\renewcommand{\citet}[1]{\cite{#1}}
\theoremstyle{plain}
\newtheorem{theorem}{Theorem}[section]
\newtheorem{lemma}[theorem]{Lemma}
\newtheorem{corollary}[theorem]{Corollary}
\newtheorem{assumption}{Assumption}[section]
\newtheorem{definition}[theorem]{Definition}
\newtheorem{remark}[theorem]{Remark}
\newtheorem{example}[theorem]{Example}
\let\originalleft\left
\let\originalright\right
\renewcommand{\left}{\mathopen{}\mathclose\bgroup\originalleft}
\renewcommand{\right}{\aftergroup\egroup\originalright}
\newcommand{\norm}[1]{\left\lVert#1\right\rVert}
\newcommand{\abs}[1]{\left\lvert#1\right\rvert}
\newcommand{\quadratic}{q}
\newcommand{\bR}{\mathbb{R}}
\newcommand{\decayfactor}{\rho}
\newcommand{\bP}[2][]{\Pr\ifthenelse{\isempty{#1}}{}{_{#1}}\left[#2\right]}
\newcommand{\bE}[2][]{\mathop\mathbb{E}\ifthenelse{\isempty{#1}}{}{_{#1}}\left[#2\right]}
\newcommand{\bI}[2][]{\mathop\mathbb{I}\ifthenelse{\isempty{#1}}{}{_{#1}}\left[#2\right]}
\newcommand{\Var}[2][]{\mathbf{Var}\ifthenelse{\isempty{#1}}{}{_{#1}}\left[#2\right]}
\DeclareMathOperator*{\argmin}{arg\,min}
\newcommand{\zero}{\mathbf{0}}
\newcommand{\one}{\mathbf{1}}
\newcommand{\MPC}{\mathsf{MPC}}
\newcommand{\dist}{\mathsf{dist}}
\newcommand{\diam}{\mathsf{diam}}
\newcommand{\ALG}{\pi}
\newcommand{\cts}{\mathrm{cts}}
\newcommand{\dis}{\mathrm{dis}}
\newcommand{\controlnum}{k}
\newcommand{\controlidx}{j}
\title{Online Adaptive Policy Selection in Time-Varying Systems: No-Regret via Contractive Perturbations}
\author{
    Yiheng Lin, James A. Preiss, Emile Anand, Yingying Li, Yisong Yue, Adam Wierman \\
    Department of Computing + Mathematical Sciences \\
    California Institute of Technology \\
    Pasadena, California, USA \\
  \texttt{$\{$yihengl, japreiss, eanand, yingli2, yyue, adamw$\}$@caltech.edu} \\}
\begin{document}
\maketitle

\begin{abstract}
We study online adaptive policy selection in systems with time-varying costs and dynamics. We develop the Gradient-based Adaptive Policy Selection (GAPS) algorithm together with a general analytical framework for online policy selection via online optimization. Under our proposed notion of contractive policy classes, we show that GAPS approximates the behavior of an ideal online gradient descent algorithm on the policy parameters while requiring less information and computation. When convexity holds, our algorithm is the first to achieve optimal policy regret. When convexity does not hold, we provide the first local regret bound for online policy selection. Our numerical experiments show that GAPS can adapt to changing environments more quickly than existing benchmarks.

\end{abstract}

\section{Introduction}\label{sec:Intro}
We study the problem of online adaptive policy selection for nonlinear time-varying discrete-time dynamical systems. The dynamics are given by $x_{t+1} = g_t(x_t, u_t)$, where $x_t$ is the state and $u_t$ is the control input at time $t$. The policy class is a time-varying mapping $\ALG_t$ from the state $x_t$ and a policy parameter $\theta_t$ to a control input $u_t$.
At every time step $t$, the online policy incurs a stage cost $c_t = f_t(x_t, u_t)$ that depends on the current state and control input.
The goal of policy selection is to pick the parameter $\theta_t$ online to minimize the total stage costs over a finite horizon $T$.

Online adaptive policy selection and general online control have received significant attention recently \cite{agarwal2019online,agarwal2019logarithmic,hazan2020nonstochastic,ho2021online,baby2022optimal, chen2021black, gradu2020adaptive, simchowitz2020improper} because many control tasks require running the policy on a single trajectory, as opposed to restarting the episode to evaluate a different policy from the same initial state.
Adaptivity is also important when the dynamics and cost functions are time-varying.
For example, in robotics, time-varying dynamics arise when we control a drone under changing wind conditions \cite{o2022neural}.

In this paper, we are interested in developing a unified framework that can leverage a broad suite of theoretical results from online optimization and efficiently translate them to online policy selection, where efficiency includes both preserving the tightness of the guarantees and computational considerations. A central issue is that, in online policy selection, the stage cost $c_t$ depends on all previously selected parameters $(\theta_0, \dots, \theta_{t-1})$ via the state $x_t$. Many prior works along this direction have addressed this issue by finite-memory reductions. This approach leads to the first regret bound on online policy selection, but the bounds are not tight, the computational cost can be large, and the dynamics and policy classes studied are restrictive \cite{agarwal2019online, hazan2020nonstochastic, chen2021black,simchowitz2020improper, gradu2020adaptive}. 

\textbf{Contributions.}
We propose and analyze the algorithm Gradient-based Adaptive Policy Selection (GAPS, Algorithm \ref{alg:OCO-with-parameter-update}) to address three limitations of existing results on online policy selection. First, under the assumption that $c_t$ is a convex function of $(\theta_0, \dots, \theta_t)$, prior work left a $\log T$ regret gap between OCO and online policy selection. We close this gap by showing that GAPS achieves the optimal regret of $O(\sqrt{T})$ (\Cref{thm:main-regret-bound-convex}).
Second, many previous approaches require oracle access to the dynamics/costs and expensive resimulation from imaginary previous states. In contrast, GAPS only requires partial derivatives of the dynamics and costs along the visited trajectory, and computes $O(\log T)$ matrix multiplications at each step.
Third, the application of previous results is limited to specific policy classes and systems because they require $c_t$ to be convex in $(\theta_0, \dots, \theta_t)$. We address this limitation by showing the first local regret bound for online policy selection when the convexity does not hold. Specifically, GAPS achieves the local regret of $O(\sqrt{(1 + V) T})$, where $V$ is a measure of how much $(g_t, f_t, \pi_t)$ changes over the entire horizon.

To derive these performance guarantees, we develop a novel proof framework based on a general exponentially decaying, or ``contractive'', perturbation property (\Cref{def:epsilon-exp-decay-perturbation-property}) on the policy-induced closed-loop dynamics.
This generalizes a key property of disturbance-action controllers \citep[e.g.][]{agarwal2019online, simchowitz2020improper} and includes other important policy classes such as model predictive control (MPC) \citep[e.g.][]{li2021robustness} and linear feedback controllers \citep[e.g.][]{qu2021exploiting}. Under this property, we prove an approximation error bound (\Cref{thm:bridge-GAPS-and-OGD}), which shows that GAPS can mimic the update of an ideal online gradient descent (OGD) algorithm \cite{zinkevich2003online} that has oracle knowledge of how the current policy parameter $\theta_t$ would have performed if used exclusively over the whole trajectory. This error bound bridges online policy selection and online optimization, which means regret guarantees on OGD for online optimization can be transferred to GAPS for online policy selection.

In numerical experiments, we demonstrate that GAPS can adapt faster than an existing follow-the-leader-type baseline in MPC with imperfect disturbance predictions,
and outperforms a strong optimal control baseline in a nonlinear system with non-i.i.d. disturbances.
We include the source code in the supplementary material.

\textbf{Related Work.} Our work is related to online control and adaptive-learning-based control \cite{agarwal2019logarithmic,li2021online,kakade2020information,dean2018regret,cohen2019learning,li2021safe,yu2022online}, especially online control with adversarial disturbances and regret guarantees \cite{agarwal2019online,chen2021black,simchowitz2020improper,pmlr-v119-foster20b,gradu2020adaptive,minasyan2021online}.
For example, there is a rich literature on policy regret bounds for time-invariant dynamics \cite{agarwal2019online,chen2021black,simchowitz2020improper,pmlr-v119-foster20b,wabersich2020performance,kakade2020information}.
There is also a growing interest in algorithms for time-varying systems with small adaptive regret \cite{gradu2020adaptive,minasyan2021online}, dynamic regret \cite{yu2020power,li2019online,lin2021perturbation}, and competitive ratio \cite{sabag2022optimal,yu2022competitive,goel2021competitive,shi2020online}. Many prior works study a specific policy class called disturbance-action controller (DAC) \cite{agarwal2019online, hazan2020nonstochastic, chen2021black,simchowitz2020improper, gradu2020adaptive}. When applied to linear dynamics $g_t$ with convex cost functions $f_t$, DAC renders the stage cost $c_t$ a convex function in past policy parameters $(\theta_0, \dots, \theta_t)$. Our work contributes to the literature by proposing a general contractive perturbation property that includes DAC as a special case, and showing local regret bounds that do not require $c_t$ to be convex in $(\theta_0, \dots, \theta_t)$. A recent work also handles nonconvex $c_t$, but it studies an episodic setting and requires $c_t$ to be ``nearly convex'', which holds under its policy class \cite{chen2023regret}.

In addition to online control,
this work is also related to online learning/optimization \cite{auer2002nonstochastic,zinkevich2003online,hazan2016introduction}, especially online optimization with memory and/or switching costs, where the cost at each time step depends on past decisions. Specifically, our online adaptive policy selection problem is related to online optimization with memory    \cite{anava2015online,shi2020online,rouyer2021algorithm,goel2019beyond,lin2020online,chen2018smoothed,li2020online,arora2012online}. Our analysis for GAPS provides insight on how to handle indefinite memory when the impact of a past decision decays exponentially with time.

Our contractive perturbation property and the analytical framework based on this property are closely related to prior works on discrete-time incremental stability and contraction theory in nonlinear systems \cite{boffi2021regret,shi2021meta,lohmiller1998contraction,angeli2002lyapunov,ruffer2013convergent,bayer2013discrete,tsukamoto2021contraction}, as well as works that leverage such properties to derive guarantees for (online) controllers \cite{li2022certifying,tsukamoto2021learning,tu2022sample}.  In complicated systems, it may be hard to design policies that provably satisfy these properties. This motivates some recent works to study neural-based approaches that can learn a controller together with its certificate for contraction properties simultaneously \cite{dawson2023safe,sun2021learning}. Our work contributes to this field by showing that, when the system satisfies the contractive perturbation property, one can leverage this property to bridge online policy selection with online optimization.

\textbf{Notation.} We use $[t_1\mathbin{:}t_2]$ to denote the sequence $(t_1, \ldots, t_2)$, $a_{t_1:t_2}$  to denote  $(a_{t_1}, a_{t_1+1}, \ldots, a_{t_2})$ for $t_1 \leq t_2$, and $a_{\times \tau}$  for  $(a, \ldots, a)$ with $a$ repeated $\tau\geq 0$ times. We define $\quadratic(x, Q)=x^\top Q x$. Symbols $\one$ and $\zero$  denote the all-one and all-zero vectors/matrices respectively, with dimension implied by context.
The Euclidean ball with center $\zero$ and radius $R$ in $\mathbb{R}^n$ is denoted by $B_n(0, R)$.
We let $\norm{\cdot}$ denote the (induced) Euclidean norm for vectors (matrices).
The diameter of a set $\Theta$ is  $\diam(\Theta)\coloneqq \sup_{x,y \in \Theta} \|x-y\|$.
The projection onto the set $\Theta$ is  $\Pi_{\Theta}(x) = \argmin_{y \in \Theta} \norm{y - x}$.

\section{Preliminaries}\label{sec:Preliminaries}
\begin{figure}
    \begin{minipage}[c]{0.64\linewidth}
    \centering
    \begin{tikzpicture}[
	xscale=0.8,
	yscale=0.40,
	node/.style={draw},
	nodet/.style={diamond, inner sep=0.3mm, draw},
	edge1/.style={-{Straight Barb[length=1mm, width=1mm]}, draw=blue},      
	edge2/.style={-Implies, double, draw=red},
	edge3/.style={-{Stealth[length=1.5mm, width=1.5mm]}, dotted, thick, draw=black},
        edge4/.style={double, dashed, draw=red},
	baseline=(x0),
]
	\node[node] (x0) at (0, 0) {$x_0$};
	\node[node] (x1) at (2, 0) {$x_1$};
	\node[node] (x2) at (4, 0) {$x_2$};
	\node[node] (u0) at (1, -2) {$u_0$};
	\node[node] (u1) at (3, -2) {$u_1$};
	\node[nodet] (th0) at (1, -4) {$\theta_0$};
	\node[nodet] (th1) at (3, -4) {$\theta_1$};
    \node[node] (c1) at (1, 2) {$c_0$};
    \node[node] (c2) at (3, 2) {$c_1$};
	\draw[edge1] (th0) -- (u0);
	\draw[edge1] (th1) -- (u1);
	\draw[edge2] (u0) -- (x1);
	\draw[edge2] (u1) -- (x2);
	\draw[edge2] (x0) -- (x1);
	\draw[edge2] (x1) -- (x2);
	\draw[edge1] (x0) -- (u0);
	\draw[edge1] (x1) -- (u1);
    \draw[edge3] (u0) -- (c1);
	\draw[edge3] (x0) -- (c1);
	\draw[edge3] (u1) -- (c2);
	\draw[edge3] (x1) -- (c2);

    \node[node] (ct) at (7, 2) {$c_t$};
    \node[node] (xt) at (6, 0) {$x_t$};
    \node[node] (xtplus) at (8, 0) {$x_{t+1}$};
    \node[node] (ut) at (7, -2) {$u_t$};
    \node[nodet] (tht) at (7, -4) {$\theta_t$};
    \draw[edge1] (tht) -- (ut);
    \draw[edge1] (xt) -- (ut);
    \draw[edge2] (xt) -- (xtplus);
    \draw[edge2] (ut) -- (xtplus);
    \draw[edge3] (xt) -- (ct);
    \draw[edge3] (ut) -- (ct);
    
    \draw[edge4] (x2) -- (xt);

\begin{scope}[xshift=110pt]
 \matrix [draw, nodes={anchor=center, minimum height=1.5    em}, below right] at (5.3,1.8) {
  \draw[edge1] (-0.3,0) -- (0.3,0); & \node {$\pi_t$}; \\
  \draw[edge2] (-0.3,0) -- (0.3,0); & \node {$g_t$}; \\
  \draw[edge3] (-0.3,0) -- (0.3,0); & \node {$f_t$}; \\
};
\end{scope}

\end{tikzpicture}
    \end{minipage}
    \hfill
    \begin{minipage}[c]{0.3\linewidth}
    \caption{
        Diagram of the causal relationships between states, policy parameters, control inputs, and costs.
    }\label{fig:decision-making-diagram}
    \end{minipage}
\end{figure}

We consider online policy selection on a single trajectory. The setting is a discrete-time dynamical system with state $x_t \in \mathbb{R}^n$ for time index ${t \in \mathcal T \coloneqq[0:T-1]}$. At time step $t \in \mathcal T$, the policy picks a control action $u_t \in \mathbb{R}^m$, and the next state and the incurred cost are given by:
\[\text{Dynamics:}\ \ x_{t+1} = g_t(x_t, u_t),\ \ \ \ \ \ \ \ \ \text{Cost:}\ \ c_t \coloneqq f_t(x_t, u_t),\]
respectively, where $g_t(\cdot, \cdot)$ is a time-varying dynamics function and $f_t(\cdot, \cdot)$ is a time-varying stage cost. The goal is to minimize the total cost $\sum_{t=0}^{T-1} c_t$.

We consider parameterized time-varying policies of the form of $u_t = \ALG_t(x_t, \theta_t)$, where $x_t $ is the current state at time step~$t$ and $\theta_t \in \Theta$ is the current policy parameter. $\Theta$ is a closed convex subset of $\mathbb{R}^d$. We assume the dynamics, cost, and policy functions $\{g_t, f_t, \pi_t\}_{t \in \mathcal{T}}$ are oblivious, meaning they are fixed before the game begins. The online policy selection algorithm optimizes the total cost by selecting $\theta_t$ sequentially. We illustrate how the policy parameter sequence $\theta_{0:T-1}$ affects the trajectory $\{x_t, u_t\}_{t \in \mathcal{T}}$ and per-step costs $c_{0:T-1}$ in \Cref{fig:decision-making-diagram}. The online algorithm has access to the partial derivatives of the dynamics $f_t$ and cost $g_t$ \emph{along the visited trajectory}, but does not have oracle access to the $f_t, g_t$ for arbitrary states and actions.

We provide two motivating examples for our setting.
\Cref{appendix:examples} contains more details and a third example.
The first example is learning-augmented Model Predictive Control, a generalization of \cite{li2020online}.

\begin{example}[MPC with Confidence Coefficients]\label{example:MPC-confidence}
Consider a linear time-varying (LTV) system
$g_t(x_t, u_t) = A_t x_t + B_t u_t + w_t$,
with time-varying costs $f_t(x_t, u_t) = \quadratic(x_t, Q_t) + \quadratic(u_t, R_t)$.
At time~$t$, the policy observes $\{A_{t:t+k-1}, B_{t:t+k-1}, Q_{t:t+k-1}, R_{t:t+k-1}, w_{t:t+k-1\mid t}\}$, where $w_{\tau\mid t}$ is a (noisy) prediction of the future disturbance $w_\tau$. Then, $\pi_t(x_t, \theta_t)$ commits the first entry of
\begin{equation}
\label{eq:mpc-opt-confidence}
\begin{split}
     \argmin_{u_{t:t+k-1\mid t}}& \sum_{\tau = t}^{t + k-1} f_\tau(x_{\tau|t}, u_{\tau|t}) + \quadratic(x_{t+k\mid t}, \tilde{Q}) \\
    \operatorname{s.t.} \; &x_{t\mid t} = x_t,
    \quad x_{\tau+1\mid t} = A_\tau x_{\tau\mid t} + B_\tau u_{\tau\mid t} +\lambda_t^{[\tau - t]} w_{\tau\mid t} : \; t \leq \tau < t\!+\!k,
\end{split}
\end{equation}
where $\theta_t = \big(\lambda_t^{[0]}, \lambda_t^{[1]}, \ldots, \lambda_t^{[k-1]}\big), \Theta = [0, 1]^k$ and $\tilde{Q}$ is a fixed positive-definite matrix. Intuitively, $\lambda_t^{[i]}$ represents our level of confidence in the disturbance prediction $i$ steps into the future at time step~$t$, with entry $1$ being fully confident and $0$ being not confident at all.
\end{example}

The second example studies a nonlinear control model motivated by \cite{li2022certifying,qu2021exploiting}.

\begin{example}[Linear Feedback Control in Nonlinear Systems]\label{example:nonlinear-control}
Consider a time-varying nonlinear control problem with dynamics $g_t(x_t, u_t) = A x_t + B u_t + \delta_t(x_t, u_t)$ and costs $f_t(x_t, u_t) = q(x_t, Q) + q(u_t, R)$. Here, the nonlinear residual $\delta_t$ comes from linearization and is assumed to be sufficiently small and Lipschitz. Inspired by \cite{qu2021exploiting}, we construct an online policy based on the optimal controller $u_t = - \bar{K} x_t$ for the linear-quadratic regulator $\mathrm{LQR}(A, B, Q, R)$. Specifically, we let $\pi_t(x_t, \theta_t) = - K(\theta_t) x_t$ where $K$ is a mapping from $\Theta$ to $\mathbb{R}^{n\times m}$ such that $\norm{K(\theta_t) - \bar{K}}$ is uniformly bounded. 
\end{example}

\subsection{Policy Class and Performance Metrics}\label{sec:performance-metrics}
In our setting, the state $x_t$ at time $t$ is uniquely determined by the combination of 1) a state $x_\tau$ at a previous time $\tau <t$, and 2) the parameter sequence $\theta_{\tau:t-1}$.
Similarly, the cost at time $t$ is uniquely determined by $x_\tau$ and $\theta_{\tau:t}$.
Since we use these properties often, we introduce the following notation.

\begin{definition}[Multi-Step Dynamics and Cost]\label{def:multi-step-dynamics}
The multi-step dynamics $g_{t\mid \tau}$ between two time steps $\tau \leq t$ specifies the state $x_t$ as a function of the previous state $x_\tau$ and previous policy parameters $\theta_{\tau: t-1}$. It is defined recursively, with the base case $g_{\tau\mid \tau}(x_\tau) \coloneqq x_\tau$ and the recursive case 
\[
    g_{t+1\mid \tau}(x_\tau, \theta_{\tau:t}) = g_t\left(z_t, \ALG_t\left(z_t, \theta_{t}\right)\right),\ \forall\,t\geq \tau, 
\]
in which $z_t \coloneqq g_{t\mid \tau}(x_\tau, \theta_{\tau:t-1})$.\footnote{$z_t$ is an auxiliary variable to denote the state at $t$ under initial state $x_\tau$ and parameters $\theta_{\tau:t}$.} The multi-step cost $f_{t\mid \tau}$ specifies the cost $c_t$ as function of $x_\tau$ and $\theta_{\tau:t}$. It is defined as
$f_{t\mid \tau}(x_\tau, \theta_{\tau:t}) \coloneqq f_t\left(z_t, \ALG_t\left(z_t, \theta_{t}\right)\right).$
\end{definition}
In this paper, we frequently compare the trajectory of our algorithm against the trajectory achieved by applying a fixed parameter $\theta$ since time step $0$, which we denote as $\hat{x}_t(\theta) \coloneqq g_{t\mid 0}(x_0, \theta_{\times t})$ and $ \hat{u}_t(\theta)\coloneqq \pi_t(\hat{x}_t(\theta), \theta)$. A related concept that is heavily used is the \textit{surrogate cost}~$F_t$, which maps a single policy parameter to a real number.

\begin{definition}[Surrogate Cost]\label{def:surrogate-cost}
The surrogate cost function is defined as $F_t(\theta) \coloneqq f_t(\hat{x}_t(\theta), \hat{u}_t(\theta))$.
\end{definition}
\Cref{fig:decision-making-diagram} shows the overall causal structure, from which these concepts follow.

To measure the performance of an online algorithm, we adopt the objective of \textbf{\textit{adaptive policy regret}}, which has been used by \cite{hazan2007adaptive, gradu2020adaptive}. It is a stronger benchmark than the static policy regret \cite{agarwal2019online,chen2021black} and is more suited to time-varying environments. We use $\{x_t, u_t, \theta_t\}_{t \in \mathcal{T}}$ to denote the trajectory of the online algorithm throughout the paper. The adaptive policy regret $R^A(T)$ is defined as the maximum difference between the cost of the online policy and the cost of the optimal fixed-parameter policy over any sub-interval of the whole horizon $\mathcal{T}$, i.e.,
\begin{align}\label{equ:adaptive-policy-regret}
    \textstyle
    R^A(T) \coloneqq
    \max_{I = [t_1:t_2] \subseteq \mathcal{T}} \left(\sum_{t\in I} f_t(x_t, u_t) - \inf_{\theta \in \Theta} \sum_{t \in I} F_t(\theta)\right).
\end{align}
In contrast, the (static) policy regret defined in \cite{chen2021black,agarwal2019online} restricts the time interval $I$ to be the whole horizon $\mathcal{T}$.
Thus, a bound on adaptive regret is strictly stronger than the same bound on static regret. This metric is particularly useful in time-varying environments like Examples \ref{example:MPC-confidence} and \ref{example:nonlinear-control} because an online algorithm must adapt quickly to compete against a comparator policy parameter that can change indefinitely with every time interval \cite[Section 10.2]{hazan2016introduction}. 

In the general case when surrogate costs $F_{0:T-1}$ are nonconvex, it is difficult (if not impossible) for online algorithms to achieve meaningful guarantees on classic regret metrics like $R^A(T)$ or static policy regret because they lack oracle knowledge of the surrogate costs.
Therefore, we introduce the metric of \textbf{\textit{local regret}}, which bounds the sum of squared gradient norms over the whole horizon:
\begin{align}\label{equ:local-policy-regret}
    \textstyle
    R^L(T) \coloneqq \sum_{t=0}^{T-1} \norm{\nabla F_t(\theta_t)}^2.
\end{align}
Similar metrics have been adopted by previous works on online nonconvex optimization \cite{hazan2017efficient}. Intuitively, $R^L(T)$ measures how well the online agent chases the (changing) stationary point of the surrogate cost sequence $F_{0:T-1}$. Since the surrogate cost functions are changing over time, the bound on $R^L(T)$ will depend on
how much the system $\{g_t, f_t, \pi_t\}_{t \in \mathcal{T}}$ changes over the whole horizon $\mathcal{T}$. We defer the details to \Cref{sec:nonconvex-surrogate}.

\subsection{Contractive Perturbation and Stability}

In this section, we introduce two key properties needed for our sub-linear regret guarantees in adaptive online policy selection. We define both with respect to trajectories generated by ``slowly'' time-varying parameters, which are easier to analyze than arbitrary parameter sequences. 

\begin{definition}\label{def:constrained-policy-parameter-seq}
We denote the set of policy parameter sequences with $\varepsilon$-constrained step size by
\[S_\varepsilon(t_1:t_2) \coloneqq \{\theta_{t_1:t_2} \in \Theta^{t_2 - t_1 + 1}\mid \norm{\theta_{\tau+1} - \theta_{\tau}} \leq \varepsilon, \forall \tau \in [t_1 \mathbin{:} t_2 - 1]\}.\]
\end{definition}

The first property we require is an exponentially decaying, or ``contractive'', perturbation property of the closed-loop dynamics of the system with the policy class. We now formalize this property.

\begin{definition}[$\varepsilon$-Time-varying Contractive Perturbation]\label{def:epsilon-exp-decay-perturbation-property}
The $\varepsilon$-time-varying contractive perturbation property holds for $R_C > 0, C > 0$, $\decayfactor \in (0, 1)$, and $\varepsilon \geq 0$ if, for any $\theta_{\tau:t-1} \in S_{\varepsilon}(\tau:t-1)$,
\begin{align*}
    \norm{g_{t\mid \tau}(x_\tau, \theta_{\tau:t-1}) - g_{t\mid \tau}(x_\tau', \theta_{\tau:t-1})} \leq C \decayfactor^{t - \tau} \norm{x_\tau - x'_\tau}
\end{align*}
holds for arbitrary $x_\tau, x_\tau' \in B_n(0, R_C)$ and time steps $\tau \leq t$.
\end{definition}

Intuitively, $\varepsilon$-time-varying contractive perturbation requires two trajectories starting from different states (in a bounded ball) to converge towards each other if they adopt the same slowly time-varying policy parameter sequence.
We call the special case of $\varepsilon = 0$ \textit{time-invariant contractive perturbation}, meaning the policy parameter is fixed.
Although it may be difficult to verify the time-varying property directly since it allows the policy parameters to change, we show in \Cref{lemma:from-static-to-time-varying} that time-invariant contractive perturbation implies that the time-varying version also holds for some small $\varepsilon > 0$.

The time-invariant contractive perturbation property is closely related to discrete-time incremental stability \citep[e.g.][]{bayer2013discrete} and contraction theory \citep[e.g.][]{tsukamoto2021contraction}, which have been studied in control theory. While some specific policies including DAC and MPC satisfy $\varepsilon$-time-varying contractive perturbation globally in linear systems, in other case it is hard to verify. Our property is local and thus is easier to establish for broader applications in nonlinear systems (e.g., \Cref{example:nonlinear-control}).

Besides contractive perturbation, another important property we need is the stability of the policy class, which requires $\pi_{0:T-1}$ can stabilize the system starting from the zero state as long as the policy parameter varies slowly. This property is stated formally below:

\begin{definition}[$\varepsilon$-Time-varying Stability]\label{def:epsilon-time-varying-stability}
The $\varepsilon$-time-varying stability property holds for $R_S > 0$ and $\varepsilon \geq 0$ if, for any $\theta_{\tau:t-1} \in S_{\varepsilon}(\tau:t-1)$, $\norm{g_{t\mid \tau}(0, \theta_{\tau:t-1})} \leq R_S$ holds for any time steps $t\geq \tau$.
\end{definition}

Intuitively, $\varepsilon$-time-varying stability guarantees that the policy class $\pi_{0:T-1}$ can achieve stability if the policy parameters $\theta_{0:T-1}$ vary slowly.\footnote{This property is standard in online control and is satisfied by DAC \cite{agarwal2019online, hazan2020nonstochastic, chen2021black,simchowitz2020improper, gradu2020adaptive} as well as Examples \ref{example:MPC-confidence} \& \ref{example:nonlinear-control}.}
Similarly to contractive perturbation, one only needs to verify time-invariant stability (i.e., $\varepsilon = 0$ and the policy parameter is fixed) to claim time-varying stability holds for some strictly positive $\varepsilon$ (see \Cref{lemma:from-static-to-time-varying}). The reason we still use the time-varying contractive perturbation and stability in our assumptions is that they hold for $\varepsilon = +\infty$ in some cases, including DAC and MPC with confidence coefficients.
Applying \Cref{lemma:from-static-to-time-varying} for those systems will lead to a small, overly pessimistic $\varepsilon$.

\subsection{Key Assumptions}\label{subsec:assumptions}

We make two assumptions about the online policy selection problem to achieve regret guarantees. 

\begin{assumption}\label{assump:Lipschitz-and-smoothness}
The dynamics $g_{0:T-1}$, policies $\ALG_{0:T-1}$, and costs $f_{0:T-1}$ are differentiable at every time step and satisfy that, for any convex compact sets $\mathcal{X} \subseteq \mathbb{R}^n, \mathcal{U} \subseteq \mathcal{R}^m$, one can find Lipschitzness/smoothness constants (can depend on $\mathcal{X}$ and $\mathcal{U}$) such that:
\begin{enumerate}[wide, labelindent=0pt, nolistsep]
    \item The dynamics $g_t(x, u)$ is $(L_{g, x}, L_{g, u})$-Lipschitz and $(\ell_{g, x}, \ell_{g, u})$-smooth in $(x, u)$ on $\mathcal{X} \times \mathcal{U}$.
    \item The policy function $\ALG_t(x, \theta)$ is $(L_{\ALG, x}, L_{\ALG, \theta})$-Lipschitz and $(\ell_{\ALG, x}, \ell_{\ALG, \theta})$-smooth in $(x, \theta)$ on $\mathcal{X} \times \Theta$.
    \item The stage cost function $f_t(x, u)$ is $(L_f, L_f)$-Lipschitz and $(\ell_{f, x}, \ell_{f, u})$-smooth in $(x, u)$ on $\mathcal{X} \times \mathcal{U}$.
\end{enumerate}
\end{assumption}

\Cref{assump:Lipschitz-and-smoothness} is general because we only require the Lipschitzness/smoothness of $g_t$ and $f_t$ to hold for bounded states/actions within $\mathcal{X}$ and $\mathcal{U}$, where the coefficients may depend on $\mathcal{X}$ and $\mathcal{U}$.
Similar assumptions are common in the literature of online control/optimization \cite{lin2021perturbation,shi2020online,li2022certifying}.

Our second assumption is on the contractive perturbation and the stability of the closed-loop dynamics induced by a slowly time-varying policy parameter sequence.

\begin{assumption}\label{assump:contractive-and-stability}
Let $\mathcal{G}$ denote the set of all possible dynamics/policy sequences $\{g_t, \pi_t\}_{t\in\mathcal{T}}$ the environment/policy class may provide. For a fixed $\varepsilon \in \mathbb{R}_{\geq 0}$, the $\varepsilon$-time-varying contractive perturbation (\Cref{def:epsilon-exp-decay-perturbation-property}) holds with $(R_C, C, \rho)$ for any sequence in $\mathcal{G}$. The $\varepsilon$-time-varying stability (\Cref{def:epsilon-time-varying-stability}) holds with $R_S < R_C$ for any sequence in $\mathcal{G}$. We assume that the initial state satisfies $\norm{x_0} < (R_C - R_S)/C$. Further, we assume that if $\{g, \pi\}$ is the dynamics/policy at an intermediate time step of a sequence in $\mathcal{G}$, then the time-invariant sequence $\{g, \pi\}_{\times T}$ is also in $\mathcal{G}$.
\end{assumption}

Compared to other settings where contractive perturbation holds globally \cite{agarwal2019online,simchowitz2020improper, zhang2021regret}, our assumption brings a new challenge because we need to guarantee the starting state stays within $B(0, R_C)$ whenever we apply this property in the proof. Therefore, we assume $R_C > R_S + C\norm{x_0}$ in \Cref{assump:contractive-and-stability}. Similarly, to leverage the Lipschitzness/smoothness property, we require ${\mathcal{X} \supseteq B(0, R_x)}$ where $R_x \geq C(R_S + C\norm{x_0}) + R_S$ and $\mathcal{U} = \{\pi(x, \theta) \mid x \in \mathcal{X}, \theta \in \Theta, \pi \in \mathcal{G}\}$. 
Since the coefficients in \Cref{assump:Lipschitz-and-smoothness} depend on $\mathcal{X}$ and $\mathcal{U}$, we will set $\mathcal{X} = B(0, R_x)$ and $R_x = C(R_S + C\norm{x_0}) + R_S$ by default when presenting these constants.

For some systems, verifying \Cref{assump:contractive-and-stability} is straightforward (e.g., Example \ref{example:MPC-confidence}).  In other cases, we can rely on the following lemma, which can convert a time-invariant version of the property to general time-varying one. We defer its proof to \Cref{appendix:weak-contractive-to-strong-contractive}.

\begin{lemma}\label{lemma:from-static-to-time-varying}
Suppose \Cref{assump:contractive-and-stability} holds for $\varepsilon = 0$ and $(R_C, C, \rho, R_S)$, which satisfies $R_C > (C + 1)R_S$. Suppose \Cref{assump:Lipschitz-and-smoothness} also holds and let $\mathcal{X} \coloneqq B(0, R_x)$, where $R_x = (C + 1)^2 R_S$. Then, \Cref{assump:contractive-and-stability} also holds for $\hat{\varepsilon} > 0$, $(\hat{R}_C, \hat{C}, \hat{\rho}, \hat{R}_S)$, and $x_0$ that satisfies $(\hat{R}_C - \hat{R}_S)/C$. Here, $\hat{R}_S, \hat{R}_C, \hat{\rho}$ are arbitrary constants that satisfies $R_S < \hat{R}_S < \hat{R}_C < R_C/(C + 1)$ and $\rho < \hat{\rho} < 1$.
The positive constants $\hat{\varepsilon}$ and $\hat{C}$ are given detailed expressions in \Cref{appendix:weak-contractive-to-strong-contractive}.
\end{lemma}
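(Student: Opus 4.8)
\textbf{Overall strategy.} The plan is to show that a small enough step size $\hat\varepsilon$ keeps any $\hat\varepsilon$-constrained parameter sequence $\theta_{\tau:t-1}$ close, uniformly over all relevant time horizons, to the time-invariant sequence $\theta_{\times(t-\tau)}$ obtained by freezing the parameter at its initial value $\theta_\tau$. Since the time-invariant contractive-perturbation and stability properties are assumed (the $\varepsilon=0$ case), I would transfer them to the time-varying setting by a perturbation argument: the multi-step map $g_{t\mid\tau}$ under a slowly varying parameter sequence differs from the frozen-parameter map by an error that accumulates geometrically, and this error can be absorbed into the slack between the original constants $(R_C,C,\rho,R_S)$ and the slightly weakened target constants $(\hat R_C,\hat C,\hat\rho,\hat R_S)$. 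The hypothesis $R_C > (C+1)R_S$ and the choices $R_S < \hat R_S < \hat R_C < R_C/(C+1)$, $\rho < \hat\rho < 1$ are exactly the ``room'' needed for this absorption.

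\textbf{Step 1: Invariance of a ball under frozen dynamics.} First I would use the $\varepsilon=0$ stability and contraction to show that, for a fixed parameter $\theta$, the trajectory started anywhere in $B(0,\hat R_C)$ stays inside $B(0,R_x)$ with $R_x = (C+1)^2 R_S$: indeed $\norm{g_{t\mid\tau}(x,\theta_{\times\cdot})} \le \norm{g_{t\mid\tau}(0,\theta_{\times\cdot})} + C\rho^{t-\tau}\norm{x} \le R_S + C\hat R_C \le R_S + C R_C/(C+1)$, and one checks this is at most $(C+1)^2 R_S$ using $R_C \le (C+1)^2 R_S / \text{(something)}$—more carefully, $R_S + C\hat R_C < R_S + C R_C/(C+1)$, and since $R_C < (C+1)^2 R_S$ is not assumed directly I would instead bound $\hat R_C < R_C/(C+1)$ to get $R_S + C R_C/(C+1) = R_S + CR_C/(C+1)$; then use $R_C$'s relation. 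This confinement lets me invoke Assumption~\ref{assump:Lipschitz-and-smoothness} with $\mathcal X = B(0,R_x)$ to obtain finite Lipschitz constants for $g_t$ and $\pi_t$ along all the trajectories that appear.

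\textbf{Step 2: Sensitivity of the multi-step map to the parameter sequence.} Using the chain rule / telescoping over one-step maps and the Lipschitz bounds from Step~1, I would establish a bound of the form $\norm{g_{t\mid\tau}(x,\theta_{\tau:t-1}) - g_{t\mid\tau}(x,\theta'_{\tau:t-1})} \le L \sum_{s=\tau}^{t-1} C\rho^{t-1-s}\,\norm{\theta_s - \theta'_s}$ for a constant $L$ depending on the Lipschitz/smoothness data. Specializing $\theta'_s = \theta_\tau$ for all $s$ and using $\norm{\theta_s - \theta_\tau}\le (s-\tau)\hat\varepsilon$ for an $\hat\varepsilon$-constrained sequence, the right side is $O(\hat\varepsilon)$ uniformly in $t,\tau$ because $\sum_{s\ge\tau}(s-\tau)\rho^{t-1-s}$ is geometrically summable. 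Then: (i) for stability, $\norm{g_{t\mid\tau}(0,\theta_{\tau:t-1})} \le R_S + O(\hat\varepsilon) \le \hat R_S$ once $\hat\varepsilon$ is small; (ii) for contraction, $\norm{g_{t\mid\tau}(x,\theta_{\tau:t-1}) - g_{t\mid\tau}(x',\theta_{\tau:t-1})} \le C\rho^{t-\tau}\norm{x-x'} + O(\hat\varepsilon)(\text{stuff})$, and by choosing $\hat\varepsilon$ small one upgrades the decay rate $\rho$ to $\hat\rho$ and the constant $C$ to $\hat C$ (standard: $C\rho^n + \text{small} \le \hat C\hat\rho^n$ for all $n$, since $\hat\rho > \rho$). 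Finally the initial-state condition $\norm{x_0} < (\hat R_C - \hat R_S)/C$ is exactly what propagates $x_0$ into $B(0,\hat R_C)$ under the new constants, closing the induction that the ball $B(0,\hat R_C)$ is the relevant invariant region.

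\textbf{Main obstacle.} The delicate point is the bookkeeping in Step~2: one must guarantee that \emph{every} state appearing inside the telescoping sum (the intermediate states $z_s$ along both the $\theta$-trajectory and the frozen-$\theta_\tau$ trajectory) lies in $B(0,R_x)$, so that the Lipschitz constants are legitimately applicable — otherwise the argument is circular. This requires carefully ordering the induction (confinement $\Rightarrow$ Lipschitz bounds $\Rightarrow$ sharper confinement) and choosing $\hat\varepsilon$ small enough \emph{after} fixing $R_x$, $\hat R_S$, $\hat R_C$, $\hat\rho$, so there is no circular dependence of constants. I expect the explicit formula for $\hat\varepsilon$ to come out as a minimum of a few expressions: one forcing the stability slack $R_S + O(\hat\varepsilon)\le\hat R_S$, one forcing the contraction-rate upgrade $\rho\to\hat\rho$, and one forcing the intermediate states to stay in $B(0,R_x)$; similarly $\hat C$ absorbs the geometric-sum prefactor. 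The rest is routine Lipschitz estimation.
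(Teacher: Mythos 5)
Your Step~1 and the stability half of Step~2 match the paper's argument: the paper likewise telescopes over single parameter replacements to get $\norm{g_{t\mid\tau}(x_\tau,\theta_{\tau:t-1}) - g_{t\mid\tau}(x_\tau,\theta_{\times(t-\tau)})} \leq \frac{C L_{g,u} L_{\pi,\theta}\hat\varepsilon}{(1-\rho)^2}$ and absorbs this into $\hat R_S - R_S$, with an induction to keep all intermediate states inside the region where the Lipschitz constants apply. So far so good.

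The gap is in your contraction step (ii). Your telescoping argument produces an \emph{additive} error of size $O(\hat\varepsilon)$ that is independent of $\norm{x_\tau - x_\tau'}$, so the best you can conclude is
\begin{equation*}
\norm{g_{t\mid\tau}(x,\theta_{\tau:t-1}) - g_{t\mid\tau}(x',\theta_{\tau:t-1})} \leq C\rho^{t-\tau}\norm{x-x'} + O(\hat\varepsilon).
\end{equation*}
This cannot be upgraded to $\hat C\hat\rho^{t-\tau}\norm{x-x'}$: the right-hand side of the target bound vanishes as $x'\to x$ and as $t-\tau\to\infty$, while your additive error does not. The inequality ``$C\rho^n + \text{small} \leq \hat C\hat\rho^n$ for all $n$'' that you call standard is false for any fixed positive ``small'' term once $n$ is large. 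What is needed is control of how the \emph{error itself} varies with the initial state, i.e.\ a bound on the Jacobian $\partial g_{t\mid\tau}/\partial x_\tau$, and that requires the smoothness (second-order) hypotheses, not just Lipschitzness. The paper's proof does exactly this: it splits $[\tau:t]$ into blocks of a fixed length $h$ chosen so that $C\rho^h < \hat\rho^h$, shows via a composition-of-smooth-maps lemma that each block's Jacobian under the slowly varying parameters differs from the frozen-parameter Jacobian by at most $C C'\rho(1+L_{g,x}+L_{g,u}L_{\pi,x})^{2h}\varepsilon/(1-\rho)^2 \leq \hat\rho^h - C\rho^h$, hence each block Jacobian has norm at most $\hat\rho^h$, and then multiplies the blocks to get $\hat C\hat\rho^{t-\tau}$. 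The fixed block length is also essential for a second reason you would hit if you tried to repair your argument globally: the smoothness constant of a composition of $t-\tau$ maps grows like $(1+L)^{2(t-\tau)}$, so the Jacobian comparison must be done over windows of bounded length, where the gap $\hat\rho^h - C\rho^h$ is a fixed positive constant to absorb the perturbation into.
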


\section{Method and Theoretical Results}\label{sec:continuous}
Our algorithm, Gradient-Based Adaptive Policy Selection (GAPS), is inspired by the classic online gradient descent (OGD) algorithm  \cite{hazan2016introduction, bansal2019potential}, with a novel approach for approximating the gradient of the surrogate stage cost $F_t$.
In the context of online optimization, OGD works as follows. At each time  $t$, the current stage cost describes how good the learner's current decision $\theta_t$ is.
The learner updates its decision by taking a gradient step with respect to this cost.
Mapping this intuition to online policy selection,
the \textit{ideal} OGD update rule would be the following.
\begin{definition}[Ideal OGD Update]\label{def:ideal-OGD}
At time step $t$, update $\theta_{t+1} = \prod_{\Theta}(\theta_t - \eta \nabla F_t(\theta_t))$.
\end{definition}
This is because the surrogate cost $F_t$ (\Cref{def:surrogate-cost}) characterizes how good $\theta_t$ is for time $t$ if we had applied $\theta_t$ from the start, i.e., without the impact of other historical policy parameters $\theta_{0:t-1}$.
However, since the complexity of computing $\nabla F_t$ exactly grows proportionally to $t$, the ideal OGD becomes intractable when the horizon $T$ is large.
As outlined in Algorithm \ref{alg:OCO-with-parameter-update}, GAPS
uses $G_t$  to approximate $\nabla F_t(\theta_t)$ efficiently.
To see this, we compare the decompositions, with key differences highlighted in colored text:
\begin{equation}
\label{equ:compare-G-t-and-ideal-gradient}
    \nabla F_t(\theta_t) =
    \sum_{b = 0}^{\textcolor{red}{t}}
        \left. \frac{\partial f_{t\mid 0}}{\partial \theta_{t - b}}
        \right|_{x_0, \textcolor{blue}{(\theta_t)_{\times (t+1)}}}
    \text{\quad and \quad }
    G_t =
    \sum_{b = 0}^{\textcolor{red}{\min\{B - 1, t\}}}
        \left. \frac{\partial f_{t\mid 0}}{\partial \theta_{t - b}}
        \right|_{x_0, \textcolor{blue}{\theta_{0:t}}}.
\end{equation}
GAPS uses two techniques to efficiently approximate $\nabla F_t(\theta_t)$.
First, we \textit{replace the ideal sequence $\textcolor{blue}{(\theta_t)_{\times (t+1)}}$ by the actual sequence $\textcolor{blue}{\theta_{0:t}}$.} This enables computing gradients along the actual trajectory experienced by the online policy without re-simulating the trajectory under $\theta_t$. Second, we \textit{truncate the whole historical dependence to \textcolor{red}{at most $B$} steps}. 
This bounds the memory used by GAPS.

\begin{algorithm}
\caption{Gradient-based Adaptive Policy Selection (GAPS)}\label{alg:OCO-with-parameter-update}
\begin{algorithmic}[1]
\REQUIRE Learning rate $\eta$, buffer length $B$, initial $\theta_0$.
\FOR{$t = 0, \ldots, T-1$}
    \STATE Observe the current state $x_t$.
    \STATE Pick the control action $u_t = \ALG_t(x_t, \theta_t)$.
    \STATE Incur the stage cost $c_t = f_t(x_t, u_t)$.
    \STATE Compute the approximated gradient:
    \(\displaystyle G_t = \sum_{b = 0}^{\min\{B - 1, t\}} \left. \frac{\partial f_{t\mid 0}}{\partial \theta_{t - b}}\right|_{x_0, \theta_{0:t}}.\)\label{alg:OCO-with-parameter-update:grad-approx}
    \STATE Perform the update $\theta_{t+1} = \prod_{\Theta}(\theta_t - \eta G_t)$.
\ENDFOR
\end{algorithmic}
\end{algorithm}

Algorithm \ref{alg:OCO-with-parameter-update} presents GAPS in its simplest form. 
Details of a time- and space-efficient implementation with $O(B)$ memory use are given in \Cref{alg:OCO-with-parameter-update-practical} of \Cref{appendix:practical-alg}.

Many previous works
\cite{agarwal2019online, hazan2020nonstochastic, chen2021black} take a different \textit{finite-memory reduction} approach toward reducing the online control problem to OCO with Memory \cite{anava2015online} by completely removing the dependence on policy parameters before time step $t - B$ for a fixed memory length $B$. 
In the finite-memory reduction, one must ``imaginarily'' reset the state at time $t - \tau$ to be $\zero$ and then use the $B$-step truncated multi-step cost function $f_{t\mid t - B}(\zero, \theta_{t-B:t})$ in the OGD with Memory algorithm \cite{agarwal2019online}. 
When applied to our setting, this is equivalent to replacing $G_t$ in line \ref{alg:OCO-with-parameter-update:grad-approx} of Algorithm \ref{alg:OCO-with-parameter-update} by $G_t' = \sum_{b = 0}^{B-1} \frac{\partial f_{t\mid t - B}}{\partial \theta_{t - b}} \mid_{0, (\theta_t)_{\times (B+1)}}$.
However, the estimator $G_t'$ has limitations compared with $G_t$ in GAPS.
First, computing $G_t'$ requires oracle access to the partial derivatives of the dynamics and cost functions for arbitrary state and actions.
Second, even if those are available, $G_t'$ is less computationally efficient than $G_t$ in GAPS, especially when the policy is expensive to execute.
Taking MPC (\Cref{example:MPC-confidence}) as an example, computing $G_t'$ at every time step requires solving $B$ MPC optimization problems when re-simulating the system, where $B = \Omega(\log T)$.
In contrast, computing $G_t$ in GAPS only requires solving one MPC optimization problem and $O(B)$ matrix multiplications to update the partial derivatives.

\subsection{Bounds on Truncation Error}\label{sec:main-assump-result}
We now present the first part of our main result, which states that the actual stage cost $f_t(x_t, u_t)$ incurred by GAPS is close to the ideal surrogate cost $F_t(\theta_t)$, and the approximated gradient $G_t$ is close to the ideal gradient $\nabla F_t(\theta_t)$.
In other words, GAPS mimics the ideal OGD update (\Cref{def:ideal-OGD}).

\begin{theorem}\label{thm:bridge-GAPS-and-OGD}
Suppose Assumptions \ref{assump:Lipschitz-and-smoothness} and \ref{assump:contractive-and-stability} hold. Let $\{(x_t, u_t, \theta_t)\}_{t \in \mathcal{T}}$ denote the trajectory of GAPS (Algorithm \ref{alg:OCO-with-parameter-update}) with buffer size $B$ and learning rate
$\eta \leq \Omega \left((1 - \rho)\varepsilon\right)$.
Then, we have
\begin{align*}\label{thm:main-regret-bound-convex:e01}
    \abs{f_t(x_t, u_t) - F_t(\theta_t)} = O\left({(1 - \decayfactor)^{-3}}{\eta}\right) \text{ and }
    \norm{G_t - \nabla F_t(\theta_t)} = O\left((1 - \decayfactor)^{-5}{\eta} + (1 - \decayfactor)^{-1}{\decayfactor^B}\right),
\end{align*}
where $\Omega(\cdot)$ and $O(\cdot)$ hide the dependence on the Lipschitz/smoothness constants defined in \Cref{assump:Lipschitz-and-smoothness} and $C$ in contractive perturbation
--- see details in \Cref{appendix:detailed-statements-proofs}.
\end{theorem}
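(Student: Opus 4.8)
The plan is to control the two errors by a common mechanism: bounding how much the ``actual'' trajectory of GAPS --- driven by the slowly-varying sequence $\theta_{0:t}$ --- differs from the ``ideal'' trajectory $\hat{x}_\tau(\theta_t), \hat{u}_\tau(\theta_t)$ that would have arisen had $\theta_t$ been used from time $0$. The first step is to verify that GAPS itself produces an $\varepsilon$-slowly-varying sequence: since $\theta_{t+1} = \Pi_\Theta(\theta_t - \eta G_t)$ and $G_t$ is a bounded sum of partial derivatives (bounded because, by \Cref{assump:contractive-and-stability}, all visited states lie in $B_n(0, R_x)$, so the Lipschitz constants of \Cref{assump:Lipschitz-and-smoothness} apply), we get $\norm{\theta_{t+1} - \theta_t} \le \eta \norm{G_t} = O(\eta/(1-\rho))$, which is $\le \varepsilon$ under the hypothesis $\eta \le \Omega((1-\rho)\varepsilon)$. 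Thus every window $\theta_{\tau:t-1}$ with $\tau \le t$ lies in $S_\varepsilon(\tau:t-1)$, and we may freely invoke contractive perturbation and stability on it. This also confirms inductively that the visited states stay in the bounded ball, closing the circularity.

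Second, I would establish the key trajectory-perturbation estimate: for any $\tau \le t$,
\begin{align*}
    \norm{z_t - \hat{x}_t(\theta_t)} = O\left( (1-\rho)^{-2}\, \eta \right),
\end{align*}
where $z_\tau = x_\tau$ is the true state and the trajectory is then run forward under the fixed parameter $\theta_t$ vs.\ the true sequence $\theta_{0:t-1}$. The idea is a telescoping/one-step-error argument: at each step the discrepancy between using $\theta_s$ and $\theta_t$ is $O(L_{\pi,\theta}\norm{\theta_s - \theta_t}) = O(\eta (t-s)/(1-\rho))$ (using the slow-variation bound summed over $s$ to $t$), and contractive perturbation (\Cref{def:epsilon-exp-decay-perturbation-property}) damps the contribution of a perturbation injected at time $s$ by $C\rho^{t-s}$. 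Summing $\sum_s \rho^{t-s} \cdot \eta(t-s)/(1-\rho)$ gives the $(1-\rho)^{-2}\eta$ rate (one factor of $(1-\rho)^{-1}$ from the geometric sum against the linearly growing $(t-s)$, one from the slow-variation bound itself). Feeding this into the Lipschitzness of $f_t$ immediately yields $\abs{f_t(x_t,u_t) - F_t(\theta_t)} = O((1-\rho)^{-2}\eta)$; the stated $(1-\rho)^{-3}$ is a safe over-estimate that also absorbs a factor from composing the dynamics-Lipschitz constant through $\pi_t$.

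Third, for the gradient error I would split $\norm{G_t - \nabla F_t(\theta_t)}$ into a \emph{truncation} part and a \emph{wrong-trajectory} part, matching the two approximations highlighted in \eqref{equ:compare-G-t-and-ideal-gradient}. For truncation: each summand $\partial f_{t\mid 0}/\partial \theta_{t-b}$ factors through the sensitivity of $z_t$ to a perturbation of $\theta_{t-b}$, which by a differential version of contractive perturbation (differentiating the dynamics along the trajectory, and using smoothness to control the linearized propagator) is $O(C\rho^b)$; hence the discarded tail $\sum_{b \ge B}$ is $O((1-\rho)^{-1}\rho^B)$. For the wrong-trajectory part, I would compare the Jacobian products defining $\partial f_{t\mid 0}/\partial\theta_{t-b}$ evaluated at $\theta_{0:t}$ versus at $(\theta_t)_{\times(t+1)}$: these are products of per-step Jacobians of $g_s$ and $\pi_s$, and each factor differs by $O(\ell \cdot (\norm{z_s - \hat{x}_s(\theta_t)} + \norm{\theta_s - \theta_t})) = O(\ell \cdot (1-\rho)^{-2}\eta)$ using the second step plus smoothness; a standard ``difference of products of bounded, contracting operators'' bound then converts this per-factor error into an overall $O((1-\rho)^{-?}\eta)$ bound, where the accumulated powers of $(1-\rho)^{-1}$ (from summing geometric tails at several nested levels --- over $b$, over the position of the differing factor, and over the linearly-growing $\norm{\theta_s-\theta_t}$) add up to the claimed $(1-\rho)^{-5}$. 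Combining the two parts gives the second inequality.

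The main obstacle I expect is the bookkeeping in the wrong-trajectory gradient bound: $\nabla F_t$ and $G_t$ are each sums over $b$ of products of $\Theta(t)$ Jacobian factors, and one must show the \emph{difference} of these long products is small despite each product individually being only $O(C\rho^b)$, not small. The right tool is the identity $\prod A_i - \prod B_i = \sum_k (\prod_{i<k} A_i)(A_k - B_k)(\prod_{i>k} B_i)$ together with the fact that truncated subproducts of the closed-loop Jacobians are uniformly bounded and their tails decay geometrically (this is exactly the differential content of contractive perturbation, which I would need to extract carefully from \Cref{def:epsilon-exp-decay-perturbation-property} --- possibly via a mean-value/integral representation of $g_{t\mid\tau}$ along a path of initial conditions, since the definition is stated for finite perturbations rather than derivatives). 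Tracking exactly which nested geometric sums contribute which powers of $(1-\rho)^{-1}$, and confirming the total is $5$, is the delicate part; the rest is routine application of Assumptions \ref{assump:Lipschitz-and-smoothness} and \ref{assump:contractive-and-stability}.
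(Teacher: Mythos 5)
Your proposal follows essentially the same route as the paper's proof: show the GAPS iterates are $\varepsilon$-slowly varying (so contractive perturbation and stability apply along the visited trajectory), bound the state deviation from $\hat{x}_t(\theta_t)$ by a telescoping sum damped by $C\rho^{t-s}$, deduce the cost error by Lipschitzness, and split the gradient error into a $\rho^B$-tail truncation term plus a wrong-trajectory term controlled via exponentially decaying Lipschitz/smoothness constants of the multi-step maps (the paper's Lemmas on smooth multi-step dynamics/costs play exactly the role of your ``differential contractive perturbation''). The only quibble is bookkeeping: since $\sum_{k} k\rho^{k} = \rho/(1-\rho)^{2}$ contributes two powers of $(1-\rho)^{-1}$ rather than one, your trajectory estimate and cost error come out as $O((1-\rho)^{-3}\eta)$ directly --- the paper's exponent is tight for this argument, not slack as you suggest.
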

We defer the proof of \Cref{thm:bridge-GAPS-and-OGD} to \Cref{appendix:detailed-statements-proofs}. Note that this result does not require any convexity assumptions on the surrogate cost $F_t$. 
\subsection{Regret Bounds for GAPS: Convex Surrogate Cost}\label{sec:convex-surrogate}
The second part of our main result studies the case when the surrogate cost $F_t$ is a convex function. This assumption is explicitly required or satisfied by the policy classes and dynamical systems in many prior works on online control and online policy selection \cite{agarwal2019online, hazan2020nonstochastic,zhang2021regret, chen2021black}.

The error bounds in \Cref{thm:bridge-GAPS-and-OGD} can reduce the problem of GAPS' regret bound in control to the problem of OGD's regret bound in online optimization, where the following result is well known: When the surrogate cost functions $F_t$ are convex, the ideal OGD update
(\Cref{def:ideal-OGD})
achieves the regret bound
when the step size $\eta$ is of the order $1/\sqrt{T}$ \cite{hazan2016introduction}.
By taking the biases on the stage costs and the gradients into consideration, we derive the adaptive regret bound in \Cref{thm:main-regret-bound-convex}.
Besides the adaptive regret, one can use a similar reduction approach to ``transfer'' other regret guarantees for OGD in online optimization to GAPS in control.
We include the derivation of a dynamic regret bound as an example in \Cref{appendix:gaps-dynamic-regret}.

\begin{theorem}\label{thm:main-regret-bound-convex}
Under the same assumptions as \Cref{thm:bridge-GAPS-and-OGD}, if we additionally assume $F_t$ is convex for every time $t$ and $\diam(\Theta)$ is bounded by a constant $D$, then GAPS achieves adaptive regret 
{
\begin{equation*}
    R^A(T) ={} O\big({\eta}^{-1} + (1 - \decayfactor)^{-5}{\eta T} + (1 - \decayfactor)^{-1}{\decayfactor^B} T
    + (1 - \decayfactor)^{-10}{\eta^3} T + (1 - \decayfactor)^{-2}{\decayfactor^{2B}\eta T}\big),\label{thm:main-regret-bound-convex:e02}
\end{equation*}
}
where $O(\cdot)$ hides the same constants as in \Cref{thm:bridge-GAPS-and-OGD} and $D$ --- see details in \Cref{appendix:detailed-statements-proofs}.
\end{theorem}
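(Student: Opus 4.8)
The plan is to reduce the adaptive regret of GAPS to the adaptive regret of the ideal OGD update (\Cref{def:ideal-OGD}) on the convex surrogate costs $F_{0:T-1}$, and then pay for the discrepancy using the two bias bounds in \Cref{thm:bridge-GAPS-and-OGD}. Fix an arbitrary sub-interval $I = [t_1:t_2] \subseteq \mathcal{T}$ and an arbitrary comparator $\theta \in \Theta$. The first step is to split $\sum_{t \in I} f_t(x_t, u_t) - \sum_{t \in I} F_t(\theta)$ as $\sum_{t \in I}\left(f_t(x_t, u_t) - F_t(\theta_t)\right) + \sum_{t \in I}\left(F_t(\theta_t) - F_t(\theta)\right)$. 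The first sum is controlled termwise by the stage-cost bias bound $\abs{f_t(x_t,u_t) - F_t(\theta_t)} = O((1-\rho)^{-3}\eta)$, contributing $O((1-\rho)^{-3}\eta \abs{I}) = O((1-\rho)^{-3}\eta T)$; since this is dominated by the $(1-\rho)^{-5}\eta T$ term already in the claimed bound, it is harmless. The real work is bounding the second sum, which is exactly the OCO regret of the parameter sequence $\theta_{t_1:t_2}$ against $\theta$ on the convex loss sequence $F_{t_1:t_2}$.

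The second step is a standard OGD-with-biased-gradient telescoping argument, but run on the \emph{restricted} interval $I$ so that the final bound is adaptive rather than static. By convexity, $F_t(\theta_t) - F_t(\theta) \le \inner{\nabla F_t(\theta_t)}{\theta_t - \theta}$. Write $\nabla F_t(\theta_t) = G_t + e_t$ where $e_t \coloneqq \nabla F_t(\theta_t) - G_t$ satisfies $\norm{e_t} = O((1-\rho)^{-5}\eta + (1-\rho)^{-1}\rho^B)$ by \Cref{thm:bridge-GAPS-and-OGD}. Since $\theta_{t+1} = \Pi_\Theta(\theta_t - \eta G_t)$ and $\Theta$ is closed convex, the nonexpansiveness of the projection gives the usual potential inequality $\norm{\theta_{t+1}-\theta}^2 \le \norm{\theta_t-\theta}^2 - 2\eta\inner{G_t}{\theta_t-\theta} + \eta^2\norm{G_t}^2$, hence $\inner{G_t}{\theta_t-\theta} \le \frac{1}{2\eta}\left(\norm{\theta_t-\theta}^2 - \norm{\theta_{t+1}-\theta}^2\right) + \frac{\eta}{2}\norm{G_t}^2$. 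Summing over $t \in I$, the potential terms telescope to $\frac{1}{2\eta}\norm{\theta_{t_1}-\theta}^2 \le \frac{D^2}{2\eta} = O(\eta^{-1})$ (this is why restricting to $I$ still works — we only lose the single boundary term $\norm{\theta_{t_1}-\theta}^2$, bounded by $D^2$). For the gradient-norm terms, bound $\norm{G_t} \le \norm{\nabla F_t(\theta_t)} + \norm{e_t}$; here one invokes Lipschitzness of $F_t$ (inherited from \Cref{assump:Lipschitz-and-smoothness} via the chain rule along the trajectory $\hat x_\tau(\theta_t)$, using $\varepsilon$-time-varying stability to keep states in $B(0,R_x)$) to get $\norm{\nabla F_t(\theta_t)} = O(1)$, so $\frac{\eta}{2}\sum_{t\in I}\norm{G_t}^2 = O(\eta T + \eta T\norm{e_t}^2)$. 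Finally the cross term $\sum_{t\in I}\inner{e_t}{\theta_t-\theta}$ is bounded by $D\sum_{t\in I}\norm{e_t} = O(DT\norm{e_t})$.

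The third step is just bookkeeping: substitute $\norm{e_t} = O((1-\rho)^{-5}\eta + (1-\rho)^{-1}\rho^B)$ into the three pieces above. The $D\sum\norm{e_t}$ term yields $O((1-\rho)^{-5}\eta T + (1-\rho)^{-1}\rho^B T)$; the $\eta T\norm{e_t}^2$ term yields $O((1-\rho)^{-10}\eta^3 T + (1-\rho)^{-2}\rho^{2B}\eta T)$ after expanding the square (the cross term $\eta T \cdot (1-\rho)^{-5}\eta \cdot (1-\rho)^{-1}\rho^B$ is dominated by the two square terms via AM–GM); the $O(\eta^{-1})$ and $O(\eta T)$ terms are already subsumed. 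Collecting everything and taking the max over $I$ and sup over $\theta$ gives exactly the stated bound. I expect the main obstacle to be the bookkeeping around which hidden constants (Lipschitz/smoothness constants, $C$, $D$) multiply which $(1-\rho)$ power, and confirming that $\norm{\nabla F_t(\theta_t)} = O(1)$ uniformly — this requires carefully tracking that every state visited along the hypothetical fixed-$\theta_t$ trajectory stays inside the ball $B(0,R_x)$ on which the Lipschitz constants are defined, which is precisely what Assumptions \ref{assump:Lipschitz-and-smoothness}–\ref{assump:contractive-and-stability} were set up to guarantee (and is presumably already established as a lemma en route to \Cref{thm:bridge-GAPS-and-OGD}). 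Given that, the regret argument itself is the textbook biased-OGD analysis localized to an interval; the novelty is entirely in \Cref{thm:bridge-GAPS-and-OGD}, which we are free to cite here.
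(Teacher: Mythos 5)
Your proposal is correct and follows essentially the same route as the paper: the paper also decomposes the adaptive regret into the stage-cost bias $\sum_{t\in I}\abs{f_t(x_t,u_t)-F_t(\theta_t)}$ plus the OCO regret of a biased projected-OGD iterate on the convex surrogate costs, proves the interval-telescoping potential bound $\frac{1}{2\eta}\norm{\theta_{t_1}-\theta}^2+\frac{\eta}{2}\sum\norm{G_t}^2+\sum\inner{e_t}{\theta_t-\theta}$ as a standalone biased-OGD lemma, and then substitutes the two error bounds from \Cref{thm:bridge-GAPS-and-OGD} exactly as in your bookkeeping step (with $\norm{\nabla F_t(\theta_t)}\le C_{L,f,\theta}/(1-\rho)$, which is indeed established en route to \Cref{thm:bridge-GAPS-and-OGD}). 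No gaps.
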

We discuss how to choose the learning rate and the regret it achieves in the following corollary.

\begin{corollary}\label{coro:main-regret-bounds-simplified}
Under the same assumptions as \Cref{thm:main-regret-bound-convex}, suppose the horizon length $T \gg \frac{1}{1 - \decayfactor}$ and the buffer length
$B \geq \frac{1}{2} \log(T) /\log(1/\decayfactor)$. If we set $\eta = (1 - \decayfactor)^{\frac{5}{2}} T^{-\frac{1}{2}}$, then GAPS achieves adaptive regret $R^A(T) = O((1-\decayfactor)^{-\frac{5}{2}}T^{\frac{1}{2}})$.
\end{corollary}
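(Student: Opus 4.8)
The plan is to obtain \Cref{coro:main-regret-bounds-simplified} by substituting the prescribed learning rate $\eta = (1-\decayfactor)^{5/2} T^{-1/2}$ and buffer length $B \ge \tfrac12 \log(T)/\log(1/\decayfactor)$ directly into the adaptive regret bound of \Cref{thm:main-regret-bound-convex} and bounding its five summands one at a time. Before doing so I would check that this $\eta$ is admissible, i.e. that it respects the constraint $\eta \le \Omega((1-\decayfactor)\varepsilon)$ inherited from \Cref{thm:bridge-GAPS-and-OGD}; this is exactly where the hypothesis $T \gg \tfrac{1}{1-\decayfactor}$ is used, since for $T$ large enough (relative to the hidden Lipschitz/smoothness constants and to $\varepsilon$) the factor $T^{-1/2}$ makes $\eta$ sufficiently small.

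The one elementary estimate the argument needs is that the stated lower bound on $B$ forces $\decayfactor^B \le T^{-1/2}$, and hence $\decayfactor^{2B} \le T^{-1}$. This is immediate on taking logarithms: $B \log(1/\decayfactor) \ge \tfrac12 \log T$ gives $\decayfactor^B = \exp(-B\log(1/\decayfactor)) \le \exp(-\tfrac12\log T) = T^{-1/2}$.

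With these in hand the remainder is bookkeeping. Substituting into $R^A(T) = O\big(\eta^{-1} + (1-\decayfactor)^{-5}\eta T + (1-\decayfactor)^{-1}\decayfactor^B T + (1-\decayfactor)^{-10}\eta^3 T + (1-\decayfactor)^{-2}\decayfactor^{2B}\eta T\big)$, the first term equals $(1-\decayfactor)^{-5/2}T^{1/2}$; the second equals $(1-\decayfactor)^{-5}(1-\decayfactor)^{5/2}T^{-1/2}\cdot T = (1-\decayfactor)^{-5/2}T^{1/2}$; the third is at most $(1-\decayfactor)^{-1}T^{-1/2}\cdot T = (1-\decayfactor)^{-1}T^{1/2} \le (1-\decayfactor)^{-5/2}T^{1/2}$; the fourth equals $(1-\decayfactor)^{-10}(1-\decayfactor)^{15/2}T^{-3/2}\cdot T = (1-\decayfactor)^{-5/2}T^{-1/2}$; and the fifth is at most $(1-\decayfactor)^{-2}T^{-1}(1-\decayfactor)^{5/2}T^{-1/2}\cdot T = (1-\decayfactor)^{1/2}T^{-1/2}$. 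The last two are lower order than $(1-\decayfactor)^{-5/2}T^{1/2}$, so summing all five yields $R^A(T) = O((1-\decayfactor)^{-5/2}T^{1/2})$. The ``hard part'' here is not conceptual but purely the care needed in tracking the powers of $(1-\decayfactor)$ and of $T$ through each term and confirming that the $\eta^{-1}$ and the $\eta T$-type terms (after the $1/\sqrt{T}$ balancing) are the dominant ones; the only genuine subtlety is the admissibility check on $\eta$, which is precisely what the assumption $T \gg 1/(1-\decayfactor)$ is there to enable.
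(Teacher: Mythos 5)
Your proposal is correct and follows the only natural route: verify that the prescribed $\eta$ satisfies the step-size constraint of \Cref{thm:bridge-GAPS-and-OGD} (using $T \gg \tfrac{1}{1-\decayfactor}$), note that $B \geq \tfrac12 \log(T)/\log(1/\decayfactor)$ gives $\decayfactor^B \leq T^{-1/2}$, and substitute into the five terms of \Cref{thm:main-regret-bound-convex}, which is exactly how the paper derives the corollary. The term-by-term bookkeeping, including the observation that the $\eta^3 T$ and $\decayfactor^{2B}\eta T$ terms are lower order and that $(1-\decayfactor)^{-1} \leq (1-\decayfactor)^{-5/2}$ absorbs the truncation term, matches the intended argument.
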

 
We defer the proof of \Cref{thm:main-regret-bound-convex} to \Cref{appendix:detailed-statements-proofs}. Compared to the (static) policy regret bounds of \citet{agarwal2019online, hazan2020nonstochastic}, our bound is tighter by a factor of $\log T$.
The key observation is that the impact of a past policy parameter $\theta_{t - b}$ on the current stage cost $c_t$ decays exponentially with respect to $b$ (see \Cref{appendix:gaps-outline} for details).
In comparison, the reduction-based approach first approximates $c_t$ with $\hat{c}_t$ that depends on $\theta_{t-B+1:t}$, and then applies general OCO with memory results on $\hat{c}_t$ \cite{agarwal2019online, hazan2020nonstochastic}. General OCO with memory cannot distinguish the different magnitudes of the contributions that $\theta_{t-B+1:t}$ make to $\hat{c}_t$, which leads to the regret gap of $B = O(\log T)$.

\subsection{Regret Bounds for GAPS: Nonconvex Surrogate Cost}\label{sec:nonconvex-surrogate}

The third part of our main result studies the case when the surrogate cost $F_t$ is nonconvex. Before presenting the result, we formally define the variation intensity that measures how much the system changes over the whole horizon.

\begin{definition}[Variation Intensity]\label{def:total-variation-policy-selection}
Let $\{g_t, \pi_t, f_t\}_{t\in \mathcal{T}}$ be a sequence of dynamics/policy/cost functions that the environment provides. The variation intensity $V$ of this sequence is defined as
\begin{small}
\[
    \sum_{t = 1}^{T-1}
    \sup_{x \in \mathcal{X}, u \in \mathcal{U}}
        \norm{g_t(x, u) - g_{t-1}(x, u)}
    + \sup_{x \in \mathcal{X}, \theta \in \Theta}
        \norm{\pi_t(x, \theta) - \pi_{t-1}(x, \theta)}
    + \sup_{x \in \mathcal{X}, u \in \mathcal{U}}
        \abs{f_t(x, u) - f_{t-1}(x, u)}.
\]
\end{small}
\end{definition}

Variation intensity is used as a measure of hardness for changing environments in the literature of online optimization that often appear in regret upper bounds (see \cite{mokhtari2016online} for an overview). \Cref{def:total-variation-policy-selection} generalizes one of the standard definitions
to online policy selection. Using this definition, we present our main result for GAPS applied to nonconvex surrogate costs using the metric of local regret \eqref{equ:local-policy-regret}.

\begin{theorem}\label{thm:GAPS-nonconvex-regret}
Under the same assumptions as \Cref{thm:bridge-GAPS-and-OGD}, if we additionally assume that $\Theta = \mathbb{R}^d$ for some integer $d$, then GAPS satisfies local regret
\[R^L(T) = O\left(\frac{1 + V}{(1 - \rho)^3 \eta} + \frac{\eta T}{(1 - \rho)^6} + \frac{\rho^B T}{(1 - \rho)^2} + \frac{\eta^3 T}{(1 - \rho)^{13}} + \frac{\rho^{2B} \eta T}{(1 - \rho)^5}\right),\]
where $O(\cdot)$ hides the same constants as in \Cref{thm:bridge-GAPS-and-OGD} --- see details in \Cref{appendix:gaps-nonconvex}.
\end{theorem}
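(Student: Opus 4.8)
The plan is to transfer the standard local-regret bound for online gradient descent on slowly-varying nonconvex losses to GAPS, using \Cref{thm:bridge-GAPS-and-OGD} to control the error introduced by the approximate gradient $G_t$. First I would establish smoothness of the surrogate cost: under Assumptions~\ref{assump:Lipschitz-and-smoothness} and~\ref{assump:contractive-and-stability}, the contractive perturbation property makes the map $\theta \mapsto (\hat x_t(\theta), \hat u_t(\theta))$ Lipschitz and smooth with constants depending polynomially on $(1-\rho)^{-1}$ (the geometric sum $\sum_{b\ge 0}\rho^b$ appears when one differentiates $f_{t\mid 0}$ through the chain of closed-loop maps). Hence each $F_t$ is $\beta$-smooth with $\beta = O((1-\rho)^{-\kappa})$ for some small constant $\kappa$; I would track the exact power since it feeds into the $\eta^3 T/(1-\rho)^{13}$ term. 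Next, because $\Theta = \mathbb{R}^d$, the projection in Algorithm~\ref{alg:OCO-with-parameter-update} is the identity, so the update is simply $\theta_{t+1} = \theta_t - \eta G_t$.

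The core of the argument is a descent-type telescoping on $\sum_t F_t(\theta_t)$. Writing $G_t = \nabla F_t(\theta_t) + e_t$ with $\norm{e_t} = O((1-\rho)^{-5}\eta + (1-\rho)^{-1}\rho^B)$ from \Cref{thm:bridge-GAPS-and-OGD}, and using $\beta$-smoothness of $F_t$,
\begin{align*}
F_t(\theta_{t+1}) \le F_t(\theta_t) - \eta \inner{\nabla F_t(\theta_t)}{G_t} + \tfrac{\beta\eta^2}{2}\norm{G_t}^2.
\end{align*}
Expanding $\inner{\nabla F_t(\theta_t)}{G_t} = \norm{\nabla F_t(\theta_t)}^2 + \inner{\nabla F_t(\theta_t)}{e_t}$ and bounding the cross term by Young's inequality produces a $-\tfrac{\eta}{2}\norm{\nabla F_t(\theta_t)}^2$ term (for $\eta$ small enough that $\beta\eta \le 1/2$) plus error terms of order $\eta\norm{e_t}^2$ and $\beta\eta^2\norm{e_t}^2$. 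Then I would add and subtract to pass from $F_t(\theta_{t+1})$ to $F_{t+1}(\theta_{t+1})$: the difference $F_{t+1}(\theta_{t+1}) - F_t(\theta_{t+1})$ is controlled by the variation intensity, since $F_t$ and $F_{t+1}$ differ only through $(g_t,\pi_t,f_t)$ versus $(g_{t+1},\pi_{t+1},f_{t+1})$, and the contractive perturbation property damps the effect of the dynamics/policy changes at each past step geometrically; summing over $t$ this contributes $O((1-\rho)^{-c} V)$ for a suitable $c$. Telescoping $F_{t+1}(\theta_{t+1}) - F_t(\theta_t)$ over $t = 0,\dots,T-1$, the left side is bounded by $O((1-\rho)^{-c}(1+V))$ (using boundedness of $F_t$ on the relevant compact set plus the variation bound), which after dividing by $\eta/2$ gives the $\frac{1+V}{(1-\rho)^3\eta}$ leading term. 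Plugging in $\norm{e_t}^2 = O((1-\rho)^{-10}\eta^2 + (1-\rho)^{-2}\rho^{2B})$ and multiplying by $T$ yields the remaining four terms: $\eta T/(1-\rho)^6$ (from $\eta \cdot (1-\rho)^{-5}$ already present in the first-order error, routed through the $\inner{\nabla F_t}{e_t}$ cross term), $\rho^B T/(1-\rho)^2$, $\eta^3 T/(1-\rho)^{13}$, and $\rho^{2B}\eta T/(1-\rho)^5$.

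I expect the main obstacle to be the bookkeeping of the variation term $F_{t+1}(\theta_{t+1}) - F_t(\theta_{t+1})$. Unlike in plain online nonconvex optimization where the losses change directly, here $F_t$ is defined through an infinite closed-loop rollout under the fixed parameter, so a one-step change in $(g_t,\pi_t,f_t)$ perturbs the surrogate cost at \emph{all} future times; one must argue that the perturbation to $\hat x_\tau(\cdot)$ for $\tau > t$ decays like $\rho^{\tau - t}$ via \Cref{def:epsilon-exp-decay-perturbation-property}, so that the total contribution is still $O((1-\rho)^{-c})$ times the per-step variation, and that the final sum telescopes/collapses to $O((1-\rho)^{-c} V)$ rather than $O((1-\rho)^{-c} VT)$. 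A secondary technicality is verifying that all iterates $\theta_t$ and induced states stay in the compact sets where Assumption~\ref{assump:Lipschitz-and-smoothness} applies; this follows from $\varepsilon$-time-varying stability together with the constraint $\eta = O((1-\rho)\varepsilon)$ on the step size, which keeps $\norm{\theta_{t+1}-\theta_t} = \eta\norm{G_t} \le \varepsilon$, exactly as in the hypotheses of \Cref{thm:bridge-GAPS-and-OGD}. I would handle this first as a preliminary lemma and then proceed with the descent argument above.
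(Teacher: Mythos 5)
Your overall architecture matches the paper's: a biased-gradient descent lemma for OGD on smooth nonconvex losses (the paper's \Cref{thm:nonconvex-biased-OGD}), telescoping with a correction term bounded by $\dist_s(F_t,F_{t-1}) \coloneqq \sup_{\theta}\abs{F_t(\theta)-F_{t-1}(\theta)}$, Lipschitz/smoothness constants $L_F,\ell_F$ of the surrogate costs derived from contractive perturbation, and the error bounds of \Cref{thm:bridge-GAPS-and-OGD} plugged in at the end. However, there is a genuine gap in the one step you yourself flag as the main obstacle. The claim that ``$F_t$ and $F_{t+1}$ differ only through $(g_t,\pi_t,f_t)$ versus $(g_{t+1},\pi_{t+1},f_{t+1})$'' is false: $F_{t+1}(\theta)=f_{t+1}(\hat x_{t+1}(\theta),\hat u_{t+1}(\theta))$ involves one more step of the rollout than $F_t(\theta)$, so even in a perfectly time-invariant system with $V=0$ the two functions differ by the one-step displacement $\norm{\hat x_{t+1}(\theta)-\hat x_t(\theta)}$, which is not small until the closed-loop trajectory has converged. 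Your proposed mechanism --- propagating the change in $(g_t,\pi_t,f_t)$ forward with geometric damping --- is only half of the required argument and does not by itself bound $\dist_s(F_t,F_{t-1})$; taken literally it would give a bound proportional to $V$ alone, which cannot be right since the sum is nonzero when $V=0$.

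The paper closes this gap with an idea absent from your proposal: it introduces the auxiliary rollout $\tilde x_\tau^{(g,\pi)}(\theta)$ of the \emph{frozen} time-invariant system $(g_t,\pi_t)$ and proves (\Cref{lemma:time-invariant-dynamics-fixed-point}) that it converges geometrically to a fixed point $\tilde x_\infty^{(g_t,\pi_t)}(\theta)$. The displacement $\norm{\hat x_t(\theta)-\hat x_{t-1}(\theta)}$ is then bounded by comparing both states to that fixed point: the distance from $\hat x_t(\theta)$ to $\tilde x_t^{(g_t,\pi_t)}(\theta)$ is a geometrically discounted sum of the per-step variations of $(g,\pi)$ (this is where your ``$\rho^{\tau-t}$ damping'' enters), while the distance from $\tilde x_t^{(g_t,\pi_t)}(\theta)$ to the fixed point contributes an extra $C\rho^{t-1}\diam(\mathcal X)$. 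Summing over $t$ and exchanging the order of summation yields $\sum_t \dist_s(F_t,F_{t-1}) = O\left((1-\rho)^{-2}V + (1-\rho)^{-1}\diam(\mathcal X)\right)$ (\Cref{lemma:path-length-bound}); the second, variation-independent term is precisely what your accounting would miss, and together with $F_0(\theta_0)$ it produces the ``$1$'' in the $1+V$ numerator. The remainder of your argument (projection is the identity, descent lemma with $G_t=\nabla F_t(\theta_t)+e_t$, cross-term bound, and the step-size condition keeping $\norm{\theta_{t+1}-\theta_t}\leq\varepsilon$ so that \Cref{thm:bridge-GAPS-and-OGD} applies) is consistent with the paper's proof.
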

We discuss how to choose the learning rate and the regret it achieves in the following corollary.

\begin{corollary}\label{coro:GAPS-nonconvex-regret-simplified}
Under the same assumptions as \Cref{thm:GAPS-nonconvex-regret}, suppose the horizon length $T \gg \frac{1}{1 - \decayfactor}$ and the buffer length
$B \geq \frac{1}{2} \log(T) /\log(1/\decayfactor)$. If we set $\eta = (1 - \decayfactor)^{\frac{3}{2}} (1 + V)^{\frac{1}{2}} T^{-\frac{1}{2}}$, GAPS achieves local regret $R^L(T) = O((1-\decayfactor)^{-\frac{9}{2}} (1 + V)^{\frac{1}{2}} T^{\frac{1}{2}})$.
\end{corollary}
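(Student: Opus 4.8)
The plan is to substitute the stated choices of $\eta$ and $B$ directly into the five-term local regret bound of \Cref{thm:GAPS-nonconvex-regret} and verify that each term is $O\bigl((1-\rho)^{-9/2}(1+V)^{1/2}T^{1/2}\bigr)$. First I would record the two consequences of the hypotheses: with $B \geq \tfrac{1}{2}\log(T)/\log(1/\rho)$ we have $\rho^B \leq T^{-1/2}$, hence $\rho^B T \leq T^{1/2}$ and $\rho^{2B}\eta T \leq \eta$; and with $\eta = (1-\rho)^{3/2}(1+V)^{1/2}T^{-1/2}$ we have $\eta T = (1-\rho)^{3/2}(1+V)^{1/2}T^{1/2}$ and $\eta^{-1} = (1-\rho)^{-3/2}(1+V)^{-1/2}T^{1/2}$. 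I also need to check that this $\eta$ is small enough to satisfy the constraint $\eta \leq \Omega((1-\rho)\varepsilon)$ inherited from \Cref{thm:bridge-GAPS-and-OGD}; since $\eta = O((1-\rho)^{3/2}(1+V)^{1/2}T^{-1/2})$ and $T \gg 1/(1-\rho)$, for $T$ large enough this holds (the $(1+V)^{1/2}$ factor is benign because $V$ is fixed by the environment while $T\to\infty$), so the hypotheses of the theorem are in force.

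Next I would bound the five terms one at a time. Term 1: $\tfrac{1+V}{(1-\rho)^3\eta} = (1+V)\,(1-\rho)^{-3}\,(1-\rho)^{-3/2}(1+V)^{-1/2}T^{1/2} = (1-\rho)^{-9/2}(1+V)^{1/2}T^{1/2}$, which is exactly the target rate. Term 2: $\tfrac{\eta T}{(1-\rho)^6} = (1-\rho)^{-6}(1-\rho)^{3/2}(1+V)^{1/2}T^{1/2} = (1-\rho)^{-9/2}(1+V)^{1/2}T^{1/2}$, again matching. Term 3: $\tfrac{\rho^B T}{(1-\rho)^2} \leq (1-\rho)^{-2}T^{1/2}$, which is dominated by the target since $(1-\rho)^{-2}\leq(1-\rho)^{-9/2}$ and $1\leq(1+V)^{1/2}$. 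Term 4: $\tfrac{\eta^3 T}{(1-\rho)^{13}} = (1-\rho)^{-13}(1-\rho)^{9/2}(1+V)^{3/2}T^{-1/2} = (1-\rho)^{-17/2}(1+V)^{3/2}T^{-1/2}$; using $T \gg 1/(1-\rho)$ — more precisely $T \geq (1-\rho)^{-8}(1+V)$ — this is at most $(1-\rho)^{-17/2+4}(1+V)^{1/2}T^{1/2}\cdot T^{-1}\cdot T = (1-\rho)^{-9/2}(1+V)^{1/2}T^{1/2}$, so it is within the target (I would state the precise threshold on $T$ implied by ``$T \gg \tfrac{1}{1-\rho}$'' that makes this clean). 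Term 5: $\tfrac{\rho^{2B}\eta T}{(1-\rho)^5} \leq \tfrac{\eta}{(1-\rho)^5} = (1-\rho)^{-5}(1-\rho)^{3/2}(1+V)^{1/2}T^{-1/2} = (1-\rho)^{-7/2}(1+V)^{1/2}T^{-1/2}$, which is far below the target. Summing the five bounds gives $R^L(T) = O\bigl((1-\rho)^{-9/2}(1+V)^{1/2}T^{1/2}\bigr)$.

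The argument is entirely a matter of plugging in and comparing exponents, so there is no deep obstacle; the only place requiring care is Term 4, where I must use the assumption $T \gg \tfrac{1}{1-\rho}$ (interpreted as a concrete polynomial lower bound on $T$ in terms of $(1-\rho)^{-1}$ and $1+V$) to absorb the unfavorable $(1-\rho)$ and $T$ powers into the target rate — without such a quantitative reading of ``$\gg$'' the cubic-in-$\eta$ term is not obviously lower order. I would also double-check that the hidden constants in the five $O(\cdot)$ terms, which depend only on the Lipschitz/smoothness constants and $C$ (not on $T$ or $V$), are correctly carried through, and note that $V$ enters the final bound only through the explicit $(1+V)^{1/2}$ factor.
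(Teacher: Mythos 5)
Your proposal is correct and follows exactly the route the paper intends: the corollary is obtained by direct substitution of $\eta = (1-\rho)^{3/2}(1+V)^{1/2}T^{-1/2}$ and $\rho^B \leq T^{-1/2}$ into the five-term bound of \Cref{thm:GAPS-nonconvex-regret}, and your term-by-term exponent comparison (including the observation that the first two terms are the binding ones and that the cubic-in-$\eta$ term requires a quantitative reading of $T \gg \frac{1}{1-\rho}$) matches the paper's treatment. The only cosmetic issue is the slightly garbled intermediate arithmetic in your Term 4 estimate; the clean condition needed there is $(1-\rho)^{-4}(1+V) \leq T$, which your stated threshold implies.
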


We defer the proof of \Cref{thm:GAPS-nonconvex-regret} to \Cref{appendix:gaps-nonconvex}. Note that the local regret will be sublinear in $T$ if the variation intensity $V = o(T)$. To derive the local regret guarantee in \Cref{thm:GAPS-nonconvex-regret}, we address additional challenges compared to the convex case. First, we derive a local regret guarantee for OGD in online nonconvex optimization. We cannot directly apply results from the literature because they do not use ordinary OGD, and it is difficult to apply algorithms like Follow-the-Perturbed-Leader \citep[e.g.][]{suggala2020online} to online policy selection due to constraints on information and step size.
Then, to transfer the regret bound from online optimization to online policy selection,
we show how to convert the measure of variation defined on $F_{0:T-1}$ to our variation intensity $V$ defined on $\{g_t, \pi_t, f_t\}_{t\in \mathcal{T}}$.

A limitation of \Cref{thm:GAPS-nonconvex-regret} is that we need to assume $\Theta$ is a whole Euclidean space so that GAPS will not converge to a point at the boundary of $\Theta$ that is not a stationary point. \Cref{example:nonlinear-control} and \Cref{sec:nonlinear-control-details} show that one can re-parameterize the policy class to satisfy this assumption in some cases. Relaxing this assumption is our future work.

\section{Numerical Experiments}\label{sec:Numerics}
In this section we compare GAPS to strong baseline algorithms in settings based on \Cref{example:MPC-confidence,example:nonlinear-control}.
Details are deferred to \Cref{sec:numerical-appendix} due to space limitations.
\Cref{sec:numerical-appendix} also includes a third experiment comparing GAPS to a bandit-based algorithm for selecting the planning horizon in MPC. \footnote{The code for the experiments can be found here at \url{https://github.com/jpreiss/adaptive_policy_selection}.}

\begin{figure}[h]
    \centering
    \includegraphics[width=\textwidth]{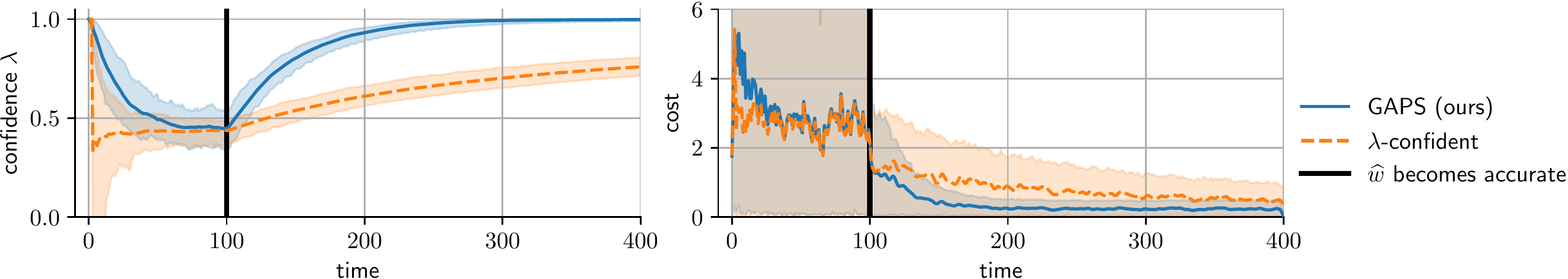}
    \caption{
        Comparing GAPS and baseline \citet{li2021robustness} for online adaptation of a confidence parameter for MPC with disturbance predictions.
        \emph{Left:} Confidence parameter. \emph{Right:} Per-step cost.
        Shaded bands show $10\%$-$90\%$ quantile range over randomized disturbance properties.
        See body for details.
    }
    \label{fig:ours-vs-lambda-confident}
\end{figure}

\paragraph{MPC confidence parameter.}
We compare GAPS to the follow-the-leader-type method of \citet{li2021robustness}
for tuning a scalar confidence parameter in model-predictive control
with noisy disturbance predictions.
The setting is close to \Cref{example:MPC-confidence}
but restricted to satisfy the conditions of the theoretical guarantees in \citet{li2021robustness}.
We consider the scalar system
\(
    x_{t+1} = 2 x_t + u_t + w_t
\)
under non-stochastic disturbances $w_t$
with the cost $f_t(x_t, u_t) = x_t^2 + u_t^2$.
For $t=0$ to $100$, the predictions of $w_t$ are corrupted by a large amount of noise.
After $t > 100$, the prediction noise is instantly reduced by a factor of $100$.
In this setup, an ideal algorithm should learn to decrease confidence level at first to account for the noise, but then increase to $\lambda \approx 1$ when the predictions become accurate.

\Cref{fig:ours-vs-lambda-confident}
shows
the values of the confidence parameter $\lambda$
and the per-timestep cost
generated by each algorithm.
Both methods are initialized to $\lambda = 1$. The method of \citet{li2021robustness} rapidly adjusts to an appropriate confidence level at first, while GAPS adjusts more slowly but eventually reaches the same value.
However, when the accuracy changes,
GAPS adapts more quickly. 
and obtains lower costs towards the end of the simulation.
In other words, we see that GAPS behaves essentially like an instance of OGD with constant step size, which is consistent with our theoretical results (\Cref{thm:bridge-GAPS-and-OGD}).

\begin{figure*}[h]
    \begin{subfigure}{0.63\textwidth}
        \includegraphics[width=\textwidth]{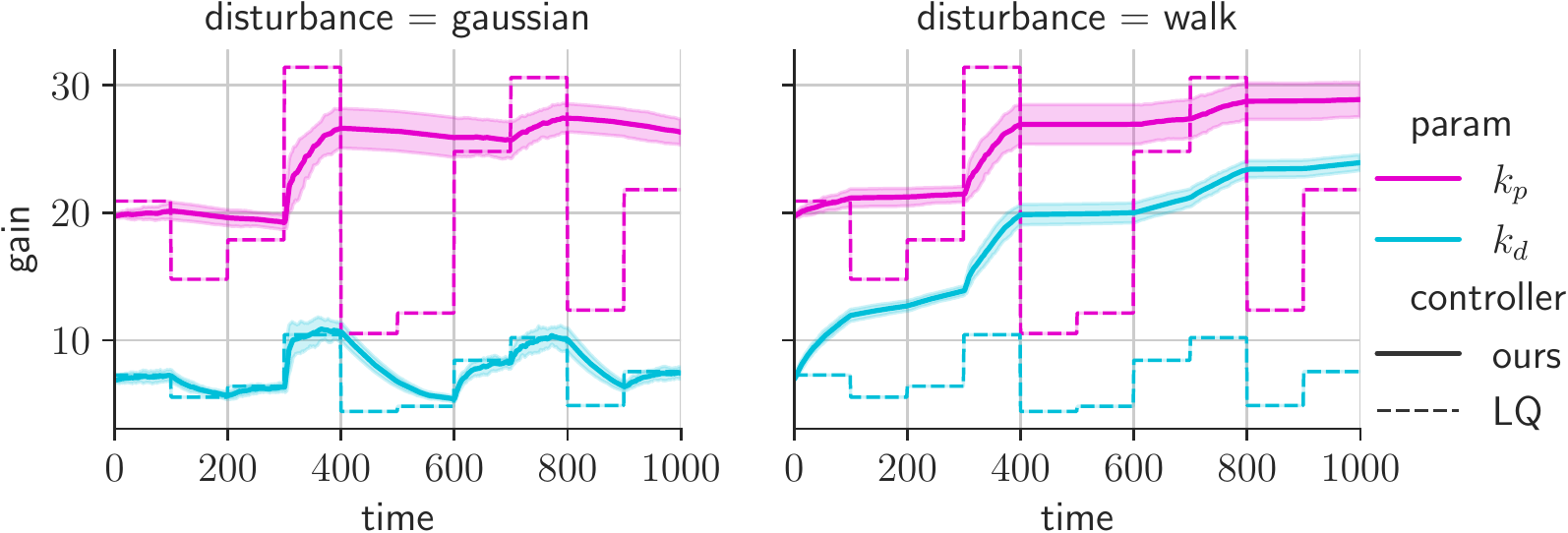}
        \caption{
            Linear gains $k_p,\ k_d$ tuned by GAPS compared to LQR optimal.
        }
        \label{fig:pendulum-params}
    \end{subfigure}
    \hfill
    \begin{subfigure}{0.335\textwidth}
        \hfill
        \includegraphics[width=\textwidth]{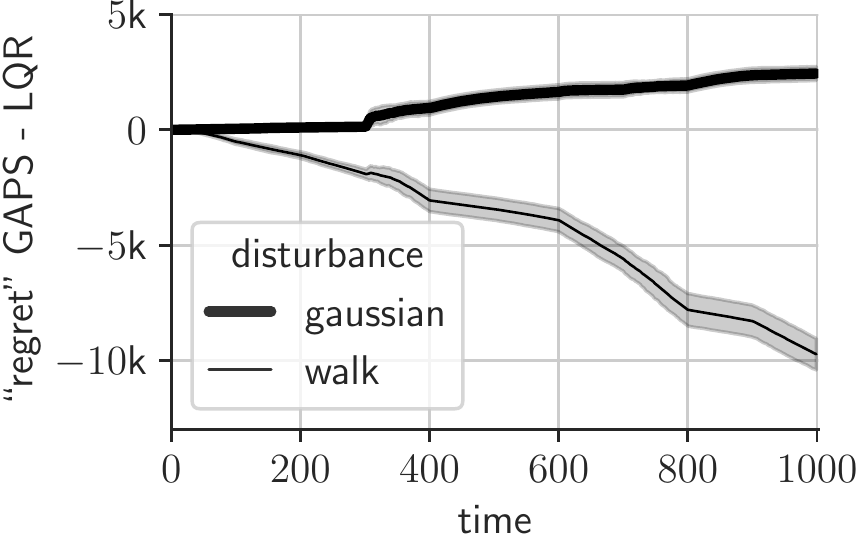}
        \caption{
            Cumulative cost difference.
        }
        \label{fig:pendulum-costs}
    \end{subfigure}
    \caption{
        Comparing GAPS and LQR baseline in nonlinear inverted pendulum system.
        Shaded bands show $\pm 1$ standard deviation over the randomness of the disturbances.
        See body for details.
    }
    \label{fig:pendulum}
\end{figure*}

\paragraph{Linear controller of nonlinear time-varying system.}
We apply GAPS to tune the gain parameters of a linear feedback controller in a nonlinear inverted pendulum system.
Every $100$ seconds, the pendulum mass changes.
The system reflects the smooth nonlinear dynamics and nonconvex surrogate costs in
\Cref{example:nonlinear-control}, although it differs in other details (see \Cref{sec:nonlinear-control-details,appendix:pendulum}).
We compare GAPS to a strong and dynamic baseline that deploys the infinite-horizon linear-quadratic regulator (LQR) optimal controller for the linearized dynamics at each mass.
We simulate two disturbances: 1)~i.i.d.\ Gaussian, and 2)~Ornstein-Uhlenbeck random walk.

\Cref{fig:pendulum-params} shows the controller parameters tuned by GAPS, along with the baseline LQR-optimal gains, for each disturbance type.
The derivative gain $k_d$ closely follows LQR for i.i.d.\ disturbances but diverges for random-walk disturbances, where LQR is no longer optimal.
This is reflected in the cumulative cost difference between GAPS and LQR,
shown in \Cref{fig:pendulum-costs}.
GAPS nearly matches LQR under i.i.d.\ disturbances,
but significantly outperforms it when the disturbance is a random walk.
The results show that GAPS can both
1) adapt to step changes in dynamics on a single trajectory almost as quickly as the comparator that benefits from knowledge of the near-optimal analytic solution,
and 2) outperform the comparator in more general settings where the analytic solution no longer applies.

\section{Conclusion and Future Directions}\label{sec:Conclusion}
In this paper, we study the problem of online adaptive policy selection under a general contractive perturbation property.
We propose GAPS, which can be implemented more efficiently and with less information than existing algorithms.
Under convexity assumptions, we show that GAPS achieves adaptive policy regret of $O(\sqrt{T})$, which closes the $\log T$ gap between online control and OCO left open by previous results.
When convexity does not hold, we show that GAPS achieves local regret of $O(\sqrt{(1 + V) T})$, where $V$ is the variation intensity of the time-varying system. 
This is the first local regret bound on online policy selection attained without any convexity assumptions on the surrogate cost functions.
Our numerical simulations demonstrate the effectiveness of GAPS, especially for fast adaptation in time-varying settings.

Our work motivates interesting future research directions. For example, a limitation is that GAPS assumes \textit{all} policy parameters can stabilize the system and satisfy contractive perturbation. A recent work on online policy selection relaxed this assumption by a bandit-based algorithm but requires $\Theta$ to be a finite set \citep{li2023online}.
An interesting future direction to study is what regret guarantees can be achieved when $\Theta$ is a continuous parameter set and not all of the candidate policies satisfy these assumptions.

\balance
\bibliographystyle{unsrt}
\bibliography{main}

\newpage
\appendix
\onecolumn
{\textbf{Outline of the appendices.}}
\vspace{-0.4cm}
\begin{itemize}[leftmargin=*]
    \item \Cref{appdx:math} 
    presents additional definitions and remarks that support the main body of the paper.
    \item \Cref{appendix:practical-alg} provides an efficient implementation of our algorithm GAPS for the continuous-parameter setting.
    \item
\Cref{appendix:weak-contractive-to-strong-contractive,%
appendix:gaps-outline,%
appendix:gaps-dynamic-regret,%
appendix:gaps-nonconvex}
contain the detailed proofs of our theoretical results.
    \item \Cref{appendix:finite} details a bandit-based algorithm for finite policy classes and its regret bound.
    \item \Cref{appendix:examples} provides additional details on our MPC and nonlinear control example problem settings, as well as a discussion of the disturbance-action controller popular in online control literature.
    \item \Cref{sec:numerical-appendix} contains 1) details on the numerical experiments in \Cref{sec:Numerics}, and
    2) a third numerical experiment whose results do not appear in \Cref{sec:Numerics}.
\end{itemize}

\section{Mathematical background and additional remarks}
\label{appdx:math}
\begin{definition}[Lipschitzness and Smoothness]\label{def:Lipschitz-and-smoothness}
 We say a function $h: \mathbb{R}^{n_1} \times \mathbb{R}^{n_2} \to \mathbb{R}^{n_3}$ is $(L_1, L_2)$-Lipschitz on a set $\mathcal{D} \in \mathbb{R}^{n_1} \times \mathbb{R}^{n_2}$ if for any $(y, z), (y', z') \in \mathcal{D}$, we have
\[
    \norm{h(y, z) - h(y', z')} \leq L_1 \norm{y - y'} + L_2 \norm{z - z'}.
\]
We say $h$ is $(\ell_1, \ell_2)$-smooth on $\mathcal{D}$ if $h$ is differentiable and for any $(y, z), (y', z') \in \mathcal{D}$, we have
\[
    \norm{\nabla h(y, z) - \nabla h(y', z')} \leq \ell_1 \norm{y - y'} + \ell_2 \norm{z - z'}.
\]
 \end{definition}

\begin{remark}[Relation to other notions of contraction/stability]
Contractive perturbation is closely related to incremental exponential stability in nonlinear control theory \cite{lohmiller1998contraction,angeli2002lyapunov,ruffer2013convergent}, which considers a time-varying dynamical system $x_{t+1}=h_t(x_t)$ and requires the following convergent property:
\[
\|x_{t}-x'_{t}\|\leq C\rho^{t-\tau}\|x_{\tau}-x'_{\tau}\|
\quad \text{for any} \quad
x_{\tau}, x_{\tau}'\in \mathbb{R}^n,\ x'_{t+1}=h_t(x'_t),\ t\geq \tau.
\]
Basically, the time-invariant contractive perturbation property requires that any closed-loop dynamics ${x_{t+1}=g_t(x_t,\ALG_t(x_t, \theta))}$ generated by policies with a fixed parameter $\theta \in \Theta$ guarantees incremental exponential stability.
A sufficient condition to verify the incremental exponential stability is the contraction analysis. See \cite{lohmiller1998contraction} for more details.
\end{remark}

\section{Practical Algorithm Implementation}\label{appendix:practical-alg}
In \Cref{alg:OCO-with-parameter-update-practical},
we present an equivalent version of the GAPS (\Cref{alg:OCO-with-parameter-update}) for practical implementation that specifies how to compute the partial derivative $\left.\frac{\partial f_{t\mid 0}}{\partial \theta_{t - b}}\right|_{x_0, \theta_{0:t}}$ efficiently using the buffered information.

\begin{algorithm}[htp]
\caption{Gradient-based Adaptive Policy Selection (For practical implementation)}\label{alg:OCO-with-parameter-update-practical}
\begin{algorithmic}[1]
\REQUIRE Learning rate $\eta$, buffer length $B$, initial parameter $\theta_0$.
\FOR{$t = 0, \ldots, T-1$}
    \STATE Observe the current state $x_t$.
    \STATE Pick control action $u_t = \ALG_t(x_t, \theta_t)$.
    \STATE Incur the stage cost $f_t(x_t, u_t)$.
    \STATE If $t \geq 1$, set $\frac{\partial x_t}{\partial \theta_{t-1}} \coloneqq \left.\frac{\partial g_{t-1}}{\partial u_{t-1}}\right|_{x_{t-1}, u_{t-1}} \cdot \left.\frac{\partial \ALG_{t-1}}{\partial \theta_{t-1}}\right|_{x_{t-1}, \theta_{t-1}}$.
    \STATE If $t \geq 1$, set $\frac{\partial x_t}{\partial x_{t-1}} \coloneqq \left.\frac{\partial g_{t-1}}{\partial x_{t-1}}\right|_{x_{t-1}, u_{t-1}} + \left.\frac{\partial g_{t-1}}{\partial u_{t-1}}\right|_{x_{t-1}, u_{t-1}} \cdot \left.\frac{\partial \ALG_{t-1}}{\partial x_{t-1}}\right|_{x_{t-1}, \theta_{t-1}}$.
    \FOR{$b = 2, \ldots, B-1$}
        \STATE If $t \geq b$, set $\frac{\partial x_t}{\partial \theta_{t-b}} \coloneqq \frac{\partial x_t}{\partial x_{t-1}} \cdot \frac{\partial x_{t-1}}{\partial \theta_{t-b}}$, where $\frac{\partial x_{t-1}}{\partial \theta_{t-b}}$ is in the buffer.
    \ENDFOR
    \STATE Compute the approximated gradient (ignore $\frac{\partial x_t}{\partial \theta_{t - b}}$ if $t < b$):
    \[G_t = \left(\left.\frac{\partial f_t}{\partial x_t}\right|_{x_t, u_t} + \left.\frac{\partial f_t}{\partial u_t}\right|_{x_t, u_t} \cdot \left.\frac{\partial \ALG_t}{\partial x_t}\right|_{x_t, \theta_t}\right) \cdot \sum_{b = 1}^{B - 1} \frac{\partial x_t}{\partial \theta_{t - b}} + \left.\frac{\partial f_t}{\partial u_t}\right|_{x_t, u_t} \cdot \left.\frac{\partial \ALG_t}{\partial \theta_t}\right|_{x_t, \theta_t}\]
    \STATE Perform the projected gradient update $\theta_{t+1} = \prod_{\Theta}(\theta_t - \eta G_t)$.
    \STATE Empty the buffer, and save $\left[\frac{\partial u_t}{\partial \theta_t}, \frac{\partial x_t}{\partial \theta_{t-1}}, \ldots, \frac{\partial x_t}{\partial \theta_{t-B+1}}\right]$ (if the term exists) into the buffer.
\ENDFOR
\end{algorithmic}
\end{algorithm}

\section{Proof of Lemma \ref{lemma:from-static-to-time-varying}}\label{appendix:weak-contractive-to-strong-contractive}
We first restate \Cref{lemma:from-static-to-time-varying} with detailed coefficients in \Cref{lemma:from-static-to-time-varying:detailed}:

\begin{lemma}\label{lemma:from-static-to-time-varying:detailed}
Suppose \Cref{assump:contractive-and-stability} holds for $\varepsilon = 0$ and $(R_C, C, \rho, R_S)$, which satisfies $R_C > (C + 1) R_S$. Suppose \Cref{assump:Lipschitz-and-smoothness} also holds and let $\mathcal{X} \coloneqq B_n(0, R_x)$, where $R_x = (C + 1)^2 R_S$. Then, \Cref{assump:contractive-and-stability} also holds for $\hat{\varepsilon} > 0$, $(\hat{R}_C, \hat{C}, \hat{\rho}, \hat{R}_S)$, and $x_0$ that satisfies the inequality $\norm{x_0} \leq (\hat{R}_C - \hat{R}_S)/C$. Here, $\hat{R}_S, \hat{R}_C, \hat{\rho}$ are arbitrary constants that satisfies $R_S < \hat{R}_S < \hat{R}_C < R_C/(C + 1)$ and $\rho < \hat{\rho} < 1$. Other coefficients are given by
\begin{align*}
    \hat{C} ={}& \left((1 + L_{\ALG, x}) \left(\ell_{g, x} + \ell_{g, u} L_{\ALG, x}\right) + L_{g, u}\ell_{\ALG, x}\right) \cdot \left(1 + L_{g, x} + L_{g, u} L_{\ALG, x}\right)^{2h} \hat{\rho}^{-h},\\
    \hat{\varepsilon} ={}& \min\left\{\frac{(\hat{\rho}^h - C \rho^h) (1 - \rho)^2}{C\cdot C' \rho \left(1 + L_{g, x} + L_{g, u} L_{\ALG, x}\right)^{2h}}, \frac{(1 - \rho)^2 (\hat{R}_S - R_S)}{C L_{g, u} L_{\pi, \theta}}\right\}, \text{ where}\\
    C' ={}& \bigg((1 + L_{\ALG, x}) \left(\ell_{g, x} + \ell_{g, u} \cdot (L_{\ALG, x} + L_{\ALG, \theta})\right) + L_{g, u} \cdot (\ell_{\ALG, x} + \ell_{\ALG, \theta})\bigg)(L_{g, u}L_{\ALG, \theta} + 1),
\end{align*}
where $h$ is a constant integer that satisfies $C \rho^h < \min\{\hat{\rho}^h, 1 - \hat{R}_S/\hat{R}_C\}$.
\end{lemma}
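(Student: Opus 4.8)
\textbf{Proof proposal for \Cref{lemma:from-static-to-time-varying:detailed}.}

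The plan is to bootstrap the time-invariant ($\varepsilon = 0$) contractive perturbation and stability into the time-varying version by a perturbation argument: if the parameter sequence changes slowly, then the multi-step map $g_{t\mid\tau}(\cdot, \theta_{\tau:t-1})$ is close to the multi-step map under the frozen parameter $\theta_\tau$, for which contraction already holds. First I would fix a block length $h$ (the constant integer in the statement) such that the frozen-parameter contraction $C\rho^h$ already beats the target rate $\hat\rho^h$ with room to spare, and also beats $1 - \hat R_S/\hat R_C$ so that the stability radius has slack. On a single block of length $h$, I would bound the discrepancy $\norm{g_{\tau+h\mid\tau}(x, \theta_{\tau:\tau+h-1}) - g_{\tau+h\mid\tau}(x, (\theta_\tau)_{\times h})}$ using \Cref{assump:Lipschitz-and-smoothness}: each one-step map is Lipschitz, so the composition expands errors by at most $(1 + L_{g,x} + L_{g,u}L_{\ALG,x})$ per step, and at each step the local parameter mismatch is at most $h\varepsilon$ (triangle inequality on $\norm{\theta_{\tau+i} - \theta_\tau}$), fed through $L_{g,u}L_{\ALG,\theta}$. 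This gives a per-block additive error of order $\varepsilon \cdot (\text{geometric factor in } h)$, which is where the $C'$ and the $(1 + L_{g,x} + L_{g,u}L_{\ALG,x})^{2h}$ factors in $\hat C$ and $\hat\varepsilon$ come from — the extra power of $2h$ versus $h$ arises because we also need to control how the \emph{derivative} (for the smoothness-based refinement) propagates, and/or because we iterate the block bound.

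Second, I would chain blocks: writing $t - \tau = qh + r$, I would alternate "apply frozen-parameter contraction on a block" with "pay the slow-variation error", and sum the resulting geometric-type series. The key inequality to maintain is that two nearby trajectories contract by a factor slightly worse than $C\rho^h$ per block — namely at most $\hat\rho^h$ per block once $\varepsilon \le \hat\varepsilon$ — so that after $q$ blocks the contraction is $\hat C \hat\rho^{t-\tau}$ for the stated $\hat C$ (the prefactor $\hat C$ absorbs the leftover $r < h$ steps and the per-step Lipschitz blowup $(1 + L_{g,x} + L_{g,u}L_{\ALG,x})^{2h}\hat\rho^{-h}$). Concretely, on a block the map under the slowly-varying sequence is within $C' \rho (1-\rho)^{-2} \varepsilon (1 + L_{g,x}+L_{g,u}L_{\ALG,x})^{2h}$ of the frozen one in the relevant sense, and choosing $\hat\varepsilon$ as the first term in the $\min$ makes $C\rho^h + (\text{that error}) \le \hat\rho^h$. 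For the stability claim, the same block decomposition applied to the trajectory from $0$ shows $\norm{g_{t\mid\tau}(0,\theta_{\tau:t-1})}$ stays inside $B(0,\hat R_S)$: the frozen dynamics keeps it within $R_S$, and the accumulated slow-variation error is bounded by $C L_{g,u}L_{\ALG,\theta}\varepsilon/(1-\rho)^2 \le \hat R_S - R_S$ by the second term in the $\min$ defining $\hat\varepsilon$; the slack $\hat R_S < \hat R_C < R_C/(C+1)$ and $R_x = (C+1)^2 R_S$ guarantee every state encountered lies inside $\mathcal X$ so \Cref{assump:Lipschitz-and-smoothness} is applicable throughout, and $\norm{x_0} \le (\hat R_C - \hat R_S)/C$ is exactly the condition needed for \Cref{assump:contractive-and-stability} to hold with the hatted constants.

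Finally I would verify the closure condition of \Cref{assump:contractive-and-stability} (if $\{g,\pi\}$ appears at an intermediate step, then $\{g,\pi\}_{\times T} \in \mathcal G$) is inherited trivially, since we are not changing $\mathcal G$, only the constants. The main obstacle I anticipate is the bookkeeping in the block-chaining step: getting the exponents exactly right (why $2h$ rather than $h$, and exactly which Lipschitz/smoothness constants enter $C'$ versus $\hat C$) requires care, because one must simultaneously track (i) how nearby initial states contract, (ii) how the frozen-vs-slowly-varying map discrepancy propagates through compositions, and (iii) that all intermediate states remain in the region where the Lipschitz constants are valid — and these three have to be proved by a simultaneous induction over blocks rather than separately. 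The choice of $h$ is what decouples them: once $C\rho^h$ is strictly below both $\hat\rho^h$ and $1 - \hat R_S/\hat R_C$, there is a fixed positive gap to spend on the $O(\varepsilon)$ perturbation terms, and $\hat\varepsilon$ is just that gap divided by the worst-case blowup constant.
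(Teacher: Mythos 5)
Your proposal matches the paper's proof in all essential respects: the same block decomposition of $g_{t\mid\tau}$ into length-$h$ pieces, the same hybrid/telescoping comparison of the slowly-varying parameter sequence against the frozen sequence $(\theta_{t_i})_{\times h}$, the same use of the smoothness of compositions to get the $(1+L_{g,x}+L_{g,u}L_{\ALG,x})^{2h}$ blowup on the Jacobian perturbation (the paper's \Cref{lemma:concatenation-of-smooth-function}), and the same split of the two terms in the $\min$ defining $\hat\varepsilon$ between the contraction gap $\hat\rho^h - C\rho^h$ and the stability slack $\hat R_S - R_S$, with the induction keeping all states inside $B(0,R_x)$. The only cosmetic difference is that the paper phrases the contraction transfer entirely as a bound on $\norm{\partial g_{t\mid\tau}/\partial x_\tau}$ rather than on trajectory differences, but your remark about needing to propagate the derivative is exactly that step, so the argument is the same.
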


Before showing \Cref{lemma:from-static-to-time-varying:detailed}, we first show that the composition of Lipschitz and smooth functions is still Lipschitz and smooth in the following technical lemma:

\begin{lemma}\label{lemma:concatenation-of-smooth-function}
Suppose the sequence of functions $\iota_{1:t}$ satisfies that $\iota_i: D_i \to D_{i+1}$ is $L$-Lipschitz and $\ell$-smooth for all $i \in \{1, 2, \cdots, t\}$. Then, their composition $(\iota_t \circ \iota_{t-1} \circ \cdots \circ \iota_1)$ is $L^t$-Lipschitz and $\ell(1 + L)^{2t}$-smooth.
\end{lemma}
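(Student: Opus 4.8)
\textbf{Proof proposal for \Cref{lemma:concatenation-of-smooth-function}.}

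The plan is to prove both claims by induction on $t$. The Lipschitz claim is immediate: if $\phi \coloneqq \iota_{t-1} \circ \cdots \circ \iota_1$ is $L^{t-1}$-Lipschitz (inductive hypothesis) and $\iota_t$ is $L$-Lipschitz, then for any $y, y'$ in the domain, $\norm{\iota_t(\phi(y)) - \iota_t(\phi(y'))} \leq L \norm{\phi(y) - \phi(y')} \leq L \cdot L^{t-1} \norm{y - y'} = L^t \norm{y - y'}$. The base case $t = 1$ is trivial.

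For the smoothness claim, I would again write $\psi \coloneqq \iota_t \circ \phi$ with $\phi \coloneqq \iota_{t-1} \circ \cdots \circ \iota_1$, assume inductively that $\phi$ is $L^{t-1}$-Lipschitz and $\ell(1+L)^{2(t-1)}$-smooth, and bound $\norm{\nabla \psi(y) - \nabla \psi(y')}$. By the chain rule, $\nabla \psi(y) = \nabla \iota_t(\phi(y)) \cdot \nabla \phi(y)$ (as a product of Jacobians). Adding and subtracting the cross term $\nabla \iota_t(\phi(y')) \cdot \nabla \phi(y)$ and applying the triangle inequality together with submultiplicativity of the operator norm gives
\begin{align*}
    \norm{\nabla \psi(y) - \nabla \psi(y')}
    \leq{}& \norm{\nabla \iota_t(\phi(y)) - \nabla \iota_t(\phi(y'))} \cdot \norm{\nabla \phi(y)} \\
    &+ \norm{\nabla \iota_t(\phi(y'))} \cdot \norm{\nabla \phi(y) - \nabla \phi(y')}.
\end{align*}
Now I bound each factor: $\norm{\nabla \iota_t(\phi(y)) - \nabla \iota_t(\phi(y'))} \leq \ell \norm{\phi(y) - \phi(y')} \leq \ell L^{t-1} \norm{y - y'}$ by $\ell$-smoothness of $\iota_t$ and $L^{t-1}$-Lipschitzness of $\phi$; $\norm{\nabla \phi(y)} \leq L^{t-1}$ and $\norm{\nabla \iota_t(\phi(y'))} \leq L$ from the Lipschitz bounds; and $\norm{\nabla \phi(y) - \nabla \phi(y')} \leq \ell(1+L)^{2(t-1)} \norm{y - y'}$ from the inductive smoothness hypothesis. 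Combining, the smoothness constant of $\psi$ is at most $\ell L^{2(t-1)} + L \ell (1+L)^{2(t-1)} \leq \ell (1+L)^{2(t-1)} (L^{2(t-1)} + L)$; since $L^{2(t-1)} + L \leq (1+L)^{2t-1} \leq (1+L)^2$ when... actually the cleanest route is to observe $\ell L^{2(t-1)} + L\ell(1+L)^{2(t-1)} \leq \ell(1+L)^{2(t-1)}(1 + L) \leq \ell(1+L)^{2(t-1)}(1+L)^2 = \ell(1+L)^{2t}$, using $L^{2(t-1)} \leq (1+L)^{2(t-1)}$ and $1 + L \leq (1+L)^2$. The base case $t=1$ holds since $\iota_1$ is $\ell$-smooth and $\ell \leq \ell(1+L)^2$.

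The only mild subtlety — not really an obstacle — is making sure the Lipschitz bound $\norm{\nabla \phi(y)} \leq L^{t-1}$ follows from $L^{t-1}$-Lipschitzness of $\phi$ (true for differentiable functions, as the operator norm of the Jacobian is bounded by the Lipschitz constant), and keeping the operator-norm submultiplicativity straight when composing Jacobians of vector-valued maps. Everything else is bookkeeping with the triangle inequality and the inductive hypotheses, so I expect no real difficulty; I would just need to state the chain-rule and Jacobian-norm facts cleanly before running the induction.
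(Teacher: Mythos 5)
Your proposal is correct and follows essentially the same route as the paper's proof: induction on the number of composed maps, the chain rule for Jacobians, and an add-and-subtract cross term bounded via the triangle inequality and submultiplicativity, with the final constant absorbed using $L^{k}\leq(1+L)^{k}$ and $1+L\leq(1+L)^2$. The only cosmetic difference is that you peel off the outermost map $\iota_t$ and apply the inductive hypothesis to the inner composition, whereas the paper peels off the innermost map $\iota_1$ and applies it to the outer composition; the bookkeeping is otherwise identical.
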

\begin{proof}[Proof of \Cref{lemma:concatenation-of-smooth-function}]
We show the conclusion by induction. For $t = 1$, $\iota_1$ is $L$-Lipschitz and $\ell(1 + L)$-smooth.

Suppose we have shown that $(\iota_t \circ \iota_{t-1} \circ \cdots \circ \iota_1)$ is $L^t$-Lipschitz and $\ell(1 + L)^{2t}$-smooth for any $t$ functions that satisfies the assumptions of \Cref{lemma:concatenation-of-smooth-function}. For $t+1$, we simplify the notation by defining $\hat{\iota} \coloneqq (\iota_{t+1} \circ \iota_{t} \circ \cdots \circ \iota_2)$. Our goal is to show $(\hat{\iota}\circ \iota_1)$ is $L^{t+1}$-Lipschitz and $\ell(1 + L)^{2(t+1)}$-smooth. Note that
\[\norm{(\hat{\iota}\circ \iota_1)(x) - (\hat{\iota}\circ \iota_1)(x')} \leq L^t \norm{\iota_1(x) - \iota_1(x')} \leq L^{t+1} \norm{x - x'},\]
where we use the induction assumption in the first inequality and the assumption of \Cref{lemma:concatenation-of-smooth-function} in the second inequality. We also see that
\begin{subequations}\label{lemma:concatenation-of-smooth-function:e1}
\begin{align}
    &\norm{\left.\frac{\partial (\hat{\iota} \circ \iota_1)}{\partial x}\right|_x - \left.\frac{\partial (\hat{\iota} \circ \iota_1)}{\partial x}\right|_{x'}}\nonumber\\
    ={}& \norm{\left.\frac{\partial \hat{\iota}}{\partial y}\right|_{\iota_1(x)} \cdot \left.\frac{\partial \iota_1}{\partial x}\right|_x - \left.\frac{\partial \hat{\iota}}{\partial y}\right|_{\iota_1(x')} \cdot \left.\frac{\partial \iota_1}{\partial x}\right|_{x'}}\label{lemma:concatenation-of-smooth-function:e1:s1}\\
    \leq{}&\norm{\left.\frac{\partial \hat{\iota}}{\partial y}\right|_{\iota_1(x)} - \left.\frac{\partial \hat{\iota}}{\partial y}\right|_{\iota_1(x')}} \cdot \norm{\left.\frac{\partial \iota_1}{\partial x}\right|_x} + \norm{\left.\frac{\partial \hat{\iota}}{\partial y}\right|_{\iota_1(x')}}\cdot \norm{\left.\frac{\partial \iota_1}{\partial x}\right|_x - \left.\frac{\partial \iota_1}{\partial x}\right|_{x'}}\nonumber\\
    \leq{}& \ell (1 + L)^{2t} \norm{\iota_1(x) - \iota_1(x')}\cdot L + L^t \cdot \ell\norm{x - x'}\label{lemma:concatenation-of-smooth-function:e1:s2}\\
    \leq{}& \ell \left(L^2 (1 + L)^{2t} + L^t\right)\norm{x - x'}\label{lemma:concatenation-of-smooth-function:e1:s3}\\
    \leq{}& \ell (1 + L)^{2(t+1)}\norm{x - x'},\nonumber
\end{align}
\end{subequations}
where we use the chain rule decomposition in \eqref{lemma:concatenation-of-smooth-function:e1:s1}; we use the induction assumption in \eqref{lemma:concatenation-of-smooth-function:e1:s2}; we use the assumption that $\iota_1$ is $L$-Lipschitz in \eqref{lemma:concatenation-of-smooth-function:e1:s3}.

Therefore, we have shown \Cref{lemma:concatenation-of-smooth-function} by induction.
\end{proof}

Now we are ready to show \Cref{lemma:from-static-to-time-varying:detailed}. 

We first discuss the intuition behind the proof. Since the multi-step dynamics is differentiable under \Cref{assump:Lipschitz-and-smoothness}, we only need to show an upper bound of $\norm{\left.\frac{\partial g_{t\mid \tau}}{\partial x_\tau}\right|_{x_\tau, \theta_{\tau:t-1}}}$ that is exponentially decaying. Intuitively, we use the chain rule to decompose the partial derivative $\left.\frac{\partial g_{t\mid \tau}}{\partial x_\tau}\right|_{x_\tau, \theta_{\tau:t-1}}$ as the product of multiple partial derivatives $\left.\frac{\partial g_{t_{i+1}\mid t_i}}{\partial x_{t_i}}\right|_{x_{t_i}, \theta_{t_i:t_{i+1}-1}}$, where each time interval $[t_i:t_{i+1}-1]$ has (approximately) length $h$. By the time-invariant contractive property, we know the norm of $\frac{\partial g_{t_{i+1}\mid t_i}}{\partial x_{t_i}}$ can be upper bounded by $C \rho^h$ once it is realized on the trajectory $\{x_{t_i}, (\theta_{t_i})_{\times h}\}$, where the policy parameter is repeating. By the smoothness guarantee derived in \Cref{lemma:concatenation-of-smooth-function}, we can show the difference between $\left.\frac{\partial g_{t_{i+1}\mid t_i}}{\partial x_{t_i}}\right|_{x_{t_i}, \theta_{t_i:t_{i+1}-1}}$ and $\left.\frac{\partial g_{t_{i+1}\mid t_i}}{\partial x_{t_i}}\right|_{x_{t_i}, (\theta_{t_i})_{\times h}}$ is in the order of $O(\varepsilon)$. This implies that the norm of $\left.\frac{\partial g_{t_{i+1}\mid t_i}}{\partial x_{t_i}}\right|_{x_{t_i}, \theta_{t_i:t_{i+1}-1}}$ can be bounded by $\hat{\rho}^h$ once $\varepsilon$ is sufficiently small with respect to the gap $\left(\hat{\rho}^h - C \rho^h\right)$.

We present the formal proof of \Cref{lemma:from-static-to-time-varying:detailed} below.

\begin{proof}[Proof of \Cref{lemma:from-static-to-time-varying:detailed}]
We first show that, under a time-invariant policy parameter $\theta$, starting from an arbitrary $x_\tau$ that satisfies $\norm{x_\tau} \leq R_C$, we have $\norm{x_t} \leq R_S + C \rho^{t-\tau} \norm{x_\tau}$ for all $t \geq \tau$.

To see this, note that by time-invariant contractive perturbation, we have
\begin{align*}
    \norm{g_{t\mid \tau}(x_\tau, \theta_{\times (t - \tau)})} \leq{}& \norm{g_{t\mid \tau}(x_\tau, \theta_{\times (t - \tau)}) - g_{t\mid \tau}(0, \theta_{\times (t - \tau)})} + \norm{g_{t\mid \tau}(0, \theta_{\times (t - \tau)})}\\
    \leq{}& C \rho^{t - \tau}\norm{x_\tau} + R_S.
\end{align*}

Now, we show that if starting from $x_\tau$ that satisfies $\norm{x_\tau} \leq \hat{R}_C$, the trajectory induced by an $\hat{\varepsilon}$-time-varying parameter sequence satisfies that $\norm{x_t} \leq \hat{R}_S + C \rho^{t - \tau} \norm{x_\tau}$ for all $t \geq \tau$. Since this ball is contained in $B(0, R_C)$, we know the time-invariant contractive perturbation always apply.

We show this statement by induction. Suppose the statement holds for all time steps $\tau, \ldots, t - 1$. We see that
\begin{align*}
    &\norm{g_{t\mid \tau}(x_\tau, \theta_{\tau:t-1}) - g_{t\mid \tau}(x_\tau, \theta_{\times (t - \tau)})}\\
    \leq{}& \sum_{j = \tau}^{t}\norm{g_{t\mid \tau}(x_\tau, \theta_{\tau:j}, (\theta_t)_{\times (t - j-1)}) - g_{t\mid \tau}(x_\tau, \theta_{\tau:j-1}, (\theta_t)_{\times (t - j)})}\\
    \leq{}& C L_{g, u} L_{\pi, \theta} \sum_{j = \tau}^t \rho^{t - j - 1} \norm{\theta_t - \theta_j} \leq \frac{C L_{g, u} L_{\pi, \theta} \hat{\varepsilon}}{(1 - \rho)^2}.
\end{align*}
Therefore, we see that
\begin{align*}
    \norm{g_{t\mid \tau}(x_\tau, \theta_{\tau:t-1})} \leq{}& \norm{g_{t\mid \tau}(x_\tau, \theta_{\times (t - \tau)})} + \frac{C L_{g, u} L_{\pi, \theta} \hat{\varepsilon}}{(1 - \rho)^2}\\
    \leq{}& C \rho^{t-\tau} \norm{x_\tau} + R_S + \frac{C L_{g, u} L_{\pi, \theta} \hat{\varepsilon}}{(1 - \rho)^2}\\
    ={}& C \rho^{t - \tau} \norm{x_\tau} + \hat{R}_S.
\end{align*}
This finishes the proof of $\varepsilon$-time-varying stability with $\hat{\varepsilon}$ and $\hat{R}_S$.

Note that we can decompose the partial derivative $\left.\frac{\partial g_{t\mid \tau}}{\partial x_\tau}\right|_{x_\tau, \theta_{\tau:t-1}}$ as
\begin{align}\label{lemma:from-static-to-time-varying:e1}
    &\left.\frac{\partial g_{t\mid \tau}}{\partial x_\tau}\right|_{x_\tau, \theta_{\tau:t-1}}\nonumber\\
    ={}& \left.\frac{\partial g_{t_p\mid t_{p-1}}}{\partial x_{t_{p-1}}}\right|_{x_{t_{p-1}}, \theta_{t_{p-1}:t_p-1}}\cdot \left.\frac{\partial g_{t_{p-1}\mid t_{p-2}}}{\partial x_{t_{p-2}}}\right|_{x_{t_{p-2}}, \theta_{t_{p-2}:t_{p-1}-1}} \cdots \left.\frac{\partial g_{t_1\mid t_0}}{\partial x_{t_0}}\right|_{x_{t_0}, \theta_{t_0:t_1-1}},
\end{align}
where $t_0 = \tau, t_p = t$; $t_i = t_{i-1} + h$ holds for $i = 1, \ldots, p-1$, and $t_{p-1} < t_p \leq t_{p-1} + h$.

For $i \in [0, p-2]$, we see that
\begin{subequations}\label{lemma:from-static-to-time-varying:e2}
\begin{align}
    &\norm{\left.\frac{\partial g_{t_{i+1}\mid t_{i}}}{\partial x_{t_i}}\right|_{x_{t_i}, \theta_{t_i:t_{i+1}-1}} - \left.\frac{\partial g_{t_{i+1}\mid t_i}}{\partial x_{t_i}}\right|_{x_{t_i}, (\theta_{t_i})\times h}}\nonumber\\
    \leq{}& \sum_{j = 0}^{h-1} \norm{\left.\frac{\partial g_{t_{i+1}\mid t_i}}{\partial x_{t_i}}\right|_{x_{t_i}, (\theta_{t_i})_{\times j}, \theta_{t_i+j; t_{i+1}-1}} - \left.\frac{\partial g_{t_{i+1}\mid t_i}}{\partial x_{t_i}}\right|_{x_{t_i}, (\theta_{t_i})_{\times (j+1)}, \theta_{t_i+j+1; t_{i+1}-1}}}\label{lemma:from-static-to-time-varying:e2:s1}\\
    \leq{}& \sum_{j = 0}^{h-1} \norm{\left.\frac{\partial g_{t_i+j}}{\partial x_{t_i}}\right|_{x_{t_i}, (\theta_{t_i})_{\times j}}}\cdot\nonumber\\
    &\norm{\left.\frac{\partial g_{t_{i+1}\mid (t_i + j)}}{\partial x_{t_i + j}}\right|_{\bar{x}_{t_i + j}, \theta_{t_i+j; t_{i+1}-1}} - \left.\frac{\partial g_{t_{i+1}\mid (t_i + j)}}{\partial x_{t_i + j}}\right|_{\bar{x}_{t_i + j}, \theta_{t_i}, \theta_{t_i+j+1; t_{i+1}-1}}}\label{lemma:from-static-to-time-varying:e2:s2}\\
    \leq{}& C \sum_{j = 0}^{h-1} \rho^j \norm{\left.\frac{\partial g_{t_{i+1}\mid (t_i + j)}}{\partial x_{t_i + j}}\right|_{\bar{x}_{t_i + j}, \theta_{t_i+j; t_{i+1}-1}} - \left.\frac{\partial g_{t_{i+1}\mid (t_i + j)}}{\partial x_{t_i + j}}\right|_{\bar{x}_{t_i + j}, \theta_{t_i}, \theta_{t_i+j+1; t_{i+1}-1}}},\label{lemma:from-static-to-time-varying:e2:s3}
\end{align}
\end{subequations}
where we use the shorthand notation $\bar{x}_{t_i + j} = g_{t_i + j\mid t_i}(x_{t_i}, (\theta_{t_i})_{\times j})$. We use the triangle inequality in \eqref{lemma:from-static-to-time-varying:e2:s1}, the chain rule decomposition in \eqref{lemma:from-static-to-time-varying:e2:s2}. In \eqref{lemma:from-static-to-time-varying:e2:s3}, we can apply the time-invariant contractive perturbation property because $\norm{x_{\tau}} \leq \hat{R}_C$, which implies that $\norm{x_{t_i}} \leq R_C$. Note that
\begin{align*}
    &\norm{\left.\frac{\partial g_{t_{i+1}\mid (t_i + j)}}{\partial x_{t_i + j}}\right|_{\bar{x}_{t_i + j}, \theta_{t_i+j; t_{i+1}-1}} - \left.\frac{\partial g_{t_{i+1}\mid (t_i + j)}}{\partial x_{t_i + j}}\right|_{\bar{x}_{t_i + j}, \theta_{t_i}, \theta_{t_i+j+1; t_{i+1}-1}}}\\
    \leq{}& \norm{\left.\frac{\partial g_{t_{i+1}\mid (t_i + j + 1)}}{\partial x_{t_i + j + 1}}\right|_{\bar{x}_{t_i + j + 1}, \theta_{t_i+j+1; t_{i+1}-1}}} \cdot\\
    &\norm{\left.\frac{\partial g_{(t_i + j + 1)\mid (t_i + j)}}{\partial x_{t_i + j}}\right|_{\bar{x}_{t_i + j}, \theta_{t_i+j}} - \left.\frac{\partial g_{(t_i + j + 1)\mid (t_i + j)}}{\partial x_{t_i + j}}\right|_{\bar{x}_{t_i + j}, \theta_{t_i}}}\\
    &+ \norm{\left.\frac{\partial g_{t_{i+1}\mid (t_i + j + 1)}}{\partial x_{t_i + j + 1}}\right|_{\bar{x}_{t_i + j + 1}, \theta_{t_i+j+1; t_{i+1}-1}} - \left.\frac{\partial g_{t_{i+1}\mid (t_i + j + 1)}}{\partial x_{t_i + j + 1}}\right|_{\bar{x}_{t_i + j + 1}', \theta_{t_i+j+1; t_{i+1}-1}}} \cdot\\
    &\norm{\left.\frac{\partial g_{(t_i + j + 1)\mid (t_i + j)}}{\partial x_{t_i + j}}\right|_{\bar{x}_{t_i + j}, \theta_{t_i + j}}}\\
    \leq{}& \left(L_{g, x} + L_{g, u} L_{\ALG, x}\right)^{h - j - 1} \cdot \left((1 + L_{\ALG, x}) \ell_{g, u} L_{\ALG, \theta} + L_{g, u} \ell_{\ALG, \theta}\right) \norm{\theta_{t_i + j} - \theta_{t_i}}\\
    &+ \left((1 + L_{\ALG, x}) \left(\ell_{g, x} + \ell_{g, u} \cdot L_{\ALG, x}\right) + L_{g, u} \cdot \ell_{\ALG, x}\right)\cdot \left(1 + L_{g, x} + L_{g, u} L_{\ALG, x}\right)^{2(h - j)} \cdot\\
    &L_{g, u}L_{\ALG, \theta} \norm{\theta_{t_i + j} - \theta_{t_i}}\\
    \leq{}& C' \left(1 + L_{g, x} + L_{g, u} L_{\ALG, x}\right)^{2h} \cdot j \varepsilon,
\end{align*}
where we adopt the shorthand $\bar{x}_{t_i + j + 1}' \coloneqq g_{t_i+j+1\mid t_i + j}(\bar{x}_{t_i + j}, \theta_{t_i + j})$, and define
\begin{align*}
    C' = \left((1 + L_{\ALG, x}) \left(\ell_{g, x} + \ell_{g, u} \cdot (L_{\ALG, x} + L_{\ALG, \theta})\right) + L_{g, u} \cdot (\ell_{\ALG, x} + \ell_{\ALG, \theta})\right)(L_{g, u}L_{\ALG, \theta} + 1).
\end{align*}
Here, we use the chain rule and the triangle inequality in the first inequality. We can apply \Cref{lemma:concatenation-of-smooth-function} and \[
    \norm{\bar{x}_{t_i + j + 1}' - \bar{x}_{t_i + j + 1}} \leq L_{g, u}L_{\ALG, \theta}\norm{\theta_{t_i+j} - \theta_{t_i}}
\] in the second inequality because the trajectory induced by $(\theta_{t_i})_{\times j}, \theta_{t_i+j:t_{i+1}-1}$ always stay within the ball $B(0, R_x)$ where the Lipschitzness/smoothness of dynamics/policies hold.
Substituting this into \eqref{lemma:from-static-to-time-varying:e2} gives
\begin{align}\label{lemma:from-static-to-time-varying:e3}
    \norm{\left.\frac{\partial g_{t_{i+1}\mid t_{i}}}{\partial x_{t_i}}\right|_{x_{t_i}, \theta_{t_i:t_{i+1}-1}} - \left.\frac{\partial g_{t_{i+1}\mid t_i}}{\partial x_{t_i}}\right|_{x_{t_i}, (\theta_{t_i})\times h}} \leq{}& C C' \left(1 + L_{g, x} + L_{g, u} L_{\ALG, x}\right)^{2h} \varepsilon \sum_{j = 0}^{h-1} \rho^j \cdot j\nonumber\\
    \leq{}& \frac{C C' \rho \left(1 + L_{g, x} + L_{g, u} L_{\ALG, x}\right)^{2h}}{(1 - \rho)^2}\cdot \varepsilon\nonumber\\
    \leq{}& \hat{\rho}^h - C \rho^h.
\end{align}
Therefore, by \eqref{lemma:from-static-to-time-varying:e1}, we see that
\begin{subequations}\label{lemma:from-static-to-time-varying:e4}
\begin{align}
    &\norm{\left.\frac{\partial g_{t\mid \tau}}{\partial x_\tau}\right|_{x_\tau, \theta_{\tau:t-1}}}\nonumber\\
    \leq{}& \norm{\left.\frac{\partial g_{t_p\mid t_{p-1}}}{\partial x_{t_{p-1}}}\right|_{x_{t_{p-1}}, \theta_{t_{p-1}:t_p-1}}}\cdot \norm{\left.\frac{\partial g_{t_{p-1}\mid t_{p-2}}}{\partial x_{t_{p-2}}}\right|_{x_{t_{p-2}}, \theta_{t_{p-2}:t_{p-1}-1}}}\cdots \norm{\left.\frac{\partial g_{t_1\mid t_0}}{\partial x_{t_0}}\right|_{x_{t_0}, \theta_{t_0:t_1-1}}}\nonumber\\
    \leq{}& \left((1 + L_{\ALG, x}) \left(\ell_{g, x} + \ell_{g, u} \cdot L_{\ALG, x}\right) + L_{g, u} \cdot \ell_{\ALG, x}\right) \cdot \left(1 + L_{g, x} + L_{g, u} L_{\ALG, x}\right)^{2h} \cdot (\hat{\rho}^h)^{p-1}\label{lemma:from-static-to-time-varying:e4:s1}\\
    \leq{}& \hat{C} (\hat{\rho})^{t - \tau}, \label{lemma:from-static-to-time-varying:e4:s2}
\end{align}
\end{subequations}
where we use \eqref{lemma:from-static-to-time-varying:e3} in \eqref{lemma:from-static-to-time-varying:e4:s1}; we use the definition of $\hat{C}$ in \eqref{lemma:from-static-to-time-varying:e4:s2}.
This finishes the proof of $\varepsilon$-time-varying contractive perturbation with $\hat{\epsilon}, \hat{R}_C, \hat{C}, \hat{\rho}$.
\end{proof}


\section{Regret of GAPS under convex surrogate costs}\label{appendix:gaps-outline}

In this section, we provide a proof outline of the adaptive regret bound for GAPS. As we discussed in \Cref{sec:continuous}, the intuition behind GAPS is to mimic the ideal OGD update $\theta_{t+1} = \prod_{\Theta}(\theta_t - \eta_t \nabla F_t(\theta_t))$ with limited memory size and computational complexity. 
While the existing literature of OCO guarantees that the ideal OGD update with constant step size $\eta$ of the order $1/\sqrt{T}$ achieves a policy regret of $O\left(\sqrt{T}\right)$, GAPS incurs an approximation error at every time step since it uses $G_t$ (Algorithm \ref{alg:OCO-with-parameter-update}, Line \ref{alg:OCO-with-parameter-update:grad-approx}) instead of $\nabla F_t(\theta_t)$ to implement gradient descent. We characterize how a per-step bias in the gradient estimation may affect the regret guarantee of the OGD in \Cref{thm:OGD-with-biased-gradient-regret}. The proof is deferred to \Cref{appendix:OGD-with-bias}.

\begin{theorem}\label{thm:OGD-with-biased-gradient-regret}
Consider the update rule $\theta_{t+1} = \prod_{\Theta}(\theta_t - \eta G_t)$. Suppose $\Theta$ is a convex compact set with diameter~$D$. If $F_t$ is convex and $\norm{\nabla F_t(\theta)} \leq W$ for all $\theta \in \Theta$, and $\norm{\nabla F_t(\theta_t) - G_t} \leq \alpha$ holds for all time steps $t$, then, for arbitrary $I = [r:s] \subseteq \mathcal{T}$,
{
\begin{align*}
\sum_{t=r}^s F_t(\theta_t) - \min_{\theta_I \in \Theta} \sum_{t=r}^s F_t(\theta_I) \leq \alpha D T + (W^2 + \alpha^2)\eta T + \frac{D^2}{2 \eta}.
\end{align*}
}

\end{theorem}

With \Cref{thm:OGD-with-biased-gradient-regret}, obtaining the policy regret bounds for GAPS reduces to showing both $\abs{f_t(x_t, u_t) - F_t(\theta_t)}$ and $\norm{\nabla F_t(\theta_t) - G_t}$ are in the order of $O(1/\sqrt{T})$. Here, we only consider the order of magnitude with respect to the horizon $T$ for clarity.
As we will show in \Cref{thm:OCO-with-parameter-update-trajectory-distance} and \Cref{thm:OCO-with-parameter-update-gradient-bias}, both of these quantities are in the order of $O(\eta)$ when GAPS adopts the learning rate $\eta$.

To obtain these results, we first show a lemma about the stability of the trajectory achieved by an $\varepsilon$-time-varying policy parameter sequence.

\begin{lemma}\label{lemma:epsilon-time-varying-stability}
Suppose Assumptions \ref{assump:Lipschitz-and-smoothness} and \ref{assump:contractive-and-stability} hold. For any starting state $x_\tau \in B_n(0, R_S + C\norm{x_0})$ and $\theta_{\tau:t-1} \in S_\varepsilon(\tau:t-1)$, the final state $x_t \coloneqq g_{t\mid \tau}(x_\tau, \theta_{\tau:t-1})$ satisfies $\norm{x_t} \leq C \rho^{t - \tau} \norm{x_\tau} + R_S$.
\end{lemma}
\begin{proof}[Proof of \Cref{lemma:epsilon-time-varying-stability}]
By $\varepsilon$-time-varying contractive perturbation, we see that
\begin{align*}
    \norm{x_t - g_{t\mid \tau}(0, \theta_{\tau:t-1})} \leq C \rho^{t-\tau}\norm{x_\tau}.
\end{align*}
Thus, by the triangle inequality, we see that
\[\norm{x_t} \leq \norm{x_t - g_{t\mid \tau}(0, \theta_{\tau:t-1})} + \norm{g_{t\mid \tau}(0, \theta_{\tau:t-1})} \leq C \rho^{t-\tau}\norm{x_\tau} + R_S,\]
where we use $\varepsilon$-time-varying stability in the last inequality.
\end{proof}

Next, we show a lemma about the contractive property of the partial derivatives of the multi-step dynamics.

\begin{lemma}[Lipschitzness/Smoothness of the Multi-Step Dynamics]\label{lemma:smooth-multi-step-dynamics}
Suppose Assumptions \ref{assump:Lipschitz-and-smoothness} and \ref{assump:contractive-and-stability} hold. Given two time steps $t > \tau$, for any $x_\tau, x_\tau' \in B_n(0, R_S + C \norm{x_0})$ and $\theta_\tau, \theta_\tau' \in \Theta,$ $\theta_{\tau+1:t-1} \in S_{\varepsilon}(\tau+1:t-1)$, if $x_{\tau+1}' \coloneqq g_{\tau + 1\mid \tau}(x_\tau', \theta_\tau')$ is also in $B_n(0, R_S + C \norm{x_0})$, the multi-step dynamical function $g_{t\mid \tau}$ satisfies that
\begin{align*}
    &\norm{\left.\frac{\partial g_{t\mid \tau}}{\partial x_\tau} \right|_{x_\tau, \theta_{\tau:t-1}}} \leq C_{L, g, x} \decayfactor^{t - \tau}, \quad \norm{\left.\frac{\partial g_{t\mid \tau}}{\partial \theta_\tau} \right|_{x_\tau, \theta_{\tau:t-1}}} \leq C_{L, g, \theta} \decayfactor^{t-\tau}, \forall \theta_{\tau:t-1} \in S_{\varepsilon}(\tau:t-1),\\
    &\norm{\left.\frac{\partial g_{t\mid \tau}}{\partial x_\tau} \right|_{x_\tau, \theta_{\tau:t-1}} - \left.\frac{\partial g_{t\mid \tau}}{\partial x_\tau} \right|_{x_\tau', \theta_\tau', \theta_{\tau+1:t-1}}} \leq C_{\ell, g, (x, x)} \decayfactor^{t - \tau} \norm{x_\tau - x_\tau'} + C_{\ell, g, (x, \theta)} \decayfactor^{t - \tau} \norm{\theta_\tau - \theta_\tau'},\\
    &\norm{\left.\frac{\partial g_{t\mid \tau}}{\partial \theta_\tau} \right|_{x_\tau, \theta_{\tau:t-1}} - \left.\frac{\partial g_{t\mid \tau}}{\partial \theta_\tau} \right|_{x_\tau', \theta_\tau', \theta_{\tau+1:t-1}}} \leq C_{\ell, g, (\theta, x)} \decayfactor^{t - \tau} \norm{x_\tau - x_\tau'} + C_{\ell, g, (\theta, \theta)} \decayfactor^{t - \tau} \norm{\theta_\tau - \theta_\tau'},
\end{align*}
where $C_{L, g, x} = C, C_{L, g, \theta} = \frac{C L_{g, u} L_{\ALG, \theta}}{\decayfactor}$, and
\begin{align*}
    C_{\ell, g, (x, x)} ={}& {\left((1 + L_{\ALG, x}) \left(\ell_{g, x} + \ell_{g, u} L_{\ALG, x}\right) + L_{g, x} \ell_{\ALG, x}\right) C^3}{\decayfactor^{-1}(1 - \decayfactor)^{-1}},\\
    C_{\ell, g, (x, \theta)} ={}& {\left((1 + L_{\ALG, x}) \left(\ell_{g, x} + \ell_{g, u} L_{\ALG, x}\right) + L_{g, x} \ell_{\ALG, x}\right) C^3 L_{g, u} L_{\ALG, \theta}}{\decayfactor^{-1}(1 - \decayfactor)^{-1}}\\
    &+ {\left((1 + L_{\ALG, x}) \ell_{g, u} L_{\ALG, \theta} + L_{g, u} \ell_{\ALG, \theta}\right) C}{\decayfactor^{-1}(1 - \decayfactor)^{-1}},\\
    C_{\ell, g, (\theta, x)} ={}& {\left((1 + L_{\ALG, x}) \left(\ell_{g, x} + \ell_{g, u} L_{\ALG, x}\right) + L_{g, x} \ell_{\ALG, x}\right) (L_{g, x} + L_{g, u} L_{\ALG, x}) C^3 L_{g, u} L_{\ALG, \theta}}{\decayfactor^{-2}(1 - \decayfactor)^{-1}}\\
    &+ {C \left(L_{\ALG, \theta}(\ell_{g, x} + \ell_{g, u} L_{\ALG, x}) + L_{g, u} \ell_{\ALG, x}\right)}{\decayfactor^{-1}},\\
    C_{\ell, g, (\theta, \theta)} ={}& {\left((1 + L_{\ALG, x}) \left(\ell_{g, x} + \ell_{g, u} \cdot L_{\ALG, x}\right) + L_{g, x} \cdot \ell_{\ALG, x}\right) L_{g, u}^2 L_{\ALG, \theta}^2 C^3}{\decayfactor^{-2}(1 - \decayfactor)^{-1}}\\
    &+ {\left(L_{g, u} \ell_{\ALG, \theta} + \ell_{g, u} L_{\ALG, \theta}^2\right)C}{\decayfactor^{-1}}.
\end{align*}
\end{lemma}

Intuitively, \Cref{lemma:smooth-multi-step-dynamics} shows that the dependence of the state $x_t$ on the previous state $x_\tau$ and $\theta_\tau$ decays exponentially with respect to their time distance $t - \tau$. Specifically, recall that the multi-step dynamics $g_{t\mid \tau}$ writes $x_t$ as a function of $x_\tau$ and $\theta_{\tau:t-1}$. When other variables are fixed, the Lipschitzness and smoothness constants with respect to $x_\tau$ and $\theta_\tau$ are both $O(\decayfactor^{t - \tau})$. While the contractive Lipschitzness on $x_\tau$ is automatically guaranteed by $\varepsilon$-time-varying contractive perturbation (\Cref{def:epsilon-exp-decay-perturbation-property}), we use this property and the chain rule decomposition to show the Lipschitzness on $\theta_\tau$ and the smoothness.

The first inequality in \Cref{lemma:smooth-multi-step-dynamics} directly follows from $\varepsilon$-time-varying contractive perturbation. To reflect the main technical difficulty, we show the third inequality here with the assumption that the first two inequalities hold. The proof of other inequalities are deferred to \Cref{appendix:smooth-multi-step-dynamics}.

\begin{proof}[Proof of the 3rd inequality in \Cref{lemma:smooth-multi-step-dynamics}]
Note that we have the chain rule decomposition
\begin{align}\label{thm:smooth-dynamics-main:e0}
    \left.\frac{\partial g_{t\mid \tau}}{\partial x_\tau} \right|_{x_\tau, \theta_\tau, \theta_{\tau+1:t-1}} &= \left.\frac{\partial g_{t\mid t-1}}{\partial x_{t-1}}\right|_{x_{t-1}, \theta_{t-1}} \cdot \frac{\partial g_{t-1\mid t-2}}{\partial x_{t-2}}\mid_{x_{t-2}, \theta_{t-2}} \cdots \left.\frac{\partial g_{\tau+1\mid \tau}}{\partial x_{\tau}}\right|_{x_{\tau}, \theta_\tau},\nonumber\\
    \left.\frac{\partial g_{t\mid \tau}}{\partial x_\tau} \right|_{x_\tau', \theta_\tau', \theta_{\tau+1:t-1}} &= \left.\frac{\partial g_{t\mid t-1}}{\partial x_{t-1}}\right|_{x_{t-1}', \theta_{t-1}} \cdot \left.\frac{\partial g_{t-1\mid t-2}}{\partial x_{t-2}}\right|_{x_{t-2}', \theta_{t-2}} \cdots \left.\frac{\partial g_{\tau+1\mid \tau}}{\partial x_{\tau}}\right|_{x_{\tau}', \theta_\tau'},
\end{align}
where we use the notation $x_{\tau'} = g_{\tau'\mid \tau}(x_\tau, \theta_{\tau:\tau'-1})$ and $x_{\tau'}' = g_{\tau'\mid \tau}(x_\tau', \theta_\tau', \theta_{\tau+1:\tau'-1})$ for $\tau' \in [\tau + 1: t-1]$.

Note that for any $i \in [1: t - \tau]$ and any $\theta_{t-i}' \in \Theta$, we have the decomposition
\begin{align*}
    &\left.\frac{\partial g_{t-i+1\mid t-i}}{\partial x_{t-i}}\right|_{x_{t-i}, \theta_{t-i}} - \left.\frac{\partial g_{t-i+1\mid t-i}}{\partial x_{t-i}}\right|_{x_{t-i}', \theta_{t-i}'}\\
    ={}& \left.\frac{\partial g_{t-i}}{\partial x_{t-i}}\right|_{x_{t-i}, u_{t-i}} - \left.\frac{\partial g_{t-i}}{\partial x_{t-i}}\right|_{x_{t-i}', u_{t-i}'} + \left.\frac{\partial g_{t-i}}{\partial u_{t-i}}\right|_{x_{t-i}, u_{t-i}}\left.\frac{\partial \ALG_{t-i}}{\partial x_{t-i}}\right|_{x_{t-i}, \theta_{t-i}}\\
    &- \left.\frac{\partial g_{t-i}}{\partial x_{t-i}}\right|_{x_{t-i}', u_{t-i}'}\left.\frac{\partial \ALG_{t-i}}{\partial x_{t-i}}\right|_{x_{t-i}', \theta_{t-i}'}\\
    ={}& \left.\frac{\partial g_{t-i}}{\partial x_{t-i}}\right|_{x_{t-i}, u_{t-i}} - \left.\frac{\partial g_{t-i}}{\partial x_{t-i}}\right|_{x_{t-i}', u_{t-i}'} + \left(\left.\frac{\partial g_{t-i}}{\partial u_{t-i}}\right|_{x_{t-i}, u_{t-i}} - \left.\frac{\partial g_{t-i}}{\partial u_{t-i}}\right|_{x_{t-i}', u_{t-i}'}\right)\left.\frac{\partial \ALG_{t-i}}{\partial x_{t-i}}\right|_{x_{t-i}, \theta_{t-i}}\\
    &+ \left.\frac{\partial g_{t-i}}{\partial u_{t-i}}\right|_{x_{t-i}', u_{t-i}'} \left(\left.\frac{\partial \ALG_{t-i}}{\partial x_{t-i}}\right|_{x_{t-i}, \theta_{t-i}} - \left.\frac{\partial \ALG_{t-i}}{\partial x_{t-i}}\right|_{x_{t-i}', \theta_{t-i}'}\right),
\end{align*}
where we use the notation $u_{t-i} = \ALG_{t-i}(x_{t-i}, \theta_{t-i}), u_{t-i}' = \ALG_{t-i}(x_{t-i}', \theta_{t-i}')$. Taking norms on both sides of the equation and applying the triangle inequality gives
\begin{subequations}\label{thm:smooth-dynamics-main:e1}
\begin{align}
    &\norm{\left.\frac{\partial g_{t-i+1\mid t-i}}{\partial x_{t-i}}\right|_{x_{t-i}, \theta_{t-i}} - \left.\frac{\partial g_{t-i+1\mid t-i}}{\partial x_{t-i}}\right|_{x_{t-i}', \theta_{t-i}'}}\nonumber\\
    \leq{}& \ell_{g, x} \norm{x_{t-i} - x_{t-i}'} + \ell_{g, u} \norm{\ALG_{t-i}(x_{t-i}, \theta_{t-i}) - \ALG_{t-i}(x_{t-i}', \theta_{t-i}')}\nonumber\\
    &+ L_{\ALG, x} \left(\ell_{g, x} \norm{x_{t-i} - x_{t-i}'} + \ell_{g, u} \norm{\ALG_{t-i}(x_{t-i}, \theta_{t-i}) - \ALG_{t-i}(x_{t-i}', \theta_{t-i}')}\right)\nonumber\\ 
    &+ L_{g, u} \cdot \left(\ell_{\ALG, x} \norm{x_{t-i} - x_{t-i}'} + \ell_{\ALG, \theta} \norm{\theta_{t-i} - \theta_{t-i}'}\right)\label{thm:smooth-dynamics-main:e1:s1}\\
    \leq{}& \left((1 + L_{\ALG, x}) \left(\ell_{g, x} + \ell_{g, u} \cdot L_{\ALG, x}\right) + L_{g, u} \cdot \ell_{\ALG, x}\right) \norm{x_{t-i} - x_{t-i}'}\nonumber\\
    &+ \left((1 + L_{\ALG, x}) \ell_{g, u} L_{\ALG, \theta} + L_{g, u} \ell_{\ALG, \theta}\right)\norm{\theta_{t-i} - \theta_{t-i}'},\label{thm:smooth-dynamics-main:e1:s2}
\end{align}
\end{subequations}
where we use \Cref{assump:Lipschitz-and-smoothness} and the definition of $u_{t-i}, u_{t-i}'$ in \eqref{thm:smooth-dynamics-main:e1:s1}; and \Cref{assump:Lipschitz-and-smoothness} in \eqref{thm:smooth-dynamics-main:e1:s2}. Therefore, by \eqref{thm:smooth-dynamics-main:e0} and \eqref{thm:smooth-dynamics-main:e1}, we see that
\begin{subequations}\label{thm:smooth-dynamics-main:e2}
\begin{align}
    &\norm{\left.\frac{\partial g_{t\mid \tau}}{\partial x_\tau} \right|_{x_\tau, \theta_{\tau:t-1}} - \left.\frac{\partial g_{t\mid \tau}}{\partial x_\tau} \right|_{x_\tau', \theta_{\tau:t-1}}}\nonumber\\
    \leq{}& \sum_{i = 1}^{t - \tau - 1} \Bigg(\norm{\prod_{\tau' = 1}^{i - 1} \left.\frac{\partial g_{t-\tau'+1\mid t-\tau'}}{\partial x_{t-\tau'}}\right|_{x_{t-\tau'}', \theta_{t - \tau'}}}\cdot \norm{\left.\frac{\partial g_{t-i+1\mid t-i}}{\partial x_{t-i}}\right|_{x_{t-i}, \theta_{t-i}} - \left.\frac{\partial g_{t-i+1\mid t-i}}{\partial x_{t-i}}\right|_{x_{t-i}', \theta_{t-i}}}\cdot \nonumber\\
    &\norm{\prod_{\tau' = i+1}^{t - \tau} \left.\frac{\partial g_{t-\tau'+1\mid t-\tau'}}{\partial x_{t-\tau'}}\right|_{x_{t-\tau'}, \theta_{t - \tau'}}} \Bigg)\nonumber\\
    &+ \norm{\prod_{\tau' = 1}^{t - \tau - 1} \left.\frac{\partial g_{t-\tau'+1\mid t-\tau'}}{\partial x_{t-\tau'}}\right|_{x_{t-\tau'}', \theta_{t - \tau'}}}\cdot \norm{\left.\frac{\partial g_{\tau+1\mid \tau}}{\partial x_{\tau}}\right|_{x_{\tau}, \theta_{\tau}} - \left.\frac{\partial g_{\tau+1\mid \tau}}{\partial x_{\tau}}\right|_{x_{\tau}', \theta_{\tau}'}}\nonumber\\
    \leq{}& \sum_{i = 1}^{t - \tau} (C \decayfactor^{i-1}) \cdot \left((1 + L_{\ALG, x}) \left(\ell_{g, x} + \ell_{g, u} \cdot L_{\ALG, x}\right) + L_{g, x} \cdot \ell_{\ALG, x}\right) \norm{x_{t-i} - x_{t-i}'} \cdot (C \decayfactor^{t-\tau-i})\nonumber\\
    &+ C \decayfactor^{t - \tau - 1} \cdot \left((1 + L_{\ALG, x}) \ell_{g, u} L_{\ALG, \theta} + L_{g, u} \ell_{\ALG, \theta}\right)\norm{\theta_{\tau} - \theta_{\tau}'}\label{thm:smooth-dynamics-main:e2:s1}\\
    ={}& \left((1 + L_{\ALG, x}) \left(\ell_{g, x} + \ell_{g, u} \cdot L_{\ALG, x}\right) + L_{g, x} \cdot \ell_{\ALG, x}\right) C^2 \cdot \decayfactor^{t - \tau - 1} \sum_{i = 1}^{t - \tau} \norm{x_{t-i} - x_{t-i}'}\nonumber\\
    &+ C \decayfactor^{t - \tau - 1} \cdot \left((1 + L_{\ALG, x}) \ell_{g, u} L_{\ALG, \theta} + L_{g, u} \ell_{\ALG, \theta}\right)\norm{\theta_{\tau} - \theta_{\tau}'}\nonumber\\
    \leq{}& C_{\ell, g, (x, x)} \cdot \decayfactor^{t - \tau} \norm{x_\tau - x_\tau'} + C_{\ell, g, (x, \theta)} \cdot \decayfactor^{t - \tau} \norm{\theta_\tau - \theta_\tau'}, \label{thm:smooth-dynamics-main:e2:s2}
\end{align}
\end{subequations}
where we use the $\varepsilon$-time-varying contractive perturbation property and \eqref{thm:smooth-dynamics-main:e1} in \eqref{thm:smooth-dynamics-main:e2:s1}; we use the first two inequalities to bound $\norm{x_{t-i} - x_{t-i}'} \leq C \decayfactor^{t-i-\tau}\norm{x_\tau - x_\tau'} + \frac{C L_{g, u} L_{\ALG, \theta}}{\decayfactor} \cdot \decayfactor^{t-i-\tau}\norm{\theta_\tau - \theta_\tau'}$ in \eqref{thm:smooth-dynamics-main:e2:s2}.
\end{proof}

Since we will need more general forms of policy sequence later to bound $\norm{G_t - \nabla F_t}$ than the sequence with small step sizes, we state the contractive Lipschitzness/smoothness of the multi-step cost function $f_{t\mid \tau}$. This is an implication of \Cref{lemma:smooth-multi-step-dynamics} because for any $\tau < t$, the previous state $x_\tau$ and previous policy parameter $\theta_\tau$ can only affect the current stage cost $c_t$ by affecting the current state $x_t$. We formalize this result in \Cref{coro:smooth-multi-step-costs} and defer the detailed proof to \Cref{appendix:smooth-multi-step-costs}.

\begin{corollary}[Lipschitzness/Smoothness of the Multi-Step Costs]\label{coro:smooth-multi-step-costs}
Under the same assumptions as \Cref{lemma:smooth-multi-step-dynamics}, let
\[x_t \coloneqq g_{t\mid \tau}(x_\tau, \theta_{\tau:t-1}), u_t \coloneqq \ALG_t(x_t, \theta_t); \text{ and }x_t' \coloneqq g_{t\mid \tau}(x_\tau', \theta_\tau', \theta_{\tau+1:t-1}), u_t' \coloneqq \ALG_t(x_t', \theta_t).\]
Then, the multi-step cost function $f_{t\mid \tau}$ satisfies that
\begin{align*}
    &\norm{\left.\frac{\partial f_{t\mid \tau}}{\partial x_\tau} \right|_{x_\tau, \theta_{\tau:t}}} \leq C_{L, f, x} \decayfactor^{t - \tau}, \norm{\left.\frac{\partial f_{t\mid \tau}}{\partial \theta_\tau} \right|_{x_\tau, \theta_{\tau:t}}} \leq C_{L, f, \theta} \decayfactor^{t-\tau},\\
    &\norm{\left.\frac{\partial f_{t\mid \tau}}{\partial x_\tau} \right|_{x_\tau, \theta_\tau, \theta_{\tau+1:t}} - \left.\frac{\partial f_{t\mid \tau}}{\partial x_\tau} \right|_{x_\tau', \theta_\tau', \theta_{\tau+1:t}}} \leq C_{\ell, f, (x, x)} \decayfactor^{t - \tau} \norm{x_\tau - x_\tau'} + C_{\ell, f, (x, \theta)} \decayfactor^{t - \tau} \norm{\theta_\tau - \theta_\tau'},\\
    &\norm{\left.\frac{\partial f_{t\mid \tau}}{\partial \theta_\tau} \right|_{x_\tau, \theta_\tau, \theta_{\tau+1:t}} - \left.\frac{\partial f_{t\mid \tau}}{\partial \theta_\tau} \right|_{x_\tau', \theta_\tau', \theta_{\tau+1:t}}} \leq C_{\ell, f, (\theta, x)} \decayfactor^{t - \tau} \norm{x_\tau - x_\tau'} + C_{\ell, f, (\theta, \theta)} \decayfactor^{t - \tau} \norm{\theta_\tau - \theta_\tau'},
\end{align*}
where $C_{L, f, x} = L_f C (1 + L_{\ALG, x}), C_{L, f, \theta} = L_f \max\{C_{L, g, \theta} (1 + L_{\ALG, x}), L_{\ALG, \theta}\}$, and
\begin{align*}
    C_{\ell, f, (x, x)} &= L_f (1 + L_{\ALG, x}) C_{\ell, g, (x, x)} + ((\ell_{f, x} + \ell_{f, u}L_{\ALG, x})(1 + L_{\ALG, x}) + L_f \ell_{\ALG, x}) C_{L, g, x}^2,\\
    C_{\ell, f, (x, \theta)} &= L_f (1 + L_{\ALG, x}) C_{\ell, g, (x, \theta)} + ((\ell_{f, x} + \ell_{f, u}L_{\ALG, x})(1 + L_{\ALG, x}) + L_f \ell_{\ALG, x}) C_{L, g, x} C_{L, g, \theta},\\
    C_{\ell, f, (\theta, x)} &= L_f (1 + L_{\ALG, x}) C_{\ell, g, (\theta, x)} + ((\ell_{f, x} + \ell_{f, u}L_{\ALG, x})(1 + L_{\ALG, x}) + L_f \ell_{\ALG, x}) C_{L, g, x} C_{L, g, \theta},\\
    C_{\ell, f, (\theta, \theta)} &= L_f (1 + L_{\ALG, x}) C_{\ell, g, (\theta, \theta)} + ((\ell_{f, x} + \ell_{f, u}L_{\ALG, x})(1 + L_{\ALG, x}) + L_f \ell_{\ALG, x}) C_{L, g, \theta}^2.
\end{align*}
\end{corollary}

With the help of \Cref{lemma:smooth-multi-step-dynamics} and \Cref{coro:smooth-multi-step-costs}, we first bound the cost difference $\abs{f_t(x_t, u_t) - F_t(\theta_t)}$ in \Cref{thm:OCO-with-parameter-update-trajectory-distance}. This inequality bounds the difference between the actual stage cost $f_t(x_t, u_t)$ incurred by GAPS and the ideal cost $F_t(\theta_t)$. Besides this inequality, in \Cref{thm:OCO-with-parameter-update-trajectory-distance}, we also bound the distance between GAPS' trajectory and the imaginary trajectory if the same policy parameter $\theta_t$ had been used from time $0$ to time $t$, which will be useful for showing \Cref{thm:OCO-with-parameter-update-gradient-bias} later in this section.

\begin{theorem}\label{thm:OCO-with-parameter-update-trajectory-distance}
Suppose Assumptions \ref{assump:Lipschitz-and-smoothness} and \ref{assump:contractive-and-stability} hold. Let $\{x_t, u_t, \theta_t\}_{t \in \mathcal{T}}$ denote the trajectory of GAPS (Algorithm \ref{alg:OCO-with-parameter-update}) with buffer size $B$ and a constant learning rate $\eta \leq \frac{(1 - \rho)\varepsilon}{C_{L, f, \theta}}$.
Then, both $\norm{G_t}$ and $\norm{\nabla F_t(\theta_t)}$ are upper bounded by $\frac{C_{L, f, \theta}}{1 - \decayfactor}$, and the following inequalities holds for any two time steps $\tau, t$ ($\tau \leq t$):
\begin{align*}
    &\norm{\theta_t - \theta_{\tau}} \leq \frac{C_{L, f, \theta}}{1 - \decayfactor}\cdot (t - \tau)\eta, \text{ and } \norm{x_\tau - \hat{x}_\tau(\theta_t)} \leq \frac{C_{L, f, \theta} C_{L, g, \theta} \decayfactor}{(1 - \decayfactor)^2} \left((t - \tau) + \frac{1}{1 - \decayfactor}\right) \cdot \eta,
\end{align*}
where we use the notation $\hat{x}_\tau(\theta) \coloneqq g_{0,\tau}(x_0, \theta_{\times (\tau + 1)}), \forall \theta \in \Theta$. Further, we have that
\[\abs{f_t(x_t, u_t) - F_t(\theta_t)} \leq \frac{C_{L, f, \theta} C_{L, g, \theta}L_f (1 + L_{\ALG, x}) \decayfactor}{(1 - \decayfactor)^3} \cdot \eta.\]
In addition, for any parameter sequence $\tilde{\theta}_{0:t} \in \Theta^{t+1}$, let $\tilde{x}_t$ and $\tilde{u}_t$ be the state/control action achieved by this sequence $\tilde{x}_t \coloneqq g_{t\mid 0}(x_0, \tilde{\theta}_{0:t-1})$ and $\tilde{u}_t \coloneqq \ALG_t(\tilde{x}_t, \tilde{\theta}_t)$. If $\norm{\tilde{x}_t} \leq \min\{R_C, R_x\}$ holds for all $t$, then the following inequality holds for all time $t$:
\[\abs{f_t(\tilde{x}_t, \tilde{u}_t) - F_t(\tilde{\theta}_t)} \leq \frac{C L_{\ALG, \theta} L_{g, x} L_f (1 + L_{\ALG, x})}{1 - \rho}\sum_{\tau = 0}^{t-1} \rho^{t - \tau - 1} \norm{\tilde{\theta}_{\tau + 1} - \tilde{\theta}_\tau}.\]
\end{theorem}

To show \Cref{thm:OCO-with-parameter-update-trajectory-distance}, we first derive a uniform upper bound on the norm the estimated gradient $G_t$, which implies that the policy parameter sequence does not vary too quickly, i.e. it is in the same order as the constant learning rate $\eta$. We then leverage  strong contractive perturbation  to bound $\norm{x_\tau - \hat{x}_\tau(\theta_t)}$ and use it to bound $\abs{f_t(x_t, u_t) - F_t(\theta_t)}$ by the Lipschitzness of $f_t$. We defer the detailed proof to \Cref{appendix:OCO-with-parameter-update-trajectory-distance}.

In \Cref{thm:OCO-with-parameter-update-gradient-bias} below, we bound the difference between the estimated gradient $G_t$ used by GAPS and the ideal gradient $\nabla F_t(\theta_t)$ used by the ideal OGD.

\begin{theorem}[Gradient Bias]\label{thm:OCO-with-parameter-update-gradient-bias}
Suppose Assumptions \ref{assump:Lipschitz-and-smoothness} and \ref{assump:contractive-and-stability} hold. Let $\{x_t, u_t, \theta_t\}_{t \in \mathcal{T}}$ denote the trajectory of GAPS (Algorithm \ref{alg:OCO-with-parameter-update}) with buffer size $B$ and learning rate $\eta \leq \frac{(1 - \rho)\varepsilon}{C_{L, f, \theta}}$. Then, the following holds for all $\tau \leq t$:
\begin{align*}
    \norm{\left.\frac{\partial f_{t\mid 0}}{\partial \theta_\tau} \right|_{x_0, \theta_{0:t}} - \left.\frac{\partial f_{t\mid 0}}{\partial \theta_\tau} \right|_{x_0, (\theta_t)_{\times (t+1)}}}
    ={}& O\left(\left(\frac{1}{(1-\rho)^4} + \frac{t - \tau}{(1 - \rho)^3} + \frac{(t - \tau)^2}{(1 - \rho)^2}\right) \decayfactor^{t - \tau} \cdot \eta\right),
\end{align*}
Further, we see that
\begin{align*}
    \norm{G_t - \nabla F_t(\theta_t)} \leq O\left(\frac{\eta}{(1 - \decayfactor)^5} + \frac{ \decayfactor^B}{1 - \decayfactor}\right).
\end{align*}
(See \Cref{appendix:OCO-with-parameter-update-gradient-bias} for the detailed expressions.)
\end{theorem}

The key technique we used to show \Cref{thm:OCO-with-parameter-update-gradient-bias} is a sequential decomposition of the error based on the triangle inequality. Specifically, note that \Cref{coro:smooth-multi-step-costs} only allow us to compare the partial derivatives when $\theta_{\tau+1: t}$ are fixed and the only perturbations are on $x_\tau$ and $\theta_\tau$. To compare the partial derivatives realized on two trajectory instances $(\hat{x}_\tau(\theta_t), (\theta_t)_{\times (t - \tau + 1)})$ and $(x_\tau, \theta_{\tau:t})$, we change the parameters sequentially one by one, following the path
\begin{align*}
&(\hat{x}_\tau(\theta_t), (\theta_t)_{\times (t - \tau + 1)}) \to (x_\tau, \theta_\tau, (\theta_t)_{\times (t - \tau)}) \to (x_\tau, \theta_{\tau:\tau+1}, (\theta_t)_{\times (t - \tau-1)}) \to \cdots \to (x_\tau, \theta_{\tau:t}).
\end{align*}
A complete proof of \Cref{thm:OCO-with-parameter-update-gradient-bias} can be found in \Cref{appendix:OCO-with-parameter-update-gradient-bias}.

The bounds in \Cref{thm:OCO-with-parameter-update-trajectory-distance,thm:OCO-with-parameter-update-gradient-bias} show that we can achieve our desired bounds $\abs{f_t(x_t, u_t) - F_t(\theta_t)} = O(1/\sqrt{T})$ and $\norm{\nabla F_t(\theta_t) - G_t} = O(1/\sqrt{T})$ if we set the learning rate and the buffer length to be  $O(1/\sqrt{T})$ and $O(\log T)$ respectively. Substituting these bounds into \Cref{thm:OGD-with-biased-gradient-regret} with a more careful analysis on the order of the factor $1/(1 - \decayfactor)$ will finish the proof of \Cref{thm:main-regret-bound-convex}. The detailed proof can be found in \Cref{appendix:OCO-with-parameter-update-gradient-bias}.

\subsection{Detailed Statement and Proofs of Theorems \ref{thm:bridge-GAPS-and-OGD} and \ref{thm:main-regret-bound-convex}}\label{appendix:detailed-statements-proofs}

We restate \Cref{thm:bridge-GAPS-and-OGD} with detailed expressions in \Cref{thm:bridge-GAPS-and-OGD:detailed}.

\begin{theorem}\label{thm:bridge-GAPS-and-OGD:detailed}
Suppose Assumptions \ref{assump:Lipschitz-and-smoothness} and \ref{assump:contractive-and-stability} hold. Let $\{x_t, u_t, \theta_t\}_{t \in \mathcal{T}}$ denote the trajectory of GAPS (Algorithm \ref{alg:OCO-with-parameter-update}) with buffer size $B$ and learning rate $\eta_t = \eta \leq \frac{(1 - \rho)\varepsilon}{C_{L, f, \theta}}$, where $C_{L, f, \theta}$ is defined in \Cref{coro:smooth-multi-step-costs}. Then, we have
\begin{align}\label{thm:main-regret-bound-convex:e01:detailed}
    \abs{f_t(x_t, u_t) - F_t(\theta_t)} \leq{}& \frac{C_{L, f, \theta} C_{L, g, \theta}L_f (1 + L_{\ALG, x}) \decayfactor}{(1 - \decayfactor)^3} \cdot \eta, \text{ and}\nonumber\\
    \norm{G_t - \nabla F_t(\theta_t)} \leq{}& \left({\hat{C}_0}{(1 - \decayfactor)^{-1}} + {(\hat{C}_1 + \hat{C}_2)}{(1 - \decayfactor)^{-2}} + {\hat{C}_2}{(1 - \decayfactor)^{-3}}\right) \eta \nonumber\\
    &+ {C_{L, f, \theta}}{(1 - \decayfactor)^{-1}} \cdot \decayfactor^B,
\end{align}
where $\hat{C}_0, \hat{C}_1, \hat{C}_2$ are defined in \Cref{thm:OCO-with-parameter-update-gradient-bias:full}.
\end{theorem}
\begin{proof}[Proof of \Cref{thm:bridge-GAPS-and-OGD:detailed}]
    \Cref{thm:bridge-GAPS-and-OGD:detailed} directly followed from \Cref{thm:OCO-with-parameter-update-trajectory-distance,thm:OCO-with-parameter-update-gradient-bias}.
\end{proof}

We restate \Cref{thm:main-regret-bound-convex} with detailed expressions in \Cref{thm:main-regret-bound-convex:detailed}.

\begin{theorem}\label{thm:main-regret-bound-convex:detailed}
Under the same assumptions as \Cref{thm:bridge-GAPS-and-OGD}, if we additionally assume the surrogate stage cost $F_t$ is convex for every time step $t$, then GAPS achieves the adaptive regret bound
\begin{align*}
    R^A(T)\leq{}& \left(\frac{C_{L, f, \theta}^2}{(1 - \decayfactor)^3} + \left(\frac{\hat{C}_0}{1 - \rho} + \frac{\hat{C}_1 + \hat{C}_2}{(1 - \rho)^2} + \frac{\hat{C}_2}{(1 - \rho)^3}\right)D\right)\eta T + \frac{D^2}{2\eta} + \frac{D C_{L, f,\theta}}{1 - \rho} \cdot \rho^B T\\
    &+2\left(\frac{\hat{C}_0}{1 - \rho} + \frac{\hat{C}_1 + \hat{C}_2}{(1 - \rho)^2} + \frac{\hat{C}_2}{(1 - \rho)^3}\right)^2 D^2 \eta^3 T + \frac{2 C_{L, f, \theta}^2}{(1 - \rho)^2}\rho^{2B}\eta T.
\end{align*}
\end{theorem}
\begin{proof}[Proof of \Cref{thm:main-regret-bound-convex:detailed}]
The first two inequalities are shown in \Cref{thm:OCO-with-parameter-update-trajectory-distance} and \Cref{thm:OCO-with-parameter-update-gradient-bias}. Thus, we focus on the adaptive regret part here in the proof.

Fix a time interval $I = [r: s] \subseteq \mathcal{T}$ and let $\theta_I$ be an arbitrary policy parameter in $\Theta$. By \Cref{thm:OGD-with-biased-gradient-regret} and \Cref{thm:OCO-with-parameter-update-gradient-bias}, we see that the sequence of policy parameters of the online policy satisfies that
\begin{align}\label{thm:main-regret-bound-convex:e1}
    \sum_{t = r}^{s}F_t(\theta_t) - \sum_{t=r}^{s}F_t(\theta_I) \leq (W^2 + \alpha^2)\eta T + \frac{D^2}{2\eta} + \alpha D T,
\end{align}
where $W = \frac{C_{L, f,\theta}}{1 - \rho}$ by \Cref{thm:OCO-with-parameter-update-trajectory-distance} and $\alpha = \left(\frac{\hat{C}_0}{1 - \rho} + \frac{\hat{C}_1 + \hat{C}_2}{(1 - \rho)^2} + \frac{\hat{C}_2}{(1 - \rho)^3}\right) \eta + \frac{C_{L, f, \theta}}{1 - \rho} \cdot \rho^B$.

Note that by definition, we have $F_t(\theta_I) = f_t(\hat{x}_t(\theta_I), \hat{u}_t(\theta_I))$ and by \Cref{thm:OCO-with-parameter-update-trajectory-distance}, we have
\begin{align}\label{thm:main-regret-bound-convex:e2}
    \abs{f_t(x_t, u_t) - F_t(\theta_t)} \leq \frac{C_{L, f, \theta} C_{L, g, \theta}L_f (1 + L_{\ALG, x}) \decayfactor}{(1 - \decayfactor)^3} \cdot \eta.
\end{align}
Substituting these into \eqref{thm:main-regret-bound-convex:e1} gives that
\begin{align*}
    &\sum_{t = r}^{s}f_t(x_t, u_t) - \sum_{t = r}^{s}f_t(\hat{x}_t(\theta_I), \hat{u}_t(\theta_I))\\
    \leq{}& \left(\sum_{t = r}^{s} F_t(\theta_t) - \sum_{t=r}^{s} F_t(\theta_I)\right) + \sum_{t = r}^{s}\abs{f_t(x_t, u_t) - F_t(\theta_t)}\\
    \leq{}& \frac{D^2}{2\eta} + \alpha D T + \left(W^2 + \alpha^2 + \frac{C_{L, f, \theta} C_{L, g, \theta}L_f (1 + L_{\ALG, x}) \decayfactor}{(1 - \decayfactor)^3}\right)\cdot \eta T\\
    \leq{}& \left(\frac{C_{L, f, \theta}^2}{(1 - \decayfactor)^3} + \left(\frac{\hat{C}_0}{1 - \rho} + \frac{\hat{C}_1 + \hat{C}_2}{(1 - \rho)^2} + \frac{\hat{C}_2}{(1 - \rho)^3}\right)D\right)\eta T + \frac{D^2}{2\eta} + \frac{D C_{L, f,\theta}}{1 - \rho} \cdot \rho^B T\\
    &+2\left(\frac{\hat{C}_0}{1 - \rho} + \frac{\hat{C}_1 + \hat{C}_2}{(1 - \rho)^2} + \frac{\hat{C}_2}{(1 - \rho)^3}\right)^2 D^2 \eta^3 T + \frac{2 C_{L, f, \theta}^2}{(1 - \rho)^2}\rho^{2B}\eta T,
\end{align*}
where we used \eqref{thm:main-regret-bound-convex:e1} and \eqref{thm:main-regret-bound-convex:e2} in the second inequality.
\end{proof}

\subsection{Proof of Theorem \ref{thm:OGD-with-biased-gradient-regret}}\label{appendix:OGD-with-bias}
Our proof is inspired by the proof of Theorem 2.1 in \cite{bansal2019potential}. For a fixed time interval $I = [r: s] \subseteq \mathcal{T}$ and $\theta_I \in \Theta$, we consider the potential function $\Phi_t = \frac{1}{2\eta} \norm{\theta_t - \theta_I}^2$. Note that $\theta_I$ satisfies $\norm{\theta_r-\theta_I}\leq D$ because we assume $\mathrm{diam}(\Theta) \leq D$. To simplify the notation, we define $\theta_{t+1}'=\theta_t-\eta G_t$.

By proposition 2.2 in \cite{bansal2019potential}, we see that
\begin{align*}
    \frac{1}{2} (\norm{\theta_{t+1}-\theta_I}^2-\norm{\theta_t-\theta_I}^2) &\leq \frac{1}{2} (\norm{\theta'_{t+1}-\theta_I}^2-\norm{\theta_t-\theta_I}^2)\\
    &= \langle \theta'_{t+1}-\theta_t, \theta_t-\theta_I\rangle + \frac{1}{2}\norm{\theta'_{t+1}-\theta_t}^2 \\
    &= \eta \langle G_t, \theta_I-\theta_t\rangle + \frac{\eta^2}{2} \norm{G_t}^2.
\end{align*}
Using this inequality, we see that
\begin{subequations}\label{thm:OGD-with-biased-gradient:e1}
\begin{align}
    &F_t(\theta_t)-F_t(\theta_I)+\Phi_{t+1}-\Phi_t \nonumber\\
    ={}& F_t(\theta_t)-F_t(\theta_I) + \langle G_t, \theta_I-\theta_t\rangle + \frac{\eta}{2} \norm{G_t}^2\nonumber\\
    ={}& F_t(\theta_t)-F_t(\theta_I) + \langle \nabla F_t(\theta_t) + (G_t - \nabla F_t(\theta_t)) , \theta_I-\theta_t\rangle + \frac{\eta}{2} \norm{\nabla F_t(\theta_t)+ (G_t - F_t(\theta_t))}^2 \nonumber\\
    \leq{}& F_t(\theta_t)-F_t(\theta_I) + \langle \nabla F_t(\theta_t), \theta_I-\theta_t\rangle + \langle G_t - \nabla F_t(\theta_t), \theta_I-\theta_t\rangle 
    + \eta \norm{\nabla F_t(\theta_t)}^2\nonumber\\
    &+\eta \norm{G_t - \nabla F_t(\theta_t)}^2 \label{thm:OGD-with-biased-gradient:e1:s1}\\
    \leq{}& 0 + \norm{G_t - \nabla F_t(\theta_t)} \cdot \norm{\theta_I-\theta_t} + \eta \norm{\nabla F_t(\theta_t)}^2+\eta \alpha^2 \label{thm:OGD-with-biased-gradient:e1:s2}\\
    \leq{}& \alpha D + W^2\eta + \eta\alpha^2, \label{thm:OGD-with-biased-gradient:e1:s3}
\end{align}
\end{subequations}
where we used the triangle inequality and the AM-GM inequality in \eqref{thm:OGD-with-biased-gradient:e1:s1};
we used the assumption that $F_t$ is convex, $\norm{G_t - \nabla F_t(\theta_t)} \leq \alpha$, and the Cauchy-Schwarz inequality in \eqref{thm:OGD-with-biased-gradient:e1:s2};
and we used the assumptions $\norm{G_t - \nabla F_t(\theta_t)} \leq \alpha$, $\mathrm{diam}(\Theta) \leq D$, $\norm{\nabla F_t(\theta_t)} \leq W$ in \eqref{thm:OGD-with-biased-gradient:e1:s3}.

Summing \eqref{thm:OGD-with-biased-gradient:e1} over the time interval $[r: s]$ gives that
\begin{align*}
    \sum_{t = r}^s \left(F_t(\theta_t) - F_t(\theta_I)\right) \leq{}& ({s - r}) \cdot \left(\alpha D + W^2\eta + \eta\alpha^2\right) + ({\Phi_r - \Phi_{s+1}})\\
    \leq{}& \left(\alpha D + W^2\eta + \eta\alpha^2\right) T + \frac{D^2}{2 \eta},
\end{align*}
where we used $\mathrm{diam}(\Theta) \leq D$ and $\Phi_{s + 1} \geq 0$ in the last inequality. Since this inequality holds for any time interval $I = [r: s]$ and $\theta_I \in \Theta$, this finishes the proof of the first part of \Cref{thm:OGD-with-biased-gradient-regret}.

\subsection{Proof of Inequalities 1,2, and 4 in Lemma \ref{lemma:smooth-multi-step-dynamics}}\label{appendix:smooth-multi-step-dynamics}
The first inequality directly follows from $\varepsilon$-time-varying contractive perturbation (\Cref{def:epsilon-exp-decay-perturbation-property}). For the second inequality, when $t = \tau + 1$, note that
$\left.\frac{\partial g_{\tau + 1\mid \tau}}{\partial \theta_\tau}\right|_{x_\tau, \theta_\tau} = \left.\frac{\partial g_\tau}{\partial u_\tau}\right|_{x_\tau, u_\tau} \cdot \left.\frac{\partial \ALG_\tau}{\partial \theta_\tau}\right|_{x_\tau, \theta_\tau},$
where $u_\tau = \ALG_\tau(x_\tau, \theta_\tau)$. Taking norms of both sides of the equation gives
\begin{align*}
    \norm{\left.\frac{\partial g_{\tau + 1\mid \tau}}{\partial \theta_\tau}\right|_{x_\tau, \theta_\tau}} = \norm{\left.\frac{\partial g_\tau}{\partial u_\tau}\right|_{x_\tau, u_\tau} \cdot \left.\frac{\partial \ALG_\tau}{\partial \theta_\tau}\right|_{x_\tau, \theta_\tau}} \leq \norm{\left.\frac{\partial g_\tau}{\partial u_\tau}\right|_{x_\tau, u_\tau}} \cdot \norm{\left.\frac{\partial \ALG_\tau}{\partial \theta_\tau}\right|_{x_\tau, \theta_\tau}} \leq L_{g, u} L_{\ALG, \theta}.
\end{align*}
When $t > \tau + 1$, we see that
\begin{align*}
    \norm{\left.\frac{\partial g_{t\mid \tau}}{\partial \theta_\tau} \right|_{x_\tau, \theta_{\tau:t-1}}} &= \norm{\left.\frac{\partial g_{t\mid \tau+1}}{\partial x_{\tau+1}} \right|_{x_{\tau+1}, \theta_{\tau+1:t-1}} \cdot \left.\frac{\partial g_{\tau+1\mid \tau}}{\partial \theta_\tau}\right|_{x_\tau, \theta_\tau}}\\
    &\leq \norm{\left.\frac{\partial g_{t\mid \tau+1}}{\partial x_{\tau+1}} \right|_{x_{\tau+1}, \theta_{\tau+1:t-1}}}\cdot \norm{\left.\frac{\partial g_{\tau+1\mid \tau}}{\partial \theta_\tau}\right|_{x_\tau, \theta_\tau}} \leq \frac{C_0 L_{g, u} L_{\ALG, \theta}}{\decayfactor} \cdot \decayfactor^{t - \tau},
\end{align*}
where $x_{\tau + 1} = g_{\tau+1\mid \tau}(x_\tau, \theta_\tau)$.

For the last inequality of \Cref{lemma:smooth-multi-step-dynamics}, when $t = \tau + 1$, we see that
\begin{subequations}\label{thm:smooth-multi-step-dynamics:e1}
\begin{align}
    &\norm{\left.\frac{\partial g_{\tau+1\mid \tau}}{\partial \theta_\tau}\right|_{x_\tau, \theta_\tau} - \left.\frac{\partial g_{\tau+1\mid \tau}}{\partial \theta_\tau}\right|_{x_\tau', \theta_\tau'}}\nonumber\\
    ={}& \norm{\left.\frac{\partial g_\tau}{\partial u_\tau}\right|_{x_\tau, u_\tau} \cdot \left.\frac{\partial \ALG_\tau}{\partial \theta_\tau}\right|_{x_\tau, \theta_\tau} - \left.\frac{\partial g_\tau}{\partial u_\tau}\right|_{x_\tau', u_\tau'} \cdot \left.\frac{\partial \ALG_\tau}{\partial \theta_\tau}\right|_{x_\tau', \theta_\tau'}}\nonumber\\
    \leq{}& \norm{\left(\left.\frac{\partial g_\tau}{\partial u_\tau}\right|_{x_\tau, u_\tau} - \left.\frac{\partial g_\tau}{\partial u_\tau}\right|_{x_\tau', u_\tau'}\right)\cdot \left.\frac{\partial \ALG_\tau}{\partial \theta_\tau}\right|_{x_\tau, \theta_\tau}} + \norm{\left.\frac{\partial g_\tau}{\partial u_\tau}\right|_{x_\tau', u_\tau'} \left(\left.\frac{\partial \ALG_\tau}{\partial \theta_\tau}\right|_{x_\tau, \theta_\tau} - \left.\frac{\partial \ALG_\tau}{\partial \theta_\tau}\right|_{x_\tau', \theta_\tau'}\right)}\label{thm:smooth-multi-step-dynamics:e1:s1}\\
    \leq{}& L_{\ALG, \theta} \left(\ell_{g, x}\norm{x_\tau - x_\tau'} + \ell_{g, u} \norm{u_\tau - u_\tau'}\right) + L_{g, u} \left(\ell_{\ALG, x} \norm{x_\tau - x_\tau'} + \ell_{\ALG, \theta} \norm{\theta_\tau - \theta_\tau'}\right)\label{thm:smooth-multi-step-dynamics:e1:s2}\\
    \leq{}& \left(L_{\ALG, \theta}(\ell_{g, x} + \ell_{g, u} L_{\ALG, x}) + L_{g, u} \ell_{\ALG, x}\right)\norm{x_\tau - x_\tau'} + (L_{\ALG, \theta}^2 \ell_{g, u} + L_{g, u} \ell_{\ALG, \theta})\norm{\theta_\tau - \theta_\tau'}, \label{thm:smooth-multi-step-dynamics:e1:s3}
\end{align}
\end{subequations}
where we use the notations $u_\tau = \ALG_\tau(x_\tau, \theta_\tau), u_\tau' = \ALG_\tau(x_\tau, \theta_\tau')$. We use the triangle inequality in \eqref{thm:smooth-multi-step-dynamics:e1:s1}; we use \Cref{assump:Lipschitz-and-smoothness} in both \eqref{thm:smooth-multi-step-dynamics:e1:s2} and \eqref{thm:smooth-multi-step-dynamics:e1:s3}.

When $t > \tau + 1$, we see that
\begin{subequations}\label{thm:smooth-multi-step-dynamics:e2}
\begin{align}
    &\norm{\left.\frac{\partial g_{t\mid \tau}}{\partial \theta_\tau} \right|_{x_\tau, \theta_\tau, \theta_{\tau+1:t-1}} - \left.\frac{\partial g_{t\mid \tau}}{\partial \theta_\tau} \right|_{x_\tau', \theta_\tau', \theta_{\tau+1:t-1}}}\nonumber\\
    \leq{}& \norm{\left.\frac{\partial g_{t\mid \tau+1}}{\partial x_{\tau+1}} \right|_{x_{\tau+1}, \theta_{\tau+1:t-1}}\cdot \left.\frac{\partial g_{\tau+1\mid \tau}}{\partial \theta_\tau}\right|_{x_\tau, \theta_\tau} - \left.\frac{\partial g_{t\mid \tau+1}}{\partial x_{\tau+1}} \right|_{x_{\tau+1}', \theta_{\tau+1:t-1}}\cdot \left.\frac{\partial g_{\tau+1\mid \tau}}{\partial \theta_\tau}\right|_{x_\tau', \theta_\tau'}}\label{thm:smooth-multi-step-dynamics:e2:s1}\\
    \leq{}& \norm{\left(\left.\frac{\partial g_{t\mid \tau+1}}{\partial x_{\tau+1}} \right|_{x_{\tau+1}, \theta_{\tau+1:t-1}} - \left.\frac{\partial g_{t\mid \tau+1}}{\partial x_{\tau+1}} \right|_{x_{\tau+1}', \theta_{\tau+1:t-1}}\right)\left.\frac{\partial g_{\tau+1\mid \tau}}{\partial \theta_\tau}\right|_{x_\tau, \theta_\tau}}\nonumber\\
    &+ \norm{\left.\frac{\partial g_{t\mid \tau+1}}{\partial x_{\tau+1}} \right|_{x_{\tau+1}', \theta_{\tau+1:t-1}}\cdot \left(\left.\frac{\partial g_{\tau+1\mid \tau}}{\partial \theta_\tau}\right|_{x_\tau, \theta_\tau} - \left.\frac{\partial g_{\tau+1\mid \tau}}{\partial \theta_\tau}\right|_{x_\tau', \theta_\tau'}\right)}\label{thm:smooth-multi-step-dynamics:e2:s2}\\
    \leq{}& \norm{\left.\frac{\partial g_{t\mid \tau+1}}{\partial x_{\tau+1}} \right|_{x_{\tau+1}, \theta_{\tau+1:t-1}} - \left.\frac{\partial g_{t\mid \tau+1}}{\partial x_{\tau+1}} \right|_{x_{\tau+1}', \theta_{\tau+1:t-1}}} \cdot \norm{\left.\frac{\partial g_{\tau+1\mid \tau}}{\partial \theta_\tau}\right|_{x_\tau, \theta_\tau}}\nonumber\\
    &+ \norm{\left.\frac{\partial g_{t\mid \tau+1}}{\partial x_{\tau+1}} \right|_{x_{\tau+1}', \theta_{\tau+1:t-1}}} \cdot \norm{\left.\frac{\partial g_{\tau+1\mid \tau}}{\partial \theta_\tau}\right|_{x_\tau, \theta_\tau} - \left.\frac{\partial g_{\tau+1\mid \tau}}{\partial \theta_\tau}\right|_{x_\tau', \theta_\tau'}}\nonumber\\
    \leq{}& \frac{\left((1 + L_{\ALG, x}) \left(\ell_{g, x} + \ell_{g, u} \cdot L_{\ALG, x}\right) + L_{g, x} \cdot \ell_{\ALG, x}\right) C_0^3}{\decayfactor(1 - \decayfactor)} \cdot \decayfactor^{t - \tau - 1} \cdot \norm{x_{\tau+1} - x_{\tau+1}'} \cdot L_{g, u} L_{\ALG, \theta}\nonumber\\
    &+ C_0 \cdot \decayfactor^{t-\tau-1} \cdot \left(L_{\ALG, \theta}(\ell_{g, x} + \ell_{g, u} L_{\ALG, x}) + L_{g, u} \ell_{\ALG, x}\right)\norm{x_\tau - x_\tau'}\nonumber\\
    &+ C_0 \cdot \decayfactor^{t - \tau - 1} \cdot (L_{\ALG, \theta}^2 \ell_{g, u} + L_{g, u} \ell_{\ALG, \theta})\norm{\theta_\tau - \theta_\tau'}\label{thm:smooth-multi-step-dynamics:e2:s3}\\
    \leq{}& C_{\ell, g, (\theta, x)} \decayfactor^{t - \tau} \norm{x_\tau - x_\tau'} + C_{\ell, g, (\theta, \theta)} \decayfactor^{t-\tau} \norm{\theta_\tau - \theta_\tau'},\label{thm:smooth-multi-step-dynamics:e2:s4}
\end{align}
\end{subequations}
where we use the notations $x_{\tau + 1} = g_{\tau+1\mid \tau}(x_\tau, \theta_\tau), x_{\tau + 1}' = g_{\tau+1\mid \tau}(x_\tau, \theta_\tau')$. We use the chain rule decomposition in \eqref{thm:smooth-multi-step-dynamics:e2:s1}; we use the triangle inequality in \eqref{thm:smooth-multi-step-dynamics:e2:s2}; we use the first and the third inequality of \Cref{lemma:smooth-multi-step-dynamics} as well as \eqref{thm:smooth-multi-step-dynamics:e1} in \eqref{thm:smooth-multi-step-dynamics:e2:s3}; we use the first two inequalities of \Cref{lemma:smooth-multi-step-dynamics} in \eqref{thm:smooth-multi-step-dynamics:e2:s4}.

\subsection{Proof of Corollary \ref{coro:smooth-multi-step-costs}}\label{appendix:smooth-multi-step-costs}
To show the first inequality, note that
\begin{align}\label{coro:smooth-multi-step-costs:e1}
    \left.\frac{\partial f_{t\mid \tau}}{\partial x_\tau} \right|_{x_\tau, \theta_{\tau:t}} = \left(\left.\frac{\partial f_t}{\partial x_t}\right|_{x_t, u_t} + \left.\frac{\partial f_t}{\partial u_t}\right|_{x_t, u_t}\cdot \left.\frac{\partial \ALG_t}{\partial x_t}\right|_{x_t, \theta_t} \right)\cdot \left.\frac{\partial g_{t\mid \tau}}{\partial x_\tau}\right|_{x_\tau, \theta_{\tau:t-1}},
\end{align}
where $x_t = g_{t\mid \tau}(x_\tau, \theta_{\tau:t-1}), u_t = \ALG_t(x_t, \theta_t)$. Thus, by $\varepsilon$-time-varying contractive perturbation, we see that
\begin{align*}
    \norm{\left.\frac{\partial f_{t\mid \tau}}{\partial x_\tau} \right|_{x_\tau, \theta_{\tau:t}}} \leq{}& \left(\norm{\left.\frac{\partial f_t}{\partial x_t}\right|_{x_t, u_t}} + \norm{\left.\frac{\partial f_t}{\partial u_t}\right|_{x_t, u_t}}\cdot \norm{\left.\frac{\partial \ALG_t}{\partial x_t}\right|_{x_t, \theta_t}} \right)\cdot \norm{\left.\frac{\partial g_{t\mid \tau}}{\partial x_\tau}\right|_{x_\tau, \theta_{\tau:t-1}}}\\*
    \leq{}& L_f (1 + L_{\ALG, x}) \cdot C \decayfactor^{t - \tau}.
\end{align*}

For the second inequality, when $\tau = t$, since $x_t \in B_n(0, R_x')$ and $u_t \in B_m(0, R_u')$, we see that
\begin{align*}
    \norm{\left.\frac{\partial f_{t\mid t}}{\partial \theta_t} \right|_{x_t, \theta_t}} = \norm{\left.\frac{\partial f_t}{\partial u_t}\right|_{x_t, u_t} \cdot \left.\frac{\partial \ALG_t}{\partial \theta_t}\right|_{x_t, \theta_t}} \leq \norm{\left.\frac{\partial f_t}{\partial u_t}\right|_{x_t, u_t}} \cdot \norm{\left.\frac{\partial \ALG_t}{\partial \theta_t}\right|_{x_t, \theta_t}} \leq L_f L_{\ALG, \theta}.
\end{align*}
When $\tau < t$, the second inequality can be shown similarly with the first inequality in \Cref{coro:smooth-multi-step-costs} because we have the chain-rule decomposition
\begin{align}\label{coro:smooth-multi-step-costs:e2}
    \left.\frac{\partial f_{t\mid \tau}}{\partial \theta_\tau} \right|_{x_\tau, \theta_{\tau:t}} = \left(\left.\frac{\partial f_t}{\partial x_t}\right|_{x_t, u_t} + \left.\frac{\partial f_t}{\partial u_t}\right|_{x_t, u_t}\cdot \left.\frac{\partial \ALG_t}{\partial x_t}\right|_{x_t, \theta_t} \right)\cdot \left.\frac{\partial g_{t\mid \tau}}{\partial \theta_\tau}\right|_{x_\tau, \theta_{\tau:t-1}}.
\end{align}
Applying \Cref{lemma:smooth-multi-step-dynamics} gives that $C_{L, f, \theta} = L_f C_{L, g, \theta} (1 + L_{\ALG, x})$.

For the third inequality, using \eqref{coro:smooth-multi-step-costs:e1}, we see that
\begin{subequations}\label{coro:smooth-multi-step-costs:e3}
\begin{align}
    &\norm{\left.\frac{\partial f_{t\mid \tau}}{\partial x_\tau} \right|_{x_\tau, \theta_\tau, \theta_{\tau+1:t}} - \left.\frac{\partial f_{t\mid \tau}}{\partial x_\tau} \right|_{x_\tau', \theta_\tau', \theta_{\tau+1:t}}}\nonumber\\
    \leq{}& \norm{\left.\frac{\partial f_t}{\partial x_t}\right|_{x_t, u_t} \cdot \left(\left.\frac{\partial g_{t\mid \tau}}{\partial x_\tau}\right|_{x_\tau, \theta_\tau, \theta_{\tau+1:t-1}} - \left.\frac{\partial g_{t\mid \tau}}{\partial x_\tau}\right|_{x_\tau', \theta_\tau', \theta_{\tau+1:t-1}}\right)}\nonumber\\
    &+ \norm{\left(\left.\frac{\partial f_t}{\partial x_t}\right|_{x_t, u_t} - \left.\frac{\partial f_t}{\partial x_t}\right|_{x_t', u_t'}\right)\cdot \left.\frac{\partial g_{t\mid \tau}}{\partial x_\tau}\right|_{x_\tau', \theta_\tau', \theta_{\tau+1:t-1}}}\nonumber\\
    &+ \norm{\left.\frac{\partial f_t}{\partial u_t}\right|_{x_t, u_t}\cdot \left.\frac{\partial \ALG_t}{\partial x_t}\right|_{x_t, \theta_t} \cdot \left(\left.\frac{\partial g_{t\mid \tau}}{\partial x_\tau}\right|_{x_\tau, \theta_\tau, \theta_{\tau+1:t-1}} - \left.\frac{\partial g_{t\mid \tau}}{\partial x_\tau}\right|_{x_\tau', \theta_\tau', \theta_{\tau+1:t-1}}\right)}\nonumber\\
    &+ \norm{\left.\frac{\partial f_t}{\partial u_t}\right|_{x_t, u_t}\cdot \left(\left.\frac{\partial \ALG_t}{\partial x_t}\right|_{x_t, \theta_t} - \left.\frac{\partial \ALG_t}{\partial x_t}\right|_{x_t', \theta_t}\right)\cdot \left.\frac{\partial g_{t\mid \tau}}{\partial x_\tau}\right|_{x_\tau', \theta_\tau', \theta_{\tau+1:t-1}}}\nonumber\\
    &+ \norm{\left(\left.\frac{\partial f_t}{\partial u_t}\right|_{x_t, u_t} - \left.\frac{\partial f_t}{\partial u_t}\right|_{x_t', u_t'}\right)\cdot \left.\frac{\partial \ALG_t}{\partial x_t}\right|_{x_t', \theta_t} \cdot \left.\frac{\partial g_{t\mid \tau}}{\partial x_\tau}\right|_{x_\tau', \theta_\tau', \theta_{\tau+1:t-1}}} \label{coro:smooth-multi-step-costs:e3:s1}\\
    \leq{}& L_f \decayfactor^{t - \tau} \left(C_{\ell, g, (x, x)} \norm{x_\tau - x_\tau'} + C_{\ell, g, (x, \theta)} \norm{\theta_\tau - \theta_\tau'}\right)\nonumber\\
    &+ (\ell_{f, x}\norm{x_t - x_t'} + \ell_{f, u}\norm{u_t - u_t'})\cdot C_{L, g, x} \decayfactor^{t - \tau}\nonumber\\
    &+ L_f L_{\ALG, x} \decayfactor^{t - \tau} \left(C_{\ell, g, (x, x)} \norm{x_\tau - x_\tau'} + C_{\ell, g, (x, \theta)} \norm{\theta_\tau - \theta_\tau'}\right)\nonumber\\
    &+ L_f \ell_{\ALG, x} \norm{x_t - x_t'}\cdot C_{L, g, x} \decayfactor^{t - \tau} + (\ell_{f, x}\norm{x_t - x_t'} + \ell_{f, u}\norm{u_t - u_t'})\cdot L_{\ALG, x} C_{L, g, x} \decayfactor^{t - \tau}, \label{coro:smooth-multi-step-costs:e3:s2}
\end{align}
\end{subequations}
where we use the notations $x_t = g_{t\mid \tau}(x_\tau, \theta_{\tau:t-1}), x_t' = g_{t\mid \tau}(x_\tau', \theta_{\tau:t-1}), u_t = \ALG_t(x_t, \theta_t), u_t' = \ALG_t(x_t', \theta_t)$. We use \eqref{coro:smooth-multi-step-costs:e1} and the triangle inequality in \eqref{coro:smooth-multi-step-costs:e3:s1}; we use \Cref{lemma:smooth-multi-step-dynamics} in \eqref{coro:smooth-multi-step-costs:e3:s2}. Note that by the first two inequalities in \Cref{lemma:smooth-multi-step-dynamics}, we have
\begin{align*}
    \norm{x_t - x_t'} &\leq \decayfactor^{t-\tau} \left(C_{L, g, x} \norm{x_\tau - x_\tau'} + C_{L, g, \theta} \norm{\theta_\tau - \theta_\tau'}\right),\\
    \norm{u_t - u_t'} &\leq L_{\ALG, x} \decayfactor^{t-\tau} \left(C_{L, g, x} \norm{x_\tau - x_\tau'} + C_{L, g, \theta} \norm{\theta_\tau - \theta_\tau'}\right).
\end{align*}
Substituting these two inequalities into \eqref{coro:smooth-multi-step-costs:e3} finishes the proof of the third inequality.

For the last inequality, when $\tau = t$, we have that
\begin{subequations}\label{coro:smooth-multi-step-costs:e4}
\begin{align}
    &\norm{\left.\frac{\partial f_{t, t}}{\partial \theta_t} \right|_{x_t, \theta_{t}} - \left.\frac{\partial f_{t, t}}{\partial \theta_t} \right|_{x_t', \theta_{t}'}}\nonumber\\
    ={}& \norm{\left.\frac{\partial f_t}{\partial u_t}\right|_{x_t, u_t} \cdot \left.\frac{\partial \ALG_t}{\partial \theta_t}\right|_{x_t, \theta_t} - \left.\frac{\partial f_t}{\partial u_t}\right|_{x_t', u_t'} \cdot \left.\frac{\partial \ALG_t}{\partial \theta_t}\right|_{x_t', \theta_t'}}\label{coro:smooth-multi-step-costs:e4:s1}\\
    \leq{}& \norm{\left.\frac{\partial f_t}{\partial u_t}\right|_{x_t, u_t} \cdot \left(\left.\frac{\partial \ALG_t}{\partial \theta_t}\right|_{x_t, \theta_t} - \left.\frac{\partial \ALG_t}{\partial \theta_t}\right|_{x_t', \theta_t'}\right)} + \norm{\left(\left.\frac{\partial f_t}{\partial u_t}\right|_{x_t, u_t} - \left.\frac{\partial f_t}{\partial u_t}\right|_{x_t', u_t'}\right)\cdot \left.\frac{\partial \ALG_t}{\partial \theta_t}\right|_{x_t', \theta_t'}}\label{coro:smooth-multi-step-costs:e4:s2}\\
    \leq{}& L_f \left(\ell_{\ALG, \theta} \norm{\theta_t - \theta_t'} + \ell_{\ALG, x} \norm{x_t - x_t'}\right) + \left(\ell_{f, x}\norm{x_t - x_t'} + \ell_{f, u}\norm{u_t - u_t'}\right) \cdot L_{\ALG, \theta}\label{coro:smooth-multi-step-costs:e4:s3}\\
    \leq{}& \left(L_f \ell_{\ALG, \theta} + (\ell_{f, x} + \ell_{f, u} L_{\ALG, x})L_{\ALG, \theta}\right)\norm{x_t - x_t'} + \left(L_f \ell_{\ALG, \theta} + \ell_{f, u} L_{\ALG, \theta}^2\right) \norm{\theta_t - \theta_t'},\label{coro:smooth-multi-step-costs:e4:s4}
\end{align}
\end{subequations}
where we use the chain rule decomposition in \eqref{coro:smooth-multi-step-costs:e4:s1}; we use the triangle inequality in \eqref{coro:smooth-multi-step-costs:e4:s2}; we use \Cref{assump:Lipschitz-and-smoothness} in both \eqref{coro:smooth-multi-step-costs:e4:s3} and \eqref{coro:smooth-multi-step-costs:e4:s4}.

When $\tau < t$, by \eqref{coro:smooth-multi-step-costs:e2}, we have that
\begin{subequations}\label{coro:smooth-multi-step-costs:e5}
\begin{align}
    &\norm{\left.\frac{\partial f_{t\mid \tau}}{\partial \theta_\tau} \right|_{x_\tau, \theta_\tau, \theta_{\tau+1:t}} - \left.\frac{\partial f_{t\mid \tau}}{\partial \theta_\tau} \right|_{x_\tau', \theta_\tau', \theta_{\tau+1:t}}}\nonumber\\
    \leq{}& \norm{\left.\frac{\partial f_t}{\partial x_t}\right|_{x_t, u_t} \cdot \left(\left.\frac{\partial g_{t\mid \tau}}{\partial \theta_\tau}\right|_{x_\tau, \theta_\tau, \theta_{\tau+1:t-1}} - \left.\frac{\partial g_{t\mid \tau}}{\partial \theta_\tau}\right|_{x_\tau', \theta_\tau', \theta_{\tau+1:t-1}}\right)}\nonumber\\
    &+ \norm{\left(\left.\frac{\partial f_t}{\partial x_t}\right|_{x_t, u_t} - \left.\frac{\partial f_t}{\partial x_t}\right|_{x_t', u_t'}\right)\cdot \left.\frac{\partial g_{t\mid \tau}}{\partial \theta_\tau}\right|_{x_\tau', \theta_\tau', \theta_{\tau+1:t-1}}}\nonumber\\
    &+ \norm{\left.\frac{\partial f_t}{\partial u_t}\right|_{x_t, u_t}\cdot \left.\frac{\partial \ALG_t}{\partial x_t}\right|_{x_t, \theta_t} \cdot \left(\left.\frac{\partial g_{t\mid \tau}}{\partial \theta_\tau}\right|_{x_\tau, \theta_\tau, \theta_{\tau+1:t-1}} - \left.\frac{\partial g_{t\mid \tau}}{\partial \theta_\tau}\right|_{x_\tau', \theta_\tau', \theta_{\tau+1:t-1}}\right)}\nonumber\\
    &+ \norm{\left.\frac{\partial f_t}{\partial u_t}\right|_{x_t, u_t}\cdot \left(\left.\frac{\partial \ALG_t}{\partial x_t}\right|_{x_t, \theta_t} - \left.\frac{\partial \ALG_t}{\partial x_t}\right|_{x_t', \theta_t}\right)\cdot \left.\frac{\partial g_{t\mid \tau}}{\partial \theta_\tau}\right|_{x_\tau', \theta_\tau', \theta_{\tau+1:t-1}}}\nonumber\\
    &+ \norm{\left(\left.\frac{\partial f_t}{\partial u_t}\right|_{x_t, u_t} - \left.\frac{\partial f_t}{\partial u_t}\right|_{x_t', u_t'}\right)\cdot \left.\frac{\partial \ALG_t}{\partial x_t}\right|_{x_t', \theta_t} \cdot \left.\frac{\partial g_{t\mid \tau}}{\partial \theta_\tau}\right|_{x_\tau', \theta_\tau', \theta_{\tau+1:t-1}}}\label{coro:smooth-multi-step-costs:e5:s1}\\
    \leq{}& L_f \decayfactor^{t - \tau} \left(C_{\ell, g, (\theta, x)}\norm{x_\tau - x_\tau'} + C_{\ell, g, (\theta, \theta)}\norm{\theta_\tau - \theta_\tau'}\right)\nonumber\\ 
    &+ (\ell_{f, x}\norm{x_t - x_t'} + \ell_{f, u}\norm{u_t - u_t'})\cdot C_{L, g, \theta} \decayfactor^{t - \tau}\nonumber\\
    &+ L_f L_{\ALG, x} \decayfactor^{t - \tau} \left(C_{\ell, g, (\theta, x)}\norm{x_\tau - x_\tau'} + C_{\ell, g, (\theta, \theta)}\norm{\theta_\tau - \theta_\tau'}\right)\nonumber\\
    &+ L_f \ell_{\ALG, x} \norm{x_t - x_t'}\cdot C_{L, g, \theta} \decayfactor^{t - \tau} + (\ell_{f, x}\norm{x_t - x_t'} + \ell_{f, u}\norm{u_t - u_t'})\cdot L_{\ALG, x} C_{L, g, \theta} \decayfactor^{t - \tau}, \label{coro:smooth-multi-step-costs:e5:s2}
\end{align}
\end{subequations}
where we use \eqref{coro:smooth-multi-step-costs:e2} and the triangle inequality in \eqref{coro:smooth-multi-step-costs:e5:s1}; we use \Cref{assump:Lipschitz-and-smoothness} in \eqref{coro:smooth-multi-step-costs:e5:s2}. Note that by the first two inequalities in \Cref{lemma:smooth-multi-step-dynamics}, we have
\begin{align*}
    \norm{x_t - x_t'} &\leq \decayfactor^{t-\tau} \left(C_{L, g, x} \norm{x_\tau - x_\tau'} + C_{L, g, \theta} \norm{\theta_\tau - \theta_\tau'}\right),\\
    \norm{u_t - u_t'} &\leq L_{\ALG, x} \decayfactor^{t-\tau} \left(C_{L, g, x} \norm{x_\tau - x_\tau'} + C_{L, g, \theta} \norm{\theta_\tau - \theta_\tau'}\right).
\end{align*}
Substituting these into \eqref{coro:smooth-multi-step-costs:e5} finishes the proof of the fourth inequality.

\subsection{Proof of Theorem \ref{thm:OCO-with-parameter-update-trajectory-distance}}\label{appendix:OCO-with-parameter-update-trajectory-distance}
To simplify the notation, we use the shorthand $\hat{x}_\tau(\theta) \coloneqq g_{\tau\mid 0}(x_0, \theta_{\times \tau})$ and $\hat{u}_\tau(\theta) = \ALG_\tau(\hat{x}_\tau(\theta), \theta)$ for any time $\tau$ and policy parameter $\theta$.

We first derive an upper bound of $G_t$ in order to bound the difference between $\theta_t$ and $\theta_{t+1}$. Recall that
\begin{align}\label{thm:OCO-with-parameter-update-trajectory-distance:e0-0}
    G_t \coloneqq \sum_{\tau = 0}^{\min\{t, B-1\}} \left.\frac{\partial f_{t\mid 0}}{\partial \theta_{t - \tau}}\right|_{x_{0}, \theta_{0:t}} = \sum_{\tau = 0}^{\min\{t, B-1\}} \left.\frac{\partial f_{t\mid t-\tau}}{\partial \theta_{t - \tau}}\right|_{x_{t - \tau}, \theta_{t-\tau:t}}.
\end{align}
Now we use induction to show that for all time step $t \in \mathcal{T}$,
\begin{equation}\label{thm:OCO-with-parameter-update-trajectory-distance:e0}
    \norm{G_t} \leq \frac{C_{L, f, \theta}}{1 - \decayfactor}, x_t \in B_n(0, R_S + C \norm{x_0}), u_t \in \mathcal{U}, \text{ and }\norm{\theta_{t+1} - \theta_t} \leq \varepsilon.
\end{equation}
Note that $\norm{G_0} \leq C_{L, f, \theta} \leq \frac{C_{L, f, \theta}}{1 - \decayfactor}$ by \Cref{coro:smooth-multi-step-costs}. We also have $x_0 \in B_n(0, R_S + C\norm{x_0})$ and $u_0 \in \mathcal{U}$.

Suppose $\norm{G_{t-1}} \leq \frac{C_{L, f, \theta}}{1 - \decayfactor}$ for some $t \geq 1$. Then, since $\eta \leq \frac{(1 - \rho)\varepsilon}{C_{L, f, \theta}}$ and the projection onto $\Theta$ is a contraction (see Theorem 1.2.1 in \cite{schneider2014convex}), we see that
\[\norm{\theta_t - \theta_{t-1}} \leq \norm{\eta G_{t-1}} \leq \varepsilon.\]
Suppose $\norm{\theta_\tau - \theta_{\tau-1}} \leq \varepsilon$ holds for all $\tau \leq t$, i.e., $\theta_{0:t} \in S_{\varepsilon}(0:t)$. By \Cref{lemma:epsilon-time-varying-stability}, we see that
\[x_t \in B_n(0, R_S + C \norm{x_0}), \text{ and } u_t \in \mathcal{U}.\]
Taking norm on both sides of \eqref{thm:OCO-with-parameter-update-trajectory-distance:e0-0}, we see that
\begin{subequations}\label{thm:OCO-with-parameter-update-trajectory-distance:e1}
\begin{align}
    \norm{G_t} ={}& \norm{\sum_{\tau = 0}^{\min\{t, B-1\}} \left.\frac{\partial f_{t\mid t-\tau}}{\partial \theta_{t - \tau}}\right|_{x_{t - \tau}, \theta_{t-\tau:t}}}\nonumber\\
    \leq{}& \sum_{\tau = 0}^{\min\{t, B-1\}} \norm{\left.\frac{\partial f_{t\mid t-\tau}}{\partial \theta_{t - \tau}}\right|_{x_{t - \tau}, \theta_{t-\tau:t}}} \label{thm:OCO-with-parameter-update-trajectory-distance:e1:s1}\\
    \leq{}& \sum_{\tau = 0}^{\min\{t, B-1\}} C_{L, f, \theta} \decayfactor^\tau \label{thm:OCO-with-parameter-update-trajectory-distance:e1:s2}\\
    \leq{}& \frac{C_{L, f, \theta}}{1 - \decayfactor},\nonumber
\end{align}
\end{subequations}
where we use the triangle inequality in \eqref{thm:OCO-with-parameter-update-trajectory-distance:e1:s1} and \Cref{coro:smooth-multi-step-costs} in \eqref{thm:OCO-with-parameter-update-trajectory-distance:e1:s2}. Note that we can apply \Cref{coro:smooth-multi-step-costs} because $x_t \in B_n(0, R_S + C \norm{x_0})$. Therefore, we have shown \eqref{thm:OCO-with-parameter-update-trajectory-distance:e0} by induction. One can use the same technique as \eqref{thm:OCO-with-parameter-update-trajectory-distance:e1} to show $\norm{\nabla F_t(\theta_t)} \leq \frac{C_{L, f, \theta}}{1 - \decayfactor}$.

Since the projection onto the set $\Theta$ is a contraction, we obtain that for any $t > \tau$,
\begin{align}\label{thm:OCO-with-parameter-update-trajectory-distance:e2}
    \norm{\theta_t - \theta_\tau} \leq \frac{C_{L, f, \theta} \eta (t - \tau)}{1 - \decayfactor}.
\end{align}
Now we bound the distance between $x_\tau$ and $\hat{x}_\tau(\theta_t)$ for $\tau \leq t$. We see that
\begin{subequations}\label{thm:OCO-with-parameter-update-trajectory-distance:e3}
\begin{align}
    \norm{x_\tau - \hat{x}_\tau(\theta_t)} ={}& \norm{g_{\tau\mid 0}(x_0, \theta_{0:\tau-1}) - g_{\tau\mid 0}(x_0, (\theta_t)_{\times \tau})}\nonumber\\
    \leq{}& \sum_{\tau' = 0}^{\tau-1} \norm{g_{\tau\mid 0}(x_0, \theta_{0:\tau'}, (\theta_t)_{\times (\tau - \tau' -1)}) - g_{\tau\mid 0}(x_0, \theta_{0:\tau'-1}, (\theta_t)_{\times (\tau - \tau')})} \label{thm:OCO-with-parameter-update-trajectory-distance:e3:s1}\\
    \leq{}& \sum_{\tau' = 0}^{\tau-1}\norm{g_{\tau\mid \tau'}(x_{\tau'}, \theta_{\tau'}, (\theta_t)_{\times (\tau - \tau' -1)}) - g_{\tau\mid \tau'}(x_{\tau'}, (\theta_t)_{\times (\tau - \tau')})} \label{thm:OCO-with-parameter-update-trajectory-distance:e3:s2}\\
    \leq{}& \sum_{\tau' = 0}^{\tau-1} C_{L, g, \theta} \decayfactor^{\tau - \tau'} \norm{\theta_t - \theta_{\tau'}} \label{thm:OCO-with-parameter-update-trajectory-distance:e3:s3}\\
    \leq{}& \frac{C_{L, f, \theta} C_{L, g, \theta} \eta}{1 - \decayfactor} \sum_{\tau' = 0}^{\tau-1} (t - \tau') \decayfactor^{\tau - \tau'} \label{thm:OCO-with-parameter-update-trajectory-distance:e3:s4}\\
    \leq{}& \frac{C_{L, f, \theta} C_{L, g, \theta} \decayfactor}{(1 - \decayfactor)^2} \left((t - \tau) + \frac{1}{1 - \decayfactor}\right) \cdot \eta, \nonumber
\end{align}
\end{subequations}
where we use the triangle inequality in \eqref{thm:OCO-with-parameter-update-trajectory-distance:e3:s1};
we use the definition of multi-step dynamics in \eqref{thm:OCO-with-parameter-update-trajectory-distance:e3:s2};
we use \Cref{lemma:smooth-multi-step-dynamics} in \eqref{thm:OCO-with-parameter-update-trajectory-distance:e3:s3};
we use \eqref{thm:OCO-with-parameter-update-trajectory-distance:e2} in \eqref{thm:OCO-with-parameter-update-trajectory-distance:e3:s4}.

Similarly, since \eqref{thm:OCO-with-parameter-update-trajectory-distance:e0-0} guarantees that $x_t \in B_n(0, R_S + C \norm{x_0})$ and we also see that $\hat{x}_t(\theta_t) \in B_n(0, R_S + C \norm{x_0})$, we obtain that
\begin{subequations}\label{thm:OCO-with-parameter-update-trajectory-distance:e4}
\begin{align}
    \abs{f_t(x_t, u_t) - F_t(\theta_t)} ={}& \abs{f_t(x_t, u_t) - f_t(\hat{x}_t(\theta_t), \hat{u}_t(\theta_t))}\nonumber\\
    \leq{}& L_f\left(\norm{x_t - \hat{x}_t(\theta_t)} + \norm{u_t - \hat{u}_t(\theta_t)}\right) \label{thm:OCO-with-parameter-update-trajectory-distance:e4:s1}\\
    ={}& L_f\left(\norm{x_t - \hat{x}_t(\theta_t)} + \norm{\ALG_t(x_t, \theta_t) - \ALG_t(\hat{x}_t(\theta_t), \theta_t)}\right) \nonumber\\
    \leq{}& L_f (1 + L_{\ALG, x}) \norm{x_t - \hat{x}_t(\theta_t)} \label{thm:OCO-with-parameter-update-trajectory-distance:e4:s3}\\
    \leq{}& \frac{C_{L, f, \theta} C_{L, g, \theta}L_f (1 + L_{\ALG, x}) \decayfactor}{(1 - \decayfactor)^3} \cdot \eta, \label{thm:OCO-with-parameter-update-trajectory-distance:e4:s4}
\end{align}
\end{subequations}
where we use \Cref{assump:Lipschitz-and-smoothness} in \eqref{thm:OCO-with-parameter-update-trajectory-distance:e4:s1} and \eqref{thm:OCO-with-parameter-update-trajectory-distance:e4:s3};
we use \eqref{thm:OCO-with-parameter-update-trajectory-distance:e3} in \eqref{thm:OCO-with-parameter-update-trajectory-distance:e4:s4}.

To show the last inequality in \Cref{thm:OCO-with-parameter-update-trajectory-distance}, note that we have
\begin{subequations}\label{thm:OCO-with-parameter-update-trajectory-distance:e5-0}
\begin{align}
    \norm{\tilde{x}_t - \hat{x}_t(\tilde{\theta}_t)} &\leq \sum_{\tau = 0}^{t-1} \norm{g_{t\mid 0}\left(x_0, \tilde{\theta}_{0:\tau-1}, (\tilde{\theta}_t)_{\times (t-\tau)}\right) - g_{t\mid 0}\left(x_0, \tilde{\theta}_{0:\tau}, (\tilde{\theta}_t)_{\times (t-\tau-1)}\right)}\label{thm:OCO-with-parameter-update-trajectory-distance:e5-0:s0-0}\\
    &\leq C L_{\ALG, \theta} L_{g, x} \sum_{\tau = 0}^{t-1}\rho^{t - \tau - 1}\norm{\tilde{\theta}_t - \tilde{\theta}_{\tau}}\label{thm:OCO-with-parameter-update-trajectory-distance:e5-0:s0-1}\\
    &\leq C L_{\ALG, \theta} L_{g, x} \sum_{\tau = 0}^{t-1}\rho^{t - \tau - 1} \sum_{\tau' = \tau}^{t-1} \norm{\tilde{\theta}_{\tau'+1} - \tilde{\theta}_{\tau'}}\label{thm:OCO-with-parameter-update-trajectory-distance:e5-0:s1}\\
    &\leq \frac{C L_{\ALG, \theta} L_{g, x}}{1 - \rho}\sum_{\tau = 0}^{t-1} \rho^{t - \tau - 1} \norm{\tilde{\theta}_{\tau + 1} - \tilde{\theta}_\tau},\label{thm:OCO-with-parameter-update-trajectory-distance:e5-0:s2}
\end{align}
\end{subequations}
where we use the triangle inequality in \eqref{thm:OCO-with-parameter-update-trajectory-distance:e5-0:s0-0} and \eqref{thm:OCO-with-parameter-update-trajectory-distance:e5-0:s1}; we use the assumption that $\norm{\tilde{x_t}} \leq \min\{R_C, R_x\}$ and the time-invariant contractive perturbation property in \eqref{thm:OCO-with-parameter-update-trajectory-distance:e5-0:s0-1}; we rearrange the terms and use $\sum_{\tau=0}^\infty \rho^\tau \leq \frac{1}{1 - \rho}$ in \eqref{thm:OCO-with-parameter-update-trajectory-distance:e5-0:s2}.

Therefore, since $\tilde{x}_t, \hat{x}_t(\tilde{\theta}_t) \in \mathcal{X}$ and $\tilde{u}_t, \hat{u}_t(\tilde{\theta}_t) \in \mathcal{U}$, we see that
\begin{subequations}\label{thm:OCO-with-parameter-update-trajectory-distance:e5}
\begin{align}
    \abs{f_t(\tilde{x}_t, \tilde{u}_t) - F_t(\tilde{\theta}_t)} ={}& \abs{f_t(\tilde{x}_t, \tilde{u}_t) - f_t(\hat{x}_t(\tilde{\theta}_t), \hat{u}_t(\tilde{\theta}_t))}\nonumber\\
    \leq{}& L_f\left(\norm{\tilde{x}_t - \hat{x}_t(\tilde{\theta}_t)} + \norm{\tilde{u}_t - \hat{u}_t(\tilde{\theta}_t)}\right) \label{thm:OCO-with-parameter-update-trajectory-distance:e5:s1}\\
    ={}& L_f\left(\norm{\tilde{x}_t - \hat{x}_t(\tilde{\theta}_t)} + \norm{\ALG_t(\tilde{x}_t, \tilde{\theta}_t) - \ALG_t(\hat{x}_t(\tilde{\theta}_t), \tilde{\theta}_t)}\right) \nonumber\\
    \leq{}& L_f (1 + L_{\ALG, x}) \norm{\tilde{x}_t - \hat{x}_t(\tilde{\theta}_t)} \label{thm:OCO-with-parameter-update-trajectory-distance:e5:s3}\\
    \leq{}& \frac{C L_{\ALG, \theta} L_{g, x} L_f (1 + L_{\ALG, x})}{1 - \rho}\sum_{\tau = 0}^{t-1} \rho^{t - \tau - 1} \norm{\tilde{\theta}_{\tau + 1} - \tilde{\theta}_\tau}, \label{thm:OCO-with-parameter-update-trajectory-distance:e5:s4}
\end{align}
\end{subequations}
where we use \Cref{assump:Lipschitz-and-smoothness} in \eqref{thm:OCO-with-parameter-update-trajectory-distance:e5:s1} and \eqref{thm:OCO-with-parameter-update-trajectory-distance:e5:s3}; we use \eqref{thm:OCO-with-parameter-update-trajectory-distance:e5-0} in \eqref{thm:OCO-with-parameter-update-trajectory-distance:e5:s4}.

\subsection{Proof of Theorem \ref{thm:OCO-with-parameter-update-gradient-bias}}\label{appendix:OCO-with-parameter-update-gradient-bias}
\begin{theorem}[Gradient Bias]\label{thm:OCO-with-parameter-update-gradient-bias:full}
Suppose Assumptions \ref{assump:Lipschitz-and-smoothness} and \ref{assump:contractive-and-stability} hold. Let $\{x_t, u_t, \theta_t\}_{t \in \mathcal{T}}$ denote the trajectory of GAPS (Algorithm \ref{alg:OCO-with-parameter-update}) with buffer size $B$ and learning rate $\eta \leq \frac{(1 - \rho)\varepsilon}{C_{L, f, \theta}}$. Then, the following holds for all $\tau \leq t$:
\begin{align*}
    \norm{\left.\frac{\partial f_{t\mid 0}}{\partial \theta_\tau} \right|_{x_0, \theta_{0:t}} - \left.\frac{\partial f_{t\mid 0}}{\partial \theta_\tau} \right|_{x_0, (\theta_t)_{\times (t+1)}}} \leq \left(\hat{C}_0 + \hat{C}_1 (t - \tau) + \hat{C}_2 (t - \tau)^2\right) \decayfactor^{t - \tau} \cdot \eta,
\end{align*}
for
\begin{align*}
    &\hat{C}_0 = \frac{\decayfactor C_{L, f, \theta} C_{L, g, \theta} C_{\ell, f, (\theta, x)}}{(1 - \decayfactor)^3}, \hat{C}_1 = \frac{(1 - \decayfactor)C_{L, f, \theta} C_{\ell, f, (\theta, x)} + \decayfactor C_{L, f, \theta} C_{L, g, \theta} C_{\ell, f, (\theta, \theta)}}{(1 - \decayfactor)^2},\\
    &\hat{C}_2 = \frac{C_{\ell, f, (x, \theta) C_{L, g, \theta} C_{L, f, \theta}}}{1 - \decayfactor}.
\end{align*}
Next,
\[\norm{G_t - \nabla F_t(\theta_t)} \leq \left(\frac{\hat{C}_0}{1 - \decayfactor} + \frac{\hat{C}_1 + \hat{C}_2}{(1 - \decayfactor)^{2}} + \frac{\hat{C}_2}{(1 - \decayfactor)^{3}}\right) \eta + \frac{C_{L, f, \theta}}{1 - \decayfactor} \cdot \decayfactor^B.\]
\end{theorem}
\begin{proof}[Proof of \Cref{thm:OCO-with-parameter-update-gradient-bias:full}]
To simplify the notation, we adopt the shorthand notations $\hat{x}_\tau(\theta) \coloneqq g_{\tau\mid 0}(x_0, \theta_{\times \tau})$ and $\hat{u}_\tau(\theta) \coloneqq \ALG_\tau(\hat{x}_\tau(\theta), \theta)$ throughout the proof.

As we discussed below \Cref{thm:OCO-with-parameter-update-gradient-bias} in the proof outline, we use the triangle inequality to do the decomposition
\begin{align}\label{thm:OCO-with-parameter-update-gradient-bias:e1}
    &\norm{\left.\frac{\partial f_{t\mid 0}}{\partial \theta_\tau} \right|_{x_0, \theta_{0:t}} - \left.\frac{\partial f_{t\mid 0}}{\partial \theta_\tau} \right|_{x_0, (\theta_t)_{\times (t + 1)}}}\nonumber\\
    ={}&\norm{\left.\frac{\partial f_{t\mid \tau}}{\partial \theta_\tau} \right|_{x_\tau, \theta_{\tau:t}} - \left.\frac{\partial f_{t\mid \tau}}{\partial \theta_\tau} \right|_{\hat{x}_\tau(\theta_t), (\theta_t)_{\times (t - \tau + 1)}}} \nonumber\\
    \leq{}& \norm{\left.\frac{\partial f_{t\mid \tau}}{\partial \theta_\tau} \right|_{x_\tau, \theta_\tau, (\theta_t)_{\times (t - \tau)}} - \left.\frac{\partial f_{t\mid \tau}}{\partial \theta_\tau} \right|_{\hat{x}_\tau(\theta_t), (\theta_t)_{\times (t - \tau + 1)}}}\nonumber\\
    &+ \sum_{\tau' = \tau + 1}^{t-1} \norm{\left.\frac{\partial f_{t\mid \tau}}{\partial \theta_\tau} \right|_{x_\tau, \theta_{\tau:\tau'}, (\theta_t)_{\times (t - \tau')}} - \left.\frac{\partial f_{t\mid \tau}}{\partial \theta_\tau} \right|_{x_\tau, \theta_{\tau:\tau'-1}, (\theta_t)_{\times (t - \tau' + 1)}}}.
\end{align}

Note that we can apply \Cref{coro:smooth-multi-step-costs} to bound each term in \eqref{thm:OCO-with-parameter-update-gradient-bias:e1}. For the first term in \eqref{thm:OCO-with-parameter-update-gradient-bias:e1}, since $x_\tau, \hat{x}_\tau(\theta_t), x_{\tau+1} \in B_n(0, R_S + C\norm{x_0})$, we see that
\begin{subequations}\label{thm:OCO-with-parameter-update-gradient-bias:e2}
\begin{align}
    &\norm{\left.\frac{\partial f_{t\mid \tau}}{\partial \theta_\tau} \right|_{x_\tau, \theta_\tau, (\theta_t)_{\times (t - \tau)}} - \left.\frac{\partial f_{t\mid \tau}}{\partial \theta_\tau} \right|_{\hat{x}_\tau(\theta_t), (\theta_t)_{\times (t - \tau + 1)}}}\nonumber\\
    \leq{}& \decayfactor^{t - \tau} \left(C_{\ell, f, (\theta, x)} \norm{x_\tau - \hat{x}_\tau(\theta_t)} + C_{\ell, f, (\theta, \theta)} \norm{\theta_t - \theta_\tau}\right) \label{thm:OCO-with-parameter-update-gradient-bias:e2:s1}\\
    \leq{}& \frac{(1 - \decayfactor)C_{L, f, \theta} C_{\ell, f, (\theta, x)} + \decayfactor C_{L, f, \theta} C_{L, g, \theta} C_{\ell, f, (\theta, \theta)}}{(1 - \decayfactor)^2} \cdot (t - \tau) \decayfactor^{t - \tau} \cdot \eta\nonumber\\
    &+ \frac{\decayfactor C_{L, f, \theta} C_{L, g, \theta} C_{\ell, f, (\theta, x)}}{(1 - \decayfactor)^3}\cdot \decayfactor^{t - \tau} \cdot \eta, \label{thm:OCO-with-parameter-update-gradient-bias:e2:s2}
\end{align}
\end{subequations}
where we use \Cref{coro:smooth-multi-step-costs} in \eqref{thm:OCO-with-parameter-update-gradient-bias:e2:s1} and \Cref{thm:OCO-with-parameter-update-trajectory-distance} in \eqref{thm:OCO-with-parameter-update-gradient-bias:e2:s2}.

For any $\tau' \in [\tau+1: t-1]$, since $x_{\tau'}, x_{\tau'+1} \in B_n(0, R_S + C\norm{x_0})$, we see that
\begin{subequations}\label{thm:OCO-with-parameter-update-gradient-bias:e3}
\begin{align}
    &\norm{\left.\frac{\partial f_{t\mid \tau}}{\partial \theta_\tau} \right|_{x_\tau, \theta_{\tau:\tau'}, (\theta_t)_{\times (t - \tau')}} - \left.\frac{\partial f_{t\mid \tau}}{\partial \theta_\tau} \right|_{x_\tau, \theta_{\tau:\tau'-1}, (\theta_t)_{\times (t - \tau' + 1)}}}\nonumber\\
    ={}& \norm{\left(\left.\frac{\partial f_{t\mid \tau'}}{\partial x_{\tau'}} \right|_{x_{\tau'}, \theta_{\tau'}, (\theta_t)_{\times (t - \tau')}} - \left.\frac{\partial f_{t\mid \tau'}}{\partial x_{\tau'}} \right|_{x_{\tau'}, (\theta_t)_{\times (t - \tau' + 1)}}\right)\left.\frac{\partial g_{\tau'\mid \tau}}{\partial \theta_\tau}\right|_{x_\tau, \theta_{\tau:\tau'-1}}}\nonumber\\
    \leq{}& \norm{\left.\frac{\partial f_{t\mid \tau'}}{\partial x_{\tau'}} \right|_{x_{\tau'}, \theta_{\tau'}, (\theta_t)_{\times (t - \tau')}} - \left.\frac{\partial f_{t\mid \tau'}}{\partial x_{\tau'}} \right|_{x_{\tau'}, (\theta_t)_{\times (t - \tau' + 1)}}} \cdot \norm{\left.\frac{\partial g_{\tau'\mid \tau}}{\partial \theta_\tau}\right|_{x_\tau, \theta_{\tau:\tau'-1}}}\nonumber\\
    \leq{}& C_{\ell, f, (x, \theta)} \decayfactor^{t - \tau'} \norm{\theta_t - \theta_{\tau'}} \cdot C_{L, g, \theta} \decayfactor^{\tau' - \tau} \label{thm:OCO-with-parameter-update-gradient-bias:e3:s1}\\
    \leq{}& \frac{C_{\ell, f, (x, \theta) C_{L, g, \theta} C_{L, f, \theta}}}{1 - \decayfactor} \cdot (t - \tau)\decayfactor^{t - \tau} \cdot \eta, \label{thm:OCO-with-parameter-update-gradient-bias:e3:s2}
\end{align}
\end{subequations}
where we use \Cref{lemma:smooth-multi-step-dynamics} and \Cref{coro:smooth-multi-step-costs} in \eqref{thm:OCO-with-parameter-update-gradient-bias:e3:s1}; we use \Cref{thm:OCO-with-parameter-update-trajectory-distance} in \eqref{thm:OCO-with-parameter-update-gradient-bias:e3:s2}. Substituting \eqref{thm:OCO-with-parameter-update-gradient-bias:e2} and \eqref{thm:OCO-with-parameter-update-gradient-bias:e3} into \eqref{thm:OCO-with-parameter-update-gradient-bias:e1} finishes the proof of the first inequality.

For the second inequality, recall that $G_t$ and $\nabla \ell_t(\theta_t)$ are given by
\[G_t \coloneqq \sum_{\tau = 0}^{\min\{t, B-1\}} \left.\frac{\partial f_{t\mid 0}}{\partial \theta_{t - \tau}}\right|_{x_0, \theta_{0:t}}, \nabla \ell_t(\theta_t) = \sum_{\tau=0}^t \left.\frac{\partial f_{t\mid 0}}{\partial \theta_{t - \tau}}\right|_{x_0, (\theta_t)_{\times (t+1)}}.\]
Therefore, we see that
\begin{subequations}\label{thm:OCO-with-parameter-update-gradient-bias:e4}
\begin{align}
    \norm{G_t - \nabla F_t(\theta_t)}
    ={}& \norm{\sum_{\tau = 0}^{\min\{t, B-1\}} \left.\frac{\partial f_{t\mid 0}}{\partial \theta_{t - \tau}}\right|_{x_0, \theta_{0:t}} - \sum_{\tau=0}^t \left.\frac{\partial f_{t\mid 0}}{\partial \theta_{t - \tau}}\right|_{x_0, (\theta_t)_{\times (t+1)}}}\nonumber\\
    \leq{}& \sum_{\tau = 0}^{\min\{t, B - 1\}} \norm{\left.\frac{\partial f_{t\mid 0}}{\partial \theta_{t - \tau}}\right|_{x_0, \theta_{0:t}} - \left.\frac{\partial f_{t\mid 0}}{\partial \theta_{t - \tau}}\right|_{x_0, (\theta_t)_{\times (t+1)}}}\nonumber\\
    &+ \mathbf{1}(t \geq B) \sum_{\tau = B}^t \norm{\left.\frac{\partial f_{t\mid t - \tau}}{\partial \theta_{t - \tau}}\right|_{\hat{x}_{t-\tau}(\theta_t), (\theta_t)_{\times (\tau + 1)}}}\label{thm:OCO-with-parameter-update-gradient-bias:e4:s1}\\
    \leq{}& \sum_{\tau = 0}^{B - 1} \left(\hat{C}_0 + \hat{C}_1 \tau + \hat{C}_2 \tau^2\right) \decayfactor^\tau \eta + \sum_{\tau = B}^t C_{L, f, \theta} \decayfactor^\tau\label{thm:OCO-with-parameter-update-gradient-bias:e4:s2}\\
    \leq{}& \left(\frac{\hat{C}_0}{1 - \decayfactor} + \frac{\hat{C}_1 + \hat{C}_2}{(1 - \decayfactor)^2} + \frac{\hat{C}_2}{(1 - \decayfactor)^3}\right) \eta + \frac{C_{L, f, \theta}}{1 - \decayfactor} \cdot \decayfactor^B, \nonumber
\end{align}
\end{subequations}
where we use the triangle inequality in \eqref{thm:OCO-with-parameter-update-gradient-bias:e4:s1}; we use the first inequality in \Cref{thm:OCO-with-parameter-update-gradient-bias} that we have shown and \Cref{coro:smooth-multi-step-costs} in \eqref{thm:OCO-with-parameter-update-gradient-bias:e4:s2}.
\end{proof}

\section{Dynamic regret of GAPS}\label{appendix:gaps-dynamic-regret}
When the surrogate cost $F_t$ is convex, dynamic policy regret is another benchmark designed for time-varying environments \cite{hazan2016introduction, zhou2023efficient}. To define the dynamic regret, we first need to define the optimal total cost under the parameter path length constraint (denoted by $\hat{J}_{\mathcal{T}}(\Upsilon)$):
\begin{align*}
\min_{\tilde{\theta}_{0:T-1} \in \Theta^T} J_{\mathcal{T}}(\tilde{\theta}_{0:T-1})
\text{ s.t. } \sum_{\tau = 1}^{T-1}\norm{\tilde{\theta}_\tau - \tilde{\theta}_{\tau-1}} \leq \Upsilon-1,\text{ and }\norm{\tilde{x}_t} \leq \min\{R_C, R_x\}, \forall t \in \mathcal{T}, 
\end{align*}
where $\tilde{x}_t \coloneqq g_{t\mid 0}(x_0, \tilde{\theta}_{0:t-1})$ and $\Upsilon-1 \geq 0$ is the upper bound on the total path length of the parameter sequence. The dynamic regret is defined as
\begin{align}\label{equ:dynamic-policy-regret}
    R_{\Upsilon}^D(T) \coloneqq J_{\mathcal{T}}(\theta_{0:T-1}) - \hat{J}_{\mathcal{T}}(\Upsilon).
\end{align}
Compared with the (static) policy regret $R^S(T)$ and the adaptive policy regret $R^A(T)$, the dynamic policy regret $R^D(T)$ is more flexible by allowing the offline benchmark to change policy parameters over time.

To bound the dynamic regret of GAPS, we follow a similar outline as the proof of the adaptive regret (\Cref{appendix:gaps-outline}): We first show a dynamic regret of the Ideal OGD algorithm that is robust to gradient biases.

\begin{theorem}\label{thm:OGD-with-biased-gradient-dynamic-regret}
Under the same assumptions as \Cref{thm:OGD-with-biased-gradient-regret}, for any sequence $\tilde{\theta}_{0:T-1}\in \Theta^T$, we have
{
\begin{align*}
\sum_{t=0}^{T-1}F_t(\theta_t) - \sum_{t=0}^{T-1}F_t(\tilde{\theta}_t) \leq \frac{3 D^2}{\eta} + \frac{2D}{\eta}\sum_{t=1}^{T-1} \norm{\tilde{\theta}_t - \tilde{\theta}_{t-1}} + \eta (W + \alpha)^2 T + 2 \alpha D T.
\end{align*}
}
\end{theorem}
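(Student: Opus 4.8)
The plan is to adapt the standard dynamic regret analysis for online gradient descent to the setting with biased gradients, mirroring the structure already used to prove the static regret bound (\Cref{thm:OGD-with-biased-gradient-regret}). First I would set up notation: write $g_t$ for the biased gradient that the Ideal OGD update actually uses, and decompose $g_t = \nabla F_t(\theta_t) + \xi_t$, where the bias satisfies $\norm{\xi_t} \leq \alpha$ and where, under the assumptions inherited from \Cref{thm:OGD-with-biased-gradient-regret}, the true gradients are bounded by $\norm{\nabla F_t(\theta_t)} \leq W$ (so $\norm{g_t} \leq W + \alpha$). The update is $\theta_{t+1} = \Pi_\Theta(\theta_t - \eta g_t)$, and I would compare against an arbitrary comparator sequence $\tilde\theta_{0:T-1} \in \Theta^T$.

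The core step is the per-round potential argument. Using nonexpansiveness of the Euclidean projection onto the convex set $\Theta$ and the comparator $\tilde\theta_t$,
\begin{align*}
\norm{\theta_{t+1} - \tilde\theta_t}^2 \leq \norm{\theta_t - \tilde\theta_t}^2 - 2\eta \inner{g_t}{\theta_t - \tilde\theta_t} + \eta^2 \norm{g_t}^2.
\end{align*}
Splitting $g_t = \nabla F_t(\theta_t) + \xi_t$ and invoking convexity of $F_t$ gives $\inner{\nabla F_t(\theta_t)}{\theta_t - \tilde\theta_t} \geq F_t(\theta_t) - F_t(\tilde\theta_t)$, while the bias term contributes $-2\eta\inner{\xi_t}{\theta_t - \tilde\theta_t} \geq -2\eta\alpha D$ using $\norm{\theta_t - \tilde\theta_t} \leq D$. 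Rearranging yields the single-round inequality
\begin{align*}
F_t(\theta_t) - F_t(\tilde\theta_t) \leq \frac{\norm{\theta_t - \tilde\theta_t}^2 - \norm{\theta_{t+1} - \tilde\theta_t}^2}{2\eta} + \frac{\eta}{2}(W+\alpha)^2 + \alpha D.
\end{align*}
Summing over $t = 0, \dots, T-1$ gives the $\eta(W+\alpha)^2 T + 2\alpha D T$ terms (up to the factor-of-two bookkeeping) and a telescoping-like sum $\sum_t \bigl(\norm{\theta_t - \tilde\theta_t}^2 - \norm{\theta_{t+1} - \tilde\theta_t}^2\bigr)$ that is \emph{not} a clean telescope because the comparator changes each step.

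The main obstacle — and the one genuinely dynamic-regret-specific part — is handling that non-telescoping sum. The standard trick is to add and subtract $\norm{\theta_{t+1} - \tilde\theta_{t+1}}^2$ inside the sum, producing a telescoping piece $\norm{\theta_0 - \tilde\theta_0}^2 - \norm{\theta_T - \tilde\theta_T}^2 \leq D^2$ plus a correction $\sum_{t=0}^{T-2}\bigl(\norm{\theta_{t+1} - \tilde\theta_t}^2 - \norm{\theta_{t+1} - \tilde\theta_{t+1}}^2\bigr)$. Each correction term factors as $\inner{\tilde\theta_{t+1} - \tilde\theta_t}{2\theta_{t+1} - \tilde\theta_t - \tilde\theta_{t+1}}$, which is bounded by $2D\norm{\tilde\theta_{t+1} - \tilde\theta_t}$ since all iterates and comparators lie in $\Theta$ of diameter $D$. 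Dividing by $2\eta$ turns this into $\frac{D}{\eta}\sum_t \norm{\tilde\theta_{t+1} - \tilde\theta_t}$; combined with the $D^2/(2\eta)$ telescoped term and absorbing constants loosely to match the stated form, one arrives at $\frac{3D^2}{\eta} + \frac{2D}{\eta}\sum_{t=1}^{T-1}\norm{\tilde\theta_t - \tilde\theta_{t-1}} + \eta(W+\alpha)^2 T + 2\alpha D T$. The only care needed is to confirm which boundedness facts ($W$, $\alpha$, $D$) are exactly the ones carried over from \Cref{thm:OGD-with-biased-gradient-regret}, so that nothing new is assumed; everything else is the routine projected-gradient bookkeeping.
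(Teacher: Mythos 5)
Your proposal is correct and follows essentially the same route as the paper's proof: the identical per-round potential inequality from nonexpansiveness of the projection, the same splitting of the biased gradient to extract the $2\alpha D$ and $\eta(W+\alpha)^2$ terms via convexity, and then control of the non-telescoping sum by the comparator path length. The only (cosmetic) difference is in that last step: you add and subtract $\norm{\theta_{t+1}-\tilde\theta_{t+1}}^2$ and factor the resulting differences of squares, whereas the paper expands the squared norms and applies summation by parts to $\sum_t\langle\tilde\theta_t,\theta_{t+1}-\theta_t\rangle$; both yield the claimed bound (yours with slightly better constants).
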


The proof of \Cref{thm:OGD-with-biased-gradient-dynamic-regret} is deferred to \Cref{appendix:OGD-dynamic-regret}.

Recall that we have estabilished the similarity between GAPS and Ideal OGD in \Cref{thm:bridge-GAPS-and-OGD}. Using these bounds on costs/gradients differences, we can ``transfer'' the dynamic regret guarantee on OGD (for online optimization) to GAPS (for online policy selection). We formally state this result \Cref{thm:main-regret-bound-convex:dynamic} and defer the proof to \Cref{appendix:GAPS-dynamic-regret}.

\begin{theorem}\label{thm:main-regret-bound-convex:dynamic}
Under the same assumptions as \Cref{thm:main-regret-bound-convex}, GAPS achieves the dynamic regret bound
{
\begin{align}
    R_{\Upsilon}^D(T) ={}& O\bigg((\eta T)^{-1} \Upsilon + (1 - \decayfactor)^{-5}{\eta T} + (1 - \decayfactor)^{-1}{\decayfactor^B T} + (1 - \decayfactor)^{-10}{\eta^3 T} + (1 - \decayfactor)^{-2}{\decayfactor^{2B}\eta}\bigg). \label{thm:main-regret-bound-convex:e03}
\end{align}
}
(See \Cref{appendix:GAPS-dynamic-regret} for the detailed expressions.)
\end{theorem}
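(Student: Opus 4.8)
The plan is to mirror the adaptive-regret argument of \Cref{appendix:gaps-outline}: reduce the dynamic \emph{policy} regret of GAPS to the dynamic \emph{optimization} regret of Ideal OGD with biased gradients (\Cref{thm:OGD-with-biased-gradient-dynamic-regret}), paying for the mismatch between the two settings with the cost/gradient error bounds of \Cref{thm:bridge-GAPS-and-OGD}. First I would fix a feasible comparator path $\tilde\theta^\star_{0:T-1}$ attaining $\hat J_{\mathcal T}(\Upsilon)$, so that $\sum_{\tau=1}^{T-1}\norm{\tilde\theta^\star_\tau-\tilde\theta^\star_{\tau-1}}\le\Upsilon-1$, $\norm{\tilde{x}_t}\le\min\{R_C,R_x\}$ for all $t$, and $\hat J_{\mathcal T}(\Upsilon)=\sum_t f_t(\tilde{x}_t,\tilde{u}_t)$, where $\tilde{x}_t\coloneqq g_{t\mid0}(x_0,\tilde\theta^\star_{0:t-1})$ and $\tilde{u}_t\coloneqq\pi_t(\tilde{x}_t,\tilde\theta^\star_t)$. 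Then I would split
\begin{align*}
R_{\Upsilon}^D(T)=\underbrace{\Big(J_{\mathcal T}(\theta_{0:T-1})-\textstyle\sum_t F_t(\theta_t)\Big)}_{(\mathrm{I})}+\underbrace{\Big(\textstyle\sum_t F_t(\theta_t)-\sum_t F_t(\tilde\theta^\star_t)\Big)}_{(\mathrm{II})}+\underbrace{\Big(\textstyle\sum_t F_t(\tilde\theta^\star_t)-\hat J_{\mathcal T}(\Upsilon)\Big)}_{(\mathrm{III})}.
\end{align*}

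Term $(\mathrm{I})$ is handled directly by summing the cost-error bound $\abs{f_t(x_t,u_t)-F_t(\theta_t)}=O((1-\rho)^{-3}\eta)$ of \Cref{thm:bridge-GAPS-and-OGD} over $t$, giving $(\mathrm{I})=O((1-\rho)^{-3}\eta T)$. Term $(\mathrm{II})$ is exactly the dynamic regret of Ideal OGD against the path $\tilde\theta^\star_{0:T-1}$: I would invoke \Cref{thm:OGD-with-biased-gradient-dynamic-regret} with gradient-bias parameter $\alpha=O((1-\rho)^{-5}\eta+(1-\rho)^{-1}\rho^B)$ taken from the second bound of \Cref{thm:bridge-GAPS-and-OGD}, and with $W$ a uniform bound on $\norm{\nabla F_t}$, which is finite because the summands $\partial f_{t\mid0}/\partial\theta_{t-b}$ decay geometrically in $b$ under \Cref{assump:Lipschitz-and-smoothness,assump:contractive-and-stability}. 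Using $\sum_\tau\norm{\tilde\theta^\star_\tau-\tilde\theta^\star_{\tau-1}}\le\Upsilon$, $\diam(\Theta)\le D$, and expanding $\eta(W+\alpha)^2T+2\alpha DT$, this yields $(\mathrm{II})=O\big(\eta^{-1}\Upsilon+(1-\rho)^{-5}\eta T+(1-\rho)^{-1}\rho^B T+(1-\rho)^{-10}\eta^3 T+(1-\rho)^{-2}\rho^{2B}\eta T\big)$.

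Term $(\mathrm{III})$ is the genuinely new ingredient: it is the error from scoring the comparator through its surrogate cost rather than its true trajectory. Writing $(\mathrm{III})=\sum_t\big[f_t(\hat{x}_t(\tilde\theta^\star_t),\hat{u}_t(\tilde\theta^\star_t))-f_t(\tilde{x}_t,\tilde{u}_t)\big]$ with $\hat{x}_t(\theta)=g_{t\mid0}(x_0,\theta_{\times t})$, Lipschitzness of $f_t$ and $\pi_t$ reduces it to bounding $\sum_t\norm{\hat{x}_t(\tilde\theta^\star_t)-\tilde{x}_t}$. I would telescope each summand over a ``switch time'' $\tau$: $\hat{x}_t(\tilde\theta^\star_t)-\tilde{x}_t=\sum_{\tau=0}^{t-1}\big[g_{t\mid\tau}(\tilde{x}_\tau,(\tilde\theta^\star_t)_{\times(t-\tau)})-g_{t\mid\tau+1}(\tilde{x}_{\tau+1},(\tilde\theta^\star_t)_{\times(t-\tau-1)})\big]$; the $\tau$-th term compares two trajectories that both run the constant parameter $\tilde\theta^\star_t$ from time $\tau+1$ onward but start there from states $L_{g,u}L_{\pi,\theta}\norm{\tilde\theta^\star_t-\tilde\theta^\star_\tau}$ apart, so the $\varepsilon=0$ (constant-parameter) case of \Cref{def:epsilon-exp-decay-perturbation-property} bounds it by $C\rho^{t-\tau-1}L_{g,u}L_{\pi,\theta}\norm{\tilde\theta^\star_t-\tilde\theta^\star_\tau}$. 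Bounding $\norm{\tilde\theta^\star_t-\tilde\theta^\star_\tau}\le\sum_{i=\tau+1}^{t}\norm{\tilde\theta^\star_i-\tilde\theta^\star_{i-1}}$, swapping the two geometric sums, and summing over $t$ gives $\sum_t\norm{\hat{x}_t(\tilde\theta^\star_t)-\tilde{x}_t}=O((1-\rho)^{-2}\Upsilon)$, which is absorbed into the $\eta^{-1}\Upsilon$ term of $(\mathrm{II})$. Finally, combining $(\mathrm{I})$--$(\mathrm{III})$ and absorbing $(1-\rho)^{-3}\eta T$ into $(1-\rho)^{-5}\eta T$ gives the claimed bound.

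The main obstacle is making the contractive-perturbation step in $(\mathrm{III})$ rigorous: \Cref{def:epsilon-exp-decay-perturbation-property} applies only when both trajectories stay inside $B_n(0,R_C)$ (and inside $\mathcal{X}$ for the Lipschitz estimates on $f_t$ and $\pi_t$), whereas the hybrid trajectories $g_{\cdot\mid\tau}(\tilde{x}_\tau,(\tilde\theta^\star_t)_{\times\cdot})$ contain the ``wrong'' parameter $\tilde\theta^\star_t$ spliced in at time $\tau$. Here one leans on the state-norm constraint $\norm{\tilde{x}_t}\le\min\{R_C,R_x\}$ built into the definition of $\hat J_{\mathcal T}(\Upsilon)$ together with the time-invariant stability estimate $\norm{g_{s\mid\tau}(x,\theta_{\times(s-\tau)})}\le R_S+C\rho^{s-\tau}\norm{x}$ established inside the proof of \Cref{lemma:from-static-to-time-varying}, and must verify that these hybrid trajectories never leave the admissible region. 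A lesser subtlety is that \Cref{thm:OGD-with-biased-gradient-dynamic-regret} already holds for an arbitrary sequence in $\Theta^T$, so $(\mathrm{II})$ requires no feasibility argument beyond $\diam(\Theta)\le D$.
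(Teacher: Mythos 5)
Your proposal is correct and follows essentially the same route as the paper: the same three-term decomposition, with term (I) controlled by the cost-error bound of \Cref{thm:bridge-GAPS-and-OGD}, term (II) by the biased-OGD dynamic regret bound of \Cref{thm:OGD-with-biased-gradient-dynamic-regret}, and term (III) by a comparator-trajectory error of order $(1-\rho)^{-2}\Upsilon$ --- your telescoping-over-switch-times argument for (III) is precisely the content of the bound $\abs{f_t(\tilde{x}_t,\tilde{u}_t)-F_t(\tilde{\theta}_t)}\leq \frac{C_{L,f,\theta}}{1-\rho}\sum_{\tau}\rho^\tau\norm{\tilde{\theta}_{t-\tau}-\tilde{\theta}_{t-\tau-1}}$ that the paper imports from \Cref{thm:OCO-with-parameter-update-trajectory-distance}. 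Note also that your $\eta^{-1}\Upsilon$ term matches the paper's detailed statement (\Cref{thm:main-regret-bound-convex:dynamic:detailed}); the $(\eta T)^{-1}\Upsilon$ and the missing factor of $T$ on the final term in the abbreviated theorem statement appear to be typos.
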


By choosing the appropriate learning rate $\eta$, one can achieve sublinear dynamic regret when the parameter path length constraint $\Upsilon$ of the benchmark is $o(T)$. We discuss how to set the learning rate and the dynamic regret bound it achieves in \Cref{coro:main-regret-bounds-simplified:dynamic}

\begin{corollary}\label{coro:main-regret-bounds-simplified:dynamic}
Under the same assumptions as \Cref{thm:main-regret-bound-convex}, suppose $T \gg \frac{1}{1 - \decayfactor}$ and
$B \geq \frac{1}{2} \log(T) /\log(1/\decayfactor)$: If we set $\eta = (1 - \rho)^{\frac{5}{2}} (\Upsilon/T)^{\frac{1}{2}}$, we get the dynamic regret of $R_{\Upsilon}^D(T) = O\left((1-\rho)^{-\frac{5}{2}}(\Upsilon T)^{\frac{1}{2}}\right)$.
\end{corollary}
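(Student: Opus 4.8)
The plan is to obtain \Cref{coro:main-regret-bounds-simplified:dynamic} as a direct consequence of \Cref{thm:main-regret-bound-convex:dynamic}, by substituting the stated step size and simplifying --- exactly mirroring how \Cref{coro:main-regret-bounds-simplified} follows from \Cref{thm:main-regret-bound-convex}. Before substituting, I would make two preliminary reductions. First, we may assume $\Upsilon \leq T$: otherwise the target bound $O\!\big((1-\decayfactor)^{-5/2}(\Upsilon T)^{1/2}\big)$ is already $\Omega\!\big((1-\decayfactor)^{-5/2} T\big)$, which dominates the crude $O(T)$ bound on $R_{\Upsilon}^D(T)$ that follows from $F_t$ being Lipschitz, hence bounded, on the compact set of reachable parameters and states guaranteed by \Cref{assump:Lipschitz-and-smoothness} and \Cref{assump:contractive-and-stability}. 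Second, I would verify that the chosen $\eta = (1-\decayfactor)^{5/2}(\Upsilon/T)^{1/2}$ satisfies the step-size restriction $\eta \leq \Omega((1-\decayfactor)\varepsilon)$ required by \Cref{thm:bridge-GAPS-and-OGD} and inherited by \Cref{thm:main-regret-bound-convex:dynamic}; since $\Upsilon \leq T$ this holds for $T$ large enough, which is, up to the constant hidden by $\gg$, the content of the hypothesis $T \gg 1/(1-\decayfactor)$, and is vacuous whenever $\varepsilon = +\infty$ (e.g.\ DAC, \Cref{example:MPC-confidence}).

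Next I would convert the buffer hypothesis into exponential decay: since $\decayfactor \in (0,1)$ we have $\log(1/\decayfactor) > 0$, so $B \geq \tfrac12 \log T / \log(1/\decayfactor)$ gives $\decayfactor^{B} = \exp(-B\log(1/\decayfactor)) \leq \exp(-\tfrac12 \log T) = T^{-1/2}$ and thus $\decayfactor^{2B} \leq T^{-1}$. Plugging $\eta = (1-\decayfactor)^{5/2}(\Upsilon/T)^{1/2}$ into the five terms of the bound in \Cref{thm:main-regret-bound-convex:dynamic}: the parameter-variation term $\eta^{-1}\Upsilon$ and the drift term $(1-\decayfactor)^{-5}\eta T$ each evaluate to exactly $(1-\decayfactor)^{-5/2}(\Upsilon T)^{1/2}$, which is precisely why this $\eta$ is the right choice --- it balances these two. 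The truncation term obeys $(1-\decayfactor)^{-1}\decayfactor^{B} T \leq (1-\decayfactor)^{-1} T^{1/2} \leq (1-\decayfactor)^{-5/2}(\Upsilon T)^{1/2}$ using $1-\decayfactor \leq 1$ and $\Upsilon \geq 1$; the higher-order step-size term satisfies $(1-\decayfactor)^{-10}\eta^{3} T = (1-\decayfactor)^{-5/2}\Upsilon^{3/2} T^{-1/2} \leq (1-\decayfactor)^{-5/2}(\Upsilon T)^{1/2}$ using $\Upsilon \leq T$; and the last (mixed) term is $o\!\big((1-\decayfactor)^{-5/2}(\Upsilon T)^{1/2}\big)$ outright. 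Summing the five contributions yields $R_{\Upsilon}^D(T) = O\!\big((1-\decayfactor)^{-5/2}(\Upsilon T)^{1/2}\big)$.

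There is no deep obstacle here: once \Cref{thm:main-regret-bound-convex:dynamic} is available, the corollary is pure bookkeeping. The only mildly delicate points are (i) the $\Upsilon \leq T$ reduction, which relies on the crude linear dynamic-regret bound from boundedness of the $F_t$, and (ii) confirming admissibility of the chosen $\eta$ under the step-size restriction of \Cref{thm:bridge-GAPS-and-OGD}. If a sharp constant were desired, one would simply set the $\eta$-derivatives of the two balancing terms $\eta^{-1}\Upsilon$ and $(1-\decayfactor)^{-5}\eta T$ equal, which recovers exactly $\eta \propto (1-\decayfactor)^{5/2}(\Upsilon/T)^{1/2}$.
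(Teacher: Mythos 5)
Your proposal is correct and matches the paper's (implicit) argument: the paper offers no explicit proof of this corollary, treating it as direct substitution of $\eta=(1-\decayfactor)^{5/2}(\Upsilon/T)^{1/2}$ and $\decayfactor^{B}\leq T^{-1/2}$ into \Cref{thm:main-regret-bound-convex:dynamic}, with the first two terms balancing to $(1-\decayfactor)^{-5/2}(\Upsilon T)^{1/2}$ exactly as you compute. Your additional care about the $\Upsilon\leq T$ reduction (needed for the $\eta^{3}T$ term) and the admissibility of $\eta$ under the step-size restriction of \Cref{thm:bridge-GAPS-and-OGD} addresses points the paper leaves unstated, and is consistent with its conventions.
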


It is worth noticing that the approach to derive the dynamic regret for GAPS can generalize to other performance metrics: If we can derive some performance bounds for OGD in online optimization, we can use \Cref{thm:bridge-GAPS-and-OGD} to show a corresponding guarantee for GAPS in online policy selection. An interesting future direction is using this approach to derive results for GAPS under nonconvex surrogate costs $F_t$.

\subsection{Proof of Theorem \ref{thm:OGD-with-biased-gradient-dynamic-regret}}\label{appendix:OGD-dynamic-regret}
This proof is inspired by the proof of Theorem 10.1 in \cite{hazan2016introduction}. 

Without loss of generality, we can assume $\theta_0 = 0$, because otherwise we change the variable by subtracting all other parameters by $\theta_0$. We see that
\begin{align*}
    \norm{\theta_{t+1} - \tilde{\theta}_t}^2 \leq{}& \norm{\theta_{t+1}' - \tilde{\theta}_t}^2\\
    ={}& \norm{\theta_t - \eta G_t - \tilde{\theta}_t}^2\\
    ={}& \norm{\theta_t - \tilde{\theta}_t}^2 + \eta^2 \norm{G_t}^2 - 2\eta \langle G_t, \theta_t - \tilde{\theta}_t\rangle.
\end{align*}
Thus, we see that
\begin{align*}
    &2\langle \nabla F_t(\theta_t), \theta_t - \tilde{\theta}_t\rangle\\
    \leq{}& 2\langle \nabla F_t(\theta_t) - G_t, \theta_t - \tilde{\theta}_t\rangle + \frac{1}{\eta}\left(\norm{\theta_t - \tilde{\theta}_t}^2 - \norm{\theta_{t+1} - \tilde{\theta}_t}^2\right) + \eta (W + \alpha)^2\\
    \leq{}& \frac{1}{\eta}\left(\norm{\theta_t - \tilde{\theta}_t}^2 - \norm{\theta_{t+1} - \tilde{\theta}_t}^2\right) + \eta (W + \alpha)^2 + 2 \alpha D.
\end{align*}
Therefore, we obtain that
\begin{align*}
    &2\left(\sum_{t=0}^{T-1} F_t(\theta_t) - F_t(\tilde{\theta}_t)\right)\\
    \leq{}& 2 \sum_{t=0}^{T-1} \langle \nabla F_t(\theta_t), \theta_t - \tilde{\theta}_t\rangle\\
    \leq{}& \frac{1}{\eta} \sum_{t=0}^{T-1} \left(\norm{\theta_t - \tilde{\theta}_t}^2 - \norm{\theta_{t+1} - \tilde{\theta}_t}^2\right) + \eta (W + \alpha)^2 T + 2 \alpha D T\\
    \leq{}& \frac{1}{\eta} \sum_{t=0}^{T-1} \left(\norm{\theta_t}^2 - \norm{\theta_{t+1}}^2 + 2 \langle \tilde{\theta}_t, \theta_{t+1} - \theta_t\rangle\right) + \eta (W + \alpha)^2 T + 2 \alpha D T\\
    \leq{}& \frac{1}{\eta}\left(D^2 + 2\sum_{t=1}^{T-1} \langle \theta_t, \tilde{\theta}_{t-1} - \tilde{\theta}_t\rangle + 2 D^2\right) + \eta (W + \alpha)^2 T + 2 \alpha D T\\
    \leq{}& \frac{3 D^2}{\eta} + \frac{2D}{\eta}\sum_{t=1}^{T-1} \norm{\tilde{\theta}_t - \tilde{\theta}_{t-1}} + \eta (W + \alpha)^2 T + 2 \alpha D T.
\end{align*}

\subsection{Complete Version of Theorem \ref{thm:main-regret-bound-convex:dynamic} and Its Proof}\label{appendix:GAPS-dynamic-regret}
\begin{theorem}\label{thm:main-regret-bound-convex:dynamic:detailed}
Under the same assumptions as \Cref{thm:main-regret-bound-convex}, GAPS achieves the dynamic regret bound
{
\begin{align*}
    R_{\Upsilon}^D(T) \leq{}& 2\left(\frac{C_{L, f, \theta}^2}{(1 - \rho)^3} + \left(\frac{\hat{C}_0}{1 - \rho} + \frac{\hat{C}_1 + \hat{C}_2}{(1 - \rho)^2} + \frac{\hat{C}_2}{(1 - \rho)^3}\right)D\right)\eta T + \frac{3 D^2}{\eta} + \frac{2 D C_{L, f,\theta}}{1 - \rho} \cdot \rho^B T\\
    &+4\left(\frac{\hat{C}_0}{1 - \rho} + \frac{\hat{C}_1 + \hat{C}_2}{(1 - \rho)^2} + \frac{\hat{C}_2}{(1 - \rho)^3}\right)^2 D^2 \eta^3 T + \frac{4 C_{L, f, \theta}^2}{(1 - \rho)^2}\rho^{2B}\eta T\\
    &+ \left(\frac{2D}{\eta} + \frac{C_{L, f, \theta}}{(1 - \rho)^2}\right)\sum_{t=1}^{T-1} \norm{\tilde{\theta}_t - \tilde{\theta}_{t-1}}.
\end{align*}
}
\end{theorem}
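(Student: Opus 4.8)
The plan is to prove \Cref{thm:main-regret-bound-convex:dynamic:detailed} with the same ``bridge-and-transfer'' template used for the adaptive regret: decompose the dynamic regret into (I) a term where GAPS behaves like ideal OGD, (II) a term controlled by the OGD-with-biased-gradient \emph{dynamic} regret bound, and (III) a term comparing the comparator's true trajectory cost to its surrogate cost. Concretely, fix any feasible comparator $\tilde\theta_{0:T-1}$ (path length at most $\Upsilon-1$, trajectory $\tilde x_t = g_{t\mid 0}(x_0,\tilde\theta_{0:t-1})$ inside $B(0,\min\{R_C,R_x\})$); it suffices to upper bound $J_{\mathcal T}(\theta_{0:T-1}) - J_{\mathcal T}(\tilde\theta_{0:T-1})$ for each such $\tilde\theta$, since $R^D_\Upsilon(T)$ is the supremum of this over comparators. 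Using the exact identity
\begin{align*}
    J_{\mathcal T}(\theta_{0:T-1}) - J_{\mathcal T}(\tilde\theta_{0:T-1})
    = \underbrace{\sum_{t=0}^{T-1}\bigl(f_t(x_t,u_t) - F_t(\theta_t)\bigr)}_{\mathrm{(I)}}
    + \underbrace{\sum_{t=0}^{T-1}\bigl(F_t(\theta_t) - F_t(\tilde\theta_t)\bigr)}_{\mathrm{(II)}}
    + \underbrace{\sum_{t=0}^{T-1}\bigl(F_t(\tilde\theta_t) - f_t(\tilde x_t,\tilde u_t)\bigr)}_{\mathrm{(III)}},
\end{align*}
I would bound each of the three pieces separately.

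Piece (I) is dispatched by the first inequality of \Cref{thm:bridge-GAPS-and-OGD}: the step-size condition $\eta \le \Omega((1-\rho)\varepsilon)$ forces the GAPS iterates to lie in $S_\varepsilon$, so each summand is $O((1-\rho)^{-3}\eta)$ and $\mathrm{(I)} = O((1-\rho)^{-3}\eta T)$, which accounts for the $\tfrac{C_{L,f,\theta}^2}{(1-\rho)^3}\eta T$ contribution. For piece (II) I would invoke \Cref{thm:OGD-with-biased-gradient-dynamic-regret} with the comparator $\tilde\theta_{0:T-1}$, the gradient-norm bound $W = \tfrac{C_{L,f,\theta}}{1-\rho}$ (the uniform Lipschitz constant of $F_t$ coming from \Cref{assump:Lipschitz-and-smoothness} together with the $\rho^b$-decay of contractive perturbation), and the bias level $\alpha = O\bigl((1-\rho)^{-5}\eta + (1-\rho)^{-1}\rho^{B}\bigr)$ supplied by the second inequality of \Cref{thm:bridge-GAPS-and-OGD} (again legitimate because GAPS stays $\varepsilon$-slowly varying). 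Expanding $(W+\alpha)^2 \le 2W^2 + 2\alpha^2$ and $\alpha^2 \lesssim (1-\rho)^{-10}\eta^2 + (1-\rho)^{-2}\rho^{2B}$ produces exactly the $\tfrac{3D^2}{\eta}$, $\tfrac{2D}{\eta}\sum_t\|\tilde\theta_t-\tilde\theta_{t-1}\|$, $\eta T$, $\rho^B T$, $\eta^3 T$, and $\rho^{2B}\eta T$ terms in the claimed bound.

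Piece (III) is the delicate one: bounding $F_t(\tilde\theta_t) - f_t(\tilde x_t,\tilde u_t)$, i.e.\ comparing the cost on the true comparator state $\tilde x_t$ with the cost on the ``frozen-parameter'' state $\hat x_t(\tilde\theta_t) = g_{t\mid 0}(x_0,(\tilde\theta_t)_{\times t})$. I would interpolate between the two parameter sequences one coordinate at a time --- replacing $\tilde\theta_s$ by $\tilde\theta_t$ in position $s$ for $s = t-1,\dots,0$ --- so that consecutive hybrid sequences share a common tail. The contractive perturbation property applied to that shared tail bounds the per-step perturbation of the final state by $C\rho^{t-s-1}L_{g,u}L_{\pi,\theta}\|\tilde\theta_s-\tilde\theta_t\|$, and passing through the Lipschitzness of $f_t$ and $\pi_t$ gives $|F_t(\tilde\theta_t) - f_t(\tilde x_t,\tilde u_t)| \lesssim C_{L,f,\theta}\sum_{b\ge1}\rho^{b-1}\|\tilde\theta_{t-b}-\tilde\theta_t\|$. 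Using $\|\tilde\theta_{t-b}-\tilde\theta_t\| \le \sum_{j=t-b+1}^{t}\|\tilde\theta_j-\tilde\theta_{j-1}\|$, summing over $t$, and swapping the order of summation (each increment $\|\tilde\theta_j-\tilde\theta_{j-1}\|$ is charged a total weight $\sum_{t\ge j}\rho^{t-j}/(1-\rho) = (1-\rho)^{-2}$) yields $\mathrm{(III)} = O\bigl((1-\rho)^{-2}C_{L,f,\theta}\sum_{t=1}^{T-1}\|\tilde\theta_t-\tilde\theta_{t-1}\|\bigr)$. Adding (I)--(III) and collecting coefficients gives the stated expression; the clean bound \eqref{thm:main-regret-bound-convex:e03} then follows by plugging $\sum_t\|\tilde\theta_t-\tilde\theta_{t-1}\| \le \Upsilon-1$ and tuning $\eta$ as in \Cref{coro:main-regret-bounds-simplified:dynamic}.

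The main obstacle I anticipate is justifying the repeated use of \Cref{def:epsilon-exp-decay-perturbation-property} inside piece (III): each interpolation step creates a hybrid parameter sequence that may carry a single large jump $\|\tilde\theta_s-\tilde\theta_t\|$, so one must verify that the relevant sub-trajectories remain inside $B(0,R_C)$ (so contractive perturbation is applicable) and that the tail segments over which contraction is invoked are genuinely $\varepsilon$-slowly varying --- automatic when $\varepsilon = +\infty$ (e.g.\ DAC and MPC with confidence coefficients) but otherwise requiring the comparator class in $\hat J_{\mathcal T}(\Upsilon)$ to be restricted to $\varepsilon$-slowly-varying sequences, in line with the boundedness bookkeeping already present in \Cref{assump:contractive-and-stability}. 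A secondary nuisance is tracking the exact powers of $(1-\rho)^{-1}$ through the $W^2$, $\alpha$, and $\alpha^2$ terms so they line up with the constants $\hat C_0,\hat C_1,\hat C_2$ and $C_{L,f,\theta}$ appearing in the theorem statement.
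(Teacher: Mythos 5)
Your proposal is correct and follows essentially the same route as the paper: the identical three-term decomposition into (I) the GAPS-vs-surrogate truncation error, (II) the biased-gradient OGD dynamic regret via \Cref{thm:OGD-with-biased-gradient-dynamic-regret} with $W = C_{L,f,\theta}/(1-\rho)$ and the bias $\alpha$ from \Cref{thm:bridge-GAPS-and-OGD}, and (III) the comparator's trajectory-vs-surrogate gap, which the paper dispatches by citing \Cref{thm:OCO-with-parameter-update-trajectory-distance} (yielding the same $\frac{C_{L,f,\theta}}{1-\rho}\sum_{\tau}\rho^\tau\norm{\tilde\theta_{t-\tau}-\tilde\theta_{t-\tau-1}}$ per-step bound and the same $(1-\rho)^{-2}$ weight after summing) rather than re-deriving it via your hybrid-interpolation argument. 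The boundedness/slow-variation caveat you raise for piece (III) is the same bookkeeping the paper's lemma handles via the state constraint $\norm{\tilde x_t}\le\min\{R_C,R_x\}$ built into the definition of $\hat J_{\mathcal T}(\Upsilon)$, so it is not a substantive divergence.
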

\begin{proof}[Proof of \Cref{thm:main-regret-bound-convex:dynamic:detailed}]
By \Cref{thm:OGD-with-biased-gradient-regret} and \Cref{thm:OCO-with-parameter-update-gradient-bias}, we see that the sequence of policy parameters of the online policy satisfies that
\begin{align}\label{thm:main-regret-bound-convex:e3}
    \sum_{t=0}^{T-1}F_t(\theta_t) - \sum_{t=0}^{T-1}F_t(\tilde{\theta}_t)\leq \frac{3 D^2}{\eta} + \frac{2D}{\eta}\sum_{t=1}^{T-1} \norm{\tilde{\theta}_t - \tilde{\theta}_{t-1}} + \eta (W + \alpha)^2 T + 2 \alpha D T,
\end{align}
where $W = \frac{C_{L, f,\theta}}{1 - \rho}$ by \Cref{thm:OCO-with-parameter-update-trajectory-distance} and $\alpha = \left(\frac{\hat{C}_0}{1 - \rho} + \frac{\hat{C}_1 + \hat{C}_2}{(1 - \rho)^2} + \frac{\hat{C}_2}{(1 - \rho)^3}\right) \eta + \frac{C_{L, f, \theta}}{1 - \rho} \cdot \rho^B$.

Recall that by \Cref{thm:OCO-with-parameter-update-trajectory-distance}, we have that
\begin{align*}
    \abs{f_t(x_t, u_t) - F_t(\theta_t)} \leq{}& \frac{C_{L, f, \theta}^2 \rho}{(1 - \rho)^3}\cdot \eta,\\
    \abs{f_t(\tilde{x}_t, \tilde{u}_t) - F_t(\tilde{\theta}_t)} \leq{}& \frac{C_{L, f, \theta}}{1 - \rho} \sum_{\tau = 0}^{t-1} \rho^\tau \norm{\tilde{\theta}_{t-\tau} - \tilde{\theta}_{t-\tau-1}}.
\end{align*}
Substituting these into \eqref{thm:main-regret-bound-convex:e3} gives that
\begin{align*}
    &f_t(x_t, u_t) - f_t(\tilde{x}_t, \tilde{u}_t)\\
    \leq{}& \left(\sum_{t=0}^{T-1}F_t(\theta_t) - \sum_{t=0}^{T-1}F_t(\tilde{\theta}_t)\right) + \sum_{t=0}^{T-1} \abs{f_t(x_t, u_t) - F_t(\theta_t)} + \sum_{t=0}^{T-1} \abs{f_t(\tilde{x}_t, \tilde{u}_t) - F_t(\tilde{\theta}_t)}\\
    \leq{}& \frac{3 D^2}{\eta} + \frac{2D}{\eta}\sum_{t=1}^{T-1} \norm{\tilde{\theta}_t - \tilde{\theta}_{t-1}} + \eta (W + \alpha)^2 T + 2 \alpha D T + \frac{C_{L, f, \theta}^2 \rho}{(1 - \rho)^3}\cdot \eta T\\
    &+ \frac{C_{L, f, \theta}}{1 - \rho} \sum_{t = 0}^{T-1} \sum_{\tau = 0}^{t-1} \rho^\tau \norm{\tilde{\theta}_{t-\tau} - \tilde{\theta}_{t-\tau-1}}\\
    \leq{}& 2\left(\frac{C_{L, f, \theta}^2}{(1 - \rho)^3} + \left(\frac{\hat{C}_0}{1 - \rho} + \frac{\hat{C}_1 + \hat{C}_2}{(1 - \rho)^2} + \frac{\hat{C}_2}{(1 - \rho)^3}\right)D\right)\eta T + \frac{3 D^2}{\eta} + \frac{2 D C_{L, f,\theta}}{1 - \rho} \cdot \rho^B T\\
    &+4\left(\frac{\hat{C}_0}{1 - \rho} + \frac{\hat{C}_1 + \hat{C}_2}{(1 - \rho)^2} + \frac{\hat{C}_2}{(1 - \rho)^3}\right)^2 D^2 \eta^3 T + \frac{4 C_{L, f, \theta}^2}{(1 - \rho)^2}\rho^{2B}\eta T\\
    &+ \left(\frac{2D}{\eta} + \frac{C_{L, f, \theta}}{(1 - \rho)^2}\right)\sum_{t=1}^{T-1} \norm{\tilde{\theta}_t - \tilde{\theta}_{t-1}}.
\end{align*}
\end{proof}

\section{Regret of GAPS under nonconvex surrogate costs}\label{appendix:gaps-nonconvex}
To derive the local regret bound for GAPS in online policy selection, we first bound the local regret for OGD (with biased gradients) in online nonconvex optimization.

\begin{theorem}\label{thm:nonconvex-biased-OGD}
Suppose $\Theta = \mathbb{R}^d$. Consider the biased OGD update rule $\theta_{t+1} = \theta_t - \eta G_t$, where $G_t$ satisfies $\norm{G_t - \nabla F_t(\theta_t)} \leq \beta$. Suppose at every time $t$, $F_t$ is $L_F$-Lipschitz and $\ell_F$-smooth. If the learning rate $\eta < \frac{1}{\ell_F}$, we have that
\[\sum_{t = 0}^{T-1}\norm{\nabla F_t(\theta_t)}^2 \leq \frac{2}{\eta}\left(F_0(\theta_0) + L_F \sum_{t=1}^{T-1}\dist_s(F_t, F_{t-1})\right) + \left(2(1 - \ell_F \eta) L_F \beta + \ell_F \eta \cdot \beta^2\right)T,\]
where $\dist_s(F, F') \coloneqq \sup_{\theta \in \Theta} \abs{F(\theta) - F'(\theta)}$.
\end{theorem}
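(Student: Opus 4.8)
The plan is to run the classical descent-lemma argument for nonconvex gradient descent, modified to tolerate the gradient bias and the drifting objective; the hypothesis $\Theta=\mathbb{R}^d$ is what lets us use the plain update $\theta_{t+1}=\theta_t-\eta G_t$ with no projection, so that $\theta_{t+1}-\theta_t=-\eta G_t$ exactly. Write $G_t=\nabla F_t(\theta_t)+e_t$ with $\norm{e_t}\le\beta$, and recall that $\ell_F$-smoothness of $F_t$ means $\nabla F_t$ is $\ell_F$-Lipschitz, while $L_F$-Lipschitzness of $F_t$ gives $\norm{\nabla F_t(\theta)}\le L_F$ for all $\theta$.

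\emph{Step 1 (per-step descent).} Apply the descent lemma (a consequence of $\ell_F$-smoothness) to $F_t$ at $\theta_t$ and $\theta_{t+1}$:
\[
    F_t(\theta_{t+1}) \le F_t(\theta_t) - \eta\inner{\nabla F_t(\theta_t)}{G_t} + \tfrac{\ell_F\eta^2}{2}\norm{G_t}^2 .
\]
Substitute $G_t=\nabla F_t(\theta_t)+e_t$, expand, and group terms; the right-hand side becomes
\[
    F_t(\theta_t) - \eta\bigl(1-\tfrac{\ell_F\eta}{2}\bigr)\norm{\nabla F_t(\theta_t)}^2 - \eta(1-\ell_F\eta)\inner{\nabla F_t(\theta_t)}{e_t} + \tfrac{\ell_F\eta^2}{2}\norm{e_t}^2 .
\]
Since $\eta<1/\ell_F$ we have $1-\ell_F\eta\ge0$ and $1-\ell_F\eta/2\ge\tfrac12$; then Cauchy--Schwarz together with $\norm{\nabla F_t(\theta_t)}\le L_F$ and $\norm{e_t}\le\beta$ yields the per-step bound
\[
    \tfrac{\eta}{2}\norm{\nabla F_t(\theta_t)}^2 \le F_t(\theta_t) - F_t(\theta_{t+1}) + \eta(1-\ell_F\eta)L_F\beta + \tfrac{\ell_F\eta^2}{2}\beta^2 .
\]

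\emph{Step 2 (telescoping across the changing objective).} Sum over $t=0,\dots,T-1$. Re-indexing the cost differences,
\[
    \sum_{t=0}^{T-1}\!\bigl(F_t(\theta_t)-F_t(\theta_{t+1})\bigr) = F_0(\theta_0) - F_{T-1}(\theta_T) + \sum_{t=1}^{T-1}\!\bigl(F_t(\theta_t)-F_{t-1}(\theta_t)\bigr),
\]
and each term $F_t(\theta_t)-F_{t-1}(\theta_t)\le\sup_\theta\abs{F_t(\theta)-F_{t-1}(\theta)}=\dist_s(F_t,F_{t-1})$, while the boundary term $-F_{T-1}(\theta_T)\le0$ by nonnegativity of the (surrogate) cost. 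Combining with the accumulated error terms $\bigl(\eta(1-\ell_F\eta)L_F\beta+\tfrac{\ell_F\eta^2}{2}\beta^2\bigr)T$ and dividing by $\eta/2$ gives a bound of the stated form.

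\emph{Main obstacle.} There is no serious difficulty here; it is a textbook-style computation. The two spots that need attention are: (i) tracking the sign of $1-\ell_F\eta$ so that the bias cross-term $-\eta(1-\ell_F\eta)\inner{\nabla F_t(\theta_t)}{e_t}$ is absorbed rather than amplified and picks up the factor $L_F$ on the $\beta$-term --- this is the only place the hypothesis $\eta<1/\ell_F$ is used; and (ii) the telescoping bookkeeping over the distinct functions $F_0,\dots,F_{T-1}$, which is what generates the $\dist_s(F_t,F_{t-1})$ terms and leaves a boundary term to be discarded by nonnegativity. The genuinely nontrivial work comes afterward --- feeding the bias bound $\beta$ supplied by \Cref{thm:bridge-GAPS-and-OGD} into this theorem and converting $\sum_t\dist_s(F_t,F_{t-1})$ into the variation intensity $V$ of \Cref{def:total-variation-policy-selection} --- which is handled separately to yield \Cref{thm:GAPS-nonconvex-regret}.
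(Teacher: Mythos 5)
Your proposal is correct and follows essentially the same route as the paper's proof: the descent lemma applied to $F_t$ at $\theta_t,\theta_{t+1}$, expansion of $G_t=\nabla F_t(\theta_t)+e_t$ so the cross term is absorbed via Cauchy--Schwarz with $1-\ell_F\eta\ge 0$ and $\norm{\nabla F_t}\le L_F$, then telescoping across the changing objectives to produce the $\dist_s(F_t,F_{t-1})$ terms. The only (shared) quirk is that both your derivation and the paper's penultimate line actually give $\sum_t\dist_s(F_t,F_{t-1})$ without the extra factor $L_F$ that appears in the stated bound, so your version is if anything slightly tighter.
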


The proof of \Cref{thm:nonconvex-biased-OGD} is deferred to \Cref{appendix:thm:nonconvex-biased-OGD:proof}. Our proof is inspired by the analysis for (stochastic) gradient descent in offline nonconvex optimization (see \citep[e.g.][]{ghadimi2013stochastic}) with the additional step to handle the time-varying function sequence $F_{0:T-1}$ via the measure of variation $\dist_s(F_t, F_{t-1})$.

Our approximation error bound (\Cref{thm:bridge-GAPS-and-OGD}) guarantees that the bias $\beta = O(1/\sqrt{T})$ if we set the learning rate $\eta = O(1/\sqrt{T})$. Therefore, the remaining task is to bound the measure of variation in online nonconvex optimization $\sum_{t=1}^{T-1} \dist_s(F_t, F_{t-1})$ by the variation intensity in online policy selection $V$ (\Cref{def:total-variation-policy-selection}). To derive this bound, we need to show a convergence result on applying a fixed policy parameter in a time-invariant system. We begin with a definition that characterize this (imaginary) dynamical process. 

\begin{definition}\label{def:imaginary-state}
For fixed dynamics function $g$, policy function $\pi$, and policy parameter $\theta$, we define $\tilde{x}_\tau^{(g, \pi)}(\theta)$ recursively by the equation
\[\tilde{x}_{\tau+1}^{(g, \pi)}(\theta) = g\left(\tilde{x}_{\tau}^{(g, \pi)}(\theta), \pi\left(\tilde{x}_{\tau}^{(g, \pi)}(\theta), \theta\right)\right), \forall \tau \geq 0, \text{ where }\tilde{x}_{0}^{(g, \pi)}(\theta) = x_0.\]
\end{definition}

Compared with $\hat{x}_t(\theta)$ we defined before \Cref{def:surrogate-cost}, the state $\tilde{x}_{\tau+1}^{(g, \pi)}(\theta)$ is produced by a time-invariant dynamical system induced by $g$ and $\pi$, while $\hat{x}_t(\theta)$ is produced by the actual time-varying dynamics induced by $g_{0:t-1}$ and $\pi_{0:t-1}$.

We show the time-invariant evolution $\tilde{x}_\tau^{(g, \pi)}$ has a unique limitation point as $\tau$ tends to infinity. This limit is also a fixed point, and the states will converge to the limit exponentially fast with respect to $\tau$. We state this result formally in \Cref{lemma:time-invariant-dynamics-fixed-point}.

\begin{lemma}\label{lemma:time-invariant-dynamics-fixed-point}
Suppose Assumptions \ref{assump:Lipschitz-and-smoothness} and \ref{assump:contractive-and-stability} hold, and $(g, \pi) \in \mathcal{G}$. The limit $\lim_{\tau \to \infty} \tilde{x}_{\tau}^{(g, \pi)}(\theta)$ exists. Let $\tilde{x}_{\infty}^{(g, \pi)}(\theta) \coloneqq \lim_{\tau \to \infty} \tilde{x}_{\tau}^{(g, \pi)}(\theta)$. Further, we also have that
\[\norm{\tilde{x}_{\tau}^{(g, \pi)}(\theta) - \tilde{x}_{\infty}^{(g, \pi)}(\theta)} \leq C \rho^\tau \norm{x_0 - \tilde{x}_{\infty}^{(g, \pi)}(\theta)} \leq C \rho^\tau \cdot \diam(\mathcal{X}),\]
where $\diam(\mathcal{X}) = 2C(R_S + C\norm{x_0}) + 2R_S$.
\end{lemma}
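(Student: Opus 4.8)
The plan is to leverage the last clause of \Cref{assump:contractive-and-stability}: since $(g,\pi)\in\mathcal{G}$ is the dynamics/policy at some time step of a sequence in $\mathcal{G}$, the time-invariant sequence $\{g,\pi\}_{\times T}$ is also in $\mathcal{G}$, and a constant parameter sequence trivially satisfies the $\varepsilon$-constrained step-size condition of \Cref{def:constrained-policy-parameter-seq} for any $\varepsilon\geq 0$. Hence the $\varepsilon$-time-varying contractive perturbation property with $(R_C,C,\rho)$ and the $\varepsilon$-time-varying stability property with $R_S$ both apply to this time-invariant system. Writing $g_{t\mid\tau}$ for its multi-step dynamics, we have $\tilde x_\tau^{(g,\pi)}(\theta)=g_{\tau\mid 0}(x_0,\theta_{\times\tau})$, and by time-invariance of $g$ and $\pi$ the identity $g_{\tau+k\mid\tau}(\cdot,\theta_{\times k})=g_{k\mid 0}(\cdot,\theta_{\times k})$ holds, so in particular $\tilde x_{\tau+k}^{(g,\pi)}(\theta)=g_{k\mid 0}\big(\tilde x_\tau^{(g,\pi)}(\theta),\theta_{\times k}\big)$. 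First I would record that all iterates stay in the region where contraction is valid: splitting off the zero trajectory and using contractive perturbation on $(x_0,0)$ together with stability gives $\norm{\tilde x_\tau^{(g,\pi)}(\theta)}\leq C\rho^\tau\norm{x_0}+R_S\leq C\norm{x_0}+R_S<R_C$, by $\rho<1$ and the assumed bound $\norm{x_0}<(R_C-R_S)/C$; since $C\geq 1$, the same estimate also places every iterate inside $\mathcal{X}=B_n(0,R_x)$.

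Next I would establish convergence via a Cauchy argument. For any $\tau,k\geq 0$, since $\tilde x_\tau^{(g,\pi)}(\theta)$ and $x_0$ both lie in $B_n(0,R_C)$, contractive perturbation gives
\[
\norm{\tilde x_{\tau+k}^{(g,\pi)}(\theta)-\tilde x_k^{(g,\pi)}(\theta)}=\norm{g_{k\mid 0}\big(\tilde x_\tau^{(g,\pi)}(\theta),\theta_{\times k}\big)-g_{k\mid 0}(x_0,\theta_{\times k})}\leq C\rho^k\norm{\tilde x_\tau^{(g,\pi)}(\theta)-x_0}\leq C\rho^k\,\diam(\mathcal{X}).
\]
Since $\rho<1$, the right-hand side is uniform in $\tau$ and vanishes as $k\to\infty$, so $\{\tilde x_\tau^{(g,\pi)}(\theta)\}_{\tau\geq 0}$ is Cauchy in $\mathbb{R}^n$ and converges; I define $\tilde x_\infty^{(g,\pi)}(\theta)$ as the limit. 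Passing to the limit in the recursion of \Cref{def:imaginary-state} and using that $g$ and $\pi$ are continuous (being differentiable under \Cref{assump:Lipschitz-and-smoothness}) shows $\tilde x_\infty^{(g,\pi)}(\theta)=g\big(\tilde x_\infty^{(g,\pi)}(\theta),\pi(\tilde x_\infty^{(g,\pi)}(\theta),\theta)\big)$, i.e.\ it is a fixed point; consequently $g_{\tau\mid 0}\big(\tilde x_\infty^{(g,\pi)}(\theta),\theta_{\times\tau}\big)=\tilde x_\infty^{(g,\pi)}(\theta)$ for all $\tau$. Letting $\tau\to\infty$ in $\norm{\tilde x_\tau^{(g,\pi)}(\theta)}\leq C\rho^\tau\norm{x_0}+R_S$ yields $\norm{\tilde x_\infty^{(g,\pi)}(\theta)}\leq R_S<R_C$, so the limit also lies in $B_n(0,R_C)\cap\mathcal{X}$.

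Finally, for the convergence rate I would apply contractive perturbation once more with the two initial states $x_0$ and $\tilde x_\infty^{(g,\pi)}(\theta)$, both of which are now known to be in $B_n(0,R_C)$:
\[
\norm{\tilde x_\tau^{(g,\pi)}(\theta)-\tilde x_\infty^{(g,\pi)}(\theta)}=\norm{g_{\tau\mid 0}(x_0,\theta_{\times\tau})-g_{\tau\mid 0}\big(\tilde x_\infty^{(g,\pi)}(\theta),\theta_{\times\tau}\big)}\leq C\rho^\tau\norm{x_0-\tilde x_\infty^{(g,\pi)}(\theta)},
\]
and since $x_0,\tilde x_\infty^{(g,\pi)}(\theta)\in\mathcal{X}=B_n(0,R_x)$ we bound $\norm{x_0-\tilde x_\infty^{(g,\pi)}(\theta)}\leq\diam(\mathcal{X})=2R_x=2C(R_S+C\norm{x_0})+2R_S$, which is the claimed statement. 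The only genuine subtlety --- the point this paper repeatedly emphasizes --- is that contractive perturbation is a \emph{local} property, so each of its three invocations above must be preceded by the verification that the relevant initial states sit inside $B_n(0,R_C)$; the uniform norm bound $\norm{\tilde x_\tau^{(g,\pi)}(\theta)}\leq C\norm{x_0}+R_S$ on all iterates, and $\leq R_S$ on the limit, is precisely what supplies this.
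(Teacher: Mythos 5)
Your proposal is correct and follows essentially the same route as the paper's proof: show the sequence is Cauchy, identify the limit as a fixed point of the closed-loop map by continuity, and then apply contractive perturbation one final time to the pair of initial states $x_0$ and $\tilde{x}_{\infty}^{(g,\pi)}(\theta)$. The one step where you genuinely diverge is the Cauchy estimate: the paper telescopes consecutive differences, bounding $\norm{\tilde{x}_{j+1}^{(g, \pi)}(\theta) - \tilde{x}_{j}^{(g, \pi)}(\theta)} \leq C\rho^{j}\norm{\tilde{x}_{1}^{(g, \pi)}(\theta) - x_0}$ and summing the geometric series to get a tail bound of order $C\rho^{\tau}/(1-\rho)$, whereas you compare $\tilde{x}_{\tau+k}^{(g,\pi)}(\theta)$ and $\tilde{x}_{k}^{(g,\pi)}(\theta)$ in one shot as images of $\tilde{x}_{\tau}^{(g,\pi)}(\theta)$ and $x_0$ under the same $k$-step map, paying $\diam(\mathcal{X})$ instead of the $1/(1-\rho)$ factor. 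Both are valid; yours requires the a priori bound that all iterates lie in $B_n(0,R_C)\cap\mathcal{X}$, which you correctly derive from stability plus contraction against the zero trajectory. That explicit locality check --- verifying before each invocation of \Cref{def:epsilon-exp-decay-perturbation-property} that the relevant initial states sit inside $B_n(0,R_C)$, and that the limit satisfies $\norm{\tilde{x}_{\infty}^{(g,\pi)}(\theta)}\leq R_S$ --- is left implicit in the paper's proof, so your version is, if anything, slightly more careful; the reliance on $C\geq 1$ (which follows from taking $t=\tau$ in the contraction definition) is harmless.
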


The proof of \Cref{lemma:time-invariant-dynamics-fixed-point} is deferred to \Cref{appendix:lemma:time-invariant-dynamics-fixed-point:proof}. With the fixed point and convergence result in \Cref{lemma:time-invariant-dynamics-fixed-point}, we bound the measure of variation based on $F_{0:T-1}$ by the variation intensity $V$ based on $g_{0:T-1}, \pi_{0:T-1},$ and $f_{0:T-1}$ in \Cref{lemma:path-length-bound}.

\begin{lemma}\label{lemma:path-length-bound}
Suppose Assumptions \ref{assump:Lipschitz-and-smoothness} and \ref{assump:contractive-and-stability} hold. Then, we have
\[\sum_{t=1}^{T-1} \dist_s(F_t, F_{t-1}) \leq \frac{2C L_f (1 + L_{\pi, x}) (1 + L_{g, u})}{(1 - \rho)^2 \rho} \cdot V + \frac{2 C L_f (1 + L_{\pi, x})}{1 - \rho} \cdot \diam(\mathcal{X}),\]
where $\diam(\mathcal{X}) = 2C(R_S + C\norm{x_0}) + 2R_S$.
\end{lemma}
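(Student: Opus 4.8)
The plan is to bound $\dist_s(F_t, F_{t-1}) = \sup_{\theta \in \Theta} |F_t(\theta) - F_{t-1}(\theta)|$ by decomposing the difference into three effects: the change in the cost function $f_t$ vs.\ $f_{t-1}$, the change in the dynamics $g_t$ vs.\ $g_{t-1}$, and the change in the policy $\pi_t$ vs.\ $\pi_{t-1}$. Recall $F_t(\theta) = f_t(\hat x_t(\theta), \hat u_t(\theta))$ where $\hat x_t(\theta) = g_{t \mid 0}(x_0, \theta_{\times t})$ and $\hat u_t(\theta) = \pi_t(\hat x_t(\theta), \theta)$. The first obstacle is that $\hat x_t(\theta)$ and $\hat x_{t-1}(\theta)$ are states at \emph{different} time indices, so they are not directly comparable. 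The key idea is to route through the time-invariant fixed point $\tilde x_\infty^{(g_t, \pi_t)}(\theta)$ from \Cref{lemma:time-invariant-dynamics-fixed-point}: since applying $(g_t, \pi_t)$ with fixed $\theta$ converges exponentially to this fixed point, and similarly for $(g_{t-1}, \pi_{t-1})$, I will compare $F_t(\theta)$ to $f_t$ evaluated at the fixed point, then compare the two fixed points.

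Concretely, first I would show that the fixed points themselves are close: $\|\tilde x_\infty^{(g_t, \pi_t)}(\theta) - \tilde x_\infty^{(g_{t-1}, \pi_{t-1})}(\theta)\|$ is controlled by the per-step change in $(g, \pi)$. To do this, observe that $\tilde x_\infty^{(g_{t-1},\pi_{t-1})}(\theta)$, when fed through one step of the $(g_t, \pi_t)$ dynamics, moves by at most $\sup_{x,u}\|g_t(x,u) - g_{t-1}(x,u)\| + L_{g,u}\sup_{x,\theta}\|\pi_t(x,\theta) - \pi_{t-1}(x,\theta)\|$ (using the contractivity ball assumptions to stay in $\mathcal{X}, \mathcal{U}$); then iterating the contraction $C\rho^\tau$ from \Cref{lemma:time-invariant-dynamics-fixed-point} and summing the geometric series $\sum_\tau C\rho^\tau$ gives a bound of order $\frac{C}{1-\rho}\big(\|g_t - g_{t-1}\|_\infty + L_{g,u}\|\pi_t - \pi_{t-1}\|_\infty\big)$ — this is the ``one-step perturbation propagated to the fixed point'' argument, analogous to standard perturbation bounds for contractive maps. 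I would apply this telescoping across consecutive times to link into $V$.

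Next I would bound $|F_t(\theta) - f_t(\tilde x_\infty^{(g_t,\pi_t)}(\theta), \pi_t(\tilde x_\infty^{(g_t,\pi_t)}(\theta), \theta))|$: since $F_t(\theta)$ uses $\hat x_t(\theta)$, the actual state at time $t$ which evolved under the \emph{time-varying} $g_{0:t-1}, \pi_{0:t-1}$ with fixed $\theta$, this is where the per-step variation of the system over $[0:t]$ enters. The difference between $\hat x_t(\theta)$ and $\tilde x_\infty^{(g_t,\pi_t)}(\theta)$ is bounded by comparing the time-varying trajectory to the time-invariant one step-by-step: at step $s$, the dynamics differ from $(g_t, \pi_t)$ by the accumulated variation $\sum_{r=s}^{t-1}(\|g_{r+1}-g_r\|_\infty + L_{g,u}\|\pi_{r+1}-\pi_r\|_\infty)$, and each such discrepancy is contracted by $C\rho^{t-s}$ before reaching time $t$. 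Summing these contributions, swapping the order of summation, and using $\sum \rho^{t-s} = O(1/(1-\rho))$ produces a bound involving $V$ with an extra $1/(1-\rho)$ factor (hence the $(1-\rho)^{-2}$ in the statement), plus the $C\rho^t \diam(\mathcal X)$ term from \Cref{lemma:time-invariant-dynamics-fixed-point} bounding the residual convergence gap of the time-invariant process itself. Finally, Lipschitzness of $f_t$ in $(x,u)$ with the chain $u = \pi_t(x,\theta)$ (giving the $L_f(1 + L_{\pi,x})$ factor), and Lipschitzness of $g$ in $u$ (the $(1 + L_{g,u})$ factor), convert all these state-distance bounds into cost-difference bounds; summing over $t = 1, \dots, T-1$ and collecting constants gives the claimed inequality.

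The main obstacle I anticipate is bookkeeping the double sum carefully: both the fixed-point comparison and the trajectory-to-fixed-point comparison generate sums of the form $\sum_t \sum_{s \le t} \rho^{t-s}(\text{per-step variation at } s)$, and one must interchange summation order correctly to see that each per-step variation term is charged $O(1/(1-\rho))$ total (not $O(T)$), yielding a clean $V$ dependence rather than something like $VT$. The $\rho$ in the denominator of the first term comes from an off-by-one in the geometric series (e.g.\ $\sum_{\tau \ge 1}\rho^{\tau}\cdot\tau$-type or a shifted index), which I would track but not belabor. The residual $\diam(\mathcal X)$ term is benign — it is just the $T$ copies of $C\rho^t$ summed (bounded by $C/(1-\rho)$) times $\diam(\mathcal X)$ times the Lipschitz constant $L_f(1 + L_{\pi,x})$.
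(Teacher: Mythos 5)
Your proposal is correct and follows essentially the same route as the paper's proof: the Lipschitz decomposition of $\abs{F_t(\theta)-F_{t-1}(\theta)}$ into an $L_f(1+L_{\pi,x})$ state-difference term plus the direct $\pi$- and $f$-variation terms, the comparison of $\hat{x}_t(\theta)$ and $\hat{x}_{t-1}(\theta)$ to the time-invariant auxiliary process and its fixed point from \Cref{lemma:time-invariant-dynamics-fixed-point}, the telescoping of $\dist_d(g_\tau,g_t)$ into per-step variations, and the summation swap that charges each per-step variation $O((1-\rho)^{-2})$. The only cosmetic difference is that the paper anchors both $\tilde{x}_t^{(g_t,\pi_t)}(\theta)$ and $\tilde{x}_{t-1}^{(g_t,\pi_t)}(\theta)$ to the single fixed point $\tilde{x}_\infty^{(g_t,\pi_t)}(\theta)$ (which is where the $\diam(\mathcal{X})$ term arises), whereas you additionally compare the two fixed points of $(g_t,\pi_t)$ and $(g_{t-1},\pi_{t-1})$ via a standard contraction-perturbation argument; both variants are valid and yield the same orders in $(1-\rho)$.
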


With these auxiliary results, we restate \Cref{thm:GAPS-nonconvex-regret} with complete expressions and present the proof.

\begin{theorem}\label{thm:GAPS-nonconvex-regret:detailed}
Under the same assumptions as \Cref{thm:bridge-GAPS-and-OGD}, if we additionally assume that $\Theta = \mathbb{R}^d$ for some integer $d$, then GAPS satisfies local regret
\begin{align}
    R^L(T) \leq{}& \frac{2}{\eta}\left(c_0 + \frac{2C L_F L_f (1 + L_{\pi, x})}{1 - \rho}\left(\frac{(1 + L_{g, u})V}{(1 - \rho)\rho} + 2C(R_S + C \norm{x_0}) + 2 R_S\right)\right) \nonumber\\
    &+ 2(1 - \ell_F \eta)L_F \left(\frac{\hat{C}_0}{1 - \decayfactor} + \frac{\hat{C}_1 + \hat{C}_2}{(1 - \decayfactor)^{2}} + \frac{\hat{C}_2}{(1 - \decayfactor)^{3}}\right) \eta T\nonumber\\
    &+ 2(1 - \ell_F \eta)L_F \frac{C_{L, f, \theta}}{1 - \decayfactor} \cdot \decayfactor^B T\nonumber\\
    &+ 2 \ell_F \left(\frac{\hat{C}_0}{1 - \decayfactor} + \frac{\hat{C}_1 + \hat{C}_2}{(1 - \decayfactor)^{2}} + \frac{\hat{C}_2}{(1 - \decayfactor)^{3}}\right)^2 \eta^3 T + 2 \ell_F \frac{C_{L, f, \theta}^2}{(1 - \decayfactor)^2} \cdot \decayfactor^{2B} \eta T,
\end{align}
where $\hat{C}_0, \hat{C}_1, \hat{C}_2$ are defined in \Cref{thm:OCO-with-parameter-update-gradient-bias:full}, $c_0 = f_0(x_0, \pi_0(x_0, \theta_0))$, and 
\[L_F = \frac{C_{L, f, \theta}}{1 - \rho}, \  \ell_F = \frac{C L_{g, u} L_{\pi, \theta} C_{\ell, f, (\theta, x)} + \rho C_{\ell, f, (x, \theta)} C_{L, g, \theta}}{(1 - \rho)^2}.\]
\end{theorem}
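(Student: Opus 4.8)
The plan is to assemble three results from the appendix into a short reduction. The key structural point is that GAPS is exactly biased online gradient descent run on the surrogate sequence $F_{0:T-1}$: since $\Theta = \mathbb{R}^d$, the update $\theta_{t+1} = \Pi_\Theta(\theta_t - \eta G_t)$ is just $\theta_{t+1} = \theta_t - \eta G_t$, so \Cref{thm:nonconvex-biased-OGD} applies verbatim to the GAPS iterates with $\beta$ a uniform upper bound on $\norm{G_t - \nabla F_t(\theta_t)}$. I therefore only need to (i) check that $F_t$ satisfies the Lipschitz/smoothness hypotheses of \Cref{thm:nonconvex-biased-OGD} with the stated $L_F$ and $\ell_F$; (ii) plug in the gradient bias $\beta$ from \Cref{thm:bridge-GAPS-and-OGD}; and (iii) bound the surrogate-cost variation $\sum_{t\ge1}\dist_s(F_t, F_{t-1})$ by the system variation intensity $V$ via \Cref{lemma:path-length-bound}.

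For step (i): by \Cref{def:surrogate-cost}, $F_t(\theta) = f_t(\hat x_t(\theta), \pi_t(\hat x_t(\theta), \theta))$ with $\hat x_t(\theta) = g_{t\mid 0}(x_0, \theta_{\times t})$. The constant parameter sequence $\theta_{\times t}$ lies in $S_\varepsilon(0:t-1)$ for every $\varepsilon \ge 0$ and, by $\varepsilon$-time-varying stability together with $\norm{x_0} < (R_C - R_S)/C$ from \Cref{assump:contractive-and-stability}, the induced trajectory stays in $B_n(0, R_x) \subseteq \mathcal{X}$, so \Cref{assump:Lipschitz-and-smoothness} and the contractive-perturbation inequality both apply along it. Differentiating $\hat x_t$ once and expanding by the chain rule over the step $\tau$ at which a parameter perturbation enters, contractive perturbation attaches a factor $\rho^{t-\tau}$ to each summand, and the geometric sum gives $\norm{\nabla F_t(\theta)} \le C_{L,f,\theta}/(1-\rho) = L_F$. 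Differentiating again, the second-order sensitivity of $\hat x_t$ decomposes into interactions of a perturbation at step $\tau_1$ with one at step $\tau_2 \ge \tau_1$: the first decays by $\rho^{\tau_2-\tau_1}$ to step $\tau_2$, where it meets the smoothness constants $C_{\ell,f,(\theta,x)}, C_{\ell,f,(x,\theta)}, C_{L,g,\theta}$ of the composed dynamics/policy/cost maps, and the combined perturbation decays by $\rho^{t-\tau_2}$ to step $t$; the nested sum $\sum_{\tau_2\le t}\rho^{t-\tau_2}\sum_{\tau_1\le\tau_2}\rho^{\tau_2-\tau_1}$ produces the $(1-\rho)^{-2}$ factor and recovers $\ell_F = \big(CL_{g,u}L_{\pi,\theta}C_{\ell,f,(\theta,x)} + \rho\,C_{\ell,f,(x,\theta)}C_{L,g,\theta}\big)/(1-\rho)^2$. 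This is the same trajectory-sensitivity bookkeeping already carried out for the cost- and gradient-bias estimates in \Cref{appendix:gaps-outline} (e.g.\ \Cref{thm:OCO-with-parameter-update-trajectory-distance,thm:OCO-with-parameter-update-gradient-bias}), so I would invoke those rather than redo it.

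With $F_t$ verified to be $L_F$-Lipschitz and $\ell_F$-smooth, and $\eta$ small enough that $\eta < 1/\ell_F$ (consistent with the step-size condition $\eta \le \Omega((1-\rho)\varepsilon)$, and satisfied for the choice in \Cref{coro:GAPS-nonconvex-regret-simplified}), \Cref{thm:nonconvex-biased-OGD} gives $R^L(T) \le \frac{2}{\eta}\big(F_0(\theta_0) + L_F\sum_{t\ge1}\dist_s(F_t,F_{t-1})\big) + \big(2(1-\ell_F\eta)L_F\beta + \ell_F\eta\beta^2\big)T$. I would take $\beta = \big(\tfrac{\hat C_0}{1-\rho} + \tfrac{\hat C_1+\hat C_2}{(1-\rho)^2} + \tfrac{\hat C_2}{(1-\rho)^3}\big)\eta + \tfrac{C_{L,f,\theta}}{1-\rho}\rho^B$ from the gradient half of \Cref{thm:bridge-GAPS-and-OGD} (equivalently \Cref{thm:OCO-with-parameter-update-gradient-bias:full}), use $F_0(\theta_0) = f_0(x_0,\pi_0(x_0,\theta_0)) = c_0$, and bound $\sum_{t\ge1}\dist_s(F_t,F_{t-1})$ by \Cref{lemma:path-length-bound} in terms of $V$ and $\diam(\mathcal{X}) = 2C(R_S+C\norm{x_0})+2R_S$. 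Expanding $2(1-\ell_F\eta)L_F\beta T$ splits into the $\eta T$ and $\rho^B T$ lines; for the quadratic term I use $\beta^2 \le 2\big(\tfrac{\hat C_0}{1-\rho}+\tfrac{\hat C_1+\hat C_2}{(1-\rho)^2}+\tfrac{\hat C_2}{(1-\rho)^3}\big)^2\eta^2 + 2\tfrac{C_{L,f,\theta}^2}{(1-\rho)^2}\rho^{2B}$, yielding the $\eta^3 T$ and $\rho^{2B}\eta T$ lines; and the $\tfrac2\eta L_F(\cdots)$ term gives the first line. Collecting these reproduces the claimed bound.

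The hardest step will be the smoothness estimate for the surrogate cost, i.e.\ establishing $\ell_F$ with the exact $(1-\rho)^{-2}$ scaling and the stated combination of constants: one must propagate the second-order sensitivity of the constant-parameter closed-loop trajectory, carefully combining the geometric decay of first-order sensitivities (from contractive perturbation) with the smoothness of $g$, $\pi$, and $f$, and control the nested geometric sum without losing an extra factor $(1-\rho)^{-1}$. Relative to the convex case (\Cref{thm:main-regret-bound-convex}), which only needs Lipschitzness and convexity of $F_t$, the nonconvex argument additionally rests on the fixed-point/convergence statement \Cref{lemma:time-invariant-dynamics-fixed-point} underlying \Cref{lemma:path-length-bound}; that is already in hand, so once $L_F$ and $\ell_F$ are pinned down, the rest is routine substitution and term collection.
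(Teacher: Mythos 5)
Your proposal is correct and follows essentially the same route as the paper's proof: reduce to the biased nonconvex OGD bound (\Cref{thm:nonconvex-biased-OGD}) with the bias $\beta$ from \Cref{thm:bridge-GAPS-and-OGD}, verify the Lipschitz and smoothness constants $L_F,\ell_F$ of the surrogate costs via contractive perturbation and the multi-step smoothness lemmas, and convert $\sum_t \dist_s(F_t,F_{t-1})$ to $V$ via \Cref{lemma:path-length-bound}. Your sketch of the $\ell_F$ computation, including the nested geometric sums yielding the $(1-\rho)^{-2}$ factor and the split of $\beta$ and $\beta^2$ into the $\eta T$, $\rho^B T$, $\eta^3 T$, and $\rho^{2B}\eta T$ terms, matches the paper's argument.
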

\begin{proof}[Proof of \Cref{thm:GAPS-nonconvex-regret:detailed}]
By \Cref{thm:nonconvex-biased-OGD} and \Cref{thm:bridge-GAPS-and-OGD}, we know the parameter sequence of GAPS satisfies that
\begin{align}\label{thm:GAPS-nonconvex-regret:detailed:e0}
    \sum_{t = 0}^{T-1}\norm{\nabla F_t(\theta_t)}^2 \leq \frac{2}{\eta}\left(F_0(\theta_0) + L_F \sum_{t=1}^{T-1}\dist_s(F_t, F_{t-1})\right) + \left(2(1 - \ell_F \eta) L_F \beta + \ell_F \eta \cdot \beta^2\right)T,
\end{align}
where $\beta = \left({\hat{C}_0}{(1 - \decayfactor)^{-1}} + {(\hat{C}_1 + \hat{C}_2)}{(1 - \decayfactor)^{-2}} + {\hat{C}_2}{(1 - \decayfactor)^{-3}}\right) \eta + {C_{L, f, \theta}}{(1 - \decayfactor)^{-1}} \cdot \decayfactor^B$. Here, $\hat{C}_0, \hat{C}_1, \hat{C}_2$ are defined in \Cref{thm:OCO-with-parameter-update-gradient-bias:full}.

Note that $L_F = \frac{C_{L, f, \theta}}{1 - \rho}$ by \Cref{thm:OCO-with-parameter-update-trajectory-distance}. Now we show that we can set 
\[\ell_F = \frac{C L_{g, u} L_{\pi, \theta} C_{\ell, f, (\theta, x)} + \rho C_{\ell, f, (x, \theta)} C_{L, g, \theta}}{(1 - \rho)^2}.\]
To see this, by \Cref{lemma:smooth-multi-step-dynamics}, we obtain that the following inequality holds for every time step $t$,
\begin{subequations}\label{thm:GAPS-nonconvex-regret:detailed:e1}
\begin{align}
    \norm{\nabla F_t(\theta) - \nabla F_t(\theta')} ={}& \norm{\sum_{\tau = 0}^t \left.\frac{\partial f_{t\mid \tau}}{\partial \theta_\tau}\right|_{\hat{x}_\tau(\theta), \theta_{\times (t - \tau + 1)}} - \sum_{\tau = 0}^t \left.\frac{\partial f_{t\mid \tau}}{\partial \theta_\tau}\right|_{\hat{x}_\tau(\theta'), \theta'_{\times (t - \tau + 1)}}} \label{thm:GAPS-nonconvex-regret:detailed:e1:s1}\\
    \leq{}& \sum_{\tau = 0}^t \norm{\left.\frac{\partial f_{t\mid \tau}}{\partial \theta_\tau}\right|_{\hat{x}_\tau(\theta), \theta_{\times (t - \tau + 1)}} - \left.\frac{\partial f_{t\mid \tau}}{\partial \theta_\tau}\right|_{\hat{x}_\tau(\theta'), \theta'_{\times (t - \tau + 1)}}}, \label{thm:GAPS-nonconvex-regret:detailed:e1:s2}
\end{align}
\end{subequations}
where we use the definition of surrogate cost functions in \eqref{thm:GAPS-nonconvex-regret:detailed:e1:s1}; we use the triangle inequality in \eqref{thm:GAPS-nonconvex-regret:detailed:e1:s2}.

Note that for each term in \eqref{thm:GAPS-nonconvex-regret:detailed:e1}, we can decompose it as
\begin{align}\label{thm:GAPS-nonconvex-regret:detailed:e2}
    &\norm{\left.\frac{\partial f_{t\mid \tau}}{\partial \theta_\tau}\right|_{\hat{x}_\tau(\theta), \theta_{\times (t - \tau + 1)}} - \left.\frac{\partial f_{t\mid \tau}}{\partial \theta_\tau}\right|_{\hat{x}_\tau(\theta'), \theta'_{\times (t - \tau + 1)}}}\nonumber\\
    \leq{}& \norm{\left.\frac{\partial f_{t\mid \tau}}{\partial \theta_\tau}\right|_{\hat{x}_\tau(\theta), \theta_{\times (t - \tau + 1)}} - \left.\frac{\partial f_{t\mid \tau}}{\partial \theta_\tau}\right|_{\hat{x}_\tau(\theta'), \theta_{\times (t - \tau + 1)}}}\nonumber\\
    &+ \sum_{j = \tau}^{t} \norm{\left.\frac{\partial f_{t\mid \tau}}{\partial \theta_\tau}\right|_{\hat{x}_\tau(\theta'), \theta'_{\times (j - \tau)}, \theta_{\times (t-j+1)}} - \left.\frac{\partial f_{t\mid \tau}}{\partial \theta_\tau}\right|_{\hat{x}_\tau(\theta'), \theta'_{\times (j - \tau + 1)}, \theta_{\times (t-j)}}}.
\end{align}
Note that for any time step $\tau$, by the triangle inequality, we have
\begin{align*}
    \norm{\hat{x}_\tau(\theta) - \hat{x}_\tau(\theta')} \leq{}& \sum_{j = 0}^{\tau - 1} \norm{g_{\tau\mid 0}(x_0, \theta_{\times j}, \theta'_{\times (\tau - j)}) - g_{\tau\mid 0}(x_0, \theta_{\times (j + 1)}, \theta'_{\times (\tau - j - 1)})}\\
    \leq{}& \sum_{j = 0}^{\tau - 1} C \rho^{\tau - j - 1} \cdot L_{g, u} L_{\pi, \theta}\norm{\theta - \theta'} \leq \frac{C L_{g, u} L_{\pi, \theta}}{1 - \rho} \cdot \norm{\theta - \theta'},
\end{align*}
where we apply the time-invariant contractive perturbation in the last inequality. Therefore, by \Cref{coro:smooth-multi-step-costs}, we obtain that
\begin{align}\label{thm:GAPS-nonconvex-regret:detailed:e3}
    &\norm{\left.\frac{\partial f_{t\mid \tau}}{\partial \theta_\tau}\right|_{\hat{x}_\tau(\theta), \theta_{\times (t - \tau + 1)}} - \left.\frac{\partial f_{t\mid \tau}}{\partial \theta_\tau}\right|_{\hat{x}_\tau(\theta'), \theta_{\times (t - \tau + 1)}}}\nonumber\\
    \leq{}& C_{\ell, f, (\theta, x)} \rho^{t - \tau} \norm{\hat{x}_\tau(\theta) - \hat{x}_\tau(\theta')} \leq \frac{C L_{g, u} L_{\pi, \theta} C_{\ell, f, (\theta, x)}}{1 - \rho} \cdot \rho^{t - \tau} \cdot \norm{\theta - \theta'}.
\end{align}
We also see that
\begin{align}\label{thm:GAPS-nonconvex-regret:detailed:e4}
    &\norm{\left.\frac{\partial f_{t\mid \tau}}{\partial \theta_\tau}\right|_{\hat{x}_\tau(\theta'), \theta'_{\times (j - \tau)}, \theta_{\times (t-j+1)}} - \left.\frac{\partial f_{t\mid \tau}}{\partial \theta_\tau}\right|_{\hat{x}_\tau(\theta'), \theta'_{\times (j - \tau + 1)}, \theta_{\times (t-j)}}}\nonumber\\
    ={}& \norm{\left(\left.\frac{\partial f_{t\mid j}}{\partial x_j}\right|_{\hat{x}_j(\theta'), \theta_{\times (t-j+1)}} - \left.\frac{\partial f_{t\mid j}}{\partial \theta_j}\right|_{\hat{x}_j(\theta'), \theta', \theta_{\times (t-j)}}\right)\left.\frac{\partial g_{j\mid \tau}}{\partial \theta_\tau}\right|_{\hat{x}_\tau(\theta'), \theta'_{\times (j - \tau)}}}\nonumber\\
    \leq{}& \norm{\left.\frac{\partial f_{t\mid j}}{\partial x_j}\right|_{\hat{x}_j(\theta'), \theta_{\times (t-j+1)}} - \left.\frac{\partial f_{t\mid j}}{\partial \theta_j}\right|_{\hat{x}_j(\theta'), \theta', \theta_{\times (t-j)}}}\cdot \norm{\left.\frac{\partial g_{j\mid \tau}}{\partial \theta_\tau}\right|_{\hat{x}_\tau(\theta'), \theta'_{\times (j - \tau)}}}\nonumber\\
    \leq{}& C_{\ell, f, (x, \theta)} \rho^{t - j} \norm{\theta - \theta'} \cdot C_{L, g, \theta} \rho^{j - \tau} = C_{\ell, f, (x, \theta)} C_{L, g, \theta} \rho^{t - \tau} \norm{\theta - \theta'}.
\end{align}
Substituting \eqref{thm:GAPS-nonconvex-regret:detailed:e3} and \eqref{thm:GAPS-nonconvex-regret:detailed:e4} into \eqref{thm:GAPS-nonconvex-regret:detailed:e2} gives that
\begin{align*}
    &\norm{\left.\frac{\partial f_{t\mid \tau}}{\partial \theta_\tau}\right|_{\hat{x}_\tau(\theta), \theta_{\times (t - \tau + 1)}} - \left.\frac{\partial f_{t\mid \tau}}{\partial \theta_\tau}\right|_{\hat{x}_\tau(\theta'), \theta'_{\times (t - \tau + 1)}}}\nonumber\\
    \leq{}& \left(\frac{C L_{g, u} L_{\pi, \theta} C_{\ell, f, (\theta, x)}}{1 - \rho} + (t - \tau) C_{\ell, f, (x, \theta)} C_{L, g, \theta}\right)\cdot \rho^{t - \tau} \norm{\theta - \theta'}.
\end{align*}
Substituting this inequality into \eqref{thm:GAPS-nonconvex-regret:detailed:e1} gives that
\begin{align*}
    \norm{\nabla F_t(\theta) - \nabla F_t(\theta')} \leq{}& \sum_{\tau = 0}^t \left(\frac{C L_{g, u} L_{\pi, \theta} C_{\ell, f, (\theta, x)}}{1 - \rho} + (t - \tau) C_{\ell, f, (x, \theta)} C_{L, g, \theta}\right)\cdot \rho^{t - \tau} \norm{\theta - \theta'}\\
    \leq{}& \frac{C L_{g, u} L_{\pi, \theta} C_{\ell, f, (\theta, x)} + \rho C_{\ell, f, (x, \theta)} C_{L, g, \theta}}{(1 - \rho)^2} \cdot \norm{\theta - \theta'}.
\end{align*}
Therefore, we can set $\ell_F = \frac{C L_{g, u} L_{\pi, \theta} C_{\ell, f, (\theta, x)} + \rho C_{\ell, f, (x, \theta)} C_{L, g, \theta}}{(1 - \rho)^2}$.

Recall that the notation $\dist_s$ is defined in \Cref{thm:nonconvex-biased-OGD}.
By \Cref{lemma:path-length-bound}, we know that
\begin{align*}
    &\sum_{t=1}^{T-1} \dist_s(F_t, F_{t-1})\\
    \leq{}& \frac{2C L_f (1 + L_{\pi, x}) (1 + L_{g, u})}{(1 - \rho)^2 \rho} \cdot V + \frac{4 C L_f (1 + L_{\pi, x}) \left(C(R_S + C\norm{x_0}) + R_S\right)}{1 - \rho}.
\end{align*}
Substituting this inequality and the expressions of $L_F, \ell_F$ into \eqref{thm:GAPS-nonconvex-regret:detailed:e0} finishes the proof.
\end{proof}

\subsection{Proof of Theorem \ref{thm:nonconvex-biased-OGD}}\label{appendix:thm:nonconvex-biased-OGD:proof}
By the smoothness of $F_t(\cdot)$, we see that
\begin{align}\label{thm:deterministic-OGD:e1}
    F_t(\theta_{t+1}) \leq{}& F_t(\theta_t) + \langle \nabla F_t(\theta_t), \theta_{t+1} - \theta_t\rangle + \frac{\ell_F}{2}\norm{\theta_{t+1} - \theta_t}^2\nonumber\\
    ={}& F_t(\theta_t) - \eta \langle \nabla F_t(\theta_t), G_t\rangle + \frac{\ell_F \eta^2}{2}\norm{G_t}^2\nonumber\\
    ={}& F_t(\theta_t) - \eta \langle \nabla F_t(\theta_t), \nabla F_t(\theta_t)\rangle + \frac{\ell_F \eta^2}{2}\norm{\nabla F_t(\theta_t)}^2\nonumber\\
    &- \eta \langle \nabla F_t(\theta_t), G_t - \nabla F_t(\theta_t)\rangle + \ell_F \eta^2 \langle \nabla F_t(\theta_t), G_t - \nabla F(\theta_t)\rangle\nonumber\\
    &+ \frac{\ell_F \eta^2}{2}\norm{\nabla F_t(\theta_t) - G_t}^2\nonumber\\
    \leq{}& F_t(\theta_t) - \eta \left(1 - \frac{\ell_F \eta}{2}\right)\norm{\nabla F_t(\theta_t)}^2 + \eta (1 - \ell_F \eta) L \beta + \frac{\ell_F \eta^2}{2}\cdot \beta^2.
\end{align}
Summing \eqref{thm:deterministic-OGD:e1} over $t = 0, 1, \ldots, T-1$ and rearranging the terms gives that
\begin{align}\label{thm:deterministic-OGD:e2}
    &\eta \left(1 - \frac{\ell_F \eta}{2}\right)\sum_{t = 0}^{T-1} \norm{\nabla F_t(\theta_t)}^2\nonumber\\
    \leq{}& \sum_{t = 0}^{T-1} \left(F_t(\theta_t) - F_t(\theta_{t+1})\right) + \left(\eta (1 - \ell_F \eta) L \beta + \frac{\ell_F \eta^2}{2}\cdot \beta^2\right)T\nonumber\\
    \leq{}& F_0(\theta_0) + \sum_{t=1}^{T-1} \left(F_t(\theta_t) - F_{t-1}(\theta_t)\right) + \left(\eta (1 - \ell_F \eta) L \beta + \frac{\ell_F \eta^2}{2}\cdot \beta^2\right)T\nonumber\\
    \leq{}& F_0(\theta_0) + L_F\sum_{t=1}^{T-1}\dist_s(F_t, F_{t-1}) + \left(\eta (1 - \ell_F \eta) L_F \beta + \frac{\ell_F \eta^2}{2}\cdot \beta^2\right)T.
\end{align}

\subsection{Proof of Lemma \ref{lemma:time-invariant-dynamics-fixed-point}}\label{appendix:lemma:time-invariant-dynamics-fixed-point:proof}
We first show the limit $\lim_{\tau \to \infty} \tilde{x}_\tau^{(g, \pi)}(\theta)$ exists. It suffices to show that $\{\tilde{x}_\tau^{(g, \pi)}(\theta)\}$ is a Cauchy sequence. Note that for $\tau' > \tau \geq 0$, we have
\begin{subequations}\label{lemma:time-invariant-dynamics-fixed-point:e1}
\begin{align}
    \norm{\tilde{x}_{\tau'}^{(g, \pi)}(\theta) - \tilde{x}_{\tau}^{(g, \pi)}(\theta)} \leq{}& \sum_{j = \tau}^{\tau'-1} \norm{\tilde{x}_{j+1}^{(g, \pi)}(\theta) - \tilde{x}_{j}^{(g, \pi)}(\theta)} \label{lemma:time-invariant-dynamics-fixed-point:e1:s1}\\
    \leq{}& \sum_{j = \tau}^{\tau'-1} C \rho^j \norm{\tilde{x}_1^{(g, \pi)}(\theta) - x_0} \label{lemma:time-invariant-dynamics-fixed-point:e1:s2}\\
    \leq{}& \frac{C \rho^\tau}{1 - \rho} \cdot \norm{\tilde{x}_1^{(g, \pi)}(\theta) - x_0}, \nonumber
\end{align}
\end{subequations}
where we use the triangle inequality in \eqref{lemma:time-invariant-dynamics-fixed-point:e1:s1} and \Cref{assump:contractive-and-stability} in \eqref{lemma:time-invariant-dynamics-fixed-point:e1:s2}. Therefore, we see the limit $\lim_{\tau \to \infty} \tilde{x}_\tau^{(g, \pi)}(\theta)$ exists because $\{\tilde{x}_\tau^{(g, \pi)}(\theta)\}$ is a Cauchy sequence and we denote $\tilde{x}_\infty^{(g, \pi)}(\theta) \coloneqq \lim_{\tau \to \infty} \tilde{x}_\tau^{(g, \pi)}(\theta)$.

Now, we show that $\tilde{x}_\infty^{(g, \pi)}(\theta)$ is a fixed point of the time-invariant closed-loop dynamics induced by $(g, \pi, \theta)$. To see this, note that
\begin{align*}
    g\left(\tilde{x}_\infty^{(g, \pi)}(\theta), \pi(\tilde{x}_\infty^{(g, \pi)}(\theta), \theta)\right) ={}& g\left(\lim_{\tau \to \infty}\tilde{x}_\tau^{(g, \pi)}(\theta), \pi(\lim_{\tau \to \infty}\tilde{x}_\tau^{(g, \pi)}(\theta), \theta)\right)\\
    ={}& \lim_{\tau \to \infty} g\left(\tilde{x}_\tau^{(g, \pi)}(\theta), \pi(\tilde{x}_\tau^{(g, \pi)}(\theta), \theta)\right)\\
    ={}& \lim_{\tau \to \infty} \tilde{x}_{\tau+1}^{(g, \pi)}(\theta) = \tilde{x}_\infty^{(g, \pi)}(\theta),
\end{align*}
where we can pull out $\lim_{\tau \to \infty}$ in the second equation because the right hand side is a continuous function of $\tilde{x}_\tau^{(g, \pi)}(\theta)$ at the point $\tilde{x}_\infty^{(g, \pi)}(\theta)$ by \Cref{assump:Lipschitz-and-smoothness}.

Therefore, applying the contractive perturbation property in \Cref{assump:contractive-and-stability} gives that
\[\norm{\tilde{x}_\tau^{(g, \pi)}(\theta) - \tilde{x}_\infty^{(g, \pi)}(\theta)} \leq C \rho^\tau \norm{x_0 - \tilde{x}_\infty^{(g, \pi)}(\theta)} \leq C \rho^{\tau} \diam(\mathcal{X}).\]

\subsection{Proof of Lemma \ref{lemma:path-length-bound}}
\label{appendix:lemma:path-length-bound:proof}
To simplify the notation, we introduce the notations
\begin{align*}
    \dist_d(g, g') &\coloneqq \sup_{x \in \mathcal{X}, u \in \mathcal{U}} \norm{g(x, u) - g'(x, u)},
    & \dist_p(\pi, \pi') &\coloneqq \sup_{x \in \mathcal{X}, \theta \in \Theta} \norm{\pi(x, \theta) - \pi'(x, \theta)},\\*
    \dist_c(f, f') &\coloneqq \sup_{x \in \mathcal{X}, u \in \mathcal{U}} \abs{f(x, u) - f'(x, u)}.
\end{align*}
For any $\theta \in \Theta$, we see that
\begin{subequations}\label{lemma:path-length-bound:e1}
\begin{align}
    &\abs{F_t(\theta) - F_{t-1}(\theta)}\nonumber\\
    ={}& \abs{f_t\left(\hat{x}_t(\theta), \pi_t(\hat{x}_t(\theta), \theta)\right) - f_{t-1}\left(\hat{x}_{t-1}(\theta), \pi_{t-1}(\hat{x}_{t-1}(\theta), \theta)\right)} \label{lemma:path-length-bound:e1:s1}\\
    \leq{}& \abs{f_t\left(\hat{x}_t(\theta), \pi_t(\hat{x}_t(\theta), \theta)\right) - f_t\left(\hat{x}_{t-1}(\theta), \pi_{t-1}(\hat{x}_{t-1}(\theta), \theta)\right)} + \dist_c(f_t, f_{t-1}) \label{lemma:path-length-bound:e1:s2}\\
    \leq{}& L_f \left(\norm{\hat{x}_t(\theta) - \hat{x}_{t-1}(\theta)} + \norm{\pi_t(\hat{x}_t(\theta), \theta) - \pi_{t-1}(\hat{x}_{t-1}(\theta), \theta)}\right) + \dist_c(f_t, f_{t-1}) \label{lemma:path-length-bound:e1:s3}\\
    \leq{}& L_f \left(\norm{\hat{x}_t(\theta) - \hat{x}_{t-1}(\theta)} + \norm{\pi_t(\hat{x}_t(\theta), \theta) - \pi_t(\hat{x}_{t-1}(\theta), \theta)}\right)\nonumber\\
    &+ (L_f \dist_p(\pi_t, \pi_{t-1}) + \dist_c(f_t, f_{t-1})) \label{lemma:path-length-bound:e1:s4}\\
    \leq{}& L_f (1 + L_{\pi, x}) \norm{\hat{x}_t(\theta) - \hat{x}_{t-1}(\theta)} + (L_f \dist_p(\pi_t, \pi_{t-1}) + \dist_c(f_t, f_{t-1})), \label{lemma:path-length-bound:e1:s5}
\end{align}
\end{subequations}
where we use the definition of the surrogate cost in \eqref{lemma:path-length-bound:e1:s1}; we use the definition of $\dist_c$ in \eqref{lemma:path-length-bound:e1:s2}; we use the assumption that $f_t$ is Lipschitz in \eqref{lemma:path-length-bound:e1:s3}; we use the definition of $\dist_\pi$ on \eqref{lemma:path-length-bound:e1:s4}; we use the assumption that $\pi_t$ is Lipschitz in \eqref{lemma:path-length-bound:e1:s5}.

To bound $\norm{\hat{x}_t(\theta) - \hat{x}_{t-1}(\theta)}$, we first bound the difference between $\hat{x}_t(\theta)$ and $\tilde{x}_t^{(g, \pi)}(\theta)$ for arbitrary $(g, \pi) \in \mathcal{G}$. Note that $\hat{x}_t(\theta) = g_{t\mid 0}\left(\tilde{x}_0^{(g, \pi)}, \theta_{\times t}\right)$, thus
\begin{subequations}\label{lemma:path-length-bound:e2}
\begin{align}
    \norm{\hat{x}_t(\theta) - \tilde{x}_t^{(g, \pi)}(\theta)} \leq{}& \sum_{\tau = 0}^{t-1} \norm{g_{t\mid \tau + 1}\left(\tilde{x}_{\tau + 1}^{(g, \pi)}(\theta), \theta_{\times (t - \tau - 1)}\right) - g_{t\mid \tau}\left(\tilde{x}_\tau^{(g, \pi)}(\theta), \theta_{\times (t-\tau)}\right)} \label{lemma:path-length-bound:e2:s1}\\
    \leq{}& C \sum_{\tau = 0}^{t-1} \rho^{t-\tau-1} \norm{\tilde{x}_{\tau + 1}^{(g, \pi)}(\theta) - g_{\tau+1\mid \tau}\left(\tilde{x}_{\tau + 1}^{(g, \pi)}(\theta), \theta\right)} \label{lemma:path-length-bound:e2:s2}\\
    \leq{}& C \sum_{\tau = 0}^{t-1} \rho^{t-\tau-1} \left(\dist_d(g_\tau, g) + L_{g, u} \dist_p(\pi_\tau, \pi)\right), \label{lemma:path-length-bound:e2:s3}
\end{align}
\end{subequations}
where we use the triangle inequality in \eqref{lemma:path-length-bound:e2:s1}; we use the contractive perturbation property in \eqref{lemma:path-length-bound:e2:s2}; we use the definitions of $\dist_d$ and $\dist_p$ in \eqref{lemma:path-length-bound:e2:s3}.

Therefore, we see that
\begin{subequations}\label{lemma:path-length-bound:e3}
\begin{align}
    \norm{\hat{x}_t(\theta) - \hat{x}_{t-1}(\theta)} \leq{}& \norm{\hat{x}_t(\theta) - \tilde{x}_t^{(g_t, \pi_t)}(\theta)} + \norm{\hat{x}_{t-1}(\theta) - \tilde{x}_{t-1}^{(g_t, \pi_t)}(\theta)}\nonumber\\
    &+ \norm{\tilde{x}_t^{(g_t, \pi_t)}(\theta) - \tilde{x}_\infty^{(g_t, \pi_t)}(\theta)} + \norm{\tilde{x}_{t-1}^{(g_t, \pi_t)}(\theta) - \tilde{x}_\infty^{(g_t, \pi_t)}(\theta)} \label{lemma:path-length-bound:e3:s1}\\
    \leq{}& C \sum_{\tau = 0}^{t-1} \rho^{t-\tau-1} \left(\dist_d(g_\tau, g_t) + L_{g, u} \dist_p(\pi_\tau, \pi_t)\right) \nonumber\\
    &+ C \sum_{\tau = 0}^{t-2} \rho^{t-\tau-2} \left(\dist_d(g_\tau, g_t) + L_{g, u} \dist_p(\pi_\tau, \pi_t)\right) \nonumber\\
    &+ C \rho^t \diam(\mathcal{X}) + C \rho^{t-1} \diam(\mathcal{X}) \label{lemma:path-length-bound:e3:s2}\\
    \leq{}& \frac{2C}{1 - \rho} \cdot \sum_{\tau = 0}^{t-1} \rho^{t - \tau - 2} \left(\dist_d(g_\tau, g_{\tau + 1}) + L_{g, u} \dist_p(\pi_\tau, \pi_{\tau + 1})\right) \nonumber\\
    &+ 2 C \rho^{t-1} \diam(\mathcal{X}), \label{lemma:path-length-bound:e3:s3}
\end{align}
\end{subequations}
where we use the triangle inequality in \eqref{lemma:path-length-bound:e3:s1}; in \eqref{lemma:path-length-bound:e3:s2}, we use the bound we derived in \eqref{lemma:path-length-bound:e2}; in \eqref{lemma:path-length-bound:e3:s3}, we use the triangle inequality decomposition
\[\dist_d(g_\tau, g_t) \leq \sum_{j = \tau}^{t-1} \dist_d(g_{j+1}, g_j) \text{, and } \dist_p(\pi_\tau, \pi_t) \leq \sum_{j = \tau}^{t-1} \dist_p(\pi_{j+1}, \pi_j).\]
Substituting this into \eqref{lemma:path-length-bound:e1} gives that
\begin{align}\label{lemma:path-length-bound:e4}
    \dist_s(F_t, F_{t-1}) \leq \frac{2C L_f (1 + L_{\pi, x})}{1 - \rho} \cdot \sum_{\tau = 0}^{t-1} \rho^{t - \tau - 2} \left(\dist_d(g_\tau, g_{\tau + 1}) + L_{g, u} \dist_p(\pi_\tau, \pi_{\tau + 1})\right) \nonumber\\
    + 2 C L_f (1 + L_{\pi, x}) \rho^{t-1} \diam(\mathcal{X}) + (L_f \dist_p(\pi_t, \pi_{t-1}) + \dist_c(f_t, f_{t-1}))
\end{align}
because \eqref{lemma:path-length-bound:e3} holds for arbitrary $\theta \in \Theta$.

Summing \eqref{lemma:path-length-bound:e4} over $t = 1, \ldots, T-1$ and rearranging the terms give that
\begin{align*}
    &\sum_{t=1}^{T-1} \dist_s(F_t, F_{t-1})\\
    \leq{}& \frac{2C L_f (1 + L_{\pi, x})}{1 - \rho} \cdot \sum_{t=1}^{T-1} \sum_{\tau = 0}^{t-1} \rho^{t - \tau - 2} \left(\dist_d(g_\tau, g_{\tau + 1}) + L_{g, u} \dist_p(\pi_\tau, \pi_{\tau + 1})\right)\\
    &+ 2 C L_f (1 + L_{\pi, x}) \sum_{t=1}^{T-1} \rho^{t-1} \diam(\mathcal{X}) + \sum_{t=1}^{T-1} (L_f \dist_p(\pi_t, \pi_{t-1}) + \dist_c(f_t, f_{t-1}))\\
    \leq{}& \frac{2C L_f (1 + L_{\pi, x}) (1 + L_{g, u})}{(1 - \rho)^2 \rho} \cdot P_{\mathcal{T}} + \frac{2 C L_f (1 + L_{\pi, x})}{1 - \rho} \cdot \diam(\mathcal{X}).
\end{align*}

\section{Learning Finite Policy Parameters}\label{appendix:finite}
In this appendix, we consider a different setting of online policy selection where the policy parameters come from a finite set $\Theta = \{1, 2, \ldots, \controlnum\}$ rather than a convex subset of $\mathbb{R}^d$. In this setting, we can also adopt a proof approach based on the contractive perturbation property to achieve a sublinear (static) policy regret bound. Specifically, we establish a regret bound in expectation under the same assumptions as the continuous parameter case \footnote{Since step size constraints on policy parameters are not well-defined in the finite setting, by ``under the same assumptions'', we mean \Cref{assump:contractive-and-stability} holds with $\varepsilon = 0$.}, though it requires a different algorithm. Our results for the finite setting also requires less information: here we do not assume knowledge of the partial derivatives of the dynamics and cost.

\subsection{Algorithm}

We propose an algorithm, Bandit-based Adaptive Policy Selection (BAPS, \Cref{alg:batched-exp3-policy-selection}), that deploys the same policy parameter for a batch of~$b$ time steps before switching.
Intuitively, when we switch the policy parameter, we wait until the state ``forgets'' the impact of the parameter used in the previous batch before we evaluate the current parameter's performance.
With a sufficiently large batch size~$b$, we can guarantee that the difference between the mean actual cost over a batch and the mean surrogate cost is negligible.

\begin{algorithm}
\caption{Bandit-based Adaptive Policy Selection (BAPS)}\label{alg:batched-exp3-policy-selection}
\begin{algorithmic}[1]
\REQUIRE policy count $\controlnum$, batch size $b$, learning rate $\eta$.
\STATE Set the initial distribution to uniform: $s_0 = (1/\controlnum)_{\times \controlnum}$.
\FOR{$m = 0, \ldots, T/b-1$}
    \STATE  Sample $j_m \sim s_m$ and for all $t \in [mb: (m+1)b-1]$, pick $u_t = \ALG_t(x_t, j_m)$. \label{alg:baps:sampling}
    \STATE Compute the batched loss vector $\hat{\ell}_m^b \in \mathbb{R}^\controlnum$:
    \begin{align*}
        \hat{\ell}_m^b(j) = \begin{cases}
            \frac{1}{s_m(j_m)} \sum_{t = mb}^{(m+1)b-\!1} f_t(x_t, u_t), & j = j_m\\
            0, &\text{$j\not = j_m$}.
        \end{cases}
    \end{align*}
    \vspace{-2mm}
    \STATE Set $s_{m+1}(j) = s_m(j) e^{- \eta \hat{\ell}_m^b(j)}, \forall j \in \{1, \ldots, \controlnum\}$ and do the normalization $s_{m+1} \gets {s_{m+1}}/{\norm{s_{m+1}}_1}$.
\ENDFOR
\end{algorithmic}
\end{algorithm}

\subsection{Analysis}
We present our main result for BAPS in \Cref{thm:batched-exp3-policy-selection}.
\begin{theorem}\label{thm:batched-exp3-policy-selection}
For the finite parameter setting, suppose \Cref{assump:Lipschitz-and-smoothness} holds and \Cref{assump:contractive-and-stability} holds with $\varepsilon = 0$. Then, BAPS (Algorithm \ref{alg:batched-exp3-policy-selection}) with batch size $b \geq \tau_0$ and learning step size $\eta > 0$
attains the following expected policy regret,
where the expectation is taken over the randomness of the sampling mechanism in line 3 of \Cref{alg:batched-exp3-policy-selection}:
\[\mathbb{E}\left[R^S(T)\right] \leq {C D_0}\cdot {(1 - \decayfactor)^{-1} b^{-1}} T + \controlnum D_0 \eta b T + {\log \controlnum}\cdot {\eta^{-1}},\]
where $R^S(T) = \sum_{t = 0}^{T-1} f_t(x_t, u_t) - \min_{\theta \in \Theta} \sum_{t=0}^{T-1} F_t(\theta)$. Here, we define
\begin{align*}
    &R_0 = \min\{\frac{C\norm{x_0} + R_S}{\rho}, R_C\}, \tau_0 = \log\left(\frac{C R_0}{R_0 - R_S}\right)/\log\left(\frac{1}{\rho}\right), \text{ and }\\
    &D_0 = \max_{t\in \mathcal{T}}\sup_{x \in \mathcal{X}, u \in \mathcal{U}}\abs{f_t(x, u)}.
\end{align*}
\end{theorem}
We discuss how to choose the learning rate and the regret it achieves in the following corollary.

\begin{corollary}\label{thm:batched-exp3-policy-selection:simplified}
Under the same assumptions as \Cref{thm:batched-exp3-policy-selection}, suppose $T \gg \tau_0$, if we set the parameters as $b = \left(\frac{C_0^2 D_0 T}{(1 - \rho)^2 k \log k}\right)^{\frac{1}{3}}$ and $\eta = \left(\frac{(1 - \rho)(\log k)^2}{C_0 D_0^2 k T^2}\right)^{\frac{1}{3}}$, we have $\mathbb{E}\left[R^S(T)\right] = 3 \left(\frac{C D_0^2 \cdot k \log k\cdot T^2}{1 - \rho}\right)^{\frac{1}{3}}$.
\end{corollary}

Note that the $O(T^{2/3})$ static policy regret bound in expectation is worse than the $O(\sqrt T)$ bound for GAPS in \Cref{thm:main-regret-bound-convex} in the dependence on the horizon $T$.
However, it matches the regret lower bound for MAB with switching costs \cite{dekel2014bandits}, which we conjecture to be the parallel online learning setting for online policy selection. The intuition is that the possible extra cost incurred when we wait for the trajectory to converge to $\hat{x}_t(\theta)$ after switching to a new policy parameter $\theta$ can be modeled as an unknown switching cost with a constant upper bound. An interesting open question is to verify this intuition and see whether the $O(T^{2/3})$ static policy regret is tight for finite policy selection under the assumptions of BAPS.

Now we present the proof of \Cref{thm:batched-exp3-policy-selection}.

\begin{proof}[Proof of \Cref{thm:batched-exp3-policy-selection}]
We first show by induction that the trajectory achieved by a ``batched'' policy parameter with batch size $b \geq \tau_0$ will always stay within $\mathcal{X} \times \mathcal{U}$, so that the cost is uniformly bounded by $D_0$.

To see this, we show the state $x_{mb}$ at first time step in each batch satisfies $\norm{x_{mb}} \leq R_0$. Note that the statement holds for $m = 0$. Suppose it holds for $m-1$. By \Cref{assump:contractive-and-stability}, we see that
\[\norm{x_{mb}} \leq C \rho^{b}\norm{x_{(m-1)b}} + R_S \leq C \rho^{\tau_0} \cdot R_0 + R_S \leq R_0,\]
where we use the definition of $\tau_0$ in the last inequality. So $\norm{x_{mb}} \leq R_0$ holds for all $m$ by induction. Then, for any time step $t$ that satisfies $mb < t < (m+1)b$, we have
\[\norm{x_t} \leq C \rho^{t - mb}\norm{x_{mb}} + R_S \leq C (C \norm{x_0} + R_S) + R_S.\]
Thus, we see that $(x_t, u_t) \in \mathcal{X} \times \mathcal{U}$ and $\abs{f_t(x_t, u_t)} \leq D_0$.

The rest of the proof is inspired by the proof of Lemma 6.3 in \cite{hazan2016introduction}.
By Theorem 1.5 in \cite{hazan2016introduction}, the following inequality holds for any $j \in \Theta$:
    \begin{align}\label{thm:batched-exp3-regret-bound:e1}
        \sum_{m = 0}^{T/b - 1} s_m^\top \hat{\ell}_m^b \leq \sum_{m=0}^{T/b - 1} \hat{\ell}_m^b(j) + \eta \sum_{m=0}^{T/b - 1} s_m^\top (\hat{\ell}_m^b)^2 + \frac{\log \controlnum}{\eta}.
    \end{align}
    Taking the expectation on both sides gives that
    \begin{align}\label{thm:batched-exp3-regret-bound:e2}
        &\sum_{m = 0}^{T/b - 1} \mathbb{E}\left[s_m^\top \hat{\ell}_m^b\right]\nonumber\\*
        \leq{}& \sum_{m=0}^{T/b - 1} \mathbb{E}\left[\hat{\ell}_m^b(j)\right] + \eta \sum_{m=0}^{T/b - 1} \mathbb{E}\left[s_m^\top (\hat{\ell}_m^b)^2\right] + \frac{\log \controlnum}{\eta}\nonumber\\
        =& \sum_{m=0}^{T/b - 1} \mathbb{E}\left[\mathbb{E}\left[\hat{\ell}_m^b(j)\mid j_{0:m-1}\right]\right] + \eta \sum_{m=0}^{T/b - 1} \mathbb{E}\left[\mathbb{E}\left[s_m^\top (\hat{\ell}_m^b)^2\mid j_{0:m-1}\right]\right] + \frac{\log \controlnum}{\eta}\nonumber\\
        =& \sum_{m=0}^{T/b - 1} \mathbb{E}\left[\sum_{t = mb}^{(m+1)b-1} f_{t\mid mb}(x_{mb}, j_{\times (t - mb)})\right]\nonumber\\
        &+ \eta \sum_{m=0}^{T/b - 1} \mathbb{E}\left[\sum_{q \in \Theta} \left(\sum_{t = mb}^{(m+1)b - 1} f_{t\mid mb}(x_{mb}, q_{\times (t - mb)})\right)^2\right] + \frac{\log \controlnum}{\eta}\nonumber\\
        \leq& \left(\sum_{t = 0}^{T-1} F_t(j) + \frac{2 C_0 L_0 R_x}{1 - \rho} \cdot \frac{T}{b}\right) + \eta b D_0^2 \controlnum T + \frac{\log \controlnum}{\eta}.
    \end{align}
    Note that the left hand side of \eqref{thm:batched-exp3-regret-bound:e2} is equal to the expected cost incurred by Algorithm \ref{alg:batched-exp3-policy-selection}, i.e., $\mathbb{E}\left[\sum_{t=0}^{T-1} f_t(x_t, u_t)\right]$. Therefore, we see that for any $j \in \Theta$, we have
    \[\mathbb{E}\left[\sum_{t=0}^{T - 1} f_t(x_t, u_t)\right] \leq \sum_{t = 0}^{T-1} F_t(j) + \frac{2 C_0 L_0 R_x}{1 - \rho} \cdot \frac{T}{b} + \eta b D_0^2 \controlnum T + \frac{\log \controlnum}{\eta}.\]
    Since the cost functions and dynamics are oblivious, this finishes the proof.
\end{proof}

\section{Discussion of Examples}
\label{appendix:examples}
In this section, we discuss why Examples \ref{example:MPC-confidence} and \ref{example:nonlinear-control} satisfy the time-varying contractive perturbation and stability in \Cref{assump:contractive-and-stability} as well as \Cref{assump:Lipschitz-and-smoothness} about Lipschitzness and smoothness. Before presenting these two examples, we first explain why contractive perturbation generalizes the notion of the disturbance-action controller (DAC), which is a policy class that has been studied intensively in the literature of online control \cite{agarwal2019online, hazan2020nonstochastic, chen2021black}.

\subsection{Disturbance-action controller}
The example about disturbance-action controller (DAC) considers a linear time-varying (LTV) system ${g_t(x_t, u_t) = A_t x_t + B_t u_t + w_t}$, where $w_t$ is a bounded adversarial disturbance at time~$t$. The DAC class is given by
\[
    \ALG_t(x_t, \theta_t) = - K_t^{stb} x_t + \textstyle \sum_{i = 1}^{h} M_t^{[i]} w_{t-i},
\]
where
$K_t^{stb}$ is a given matrix sequence such that for some positive constants $C$ and $\decayfactor < 1$
\[\norm{\prod_{t = t_1}^{t_2 - 1} (A_t - B_t K_t^{stb})} \leq C \decayfactor^{t_2 - t_1}, \forall t_2 > t_1.\]
In this example, $\Theta \coloneqq \{(M^{[i]})_{i \in [1:h]}\mid \sum_{i=1}^{h} \norm{M^{[i]}} \leq R_M\}$ for some $R_M > 0$.

DAC has been widely used in recent online control literature \cite{agarwal2019online, hazan2020nonstochastic, chen2021black}.
When applied to linear dynamics and convex cost functions, the DAC class renders the multi-step cost functions $f_{t\mid \tau}$ convex in the policy parameters, which enables the reduction to OCO with Memory \citep{agarwal2019online, hazan2020nonstochastic}.
It is also shown to be sufficient to approximate any strongly stable state feedback policy when the memory length $h$ is large enough.
\label{sec:drc-contractive-details}

Note that the parameter set $\Theta$ is a convex compact subset of $\mathbb{R}^{hnm}$ with diameter $D \leq n R_M$. For this example to satisfy \Cref{assump:Lipschitz-and-smoothness}, we additionally assume that there exists positive constants $a, b, \bar{k}, \bar{w}$ such that
$\norm{A_t} \leq a$,
$\norm{B_t} \leq b$,
$\norm{K_t^{stb}} \leq \bar{k}$,
$\norm{w_t} \leq \bar{w}$
holds for all $t$.
The cost functions $f_t$ is $(\ell_{f, x}, \ell_{f, u})$-smooth in $(x, u)$ and $\argmin f_t = (0, 0)$.

DAC is special because it satisfies \Cref{assump:contractive-and-stability} with arbitrarily large $\varepsilon$. To see this, we define the state transition matrix as
\begin{align*}
    \Psi_{t\mid \tau} \coloneqq \begin{cases}
        \prod_{\tau' = 1}^{t - \tau} (A_{t-\tau'} - B_{t-\tau'} K_{t-\tau'}^{stb}) & \text{ if } t > \tau,\\
        I & \text{ otherwise.}
    \end{cases}
\end{align*}
By assumption, we know that $\norm{\psi_{t\mid \tau}} \leq C \decayfactor^{t - \tau}$.

We first show the $\varepsilon$-time-varying contractive perturbation property (\Cref{def:epsilon-exp-decay-perturbation-property}) holds for $R_C = +\infty, \varepsilon = +\infty$. To see this, note that for any two time steps $t \geq \tau$, states $x_\tau, x_\tau' \in \mathbb{R}^n$, and $\theta_{\tau:t-1} \in \Theta^{t-\tau}$, we have that
\begin{align*}
    g_{t\mid \tau}(x_\tau, \theta_{\tau:t-1}) - g_{t\mid \tau}(x_\tau', \theta_{\tau:t-1}) = \Psi_{t\mid \tau} (x_\tau - x_\tau'),
\end{align*}
which implies that
\[\norm{g_{t\mid \tau}(x_\tau, \theta_{\tau:t-1}) - g_{t\mid \tau}(x_\tau', \theta_{\tau:t-1})} \leq \norm{\Psi_{t\mid \tau}} \cdot \norm{x_\tau - x_\tau'} \leq C \rho^{t-\tau}\norm{x_\tau - x_\tau'}.\]

To see that the $\varepsilon$-time-varying stability (\Cref{def:epsilon-time-varying-stability}) holds with $R_S = \frac{C R_M \bar{w}}{1 - \rho}$ and $\varepsilon = +\infty$, note that for arbitrary $\theta_{\tau:t-1} \in \Theta^{t-\tau}$, we have
\begin{align}\label{example-proof:e1}
    x_t = g_{t\mid \tau}(x_\tau, \theta_{\tau: t-1}) = \psi_{t\mid \tau} x_\tau + \sum_{\tau' = \tau}^t \psi_{t\mid \tau'} \left(\sum_{i = 1}^h M_{\tau'}^{[i]} w_{\tau' - i}\right).
\end{align}
Taking norms on both side of the equation and applying the triangle inequality gives that
\begin{align*}
    \norm{x_t} \leq{}& \norm{\psi_{t\mid \tau}} \cdot \norm{x_\tau} + \sum_{\tau' = \tau}^t \norm{\psi_{t\mid \tau'}} \left(\sum_{i = 1}^h \norm{M_{\tau'}^{[i]}}\cdot \norm{w_{\tau' - i}}\right)\\
    \leq{}& C \decayfactor^{t-\tau} \norm{x_\tau} + \sum_{\tau' = \tau}^t C \decayfactor^{t - \tau'} R_M \bar{w}\\
    \leq{}& C \decayfactor^{t-\tau} \norm{x_0} + \frac{C R_M \bar{w}}{1 - \rho},
\end{align*}
where we use the assumption that $\norm{\psi_{t\mid \tau}} \leq C \decayfactor^{t - \tau}$, the definition of the policy class, and the bound on disturbances in the second inequality. Therefore, $\varepsilon$-time-varying stability holds with $R_S = \frac{C R_M \bar{w}}{1 - \rho}$.

\subsection{MPC with confidence coefficients}
\label{sec:mpc-contractive-details}
Our example about Model Predictive Control (MPC) with Confidence Coefficients generalizes the $\lambda$-confident policy proposed by \citet{li2021robustness}.
In this setting, some policy parameter $\theta^{(c)} \in \Theta$ can achieve near-optimal performance when the predictions of the future are accurate (consistency), and another policy parameter $\theta^{(r)} \in \Theta$  has a worst-case guarantee even when the predictions are unreliable (robustness).
Minimizing regret in this setting implies that the online policy is both robust and consistent.

Recall that in this example, MPC selects the current control action by solving the optimization problem
\begin{align}
\label{eq:mpc-opt-confidence:appendix}
     \argmin_{u_{t:t+k-1\mid t}}& \sum_{\tau = t}^{t + k-1} f_\tau(x_{\tau|t}, u_{\tau|t}) + \quadratic(x_{t+k\mid t}, \tilde{Q}) \nonumber\\*
    \operatorname{s.t.}& \; x_{t\mid t} = x_t,\\*
    &\ x_{\tau+1\mid t} = A_\tau x_{\tau\mid t} + B_\tau u_{\tau\mid t} +\lambda_t^{[\tau - t]} w_{\tau\mid t} : \; t \leq \tau < t\!+\!k,
    \nonumber
\end{align}
where $\theta_t = \big(\lambda_t^{[0]}, \lambda_t^{[1]}, \ldots, \lambda_t^{[k-1]}\big)$, $\Theta \subseteq [0, 1]^k$. Thus, $\Theta$ is a convex compact set with diameter $\sqrt{k}$. 

For this example to satisfy Assumptions \ref{assump:Lipschitz-and-smoothness} and \ref{assump:contractive-and-stability}, we make two standard assumptions that are also required by prior works on online control with MPC \cite{lin2021perturbation,lin2022bounded}. The first assumption is about the uniform bounds on the dynamical matrices, cost function matrices, and disturbances.

\begin{assumption}\label{assump:MPC-example-bounds}
For any time step $t \in \mathcal{T}$, we have $\norm{A_t} \leq a, \norm{B_t} \leq b, \norm{w_t} \leq \bar{w}$, and
\[\mu I_n \preceq Q_t \preceq \ell I_n, \mu I_m \preceq R_t \preceq \ell I_m, \mu I_n \preceq \tilde{Q} \preceq \ell I_n.\]
We also assume that $\norm{w_{\tau\mid t}} \leq \bar{w}$ for predicted disturbances.
\end{assumption}

The second assumption is about the uniform controllability of the LTV system:

\begin{assumption}\label{assump:MPC-example-controllability}
We define the transition matrix in this LTV system as
\begin{align*}
    \Phi(t_2, t_1) \coloneqq \begin{cases}
        A_{t_2-1} \cdots A_{t_1}, &\text{ if } t_2 > t_1,\\
        I, & \text{ otherwise.}
    \end{cases}
\end{align*}
For any two time steps $t' > t$, we define
\begin{align*}
    \Xi_{t, t'} = \left[\Phi(t', t+1)B_t, \Phi(t', t+2)B_{t+1}, \ldots, \Phi(t', t')B_{t'}\right].
\end{align*}
We assume that there exists a positive integer $d_0$ such that the smallest singular value of the matrix $\Xi_{t, t'}$ is uniformly lower bounded by some positive constant $\sigma$ when $t' \geq t + d_0$, i.e., $\sigma_{min}\left(\Xi_{t, t'}\right) \geq \sigma$ holds for any $t' \geq t + d_0$, where $\sigma_{\min}(\cdot)$ denotes the smallest singular value of a matrix.
\end{assumption}

Before we proceed to show that Assumptions \ref{assump:Lipschitz-and-smoothness} and \ref{assump:contractive-and-stability}, we first define an auxiliary parameterized optimal problem control problem that will be used in our analysis: For any two time steps $t < t'$, let $\psi_t^{t'}(y_t, v_{t:t'}; P_{t'})$ denote the optimal trajectory planned according to initial state $y_t$, disturbances $v_{t:t'}$, and terminal cost matrix $P_{t'}$, i.e.,
\begin{align*}
\psi_t^{t'}(y_t, v_{t:t'}; P_{t'}) = \argmin_{x_{t:t'}, u_{t:(t'-1)}}& \sum_{\tau = t}^{t' - 1} f_\tau\left(x_{\tau}, u_{\tau}\right) + \frac{1}{2} x_{t'}^\top P_{t'} x_{t'}\\
\text{s.t. }& x_{\tau+1} = A_\tau x_{\tau} + B_\tau u_{\tau} + v_\tau, \forall \tau \in [t:t'-1];\\
& x_{t} = y_t.
\end{align*}
Using this notation, we can express MPC with confidence coefficients as
\[\pi_t(x_t, \theta_t) = \psi_t^{t+k}(x_t, \{\lambda_t^{[\tau - t]} w_{\tau\mid t}\}_{\tau \in [t:t+k-1]}; \tilde{Q})_{u_{t}},\]
where the index $u_{t}$ denotes the corresponding entry in the predictive optimal solution. The perturbation bound in \cite[Theorem 3.3]{lin2021perturbation} states that for any $\tau \in [t, t']$, we have
\begin{align}\label{MPC-example:e3}
    \norm{\psi_t^{t'}(y_t, v_{t:t'}; \tilde{Q})_{u_\tau} - \psi_t^{t'}(y_t', v_{t:t'}'; \tilde{Q})_{u_\tau}} \leq C_0\left(\rho_0^{\tau - t}\norm{y_t - y_t'} + \sum_{j = t}^{t'} \rho_0^{\abs{\tau - j}} \norm{v_j - v_j'}\right),
\end{align}
where $C_0 > 0, \rho_0 \in (0, 1)$ are constants that depends on the system parameters including $a, b, \mu, \ell, \sigma,$ and $d_0$. And the inequality \eqref{MPC-example:e3} still holds if we replace the index $u_\tau$ on the left hand side by $x_\tau$. Therefore, we know that $\psi_t^{t+k}(y_t, v_{t:t+k}; \tilde{Q})_{u_{t}}$ is bounded Lipschitz in $y_t$ and $v_{t:t+k}$. Note that $\pi_t(x_t, \theta_t)$ is an affine function in its inputs $(x_t, \theta_t)$, i.e., $\ALG_t(x_t, \theta_t)$ can be expressed equivalently as
\begin{align}\label{example-proof:e2}
    \ALG_t(x_t, \theta_t) = - \bar{K}_t^{(k)} x_t - \sum_{\tau = t}^{t + k - 1} \lambda_t^{[\tau - t]} \bar{K}_t^{(k, \tau)} w_{\tau \mid t},
\end{align}
where the matrices $\bar{K}_t^{(k)}, \bar{K}_t^{(k, \tau)}$ only depends on $\{(A_t, B_t, Q_t, R_t)\}_{t \in \mathcal{T}}$ and $\tilde{Q}$. The superscript $k$ denotes the prediction horizon of the MPC we adopt, thus $k = T$ will give MPC future predictions all the way to the end of the online policy selection game. So the smoothness constants $\ell_{\pi, x} = \ell_{\pi, \theta} = 0$. Thus, we see that \Cref{assump:Lipschitz-and-smoothness} holds. We also see that the surrogate cost function $F_t$ is convex.

Next, we show a lemma about contractive perturbation and stability.

\begin{lemma}\label{lemma:MPC-properties}
Suppose Assumptions \ref{assump:MPC-example-bounds} and \ref{assump:MPC-example-controllability} hold. Recall that $C_0$ and $\rho_0$ are given in \eqref{MPC-example:e3}. Then, for any $\rho \in (\rho_0, 1)$, if the prediction horizon satisfies
\[k \geq \frac{1}{2}\log\left(C_0^3 ab \decayfactor_0 / (\decayfactor - \decayfactor_0)\right)/\log(1/\decayfactor_0),\]
the MPC with confidence coefficients policy class satisfies $\varepsilon$-time-varying contractive perturbation with $\varepsilon = +\infty$, $R_C = +\infty$, $C = C_0$ and decay factor $\rho$. It also satisfies $\varepsilon$-time-varying stability with $\varepsilon = +\infty$ and $R_S = \frac{C_0 (1 - \rho_0 + C_0) \bar{w}}{(1 - \rho_0)(1 - \rho)}$.
\end{lemma}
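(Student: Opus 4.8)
The plan is to exploit the affine structure of the MPC policy in \eqref{example-proof:e2}. With the parameter sequence $\theta_{\tau:t-1}$ and the oblivious disturbances/predictions held fixed, the closed-loop map $x_\tau \mapsto g_{t\mid\tau}(x_\tau,\theta_{\tau:t-1})$ is affine with linear part $\Psi_{t\mid\tau} \coloneqq \prod_{s=\tau}^{t-1}\bigl(A_s - B_s \bar K_s^{(k)}\bigr)$ (ordered product), so $g_{t\mid\tau}(x_\tau,\theta) - g_{t\mid\tau}(x_\tau',\theta) = \Psi_{t\mid\tau}(x_\tau - x_\tau')$ exactly. Hence $\varepsilon$-time-varying contractive perturbation with $R_C=+\infty$, $\varepsilon=+\infty$, $C=C_0$ and decay $\rho$ is \emph{equivalent} to the uniform matrix bound $\norm{\Psi_{t\mid\tau}} \le C_0\,\rho^{\,t-\tau}$ for all $\tau\le t$; the values $R_C=+\infty$ and $\varepsilon=+\infty$ need no argument because \eqref{MPC-example:e3}, and everything below, holds on all of $\mathbb{R}^n$ while the $\lambda$-parameters enter only as bounded scalings of the predicted disturbances. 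So the whole task reduces to this transition bound for MPC with prediction horizon $k$.

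To bound $\norm{\Psi_{t+h\mid t}}$ I would compare the closed-loop transition over a window of length $h\le k$ against the transition of the \emph{single} plan computed at time $t$. Let $M_h$ be the linear part of $y\mapsto \psi_t^{t+k}(y,0;\tilde Q)_{x_{t+h}}$; the $x_\tau$-form of \eqref{MPC-example:e3} gives $\norm{M_h}\le C_0\rho_0^{\,h}$, and $\Psi_{t+1\mid t}=M_1$ since the closed loop matches the first planned step. The discrepancy $\Psi_{t+h\mid t}-M_h$ is precisely the ``re-planning drift'': at step $t+j$ MPC discards the current plan and re-optimizes with a fresh horizon-$k$ window, which by the principle of optimality amounts to replacing, at the time step $\approx k-j$ steps ahead, the terminal penalty $\tilde Q$ by the one-step-shorter cost-to-go $P^{(j)}$. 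By the terminal-cost-perturbation part of \cite[Theorem~3.3]{lin2021perturbation}, each such replacement perturbs the relevant feedback law by $O\bigl(C_0\,\rho_0^{\,k-j}\,\norm{P^{(j)}-\tilde Q}\bigr)$, and $\norm{P^{(j)}-\tilde Q}=O(ab)$ by \Cref{assump:MPC-example-bounds}; propagating these $O\bigl(C_0^2 ab\,\rho_0^{\,k-j}\bigr)$ per-step errors forward through the window yields $\norm{\Psi_{t+h\mid t}-M_h}=O\bigl(C_0^{3}\,ab\,\rho_0^{\,2k}\bigr)\,\rho_0^{\,h}$. Therefore $\norm{\Psi_{t+h\mid t}}\le C_0\rho_0^{\,h}+O\bigl(C_0^{3}ab\,\rho_0^{\,2k}\bigr)\rho_0^{\,h}$, and since $\rho^{\,h}-\rho_0^{\,h}\ge h\rho_0^{\,h-1}(\rho-\rho_0)$, the hypothesis $k\ge\tfrac12\log\!\bigl(C_0^{3}ab\rho_0/(\rho-\rho_0)\bigr)/\log(1/\rho_0)$ is exactly what forces the drift below $C_0(\rho^{\,h}-\rho_0^{\,h})$, giving $\norm{\Psi_{t+h\mid t}}\le C_0\rho^{\,h}$ for $h\le k$. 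Chaining over consecutive windows of length $k$ then extends this to all $h$.

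Stability follows from the transition bound. Writing the closed loop from the zero state via \eqref{example-proof:e2} as $x_{\tau+1}=(A_\tau-B_\tau\bar K_\tau^{(k)})x_\tau+b_\tau$ with $b_\tau=w_\tau-B_\tau\sum_i\lambda_\tau^{[i]}\bar K_\tau^{(k,\tau+i)}w_{\tau+i\mid\tau}$, the disturbance part of \eqref{MPC-example:e3} bounds the disturbance-to-$b_\tau$ gains by a geometric series, so $\norm{b_\tau}\le \bar w+\tfrac{C_0\bar w}{1-\rho_0}=\tfrac{(1-\rho_0+C_0)\bar w}{1-\rho_0}$; unrolling and using $\norm{\Psi_{t\mid\tau+1}}\le C_0\rho^{\,t-\tau-1}$ gives $\norm{x_t}\le\sum_{\tau<t}C_0\rho^{\,t-\tau-1}\norm{b_\tau}\le\tfrac{C_0}{1-\rho}\cdot\tfrac{(1-\rho_0+C_0)\bar w}{1-\rho_0}=R_S$, again with $\varepsilon=+\infty$ since $\theta$ never entered the estimate.

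The hard part will be the second paragraph — making the re-planning-drift estimate rigorous and quantitatively matching the stated threshold. Concretely one must (i) extract the terminal-cost/horizon-perturbation form of \cite[Theorem~3.3]{lin2021perturbation} together with its linear-in-state version, (ii) bound $\norm{P^{(j)}-\tilde Q}$ from \Cref{assump:MPC-example-bounds,assump:MPC-example-controllability} to produce the $ab$ factor, and (iii) propagate the per-step finite-horizon errors through $h$ closed-loop steps without the crude per-step bound $\norm{A_s-B_s\bar K_s^{(k)}}\le C_0\rho_0$ inflating the constant — this is what forces the window length to scale with $k$ and produces the $\rho_0^{2k}$ in the threshold. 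The affine reduction and the geometric sums in the other two paragraphs are routine.
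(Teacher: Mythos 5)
Your reduction to a uniform bound $\norm{\Psi_{t\mid\tau}} \le C_0\rho^{t-\tau}$ on the closed-loop transition matrices, and your geometric-series argument for stability, match the paper. The problem is the central step. The paper does \emph{not} analyze re-planning drift over windows. Instead it (i) compares the horizon-$k$ feedback gain $\bar K_\tau^{(k)}$ at each single step to the full-horizon gain $\bar K_\tau^{(T)}$, obtaining $\norm{M_\tau^{(k)} - M_\tau^{(T)}} \le \alpha \coloneqq C_0^2 a b\,\rho_0^{2k}$ by the trick of encoding the horizon truncation as a \emph{disturbance} perturbation (insert $\hat v_{t+k} = -A_{t+k}\psi_t^{t+k}(x_t,0;\tilde Q)_{x_{t+k}}$ so that \eqref{MPC-example:e3} applies directly — no terminal-cost-perturbation version of the theorem is needed), and then (ii) bounds the whole product at once by the binomial expansion $\norm{\prod M^{(k)}} \le \sum_j \binom{n}{j} C_0^{j+1}\rho_0^{n}\alpha^j \le C_0\rho_0^n(1+C_0\alpha)^n$, where each run of consecutive $M^{(T)}$ factors costs only one $C_0$. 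The stated threshold on $k$ is exactly the condition $\rho_0(1+C_0\alpha)\le\rho$.

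Your route has three concrete gaps. First, the drift estimate is not what you claim: the re-planning error at step $t+j$ of a length-$h$ window is governed by the gap between a horizon-$(k-j)$ and a horizon-$k$ problem, which is of order $\rho_0^{2(k-j)}$, and for $j$ near $h\approx k$ this is $O(1)$, not $O(\rho_0^{2k})$; summing and propagating gives a drift of order $\rho_0^{\,2k-h+1}$, so $\norm{\Psi_{t+h\mid t}-M_h}$ is \emph{not} $O(C_0^3ab\,\rho_0^{2k})\rho_0^h$ and the stated threshold on $k$ does not drop out. Second, even granting a per-window bound $\norm{\Psi_{t+k\mid t}}\le C_0\rho^k$, chaining $p$ windows yields $C_0^p\rho^{pk}$, so you cannot recover the uniform constant $C=C_0$ with decay $\rho$ for arbitrary $t-\tau$ unless $C_0\le 1$; this multiplicative blow-up is precisely what the paper's binomial expansion is designed to avoid. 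Third, your per-step perturbation invokes a terminal-cost-perturbation form of \cite[Theorem~3.3]{lin2021perturbation} that the paper's quoted bound \eqref{MPC-example:e3} does not supply; the paper's disturbance-insertion device is the substitute for it and is the key idea your proposal is missing.
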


\begin{proof}[Proof of \Cref{lemma:MPC-properties}]
By the perturbation bound in \eqref{MPC-example:e3}, we see that for any $t' > t$,
\[\norm{\psi_t^T(y_t, 0_{\times (T-t)}; \tilde{Q})_{x_{t'}} - \psi_t^T(y_t', 0_{\times (T-t)}; \tilde{Q})_{x_{t'}}} \leq C_0 \rho_0^{t' - t} \norm{y_t - y_t'}.\]
Therefore, we obtain that for any $t' > t$,
\begin{align}\label{MPC-example:e4}
    \norm{(A_{t'-1} - B_{t'-1} \bar{K}_{t'-1}^{(T)})(A_{t'-2} - B_{t'-2} \bar{K}_{t'-2}^{(T)})\cdots (A_{t} - B_{t} \bar{K}_{t}^{(T)})} \leq C_0 \rho_0^{t' - t}.
\end{align}
Now we show that
\begin{align}\label{MPC-example:e5}
    \norm{(A_{t'-1} - B_{t'-1} \bar{K}_{t'-1}^{(k)})(A_{t'-2} - B_{t'-2} \bar{K}_{t'-2}^{(k)})\cdots (A_{t} - B_{t} \bar{K}_{t}^{(k)})} \leq C_0 \rho^{t' - t}.
\end{align}
To see this, we construct a sequence $\hat{v}_{t:T-1}$ such that $\hat{v}_{t+k} = - A_{t+k} \psi_t^{t+k}(x_t, 0_{\times k}; \tilde{Q})_{x_{t+k}}$ and $\hat{v}_\tau = 0$ for $\tau \not = t+k, \tau \in [t:T-1]$. We observe that
\begin{align*}
    &\norm{\psi_t^{t+k}(x_t, 0_{\times k}; \tilde{Q}) - \psi_t^T(x_t, 0_{\times (T-t)}; \tilde{Q})}\\
    ={}&\norm{\psi_t^T(x_t, \hat{v}_{t:T-1}; \tilde{Q}) - \psi_t^T(x_t, 0_{\times (T-t)}; \tilde{Q})}\\
    \leq{}& C_0 \rho_0^k \norm{\hat{v}_{t+k}} \leq C_0^2 a \rho_0^{2k} \norm{x_t},
\end{align*}
where we use \eqref{MPC-example:e3} in the last line. To simplify the notation, we define $M_\tau^{(p)} \coloneqq A_\tau - B_\tau \bar{K}_\tau^{(p)}$ and $\alpha \coloneqq C_0^2 \rho_0^{2k} a b$. By the above inequality, we see that
\begin{align}\label{MPC-example:e6}
    \norm{M_\tau^{(k)} - M_\tau^{(T)}} \leq \alpha, \forall \tau \in \mathcal{T}.
\end{align}
Therefore, we obtain that
\begin{subequations}\label{MPC-example:e7}
\begin{align}
    \norm{M_{t'-1}^{(k)}M_{t'-2}^{(k)} \cdots M_{t}^{(k)}} \leq{}& \sum_{j = 0}^{t' - t} \binom{t'-t}{j} C_0^{j+1} \rho_0^{t' - t} \alpha^j \label{MPC-example:e7:s1}\\
    \leq{}& C_0 \rho_0^{t' - t}\left(1 + C_0 \alpha\right)^{t' - t} \leq C_0 \rho^{t' - t}, \label{MPC-example:e7:s2}
\end{align}
\end{subequations}
where we use the decomposition $M_\tau^{(k)} = M_\tau^{(T)} + (M_\tau^{(k)} - M_\tau^{(T)})$, the triangle inequality, and \eqref{MPC-example:e6} in \eqref{MPC-example:e7:s1}; we use the condition that $k \geq \frac{1}{2}\log\left(C_0^3 ab \decayfactor_0 / (\decayfactor - \decayfactor_0)\right)/\log(1/\decayfactor_0)$ in \eqref{MPC-example:e7:s2}. This finishes the proof of \eqref{MPC-example:e5}.

Now we consider two trajectories that apply the same policy parameter sequence but start from different states $x_\tau$ and $x_\tau'$. For arbitrary $\theta_{\tau:t-1} = \Theta^{t-\tau}$, we see that
\begin{subequations}\label{MPC-example:e8}
\begin{align}
    &\norm{g_{t\mid \tau}(x_\tau, \theta_{\tau:t-1}) - g_{t\mid \tau}(x_\tau', \theta_{\tau:t-1})}\nonumber\\
    ={}& \norm{(A_{t-1} - B_{t-1} \bar{K}_{t-1}^{(k)})(A_{t-2} - B_{t-2} \bar{K}_{t-2}^{(k)})\cdots (A_{\tau} - B_{\tau} \bar{K}_{\tau}^{(k)})(x_\tau - x_\tau')} \label{MPC-example:e8:s1}\\
    \leq{}& C_0 \rho^{t - \tau} \norm{x_\tau - x_\tau'}, \label{MPC-example:e8:s2}
\end{align}
\end{subequations}
where we use the affine expression of $\pi_t$ \eqref{example-proof:e2} and the fact that these two trajectories experience the same sequence of disturbances and predictions. This finishes the proof of $\varepsilon$-time-varying contractive perturbation with $\varepsilon = +\infty$ and $R_C = +\infty$.

By the perturbation bound in \eqref{MPC-example:e3}, we also see that
\[\norm{\psi_t^{t+k}(0, v_{t:t+k-1}; \tilde{Q})_{u_t} - \psi_t^{t+k}(0, v_{t:t+k-1}'; \tilde{Q})_{u_t}} \leq C_0 \sum_{\tau = t}^{t+k-1} \rho_0^{\tau - t} \norm{v_\tau - v_\tau'}.\]
Combining this inequality with the affine relationship in \eqref{MPC-example:e4}, we see that
\begin{align}\label{MPC-example:e9}
    \norm{\bar{K}_t^{(k, \tau)}} \leq C_0 \rho_0^{\tau - t}.
\end{align}
Therefore, we obtain that
\begin{subequations}\label{MPC-example:e10}
\begin{align}
    &\norm{g_{t\mid \tau}(0, \theta_{\tau:t-1})}\nonumber\\
    ={}& \norm{\sum_{i = \tau}^{t - 1} (A_{t-1} - B_{t-1} \bar{K}_{t-1}^{(k)})\cdots (A_{i+1} - B_{i+1} \bar{K}_{i+1}^{(k)}) \left(w_i - \sum_{j = i}^{i+k-1} \lambda_i^{[j - i]} \bar{K}_i^{(k, j)} w_{j\mid i}\right)}\nonumber\\
    \leq{}& \sum_{i = \tau}^{t - 1} C_0 \rho^{t-1-i} \norm{w_i - \sum_{j = i}^{i+k-1} \lambda_i^{[j - i]} \bar{K}_i^{(k, j)} w_{j\mid i}} \label{MPC-example:e10:s1}\\
    \leq{}& \sum_{i = \tau}^{t - 1} C_0 \rho^{t-1-i}\left(\bar{w} + \sum_{j = i}^{i+k-1} C_0 \rho_0^{j-i} \bar{w}\right) \label{MPC-example:e10:s2}\\
    \leq{}& \frac{C_0 (1 - \rho_0 + C_0) \bar{w}}{(1 - \rho_0)(1 - \rho)}, \nonumber
\end{align}
\end{subequations}
where we use the triangle inequality and \eqref{MPC-example:e5} in \eqref{MPC-example:e10:s1}; we use the triangle inequality and \eqref{MPC-example:e9} in \eqref{MPC-example:e10:s2}. This finishes the proof of $\varepsilon$-time-varying stability.
\end{proof}

\subsection{Linear Feedback Control in Nonlinear System}\label{sec:nonlinear-control-details}

The example we present below is an extension of the nonlinear control model in \cite{li2022certifying}.

We consider the dynamical system $g_t(x_t, u_t) = A x_t + B u_t + \delta_t(x_t, u_t)$, where $(A, B)$ is stabilizable, i.e., there exists $K \in \mathbb{R}^{n\times m}$ such that $\rho(A - BK) < 1$. The cost functions are given by $f_t(x_t, u_t) = q(x_t, Q) + q(u_t, R)$, where $Q \succ 0$ and $R \succeq 0$. Under these assumptions, we know the discrete algebraic Riccati equation (DARE) has a unique solution $P \in \mathbb{S}^n$ such that $P \succeq 0$ and
\[P = Q + A^\top P A - A^\top P B (R + B^\top P B)^{-1} B^\top P A.\]
We let $\bar{K} \coloneqq (R + B^\top P B)^{-1} B^\top P A$ and introduce the notation $M \coloneqq A - B \bar{K}$. We know that $\rho(M) < 1$. Thus, there exists constants $C_0 > 0, \rho_0 \in (0, 1)$ such that the inequality $\norm{M^t} \leq C_0 \rho_0^t$ holds. To simplify the presentation, we assume $x_0 = 0$, $\mathcal{X} = B(0, R_x)$, and $\mathcal{U} = B(0, R_u)$ with $R_u > \norm{\bar{K}} R_x$. Our main assumption bounds the magnitude and Lipschitzness of the nonlinear residual $\delta_t$:

\begin{assumption}\label{assump:linear-feedback-nonlinear-control}
Function $\delta_t$ is Lipschitz continuous on $\mathcal{X} \times \mathcal{U}$ and satisfies that
\[\norm{\delta_t(0, 0)} \leq \bar{\delta}, \text{ and } \norm{\delta_t(x, u) - \delta_t(x', u')} \leq L_{\delta, x} \norm{x - x'} + L_{\delta, u} \norm{u - u'}\]
for any $x, x' \in \mathcal{X}$ and $u, u' \in \mathcal{U}$.
\end{assumption}

Compared with Assumption 1 in \cite{li2022certifying}, \Cref{assump:linear-feedback-nonlinear-control} does not require $\delta_t(0, 0) = 0$ or assume the Lipschitzness holds globally.

Define the set $\mathcal{K}_\sigma \coloneqq \{\hat{K} \in \mathbb{R}^{n \times m}\mid \norm{\hat{K} - \bar{K}} \leq \sigma\}$. The policy class in this example is $\pi_t(x_t, \theta_t) = K(\theta_t) x_t$, where $K: \Theta \to \mathcal{K}_\sigma$ is a $L_K$-Lipschitz function. Here, we do not directly use $\mathcal{K}_\sigma$ as our parameter set because it may be difficult to do projection and we want to leave the freedom to set $\Theta$ to the whole Euclidean space.

Now we provide an example where $\Theta = \mathbb{R}^d$. Let $K_1, K_2, \ldots, K_d$ be $d$ different linear feedback controllers in the set $\mathcal{K}_\sigma$. For any $\theta \in \Theta$, we define
\[K(\theta) \coloneqq \sum_{i=1}^d \textsf{softmax}(\theta)_i K_i, \text{ where } \textsf{softmax}(\theta)_i = \frac{e^{\theta_i}}{\sum_{j=1}^d e^{\theta_j}}, \text{ for } i = 1, \ldots, d.\]
Note that $K$ is a Lipschitz and differentiable function in this example.

\begin{lemma}\label{lemma:linear-feedback-nonlinear-control}
Under \Cref{assump:linear-feedback-nonlinear-control}, suppose $\sigma$ and the Lipschitzness constants satisfies that
\[0 < \sigma \leq \frac{R_u - \norm{\bar{K}} R_x}{R_x} \text{, and } \rho_0 + C_0 \left(\sigma \norm{B} + L_{\delta, x} + L_{\delta, u} (\norm{\bar{K}} + \sigma)\right) < 1.\]
Let $\rho \coloneqq \rho_0 + C_0 \left(\sigma \norm{B} + L_{\delta, x} + L_{\delta, u} (\norm{\bar{K}} + \sigma)\right)$. We also assume that $\bar{\delta} < (1 - \rho) R_x/C_0$. Then, the joint system of $(g_t, f_t, \pi_t)$ satisfies time-invariant contractive perturbation with 
\[C = C_0, \rho = \rho, R_C = \frac{(1 - \rho)R_x - C_0 \bar{\delta}}{(1 - \rho)C_0}.\]
It also satisfies time-invariant stability with $R_S = \frac{C_0 \bar{\delta}}{1 - \rho}$.
\end{lemma}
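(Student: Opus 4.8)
The plan is to work with the closed-loop map under a \emph{fixed} parameter $\theta$, since both time-invariant properties only constrain such trajectories. Writing $\tilde M_\theta \coloneqq A - BK(\theta)$, the closed loop is $x_{s+1} = \tilde M_\theta x_s + \delta_s(x_s, K(\theta)x_s)$. Two elementary bounds drive everything: $\norm{\tilde M_\theta - M} = \norm{B(\bar{K} - K(\theta))} \le \sigma \norm{B}$ and $\norm{K(\theta)} \le \norm{\bar{K}} + \sigma$; combined with $\norm{M^k} \le C_0\rho_0^k$ and the choice $\rho = \rho_0 + C_0\big(\sigma\norm{B} + L_{\delta,x} + L_{\delta,u}(\norm{\bar{K}}+\sigma)\big) < 1$, this is precisely what makes the contraction rate come out to $\rho$. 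A preliminary observation is that the condition $\sigma \le (R_u - \norm{\bar{K}}R_x)/R_x$ guarantees $K(\theta)x \in \mathcal{U}$ whenever $x \in \mathcal{X} = B(0,R_x)$, so the Lipschitz bound on $\delta_s$ from \Cref{assump:linear-feedback-nonlinear-control} applies along any trajectory that stays in $\mathcal{X}$.

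I would then establish three statements, each by induction on the time index and each a discrete Grönwall argument, writing $\gamma \coloneqq \sigma\norm{B} + L_{\delta,x} + L_{\delta,u}(\norm{\bar{K}}+\sigma)$ so that $\rho - \rho_0 = C_0\gamma$. (i) \emph{Stability:} starting from $x_\tau = 0$, decompose $x_{s+1} = Mx_s + w_s$ with $\norm{w_s} \le \bar{\delta} + \gamma\norm{x_s}$; unrolling and the inductive hypothesis $\norm{x_i} \le R_S$ give $\norm{x_s} \le (C_0\bar{\delta} + C_0\gamma R_S)/(1-\rho_0)$, which equals $R_S = C_0\bar{\delta}/(1-\rho)$ exactly because $1-\rho_0 - C_0\gamma = 1-\rho$; and $\bar{\delta} < (1-\rho)R_x/C_0$ ensures $R_S \le R_x$, so the hypothesis stays consistent with $x_s \in \mathcal{X}$. (ii) \emph{Invariance of the $R_C$-ball:} starting from $\norm{x_\tau}\le R_C$, the same decomposition together with the geometric-sum identity $\sum_{i=0}^{n-1}\rho_0^{n-1-i}\rho^{i} = (\rho^{n}-\rho_0^{n})/(\rho-\rho_0)$ collapses the bound to $\norm{x_s} \le C_0\rho^{s-\tau}\norm{x_\tau} + R_S$, which is $\le R_x$ precisely when $\norm{x_\tau} \le (R_x - R_S)/C_0 = R_C$; hence any trajectory launched from $B(0,R_C)$ stays in $\mathcal{X}$. (iii) \emph{Contraction:} for $x_\tau, x_\tau' \in B(0,R_C)$, both trajectories lie in $\mathcal{X}$ by (ii), so the error $e_s \coloneqq x_s - x_s'$ obeys $e_{s+1} = Me_s + \zeta_s$ with $\norm{\zeta_s} \le \gamma\norm{e_s}$; the same identity turns the inductive hypothesis $\norm{e_s}\le C_0\rho^{s-\tau}\norm{e_\tau}$ into $\norm{e_t}\le C_0\rho^{t-\tau}\norm{e_\tau}$, since $\rho_0^{n} + C_0\gamma\cdot(\rho^{n}-\rho_0^{n})/(\rho-\rho_0) = \rho^{n}$ with $n = t-\tau$. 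Statements (ii)–(iii) give \Cref{def:epsilon-exp-decay-perturbation-property} with $\varepsilon = 0$, $C = C_0$, decay factor $\rho$, radius $R_C$; statement (i) gives \Cref{def:epsilon-time-varying-stability} with $R_S = C_0\bar{\delta}/(1-\rho)$.

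The only real subtlety — and the step I would be most careful about — is the circular dependence between boundedness and the Lipschitz estimate: applying \Cref{assump:linear-feedback-nonlinear-control} at times $\tau,\dots,s-1$ needs the trajectory to be in $\mathcal{X}$ up to time $s-1$, which is exactly what the induction hypothesis supplies, with the base case $x_\tau \in B(0,R_C)\subseteq\mathcal{X}$ (resp.\ $x_\tau = 0$) closing the loop. The conditions on $\bar{\delta}$ and $\sigma$ in the hypothesis are calibrated so that each induction closes with equality rather than slack, which is why the stated constants are sharp. Apart from this, everything is routine; one should also note $C_0 \ge 1$ (from $\norm{M^0} = 1$), needed for the base case $\norm{e_\tau} \le C_0\rho^{0}\norm{e_\tau}$ of (iii).
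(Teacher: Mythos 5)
Your proposal is correct and follows essentially the same route as the paper's proof: the same decomposition of the closed loop as $x_{t+1} = M x_t + r_t(x_t)$ with $\norm{r_t(x_t)} \le \bar{\delta} + \gamma \norm{x_t}$, the same inductive discrete-Grönwall argument using $\rho - \rho_0 = C_0 \gamma$ and the geometric-sum identity, and the same handling of the boundedness-vs-Lipschitz circularity via the induction hypothesis. The only cosmetic difference is that you split stability from the origin and invariance of $B(0,R_C)$ into two inductions where the paper runs a single one; nothing substantive changes.
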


The assumptions of \Cref{lemma:linear-feedback-nonlinear-control} requires the nonlinear residual to be sufficiently small and Lipschitz, and the radius $\sigma$ of the set $\mathcal{K}_\sigma$ should also be sufficiently small. Such assumptions are also required by prior works \cite{li2022certifying,qu2021exploiting}.

One can use \Cref{lemma:from-static-to-time-varying} to extend the time-invariant contractive perturbation/stability to the $\varepsilon$-time-varying version. And to satisfy the assumptions of \Cref{lemma:from-static-to-time-varying}, we need to additionally assume $\bar{\delta}$ is sufficiently small such that
\[\bar{\delta} \leq \frac{(1 - \rho) R_x}{C_0 (C_0 + 1)^2}.\]

\begin{proof}[Proof of \Cref{lemma:linear-feedback-nonlinear-control}]
We first study stability. When the policy parameter is fixed to be $\theta$, the control action is $u_t = - K(\theta) x_t$ for all time step $t$. We see that
\begin{align}\label{lemma:linear-feedback-nonlinear-control:e1}
    x_{t+1} &= A x_t + B u_t + \delta_t(x_t, u_t) = (A - B K(\theta)) x_t + \delta_t(x_t, - K(\theta) x_t)\nonumber\\
    &= M x_t + \underbrace{\left(B (\bar{K} - K(\theta)) x_t + \delta_t(x_t, - K(\theta) x_t)\right)}_{\coloneqq r_t(x_t)}.
\end{align}
Using \Cref{assump:linear-feedback-nonlinear-control}, we know that if $x_t \in \mathcal{X}$,
\begin{align}\label{lemma:linear-feedback-nonlinear-control:e2}
    \norm{r_t(x_t)} \leq \bar{\delta} + \left(\sigma \norm{B} + L_{\delta, x} + L_{\delta, u} (\norm{\bar{K}} + \sigma)\right) \norm{x_t}.
\end{align}
Therefore, for any $0 \leq \tau < t$, if $x_\tau, \ldots, x_{t-1} \in \mathcal{X}$, we have
\begin{subequations}\label{lemma:linear-feedback-nonlinear-control:e3}
\begin{align}
    \norm{x_t} \leq{}& \norm{M^{t-\tau} x_\tau} + \norm{\sum_{j = \tau}^{t-1} M^{t-1-j} r_j(x_j)} \label{lemma:linear-feedback-nonlinear-control:e3:s1}\\
    \leq{}& C_0 \rho_0^{t-\tau} \left(\norm{x_\tau} + \sum_{j = \tau}^{t-1} \rho_0^{\tau - 1 - j} \norm{r_j(x_j)}\right) \label{lemma:linear-feedback-nonlinear-control:e3:s2}\\
    \leq{}& C_0 \rho_0^{t-\tau} \left(\norm{x_\tau} + \left(\sigma \norm{B} + L_{\delta, x} + L_{\delta, u} (\norm{\bar{K}} + \sigma)\right) \sum_{j = \tau}^{t-1} \rho_0^{\tau - 1 - j} \norm{x_j}\right) + \frac{C_0 \bar{\delta}}{1 - \rho_0}, \label{lemma:linear-feedback-nonlinear-control:e3:s3}
\end{align}
\end{subequations}
where we use \eqref{lemma:linear-feedback-nonlinear-control:e1} and the triangle inequality in \eqref{lemma:linear-feedback-nonlinear-control:e3:s1}; we use the property of matrix $M$ in \eqref{lemma:linear-feedback-nonlinear-control:e3:s2}; we use \eqref{lemma:linear-feedback-nonlinear-control:e2} in \eqref{lemma:linear-feedback-nonlinear-control:e3:s3}.

Let $\rho \coloneqq \rho_0 + C_0 \left(\sigma \norm{B} + L_{\delta, x} + L_{\delta, u} (\norm{\bar{K}} + \sigma)\right)$ and $\beta \coloneqq \frac{C_0 \bar{\delta}}{1 - \rho}$. Now we use induction to show that if $\norm{x_\tau} \leq \frac{R_x - \beta}{C_0}$, then for every $t \geq \tau$, $x_t \in \mathcal{X}$ and $\norm{x_t} \leq C_0 \rho^{t-\tau} \norm{x_\tau} + \beta$.

Note that the statement holds for $\tau$. Suppose it holds for $\tau, \ldots, t-1$, then for $t$, by \eqref{lemma:linear-feedback-nonlinear-control:e3}, we see that
\begin{subequations}\label{lemma:linear-feedback-nonlinear-control:e4}
\begin{align}
    \norm{x_t} \leq{}& C_0 \rho_0^{t-\tau} \norm{x_\tau} + (\rho - \rho_0) \rho_0^{t-\tau} \sum_{j = \tau}^{t-1} \rho_0^{\tau - 1 - j} \norm{x_j} + \frac{C_0 \bar{\delta}}{1 - \rho_0} \label{lemma:linear-feedback-nonlinear-control:e4:s1}\\
    \leq{}& C_0 \rho_0^{t-\tau} \norm{x_\tau} + (\rho - \rho_0) \rho_0^{t-\tau} \sum_{j = \tau}^{t-1} \rho_0^{\tau - 1 - j} \left(C_0 \rho^{j-\tau} \norm{x_\tau} + \beta\right) + \frac{C_0 \bar{\delta}}{1 - \rho_0} \label{lemma:linear-feedback-nonlinear-control:e4:s2}\\
    \leq{}& C_0 \rho_0^{t-\tau} \norm{x_\tau} + C_0 (\rho^{t-\tau} - \rho_0^{t-\tau}) \norm{x_\tau} + \frac{(\rho - \rho_0)\beta}{1 - \rho_0} + \frac{C_0 \bar{\delta}}{1 - \rho_0} \label{lemma:linear-feedback-nonlinear-control:e4:s3}\\
    ={}& C_0 \rho^{t-\tau} \norm{x_\tau} + \beta,\nonumber
\end{align}
\end{subequations}
where we use \eqref{lemma:linear-feedback-nonlinear-control:e3} in \eqref{lemma:linear-feedback-nonlinear-control:e4:s1}; we use the induction assumption in \eqref{lemma:linear-feedback-nonlinear-control:e4:s2} and rearrange the terms in \eqref{lemma:linear-feedback-nonlinear-control:e4:s3}. Therefore, $x_t$ is also in $\mathcal{X}$, and we have shown that $x_t \in \mathcal{X}$ and $\norm{x_t} \leq C_0 \rho^{t-\tau} \norm{x_\tau} + \beta$ hold for all $t$ by inductions. This finishes the proof for stability.

Now, we show that contractive perturbation also holds. Suppose $x_\tau', \ldots, x_t'$ is another trajectory that starts from a different state $x_\tau'$, which also satisfies $\norm{x_\tau'} \leq \frac{R_x - \beta}{C_0}$. Note that this trajectory evolves according to $x_{j+1}' = M x_j' + r_j(x_j'), \forall j \geq \tau$. By the Lipschitzness of the nonlinear residual $\delta_j$, we see that
\[\norm{r_t(x_t) - r_t(x_t')} \leq \left(\sigma \norm{B} + L_{\delta, x} + L_{\delta, u} (\norm{K} + \sigma)\right) \norm{x_t - x_t'}.\]
Therefore, we obtain that
\begin{align*}
    \norm{x_t - x_t'} \leq{}& \norm{M^{t-\tau} (x_\tau - x_\tau')} + \norm{\sum_{j=\tau}^{t-1} M^{t-1-j} \left(r_j(x_j) - r_j(x_j')\right)}\\
    \leq{}& C_0 \rho_0^{t-\tau} \left(\norm{x_\tau - x_\tau'} + \sum_{j = \tau}^{t-1} \rho_0^{\tau - 1 - j} \norm{r_j(x_j) - r_j(x_j')}\right)\\
    \leq{}& C_0 \rho_0^{t-\tau} \left(\norm{x_\tau - x_\tau'} + \left(\sigma \norm{B} + L_{\delta, x} + L_{\delta, u} (\norm{K} + \sigma)\right) \sum_{j = \tau}^{t-1} \rho_0^{\tau - 1 - j} \norm{x_j - x_j'}\right).
\end{align*}
Using the same technique as \eqref{lemma:linear-feedback-nonlinear-control:e4}, we can show that
\[\norm{x_t - x_t'} \leq C_0 \rho^{t-\tau}\norm{x_\tau - x_\tau'}\]
holds if $\norm{x_\tau} \leq \frac{R_x - \beta}{C_0}$ and $\norm{x_\tau'} \leq \frac{R_x - \beta}{C_0}$.
\end{proof}

\section{Numerical experiment details}
\label{sec:numerical-appendix}
In this appendix we provide details on the numerical experiments of \Cref{sec:Numerics}.
We also present a third experiment that compares GAPS to our bandit-based algorithm (BAPS, \Cref{alg:batched-exp3-policy-selection}) in a setting where the policy class for GAPS is a strict superset of the policy class for BAPS.

\subsection{Adapting a scalar confidence parameter for MPC}
\label{appendix:ours-vs-lambda-confident}
This section provides details on the baseline algorithm \citet{li2021robustness}
to which GAPS was compared in \Cref{sec:Numerics}.
The main result was shown in \Cref{fig:ours-vs-lambda-confident}.

The method of Li et al.\ \citet{li2021robustness} tunes a scalar confidence parameter for model-predictive control in a LTI system with imperfect disturbance predictions.
The setting is similar to \Cref{example:MPC-confidence} but has several restrictions:
1) the dynamics matrices $A, B$ and cost matrices $Q, R$ are time-invariant,
2) the disturbance predictions $\{ \widehat w_t \}_{t=0}^{T-1}$ are static and known before the game begins,
and 3) the policy is parameterized by a single confidence parameter $\lambda \in [0, 1]$ instead of separate confidence parameters $(\lambda^{[0]}, \dots, \lambda^{[k-1]}) \in [0, 1]^k$ for each prediction in the planning horizon.

Li et al. \citet{li2021robustness} propose a ``follow-the-leader''-style algorithm where the entire trajectory history is used to select $\lambda$ at each time step.
Their method commits the $u_{t\mid t}$ entry of the optimal solution
\begin{align*}
    \argmin_{u_{t:t+k-1\mid t}} & \quad \sum_{\tau = t}^{t + k-1}\left(\quadratic\left(x_{\tau\mid t}, Q\right) + \quadratic\left(u_{\tau\mid t}, R\right)\right) + \quadratic\left(x_{t+k\mid t}, P\right) \\*
    \text{s.t.} & \quad x_{\tau+1\mid t} = A x_{\tau\mid t} + B u_{\tau\mid t} + \lambda_t \widehat w_\tau, \text{ for }\tau = [t:t+k-1],\\*
    & \quad x_{t\mid t} = x_t,
\end{align*}
where $P$ solves the discrete-time algebraic Riccati equation (DARE) for $A, B, Q, R$.
The confidence parameter $\lambda_t$ is determined according to
\begin{align*}
  \lambda_t = & \frac{\sum_{s=0}^{t-1}\left(\eta(w;s,t-1)\right)^{\top} H \left(\eta(\widehat{w};s,t-1)\right)}{\sum_{s=0}^{t-1}\left(\eta(\widehat{w};s,t-1)\right)^{\top} H \left(\eta(\widehat{w};s,t-1)\right)},
  \quad \text{where} \quad  \eta(w;s,t)\coloneqq \sum_{\tau=s}^{t}\left(F^\top\right)^{\tau-s} P w_\tau,
\end{align*}
where $H = B(R + B^\top P B)^{-1} B^\top$
and
\[
F = A - BK^\star = A - B(R + B^\top P B)^{-1} B^\top P A
\]
is the closed-loop linear dynamics matrix induced by the LQR-optimal infinite-horizon linear policy.

We compare the ability of each algorithm to quickly adapt to a change in prediction accuracy.
We consider the unstable scalar system
\(
    x_{t+1} = 2 x_t + u_t + w_t
\)
with the LQR costs $Q = R = 1$.
We construct sinusoidal disturbances $w_t$
and noise-corrupted disturbance predictions $\widehat w_t$ following
\[
    w_t = \sin(2\pi f t + p), \quad \quad
    \widehat w_t = w_t + n_t, \quad \quad
    n_t \sim
    \begin{cases}
        \operatorname{Uniform}([-2, 2]) &: t \leq T/4 \\
        \operatorname{Uniform}([-0.02, 0.02]) &: \mathrm{otherwise},
    \end{cases}
\]
where $f > 0$ and $p \in [0, 2\pi)$ are frequency and phase constants. 
In other words, the predictions are initially noisy but become accurate after $t = T/4$.
We set $T = 400$ and perform $100$ random trials.
In each trial, the frequency $f$ is sampled from a log-uniform distribution on $[0.01, 0.1]$
and the phase $p$ is sampled uniformly from $[0, 2\pi)$.
Results and discussion are found in \Cref{sec:Numerics}.

\subsection{Adapting a linear controller for a nonlinear time-varying system}
\label{appendix:pendulum}
This section provides details on the experimental setting and baseline controller for the second experiment in \Cref{sec:Numerics},
in which GAPS is compared to a LQR baseline for tuning feedback control gains in the nonlinear inverted pendulum.
The main result was shown in \Cref{fig:pendulum}.

\begin{figure}[H]
    \centering
    \usetikzlibrary{calc}

\begin{tikzpicture}[every node/.style={outer sep=0pt}, scale=1.2]
	\tikzstyle{ground}=[fill, pattern=north east lines, draw=none]
	\tikzset{>=latex}
	\begin{scope}[rotate=-15]
		\draw (0, 0) -- ++(0, 15mm) coordinate (masspt);
		\node[circle, radius=0.5mm, inner sep=0.3mm, fill] at (0, 0) {};
		\node[circle, inner sep=1mm, fill] (mass) at (masspt) {};
		\node[right=0mm of mass, anchor=west] {$m$};

		\draw[|<->|] (-7mm, 0) -- ($(masspt) + (-7mm, 0)$)
			node [midway, inner sep=1mm, fill=white] {$\ell$};
	\end{scope}

	\draw (0, 0) -- ++(0, 7mm) coordinate (angleline) -- ++(0, 1mm);
	\draw [rotate=90] (angleline) arc (00:-15:7mm) node[pos=0.65, above]{$\phi$};

	\draw [->] (mass) -- ++(0, -8mm) node[midway, right] {$G$};
	\begin{scope}[rotate=40]
	    \draw [->] (0, 3mm) arc (90:-20:3mm) node[pos=0.85, right]{$u$};
	\end{scope}

	\node[ground, minimum height=3mm, minimum width=10mm, anchor=north] (g) at (0, 0) {};
	\draw (g.north east) -- (g.north west);
\end{tikzpicture}
    \caption{The inverted pendulum system.}
    \label{fig:pendulum-diagram}
\end{figure}

We consider the inverted pendulum illustrated in \Cref{fig:pendulum-diagram}.
The system is a point mass on a massless rod with a stationary and frictionless pivot.
The input is a torque about the pivot.
The continuous-time nonlinear dynamics are given by
\[
    \ddot \phi = \frac{G}{\ell} \sin \phi + \frac{u}{m \ell^2},
\]
where $\phi$ is the pivot angle,
$u$ is the applied torque,
$m$ is the mass,
$\ell$ is the rod length,
and $G$ is the gravitational constant.
In first-order state-space form, the state is $x = (\phi, \dot \phi)$.
Near the unstable equilibrium of $x = \zero$, the system is well-approximated by its state-space linearization:
\begin{equation}
\label{eq:pendulum-linear}
    \frac{d}{dt}
    x
    \approx
    \begin{pmatrix}
        0 & 1 \\
        \nicefrac{G}{\ell} & 0 \\
    \end{pmatrix}
    x
    +
    \begin{pmatrix}0 \\ \nicefrac{1}{m \ell^2} \end{pmatrix} u.
\end{equation}
A linear feedback policy takes the form
\(
    u = -k_p \phi - k_d \dot \phi
\)
for the positive constants $k_p,\ k_d$,
also known as a proportional-derivative (PD) policy.
We apply GAPS to tune the parameters $\theta = (k_p, k_d)$.
We simulate time-varying pendulum mass $m_t$ with regularly-spaced step changes.

We measure policy performance by the LQR cost 
\begin{equation}
\label{eq:pendulum-lqr-cost}
    f_t(x_t, u_t) = \Delta_t \cdot (\phi_t^2 + \dot \phi_t^2 + 0.1 u_t^2),
\end{equation}
where $\Delta_t$ is the discretization interval.
We compare GAPS to a natural baseline 
that deploys the policy
$u_t = K^\star (m_t) x_t$,
where $K^\star$ is the optimal PD policy for the infinite-horizon summation of the LQR cost \eqref{eq:pendulum-lqr-cost}
when the pendulum dynamics for mass $m_t$ are linearized as in \eqref{eq:pendulum-linear} and discretized with a zero-order hold.

Although this is not the exact offline optimal for the finite-horizon time-varying problem, it is a good approximation when mass $m_t$ changes infrequently,
and is practical and realistic.

We model disturbances via a Ornstein-Uhlenbeck-type random walk with state $s_t \in \bR$
and dynamics $s_{t+1} = \gamma s_t + \delta_t$,
where
$\gamma \in [0, 1)$
and $\delta_t$ is Gaussian-distributed with zero mean and variance $\sigma^2$. 
When $\gamma = 0$, the process becomes i.i.d. uniform noises.
The disturbance enters as a perturbation of the velocity, i.e. the system is driven by the true input
$u_t + m \ell^2 s_t$.
We test two scenarios: the i.i.d. case $(\gamma = 0, \sigma = 8)$
and the random walk $(\gamma = 0.95, \sigma = 0.5)$.
These values lead to similar closed-loop state magnitudes under optimal LQR control.
We deploy GAPS with buffer size $B = 400$ and learning rate $\eta = 0.03$.

This setting is intended to match the spirit
of the nonlinear system in \Cref{example:nonlinear-control}
and the local regret bound for nonconvex surrogate cost in \Cref{thm:GAPS-nonconvex-regret}.
A simple randomized search yields two controller parameters that certify the nonconvexity of the surrogate cost $F_1$.
However, we relax some requirements:
1) we have unbounded (but concentrated) noise,
2) we do not parameterize the controller to allow $\Theta$ to be a full Euclidean space,
and
3) we do not restrict the controller set or noise magnitude to ensure that the conditions of \Cref{lemma:linear-feedback-nonlinear-control} are satisfied.
Nonetheless, we observe strong performance from GAPS.

\subsection{Selecting the horizon in MPC: discrete and continuous approaches}
\label{appendix:integrator}

In this section, we compare GAPS to the bandit-based algorithm (BAPS, \Cref{alg:batched-exp3-policy-selection}) in a setting where both finite and infinite policy classes are well motivated. 
We consider MPC with disturbance predictions that are accurate in the near future but inaccurate in the far future.
The inaccurate far-future predictions mean it is counterproductive to plan over a long horizon.
One natural solution is to optimize the MPC planning horizon, which yields a small finite policy class.

On the other hand, we can also apply MPC with confidence coefficients (\Cref{example:MPC-confidence}) and optimize the confidence coefficients using GAPS.

The continuous policy class is a strict superset of the finite class.
To see this, first let $\controlnum$ denote the maximum MPC horizon we wish to consider.
Let $\MPC_\controlidx(\theta)$ denote the MPC algorithm \eqref{eq:mpc-opt-confidence} with horizon $\controlidx \leq \controlnum$ and confidences $\theta \in [0, 1]^\controlidx$.
When the MPC terminal cost matrix $\tilde Q$ is the solution of the discrete-time algebraic Riccati equation for the associated infinite-horizon LQR problem, the Bellman optimality condition and the convexity of the MPC optimization problem imply that selecting from
$\{ \MPC_0(\one), \dots,  MPC_\controlnum(\one) \}$
is equivalent to selecting from the parameter set
\[
        \Theta_\dis~=~\{
        [0\ \cdots\ 0],
        [1\ 0\ \cdots\ 0],
        [1\ 1\ 0\ \cdots\ 0],
        \dots,
        [1\ \cdots\ 1]
        \}
\]
for $\MPC_\controlnum$.
For the continuous parameter set $\Theta_\cts = [0, 1]^\controlnum$ we have
$\Theta_\dis \subsetneq \Theta_\cts$,
Therefore, in the long term GAPS should outperform BAPS,
but it is not clear which will adapt faster.

We consider a two-dimensional discretized double integrator system subject to disturbances in both velocity and position, with quadratic cost.
The continuous-time system is discretized with forward Euler integration over the interval $\Delta_t$, resulting in the dynamics and cost matrices
\[
    A = \begin{pmatrix}
        I_2 & \Delta_t I_2 \\
        \zero & I_2
    \end{pmatrix} \ ,
    \quad \quad
    B = \begin{pmatrix}
        \zero \\
        \Delta_t I_2
    \end{pmatrix}\ ,
    \quad \quad
    Q = s_n \begin{pmatrix}
        I_2 & \zero \\
        \zero & s_Q I_2
    \end{pmatrix} \ ,
    \quad \quad
    R = s_n s_R I_2,
\]
with positive cost scale factors $s_Q, s_R, s_n$.
The cost scale factors $s_Q, s_R$ balance the costs of position error, velocity error, and control effort.
The overall scale factor $s_n$ is set to ensure compatibility with the cost bound assumptions of \Cref{alg:batched-exp3-policy-selection}. Note that the optimal policy is invariant to changes in $s_n$.

The true disturbances $\{w_t\}_{t=0}^{H-1}$ are sampled i.i.d. from a uniform distribution on $[-\Delta_t, \Delta_t]^4$.
To model the idea that predictions of the near future are more accurate than predictions of the distant future,
we construct noisy predictions using the process
\[
    w_{t+\tau \mid t} \coloneqq w_{t+\tau} + s_\epsilon \Delta_t \sum_{i = t}^{t+\tau} \epsilon_i,
\]
where $s_\epsilon$ is a scale factor and $\{\epsilon_t\}_{t=0}^{H-1}$ are sampled i.i.d. from a uniform distribution on $[-1, 1]^4$.

In this experiment, we select the discretization interval $\Delta_t = 0.1$
and the scale factors
$s_Q = 0.1$,
$s_R = 0.01$,
and $s_\epsilon = 0.25$.
These choices ensure that the optimal cost is strongly affected by the MPC horizon (or confidence weights).
We use the GAPS/BAPS learning rates and batch sizes suggested by our regret bounds.

    Our regret bounds are realized by particular choices of learning rate and EXP3 batch size that depend on the decay constants $(C, \rho)$ induced by the MPC policies.
    The conservative bounds on $(C, \rho)$ yielded by 
    \Cref{lemma:MPC-properties}
    will lead to very slow learning rates and long batches.
    Therefore, we appeal to the fact that MPC is solving the same optimization as the infinite-horizon LQR in this setting, and use the constants of the optimal linear policy $K^\star$.
    We compute $\rho$ empirically from the spectrum of $A-BK^\star$ and let
    $C = \max \left\{ \rho^{-n} \norm{(A - BK)^n} \right\}_{n=0}^N$
    for sufficiently large $N$.

\begin{figure*}[ht]
   
    \begin{subfigure}[t]{0.52\textwidth}
        \centering
        \includegraphics[height=4.5cm]{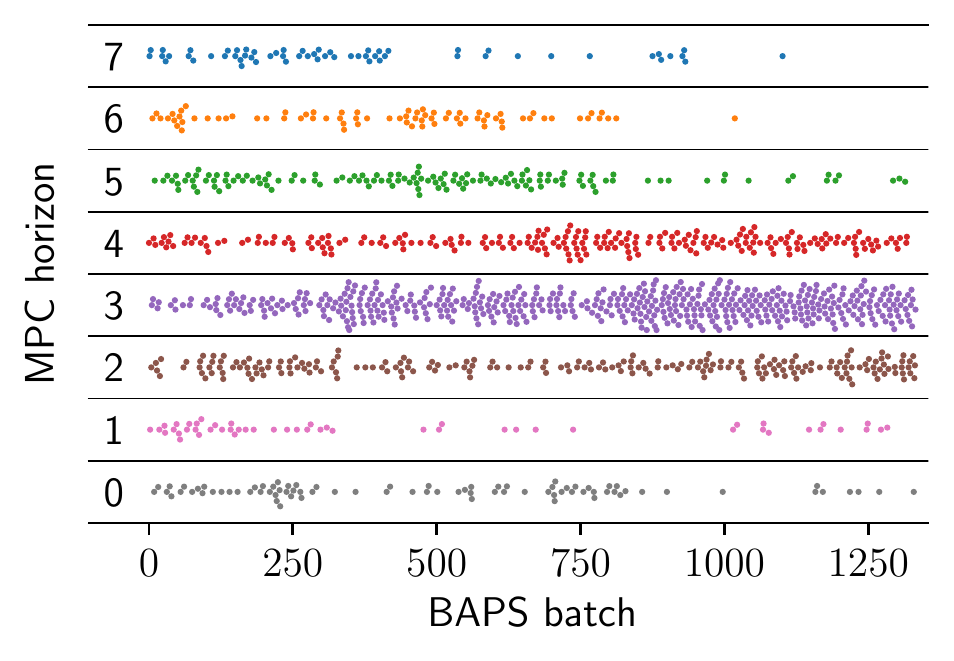}
        \caption{
            BAPS: Each dot represents the MPC horizon selected for a single batch.
        }
        \label{fig:mpc-horizon-exp3-scatter}
    \end{subfigure}
    \hfill
    \begin{subfigure}[t]{0.42\textwidth}
        \centering
        \includegraphics[trim={2.5mm 0 0 0}, clip, height=4.5cm]{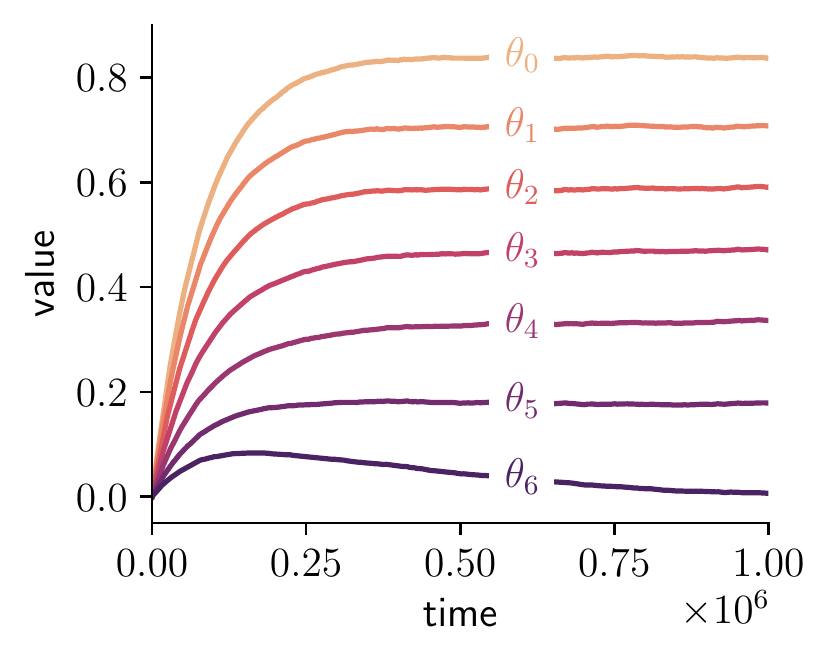}
        \caption{
            GAPS: Evolution of prediction confidence parameters $\theta_i$.
        }
    \label{fig:oco-parameter-evolution}
    \end{subfigure}
    \hfill
    \caption{
        Application of GAPS and BAPS to model-predictive control with noisy disturbance predictions.
        GAPS tunes per-step confidence weights $\theta_i \in [0, 1]$,
        while BAPS selects between different horizon lengths.
        Both methods learn to disregard far-future predictions.
    }
    \label{fig:exp3-horizon-selection}
\end{figure*}

\Cref{fig:mpc-horizon-exp3-scatter} visualizes
the behavior of BAPS on the policies
$\{ \MPC_0(\one), \dots, \MPC_\controlnum (\one) \}$.
A dot at the coordinate $(\beta, \controlidx)$ indicates that BAPS selected $\MPC_\controlidx(\one)$ during the $\beta^\mathrm{th}$ batch.
The selections eventually concentrate on the optimal $\MPC_3(\one)$,
with some continued exploration of the nearly-optimal $\MPC_2(\one)$ and $\MPC_4(\one)$. For GAPS, the policy is $\MPC_\controlnum$ with initial trust parameters $\theta_0 = \zero$.
\Cref{fig:oco-parameter-evolution} shows the evolution of the trust parameter $\theta_t$ under GAPS.
It quickly converges to values that are (roughly) inversely proportional to the prediction errors.

\begin{figure}[ht]
    \centering
    \includegraphics[height=4.0cm]{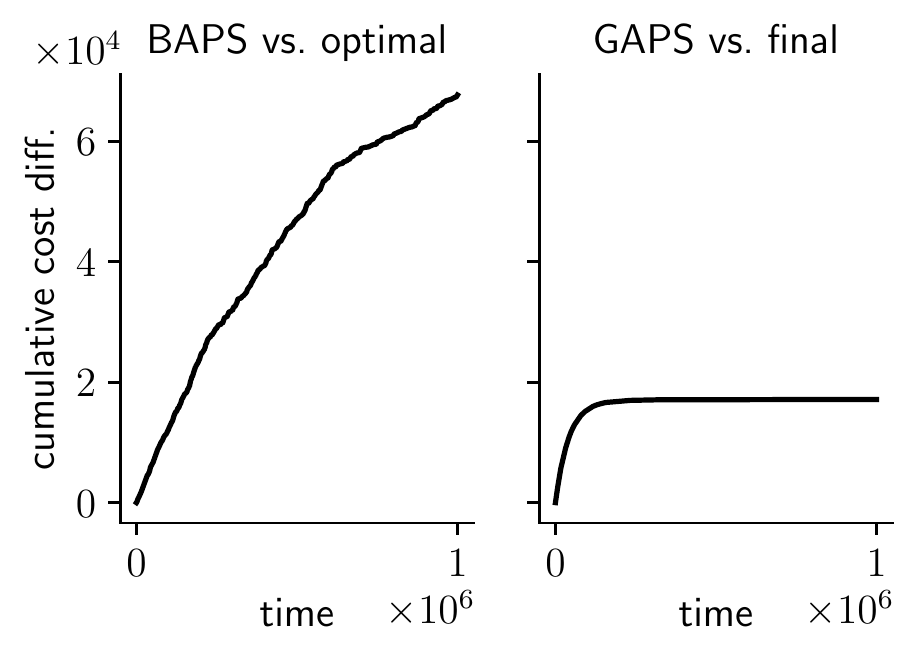}
    \caption{
        True regret of BAPS;
        ``Regret'' of GAPS compared to the final $\theta_{T-1}$ GAPS selects.
    }
    \label{fig:regret-curves}
\end{figure}
Due to the gap between $\Theta_\cts$ and $\Theta_\dis$,
we cannot evaluate the regret of each algorithm with respect to the same optimum.
In this example, we have
\(
    J(\theta_{T-1})
    =
    0.89
    \min_{\theta \in \Theta_\dis} J(\theta)
\),
where $\theta_{T-1}$ is the final parameter value selected by GAPS.
Therefore, we plot separate regret curves in \Cref{fig:regret-curves}.
Thanks to the LTI dynamics, convex costs, and time-invariant noise properties,
we have
$\theta_{T-1} \approx \min_{\theta \in \Theta_\cts} J(\theta)$.
Therefore, we use $\theta_{T-1}$ as the regret baseline for GAPS.
Over the simulation timeframe, BAPS exhibits characteristic $T^{2/3}$ regret while GAPS reaches approximately constant regret. Since GAPS also competes against a stronger baseline, we see the significant benefit of exploiting the continuous problem structure.

\end{document}